\def\baselinestretch{1.2}
\newlength{\defbaselineskip}
\titlespacing*{\chapter}{0pt}{50pt}{30pt}
\titleformat{\chapter}[display]{\normalfont\huge\bfseries}{\chaptertitlename\ \thechapter}{20pt}{\Huge}
\theoremstyle{plain}
\newtheorem{thm}{Theorem}[section]
\newtheorem{cor}[thm]{Corollary}
\newtheorem{lem}[thm]{Lemma}
\newtheorem{prop}[thm]{Proposition}
\newtheorem{exam}[thm]{Example}
\newtheorem{ques}[thm]{Question}
\newtheorem{prob}[thm]{Problem}
\newtheorem{defn}[thm]{Definition}
\newtheorem{rem}[thm]{Remark}
\newtheorem{exno}[thm]{Example and Notation}
\newtheorem{nore}[thm]{Notation and Remark}
\numberwithin{equation}{section}
\newcommand{\rad}{\mbox{rad}\,}
\newcommand{\coker}{\mbox{Coker}\,}
\newcommand{\Hom}{\mbox{Hom}\,}
\newcommand{\Ext}{\mbox{Ext}\,}
\newcommand{\Spec}{\mbox{Spec}\,}
\newcommand{\Max}{\mbox{Max}\,}
\newcommand{\Ker}{\mbox{Ker}\,}
\newcommand{\Ass}{\mbox{Ass}\,}
\newcommand{\Assh}{\mbox{Assh}\,}
\newcommand{\Att}{\mbox{Att}\,}
\newcommand{\Supp}{\mbox{Supp}\,}
\newcommand{\depth}{\mbox{depth}\,}
\newcommand{\grade}{\mbox{grade}\,}
\renewcommand{\dim}{\mbox{dim}\,}
\renewcommand{\Im}{\mbox{Im}\,}
\newcommand{\im}{\mbox{Im}\,}
\newcommand{\cd}{\mbox{cd}\,}
\newcommand{\Min}{\mbox{Min}\,}
\newcommand{\h}{\mbox{ht}}
\renewcommand{\H}{\mbox{H}}
\newcommand{\mH}{\mathcal{H}}
\newcommand{\V}{\mbox{V}}
\newcommand{\Z}{\mathbb{Z}}
\newcommand{\N}{\mathbb{N}}
\newcommand{\lo}{\longrightarrow}
\newcommand{\fa}{\mathfrak{a}}
\newcommand{\fb}{\mathfrak{b}}
\newcommand{\fm}{\mathfrak{m}}
\newcommand{\fp}{\mathfrak{p}}
\newcommand{\fq}{\mathfrak{q}}
\newcommand{\fP}{\mathfrak{P}}
\newcommand{\fQ}{\mathfrak{Q}}
\newcommand{\fn}{\mathfrak{n}}
\newcommand{\fr}{\mathfrak{r}}
\newcommand{\nCM}{\mbox{non--CM}\,}
\newcommand{\LH}{Lichtenbaum-Hartshorne Theorem}
\renewcommand{\a}{\mathrm{a}}
\newcommand{\CM}{\mbox{CM}}
\newcommand{\lc}{\mbox{H}}
\newcommand{\C}{\mathcal{C}}
\begin{document}

\clearpage\pagenumbering{roman}
\def\baselinestretch{1.3}

\thispagestyle{empty}

\noindent
{\Huge \bf Cousin complexes \\  and  applications}

\vspace{1cm}

\noindent
{\large \bf Raheleh Jafari}

\noindent
\href{mailto:jafarirahele@gmail.com}{jafarirahele@gmail.com}

\vspace{3.5cm}
{\large
\noindent
Ph.D.  thesis


\noindent
Defence: June 19, 2011


\noindent
Thesis advisor: Mohammad-T. Dibaei, Tarbiat Moallem University

\noindent
Evaluating committee:   Abdoljavad Taherizadeh,

\vspace{-0.2cm}
\quad\hspace{3cm} \ Tarbiat Moallem University

\quad\hspace{3cm} \  Masoud Tousi,

\vspace{-0.2cm}
\quad\hspace{3cm} \ Shahid Beheshti University

\quad\hspace{3cm} \ Siamak Yassemi,

\vspace{-0.2cm}
\quad\hspace{3cm} \  University of Tehran

\quad\hspace{3cm} \ Hossein Zakeri,

\vspace{-0.2cm}
\quad\hspace{3cm} \ Tarbiat Moallem University

\vspace{5.5cm}

\begin{center}

Faculty of Mathematical Sciences and Computer

Tarbiat Moallem University

Tehran, Iran

2011

\end{center}
}

\newpage
\thispagestyle{empty}
\quad

\def\baselinestretch{1.2}
\chapter*{Acknowledgement}

\quad
I wish to thank, first and foremost,  professor \textbf{Mohammad~T.~Dibaei} who offered invaluable assistance, constant support and  tactful guidance as my supervisor. His insights, passion and  patience  let me to overcome difficulties and win chills during my study and research life.

I am grateful to  professors \textbf{H.~Zakeri}, \textbf{M.~Tousi}, \textbf{S.~Yassemi} and \textbf{A.~Taherizadeh}, for serving on my thesis committee,  their helpful comments and suggestions, and  for beneficial courses, seminars and discussions I had with them during my
 undergraduate years.

I wish to express my sincere thanks to professor \textbf{S.~Zarzuela~Armengou} for accepting me as an official
visiting guest of the university of Barcelona and his hospitality.  I
was delighted to interact him by giving me the chance  to involve in several academic activities.

As a great opportunity,  I had a short but fruitful discussion with  professor \textbf{R.~Y.~Sharp} about my thesis, during his visit to the university of Barcelona.~I~owe my gratitude to him for his comments and  conversations.


Last but not the least, I owe my special gratitude and thanks to \textbf{my parents},  encouraging me towards higher educations by supporting  spiritually and financially throughout my life and their dedication that provided the foundation for this work.

\vspace{1cm}

{\hfill Raheleh Jafari}

{\hfill June  2011}

\newpage
\thispagestyle{empty}
\quad

\chapter*{Declaration}
\def\baselinestretch{0.4}
 Chapters 2, 3 and 4 of this thesis are devoted to the original
 results.

\newpage
\thispagestyle{empty}
\quad


\chapter*{Abstract}

\quad In this thesis, the class of modules whose Cousin complexes
have finitely generated cohomologies are studied as a  subclass of
modules which have uniform local cohomological annihilators and
it is shown that  these two classes coincide  over local rings
with Cohen-Macaulay formal fibres. This point of view enables us
to  obtain some properties of
 modules with finite Cousin complexes and find some characterizations of them.

 In this connection
we discuss attached prime ideals of certain local cohomology
modules in terms of cohomologies of Cousin complexes.
 In continuation, we
study the top local cohomology modules with specified set of
attached primes.

Our approach to study Cousin complexes  leads  us to characterization of
generalized Cohen-Macaulay modules in terms of uniform annihilators
of  local cohomology. We use these results to study the
Cohen-Macaulay loci of modules and find two classes of rings  over
which the Cohen-Macaulay  locus of any finitely generated module is a Zariski--open subset of the spectrum of the ring.

\vspace{3cm}\noindent{\bf Key words and phrases.}  Cousin
complexes, uniform local cohomological annihilator, Cohen-Macaulay
locus, local cohomology, attached primes.

\vspace{0.5cm}
 \noindent{\bf 2010 Mathematics Subject Classification.
}{13D02; 13D45; 13C14; 13D07.}

\newpage
\thispagestyle{empty}
\quad


\tableofcontents

\newpage
\thispagestyle{empty}
\quad

\clearpage \pagenumbering{arabic}
\csname@twosidetrue\endcsname


\chapter*{Introduction}
\addcontentsline{toc}{chapter}{Introduction}

\markboth{}{Introduction \hspace{12.3cm}}

Many concepts in commutative algebra are inspired by algebraic
geometric objects. Of particular interest and effective tool in this
thesis, is the Cousin complex of a module which is algebraic
analogue of the Cousin complex introduced in 1963/64  by
A. Grothendieck and R. Hartshorne \cite[Chapter IV]{H1}. They used this
notion to prove a duality theorem for cohomology of quasi--coherent
sheaves, with respect to a proper morphism of locally noetherian
preschemes.

\index{Cousin complex, $\C_R(M)$}
 In 1969, R. Y. Sharp presented the commutative algebraic
analogue of the Cousin complex   (see section 1.3) and approved it
as a powerful tool  by characterizing Cohen-Macaulay and Gorenstein
rings in terms of Cousin complexes \cite{S1}. This concept is
developed in \cite{S9} by Sharp and is discussed by S. Goto and
K. Watanabe in the $\Bbb{Z}$--graded context \cite{GW}. In \cite{S1},
Sharp shows that a commutative noetherian ring $R$ is Cohen-Macaulay
if and only if the Cousin complex $\C_R(R)$ of $R$ is exact, which
is improved to modules by himself in  \cite{S4},
 while $R$ is Gorenstein if and only if $\C_R(R)$ provides the
minimal injective resolution of $R$. He  also introduced Gorenstein
modules and characterized them by using Cousin complexes in
\cite{S4}. \index{Cohen-Macaulay}

 From the Cousin complex definition
is apparent that it terms are very much look like non--finitely
generated, and despite of it, $R$ is Cohen-Macaulay if and only if
$\C_R(R)$ is exact, i.e. its cohomologies are zero and so finitely
generated. Now, one may ask what rings or modules admit finitely
generated Cousin complex cohomologies and what properties these
rings or modules have.

In 2001, M. T. Dibaei and M.~Tousi, while studying the structure of dualizing
complexes, \index{dualizing complex} found a class of modules whose
Cousin complexes have finitely generated cohomologies.  The theory
of dualizing complexes comes also from algebraic geometry which was
discussed firstly by Grothendieck and Hartshorne in 1963/64 and used
to prove their duality theorem \cite[Chapter V]{H1}. Afterwards
Sharp and a number of authors studied its commutative algebraic
analogue   and found it as a useful tool.

 For the rest of this section, $R$ is a commutative noetherian
 ring and $M$ is a finitely generated $R$--module.

\index{dualizing complex|textbf} A dualizing complex for  a ring $R$
is a bounded injective complex $I^\bullet$, where all cohomology
 modules $\H^i(I^\bullet)$ are finitely generated $R$--modules and
the natural map
$M\longrightarrow\Hom_R(\Hom_R(M,I^\bullet),I^\bullet)$ is quasi
isomorphism for any finitely generated $R$--module $M$. A dualizing
complex $I^\bullet$ is said to be fundamental whenever
$\oplus_{i\in\Bbb{Z}}I^i\cong\oplus_{\fp\in\Spec R}E(R/\fp)$, where
 $E(R/\fp)$ is the injective envelope of $R/\fp$ as $R$--module,
 i.e. each prime ideal of $R$ occurs in
exactly one term of $I^\bullet$ and exactly once \cite[1.1]{S7}. It
is known that a ring $R$ possesses a  dualizing complex if and only
if it possesses a fundamental dualizing complex (see
\cite[3.6]{Hall} and \cite[1.2]{S7}), which is unique up to
isomorphism of complexes and shifting (see \cite[4.5]{S8} and
\cite[4.2]{Hall}).

Now, a natural and interesting treatment is to determine this unique
complex. In 1998, Dibaei and Tousi described that if a local ring
$R$ which satisfies the condition $(S_2)$, possesses a dualizing
complex, then the fundamental dualizing complex for $R$ is
isomorphic to the Cousin complex of the canonical module of $R$ with
respect to the height filtration \index{height!-filtration}(which is
equal to the dimension filtration in this case)\cite[2.4]{DT1}. As
an application  they proved that if a local ring $R$ satisfies the
condition $(S_2)$ and has a canonical module $K$, then finiteness of
cohomologies of the Cousin complex of $K$ with respect to a certain
filtration is necessary and sufficient condition for $R$ to possess
a dualizing complex \cite[3.4]{DT1}. In 2001, they generalized their
structural property of dualizing complex of \cite{DT1} and showed
that the Cousin cohomologies of $M$ over a local ring $R$, are
finitely generated if  $R$  has a dualizing complex and $M$ is
equidimensional which satisfies the condition $(S_2)$, in
\cite{DT2}. In continuation of \cite{DT1} and \cite{DT2}, Dibaei
studied some properties of Cousin complexes through the dualizing
complexes in 2005, and proved the following result. \index{Cousin
complex, $\C_R(M)$} \index{Cousin complex, $\C_R(M)$!finite-}\index{formal
fibre}\index{equidimensional}\index{Cohen-Macaulay}

\vspace{0.3cm}\noindent{\bf Theorem 1.}\,\,{\cite[Theorem 2.1]{D}} 
 Assume
that all formal fibres of $R$ are Cohen-Macaulay and $M$ satisfies
$(S_2)$. If $\widehat{M}$ is equidimensional, then $\C_R(M)$ has
finitely generated cohomology modules.

These ideas have been pursued in algebraic geometry by J.~Lipman, S.~Nayak
and P.~Sastry in \cite{LNS}. Taking motivation from \cite{DT1} and
\cite{DT2}, Kawasaki studied Cousin complex of a module over a
noetherian ring and improved
 results, independently from \cite{D}, in \cite{K}. More precisely, he
proved the following results.

\index{universally catenary}

\index{Cousin complex, $\C_R(M)$!finite-}\index{formal
fibre}\index{equidimensional}\index{Cohen-Macaulay}

\vspace{0.3cm}\noindent{\bf Theorem 2.}\,\,
{\cite[Theorem 1.1]{K}}
 \, Assume that $M$ is   equidimensional  and

{(i)} $R$ is universally catenary,

{(ii)} all the formal fibers of all the localizations of $R$ are
Cohen-Macaulay,

{(iii)} the Cohen-Macaulay locus of each finitely generated
$R$--algebra is open, \index{Cohen-Macaulay!-locus, $\CM(M)$|textbf}

\noindent  Then all the cohomology modules of the Cousin complex of
$M$ are finitely generated and only finitely many of them are
non--zero.

\vspace{0.3cm} The  assumptions of the above result are also
necessary in a sense.

\vspace{0.3cm}\noindent{\bf Theorem 3.}\,\,
{\cite[Theorem 1.4]{K}}
\, Let $R$  be a  catenary ring. Then the following statements are
equivalent.
\begin{itemize}
\item[(i)] $R$ satisfies the conditions (i), (ii) and {(iii)} of Theorem 2.
\item[{(ii)}] for any finitely generated equidimensional $R$--module $M$, all
the cohomology modules of the Cousin complex of $M$ are finitely
generated and only finitely many of them are non--zero.\end{itemize}

\vspace{0.3cm} In special case when $R$ is local,  Kawasaki obtains
a more simple but interesting version of his result, Theorem 2, as
the following.

 \vspace{0.3cm}\noindent{\bf Theorem 4.}\,\,{\cite[Theorem 5.5]{K}}
\,Assume that $R$  is  a universally catenary  local ring and $M$ is
an equidimensional $R$--module. If all formal fibres of $R$ are
Cohen-Macaulay, then all the cohomology modules of $\C_R(M)$ are
finitely generated.

\vspace{0.3cm} Note that if a local ring $R$ is universally
catenary, then $\widehat{M}$ is equidimensional for each finitely
generated equidimensional $R$--module $M$.  In the proof of the
above theorem,
 the assumption that $R$  is universally catenary, is  used to show that
$\widehat{M}$ is equidimensional. So one may consider this theorem
as a generalization of Theorem~1.

After reviewing some well known results and basic concepts which we
need throughout the thesis in Chapter 1, we start our study on
Cousin complexes by discussing some useful techniques and essential
properties of cohomology modules of Cousin complexes in the first
section of Chapter 2.  As a consequence we remove the condition
$(S_2)$ from Theorem 1 and recover Theorem 4, in Corollary \ref{216}
and Proposition \ref{233}.

In all above results about finiteness of the Cousin complex of an
$R$--module $M$, there are some crucial common conditions on $R$ and
$M$:
\begin{enumerate}\item[(a)] $M$ is equidimensional;
\item[(b)] $R$ is universally catenary; \item[(c)] all formal fibres of $R$ are Cohen-Macaulay.\end{enumerate}

\index{Cousin complex, $\C_R(M)$!finite-}\index{formal
fibre}\index{equidimensional}\index{Cohen-Macaulay}\index{universally
catenary} When $R$ is a local ring, these conditions are sufficient
for finiteness of cohomology modules of $\C_R(M)$ by Theorem 4, and
conditions (b) and (c) are necessary for finiteness of cohomology
modules of Cousin complexes of all equidimensional $R$--modules by
Theorem 3. It is now natural to ask that which of these conditions
are satisfied if $\C_R(M)$ has finitely generated cohomology modules
for an $R$--module $M$.

In 2006, C.~Zhou studied the properties of noetherian rings containing
uniform local cohomological annihilators and showed that all such
rings are universally catenary and locally equidimensional\cite{Z}.
Recall that an element $x\in R$ is called a  uniform local
cohomological annihilator of $M$, if $x\in R\setminus
\cup_{\fp\in\Min M}\fp$ and for each maximal ideal $\fm$ of $R$,
$x\H_\fm^i(M)= 0$ for all $i< \dim M_\fm$.\index{uniform local
cohomological annihilator|textbf}

 We continue Chapter 2, improving  some results of Zhou for modules which have uniform local
 cohomological annihilators and find some characterizations of these modules in Section 2.2.
We show that if a finitely generated  $R$--module $M$ has a uniform
local cohomological annihilator, then $M$ is locally equidimensional
and $R/0:_RM$ is universally catenary in Proposition \ref{equ} and
Corollary \ref{cat}. We also obtain that the property that $M$ has a
uniform local cohomological annihilator is independent of the module
structure  and depends only on the support of $M$ in Corollary
\ref{c215}. Finally we investigate our main result in this section
by proving that if $\C_R(M)$ has finitely generated cohomology
modules, then $M$ has a uniform local cohomological annihilator in
Theorem \ref{229} and so $M$ is equidimensional and $R/0:_RM$ is
universally catenary.

Our approach  in studying Cousin complexes is also useful for
discussing about uniform local cohomological  annihilators and helps
us to recover some results in this context by a different and may be
simple method, for instance see Corollary \ref{Zx} and Proposition
\ref{Zs}.

\index{uniform local cohomological annihilator}

This point of view, also enables us to characterize modules with
finite Cousin cohomologies over a local ring $R$ with Cohen-Macaulay
 formal fibres. The last section of
Chapter 2 is devoted to some applications of our approach.  In
Theorem \ref{234}, we show that over these rings, $\C_R(M)$ has
finitely generated cohomologies if and only if $M$ has a uniform
local cohomological annihilator, if and only if $\widehat{M}$ is
equidimensional $\widehat{R}$--module. Our results about the
annihilators of cohomologies of Cousin complexes in Section 2.1,
lead us to present the height of an ideal of $R$ in terms of Cousin
complex in Theorem \ref{height}.

Another important subject which is strongly related to the uniform
annihilators of local cohomology, is the notion $\a(M)$ which is
defined for a finitely generated $R$--module $M$ over a local ring
$(R,\fm)$ as $\a(M)=\bigcap_{i<\dim M}(0:_R\H^i_\fm(M)).$ Note that
by definition, an $R$--module $M$ has a uniform local cohomological
annihilator if and only if $\a(M)\nsubseteq \cup_{\fp\in\Min M}\fp$.
On the other hand if $\fp\in\Min M$, then $\a(M)\subseteq \fp$ if
and only if $\fp\in\Att\H^i_\fm(M)$ for some $i<\dim M$ (see Lemma
\ref{3.6}). These facts motivate us to study the relations between
local cohomology modules and Cousin complexes.

It is well known that for a finitely generated $R$--module $M$ with finite dimension $d=\dim M$,
$\Att \H^{d}_\fm(M)=\Assh M$ (see Theorem \ref{G}), we start
Chapter 3 by discussing  $\Att \H^t_\fm(M)$ for certain $t$, in
particular
 $\Att \H^{d-1}_\fm(M)$, in terms of cohomologies of $\C_R(M)$.  As a consequence we find a
non--vanishing criterion of $\H^{d-1}_\fm(M)$ when $\C_R(M)$
has finitely generated cohomologies in Corollary
\ref{2.6}.\index{local cohomology!-functor}

The main object of Section 3.2 is the following question which is
raised by Dibaei and S.~Yassemi in \cite{DY3}. They investigate the set
$\Att \H^{d}_\fa(M)$ for a finitely generated $R$--module $M$
and an ideal $\fa$ of $R$ and show  that $\Att \H^{d}_\fa(M)\subseteq \Assh M$.  Now it is natural to ask,

 \vspace{0.3cm}\noindent{\bf Question
5.}\,\,{\cite[Question 2.9]{DY3}} For any subset $T$ of ${\Assh} M$,
is there an ideal $\fa$ of $R$ such that ${\Att} {\lc}^{d}_\fa(M)= T$? \vspace{0.3cm}

Theorem \ref{328} presents a positive answer to this question in the
case where $(R,\fm)$ is a complete local ring. In
\cite[Theorem 1.6]{DY2}, it is proved that if $(R,\fm)$ is a complete local ring,
then for any pair of ideals $\fa$ and $\fb$ of $R$, if $\Att
\lc^d_\fa(M)={\Att} {\lc}^d_\fb(M)$, then ${\lc}^d_\fa(M) \cong
{\lc}^d_\fb(M)$. As a consequence we show that the number of
non--isomorphic top local cohomology modules of $M$ with respect to
all ideals of $R$ is equal to $2^{|\Assh M|}$ in Corollary
\ref{329}. \index{attached prime, $\Att$}

In last section 3.3, we  use results of  sections 3.1 and 3.2 and
those of  Chapter 2  for studying the class of generalized
Cohen-Macaulay modules.  In Corollary \ref{3.9}, we find a new
characterization of generalized Cohen-Macaulay rings in terms of
uniform annihilators of local cohomologies. Our results in this
section are useful in the last chapter of thesis to study the
Cohen-Macaulay loci of modules.

The Cohen-Macaulay locus of $M$ is denoted by
\begin{center} $\CM(M):=\{\fp\in\Spec R: M_\fp$ is Cohen-Macaulay
as $R_\fp$--module$\}$.\end{center}

The topological property of Cohen-Macaulay loci of modules and
determining when it is a zariski--open subset of $\Spec R$ have been
studied by many authors. Grothendieck in \cite{G} states that
$\CM(M)$ is a Zariski--open subset of $\Spec R$ whenever $R$ is an
excellent ring and in \cite{H1}, Hartshorne shows that $\CM(R)$ is
open when $R$ possesses a dualizing complex.  In \cite{RS}, C.~Rotthaus
and L.~M.~$\c{S}$ega study the Cohen-Macaulay loci of graded modules over
a noetherian homogeneous graded ring
$R=\bigoplus_{i\in\mathbb{N}}R_i$ considered as $R_0$--modules.

 \index{Cohen-Macaulay!-locus, $\CM(M)$}
\index{non--Cohen-Macaulay locus, non-$\CM(M)$|textbf}

 Our aim in the first section of
Chapter 4, is to determine when $\CM(M)$ is a Zariski--open subset
of $\Spec R$. We find two classes of rings, over  which,  $\CM(M)$
is open for all  $R$--modules $M$. The first is the class of rings
whose formal fibres are Cohen--Macaulay (see Remark \ref{3.7}) and
the second is the class of catenary local rings $R$  with finite
non--$\CM(R)$, where  non--$\CM(M)=\Spec R\setminus\CM(M)$ (see
Corollary \ref{419}). Finally, we present examples to show that
these two classes of rings are significant in \ref{3.15} and
\ref{3.16}.

Inspired by the above results, we study rings whose formal fibres
are  Cohen-Macaulay in Section 4.2. One of our main results in this
section is Theorem \ref{B} which gives a characterization of a
finitely generated $R$--module which admits a uniform local
cohomological annihilator in terms of certain set of formal fibres
of $R$. In particular we show that, for a prime ideal $\fp$ of $R$,
$R/\fp$ is universally catenary and  the formal fibre of $R$ over
$\fp$ is Cohen-Macaulay if and only if $R/\fp$ has a uniform local
cohomological annihilator (see Theorem \ref{B} and Lemma \ref
{31.7Ma}).

\index{formal fibre}\index{a@$\a(M)$}
 Corollary \ref{C} is a good
summary of connections between uniform annihilators of local
cohomologies and Cousin complexes which shows that a local ring $R$ is
universally catenary and all of it's formal fibres are
Cohen-Macaulay if and only if $\C_R(R/\fp)$ has finitely generated
cohomologies for all $\fp\in\Spec R$, if and only if $R/\fp$ has a
uniform local cohomological annihilator for all $\fp\in\Spec R$.

Note that  for an  $R$--module $M$, non--$\CM(M)= \V(\mathrm{a}(M))$
whenever $\mathcal{C}_R(M)$ is finite (see Corollary \ref{F}). We
close this section with Theorem  which characterizes those
modules $M$ satisfying  non--$\CM(M)= \V(\mathrm{a}(M))$  without
assuming that the Cousin complex of $M$ to be finite, which implies
 also that when $\CM_R(M)$ is finite, then the formal fibres of $R$
over some certain prime ideals are Cohen-Macaulay (see Corollary
\ref{clast}).

Observe  that if $\C_R(M)$ has finitely generated cohomologies, then
$M$ is locally equidimensional and $R/0:_RM$ is universally catenary
by Corollary \ref{cor}. On the other hand if all formal fibres of a
universally catenary local ring $R$ are Cohen-Macaulay, then
$\C_R(M)$ has finitely generated cohomologies  for all
equidimensional $R$--module $M$ (Theorem 4).

These results strengthen our guess that over a local ring $R$, if
$\C_R(R)$ has finitely generated cohomologies, then all formal
fibres of $R$ are Cohen-Macaulay~(Section~4.4).

\vspace{0.3cm}
 Throughout  this thesis, all  known  definitions and
statements are quoted with a reference afterward and all others with
no references are supposed to be new, most of them have been
appeared in \cite{DJ1}, \cite{DJ2} and \cite{DJ3}.


\renewcommand{\chaptermark}[1]{\markboth{#1}{}}
\renewcommand{\sectionmark}[1]{\markright{#1}{}}


\pagestyle{fancy}
\fancyhf{}
\fancyhead[LE,RO]{\thepage}
\fancyhead[RE]{\textit{\nouppercase{\leftmark}}}
\fancyhead[LO]{\textit{\nouppercase{\rightmark}}}

\chapter{Preliminaries}
\def\baselinestretch{1.3}

\quad Throughout this thesis $R$ is a commutative, noetherian ring
with non--zero identity and $M$ is an $R$--module. In the case
$(R,\fm)$ is local, we use $\widehat{M}$ as the $\fm$-adic completion of $M$. The set of all prime ideals of $R$  is
denoted by $\Spec R$ and for an ideal $I$ of $R$, $\V(I)$ denotes
the set of all prime ideals of $R$ contain  $I$. The support of $M$,
denoted by $\Supp M$, is the set $\{\fp\in \Spec R: M_{\fp}\neq 0\}$
and the set of associated prime ideals of $M$, denoted by $\Ass M$,
is the set
$$\{\fp\in \Spec R: \fp= (0:_{R}x) \ \ \textrm{for some non-zero element}\ \  x\in M\}.$$
The set of minimal primes of $M$, is the set of minimal elements of
$\Supp M$ with respect to inclusion and is denoted by $\Min M$. The
Krull dimension of $M$, denoted by $\dim M$, is defined to be the
supremum of lengths of chains of prime ideals of $\Supp M$ if this
supremum exists, and $\infty$ otherwise. By convention, the zero
module has Krull dimension $-1$. For a prime ideal $\fp$ of $R$, the
$M$-–height of $\fp$, denoted by $\h_M\fp$ is defined as $\dim
M_\fp$. Note that $\dim M=\max\{\dim R/\fp : \fp\in\Supp M\}$. The
set of those prime ideals $\fp\in\Supp M$ with $\dim R/\fp=\dim M$,
is denoted by $\Assh M$. \index{height!m@$M$--height, $\h_M$|textbf}

 When $\dim M<\infty$,  $M$ is
\index{equidimensional|textbf} \emph{equidimensional} if $\Min
M=\Assh M$. We say that $M$ is \index{equidimensional!locally-|textbf} \index{locally equidimensional|textbf}
\emph{locally equidimensional} if $M_\fm$ is equidimensional for
every maximal ideal $\fm$ of $\Supp M$. An element $x\in R$ is said
to be \emph{$M$--regular} \index{m@$M$--regular|textbf} if $xm\neq
0$ for each non--zero element $m\in M$. A sequence $x_1,\ldots, x_n$
of elements of $R$ is called an $M$--regular sequence or simply  an
\emph{$M$--sequence} \index{m@$M$--sequence|textbf} if  $x_1$ is
$M$--regular, $x_i$ is $M/(x_1,\ldots,x_{i-1})M$--regular for all
$2\leq i\leq n$ and $M\neq (x_1,\ldots,x_n)M$. If $IM\neq M$ for an
ideal $I$ of $R$, then the length of a maximal $M$--sequence in $I$
is a well determined integer $\grade(I,M)$ and is called the
\index{grade|textbf} \emph{grade of $I$ on $M$}. We set
$\grade(I,M)=\infty$ if $IM=M$. If $(R, \fm)$ is local ring, then
the grade of $\fm$ on $M$ is called the  \index{depth|textbf}
\emph{depth of $M$ }and is denoted by $\depth M$.

\section{Secondary representation theory}


\quad The theory of secondary representation of a module is in a
certain sense dual to the more familiar theory of primary
decomposition and associated primes, and provides a very
satisfactory tool for studying artinian modules. In this section  we
recall the concepts and some main points of this theory  which we
need later on. One may find the developed theory by Macdonald in
\cite{Mac}, it has been also mentioned by  Matsumura in his book
\cite[Section 6, Appendix]{Ma}.

A non--zero $R$--module $S$ is said to be \index{secondary|textbf}
\textit{secondary} if its multiplication map by any element $r$ of
$R$ is either surjective or nilpotent. If $S$ is secondary, then
$\fp=~\rad(0:_R~S~)$ is a prime ideal, and $S$ is said to be
\index{secondary!$\fp$--secondary|textbf} \textit{$\fp$--secondary}.
A \textit{secondary representation} \index{secondary!
representation|textbf} of an $R$--module $S$ is an expression of $S$
as a finite sum of secondary submodules
$$S= S_1+ S_2+ \dots + S_n.$$
The representation is minimal if
               \begin{itemize}
                   \item[(a)] {the prime ideals $\fp_i= \rad(0:_RS_i)$ are all distinct, and}
                   \item[(b)] {none of the $S_i$ is redundant (i.e. $S_j\nsubseteq \underset{i\neq j}{\sum\limits^n_{i=1}}S_i$).}
               \end{itemize}

An $R$--module $S$ is called
\emph{representable}\index{representable|textbf} if it has a
secondary representation. Note that the sum of the empty family of
submodules is zero and  so  a zero $R$--module is representable. It
is easy to see that if $S$ has a secondary representation, then it
has a minimal one.

Consider a minimal secondary representation $S=S_1+\cdots+S_n$ of
$S$, where $S_i$ is $\fp_i$--secondary for all $1\leq i\leq n$. Then
the set $\{\fp_1,\ldots,\fp_n\}$ is independent of the choice of
minimal secondary representation of $S$, is called the set of
\emph{attached prime ideals of $S$} and is denoted by $\Att S$ or
$\Att_R S$. By the \emph{attached prime ideals} \index{attached
prime, $\Att$|textbf} or the \emph{attached primes} of $S$, we mean
exactly the members of $\Att S$.

Here we mention some essential properties and results about
secondary representation and attached prime ideals which are
interpreted by Macdonald in \cite{Mac}.

\begin{itemize}
\item \cite[5.2]{Mac} Any  artinian $R$--module has a secondary representation.
\item \cite[2.2]{Mac} Let $S$ be a representable $R$--module and let $\fp$ be a prime ideal of $R$.
Then $\fp\in \Att S$ if and only if there is a homomorphic image of
$S$ which has annihilator equal to $\fp$.
\item \cite[4.1]{Mac} {Let $0 \longrightarrow S' \longrightarrow S\lo S''\lo 0$ be an exact sequence of representable $R$--modules and $R$--homomorphisms. Then
       $$\Att S'' \subseteq \Att S\subseteq \Att S' \cup \Att S''.$$}
 \item \cite[2.6]{Mac} Let $S$ be a representable $R$--module and let $r$ be an element of $R$. Then
                                  \begin{itemize}
                                        \item[($\alpha$)] {$rS\neq S$ if and only if $r\in \displaystyle\bigcup_{\fp\in \Att S}\fp$.}
                                        \item[($\beta$)] $\rad(0:_RS)= \displaystyle\bigcap_{\fp\in \Att S}\fp.$
                                  \end{itemize}
    \end{itemize}

\vspace{0.5cm} The following result follows easily from the above
$(\beta)$.

\begin{cor}\label{111}
Assume that $S$ is a representable $R$--module.
\begin{itemize}
\item[\emph{(i)}] If $(R,\fm)$ is local and $S$ is artinian, then $S$
is finitely generated, and so of finite length, if and only if
$\emph{\Att} S\subseteq \{\fm\}$.
\item [\emph{(ii)}]  If  $\fp\in\emph{\Min} (R/0:_RS)$, then $\fp\in\emph{\Att} S$.
\item[\emph{(iii)}] All elements of $\emph{\Att} S$ contains $0:_RS$.
\end{itemize}
\end{cor}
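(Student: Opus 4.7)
The plan is to derive all three assertions directly from property $(\beta)$, namely $\rad(0:_R S) = \bigcap_{\fp \in \Att S} \fp$, together with the fact---built into the definition of a minimal secondary representation---that $\Att S$ is a finite set. Assertion (iii) is immediate: for any $\fp \in \Att S$,
$$0:_R S \;\subseteq\; \rad(0:_R S) \;=\; \bigcap_{\fq \in \Att S} \fq \;\subseteq\; \fp.$$

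For (ii), I would write $\Att S = \{\fp_1, \ldots, \fp_n\}$. The minimal primes of $R/(0:_R S)$ coincide with the minimal primes of $\rad(0:_R S) = \fp_1 \cap \cdots \cap \fp_n$, and any such minimal prime must already belong to $\{\fp_1, \ldots, \fp_n\}$. Hence if $\fp \in \Min(R/0:_R S)$, then $\fp \supseteq \fp_i$ for some $i$; combined with $\fp_i \supseteq 0:_R S$ from (iii), the minimality of $\fp$ over $0:_R S$ forces $\fp = \fp_i \in \Att S$.

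For (i), assume $S$ is artinian over the local ring $(R,\fm)$. For the direction $(\Leftarrow)$, if $\Att S \subseteq \{\fm\}$ the case $S=0$ is trivial, so we may assume $\Att S = \{\fm\}$, and $(\beta)$ gives $\rad(0:_R S) = \fm$. Thus $\fm^n S = 0$ for some $n \geq 1$, and the filtration $S \supseteq \fm S \supseteq \cdots \supseteq \fm^n S = 0$ has subquotients which are artinian vector spaces over $R/\fm$, hence finite dimensional, so $S$ has finite length. Conversely, if $S$ is finitely generated (and artinian) over $(R,\fm)$, then $S$ has finite length, so $\fm^n S = 0$ for some $n$; hence $\rad(0:_R S) = \fm$ whenever $S \neq 0$, and by $(\beta)$ the intersection of the finitely many members of $\Att S$ equals the maximal ideal $\fm$, which forces each attached prime to equal $\fm$.

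None of these steps is technically deep; the only places requiring care are the appeal to the finiteness of $\Att S$ in (ii) (which is precisely why one must work with a minimal secondary representation) and, in (i), the verification that a local artinian module annihilated by a power of $\fm$ has finite length, obtained from its natural $\fm$-adic filtration.
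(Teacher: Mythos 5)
Your proof is correct and matches the paper's intended approach: the paper explicitly introduces this corollary with the remark that it ``follows easily from the above $(\beta)$,'' and you carry out exactly that derivation, supplying the finiteness of $\Att S$ where needed in (ii) and the standard fact that an artinian module annihilated by a power of $\fm$ has finite length in (i). Nothing to add.
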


Finally, recall the following result which enables us to change the
base ring in studying attached prime ideals.
\begin{thm}\emph{\cite[8.2.5]{BS}}\label{atc}
Let  $f : R\longrightarrow R'$ be a  homomorphism of noetherian
rings. Assume that the $R'$--module $S$ has a secondary
representation. Then $S$ has  a secondary representation as an
$R$--module by means of $f$, and   $$\emph{\Att}_R S=\{f^{-l}(\fp) :
\fp\in\emph{\Att}_{R'} S\}.$$
\end{thm}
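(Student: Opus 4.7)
The plan is to handle the theorem by first analyzing what happens to a single $R'$-secondary summand under restriction of scalars along $f$, and then to assemble these pieces into a minimal $R$-secondary representation starting from a minimal $R'$-secondary representation of $S$.

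For the single-summand step, let $T$ be a $\fp$-secondary $R'$-module. For any $r \in R$, multiplication by $r$ on $T$ coincides with multiplication by $f(r) \in R'$, which is either surjective or nilpotent by the $R'$-secondary hypothesis. Hence $T$ is secondary as an $R$-module. To pin down the attached prime, note that $r \in \rad(0:_R T)$ if and only if $f(r)^n$ annihilates $T$ for some $n$, if and only if $f(r) \in \rad(0:_{R'} T) = \fp$; so $\rad(0:_R T) = f^{-1}(\fp)$. Therefore $T$ is $f^{-1}(\fp)$-secondary over $R$.

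Now take a minimal secondary representation $S = S_1 + \cdots + S_n$ with $S_i$ being $\fp_i$-secondary over $R'$. By the previous step this is already a secondary representation over $R$. To extract $\Att_R S$ I must refine it to a minimal one. Two difficulties can arise: distinct $\fp_i$'s may share the same preimage under $f$, and the resulting combined summands might be redundant. For the first, group the indices by the value of $f^{-1}(\fp_i)$ and replace each group by its sum, using the elementary fact that a finite sum of $\fq$-secondary submodules is again $\fq$-secondary (multiplication by $r \notin \fq$ is surjective on each summand, hence on the sum; for $r \in \fq$ uniform nilpotency follows from finiteness of the family). This yields a representation $S = T_1 + \cdots + T_m$ with pairwise distinct attached primes $\fq_k = f^{-1}(\fp_{i_k})$. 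For the second, if some $T_k$ were contained in $\sum_{\ell \neq k} T_\ell$, then every $S_i$ summing into $T_k$ would lie in $\sum_{j \neq i} S_j$, contradicting the minimality of the $R'$-representation. Hence $T_1 + \cdots + T_m$ is already minimal over $R$, and its attached prime set is exactly $\{f^{-1}(\fp) : \fp \in \Att_{R'} S\}$.

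The main obstacle is the final minimality verification; the earlier steps are a routine unraveling of definitions together with the standard observation that secondariness is preserved under finite sums with a common radical. Once minimality of the combined representation is secured, the independence of $\Att$ from the choice of minimal representation immediately yields the stated formula.
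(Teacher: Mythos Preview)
The paper does not prove this statement; it is quoted as a preliminary from \cite[8.2.5]{BS} without argument. Your proof is correct and is the standard one: restriction of scalars takes a $\fp$-secondary $R'$-module to an $f^{-1}(\fp)$-secondary $R$-module, and then one refines the resulting $R$-representation to a minimal one by grouping summands with equal contracted prime. Your minimality check is sound: if $T_k \subseteq \sum_{\ell\neq k} T_\ell$ and $S_i$ is any constituent of $T_k$, then $S_i \subseteq \sum_{\ell\neq k} T_\ell \subseteq \sum_{j\neq i} S_j$, contradicting irredundancy of the original $R'$-representation.
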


\section{Local cohomology modules}

\quad In this section, we review some basic definitions and   known
results about the local cohomology modules. Our main references are
the book of  Brodmann and  Sharp \cite {BS} and the lecture of
Schenzel \cite{Sc}.

Given an ideal $\fa$ of $R$,  we consider the \emph{$\fa$--torsion
functor} \index{a@$\fa$--torsion!-functor|textbf}
over the category of $R$--modules, defined by
$$\Gamma_\fa(M) = \{x\in M: \fa^nx= 0 \ \textrm{for some} \ n\in
\mathbb{N}\},$$ for any $R$--module $M$.  It is easy to see that
$\Gamma_{\fa}(-)$ is an additive, covariant, $R$--linear, and left
exact functor on the category of $R$--modules and
$R$--homomorphisms. So it makes sense to consider the right derived
functors of $\Gamma_\fa(-)$. For each $i\geq 0$, the $i$th right
derived functor of $\Gamma_{\fa}(-)$ is denoted by
and is called
 the \textit{$i$th local cohomology functor}\index{local cohomology!-functor|textbf} with respect to the ideal $\fa$, so that $\Gamma_{\fa}(-)$ and $\H_{\fa}^0(-)$ are naturally
 equivalent. The module  is called the \index{local cohomology!-module|textbf}\emph{$i$th local cohomology  module of $M$ with respect to $\fa$}.
\\
An $R$--module $M$ is called
\textit{$\fa$--torsion-free}\index{a@$\fa$--torsion-free|textbf} if
$\Gamma_\fa(M)= 0$, while it is called  \textit{$\fa$--torsion}
\index{a@$\fa$--torsion!-module|textbf} if $\Gamma_\fa(M)= M$.

 The $\fa$--torsion functor can be expressed as
 $$\Gamma_\fa(M) = \bigcup_{n\geq 0}(0:_M\fa^n)\cong \underset{n}{\varinjlim}\Hom_R(R/{\fa^n},
 M);$$ so for all $i$, we have functorially  $$\H^i_\fa(M)\cong\underset{n}{\varinjlim}\Ext_R^i(R/{\fa^n},
 M).$$


The following basic properties of local cohomology modules will be
often used without further mention.


\begin{itemize}
\item \cite[Remark 1.2.3]{BS} $\H^r_\fa(M)=\H^r_{\rad(\fa)}(M)$.
\item \cite[Corollary 2.1.7(i)]{BS} If $M$ is $\fa$--torsion, then $\H_{\fa}^i(X)=0$ for all $i>0$.
\item \cite[Corollary 2.1.7(iii)]{BS} $\H_{\fa}^i(M)\cong \H_{\fa}^i(M/\Gamma_{\fa}(M))$, for all $i> 0$.
\item \cite[Exercise 2.1.9]{BS} If $M$ is $\fb$--torsion, then $\H_{\fa+\fb}^i(M)\cong \H_{\fa}^i(M)$ for all $i$.
\item \textbf{Independence theorem} \cite[4.2.1]{BS}.  Let
$R\longrightarrow S$ be a homomorphism of rings, $\fa$ an ideal of
$R$ and let $M$ be an $S$--module. Then $\H^i_{\fa
S}(M)\cong\H^i_\fa(M)$ for all $i$.
\item \textbf{Flat base change theorem} \cite[4.3.2]{BS}. Let $R\longrightarrow S$ be
a flat homomorphism of rings, $\fa$ an ideal of $R$ and let $M$ be
an $R$--module. Then $\H^i_{\fa
S}(M\otimes_RS)\cong\H^i_\fa(M)\otimes_RS$ for all $i$.
\item \textbf{Long exact sequence of local cohomology modules}.
Let $0\longrightarrow M_1\longrightarrow M_2\longrightarrow
M_3\longrightarrow 0$ be an exact sequence of $R$--modules. Then we
have the following long exact sequence.
$$\cdots
\longrightarrow\H^i_\fa(M_1)\longrightarrow\H^i_\fa(M_2)\longrightarrow\H^i_\fa(M_3)\longrightarrow\H^{i+1}_\fa(M_1)\longrightarrow\cdots$$
\end{itemize}

The  vanishing of local cohomology modules is an important problem
which there are many results concerning it. The following theorems
are of most famous results.

\begin{thm} \label {GrV}
\emph{(Grothendieck's vanishing theorem)\cite[6.1.2]{BS}}
\index{Grothendieck's vanishing theorem|textbf} Let $M$ be an
$R$--module and $\fa$ be an ideal of $R$. Then $\emph{\H}_\fa^i(M)=
0$ for all $i> \emph{\dim}M$.
\end{thm}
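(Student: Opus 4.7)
The plan is to reduce to the case where $M$ is finitely generated and then induct on $d := \dim M$. For the reduction, I would write $M$ as the direct limit $\varinjlim_\lambda M_\lambda$ of its finitely generated submodules; since $\Supp M_\lambda \subseteq \Supp M$, one has $\dim M_\lambda \leq d$, and because $R$ is noetherian each $R/\fa^n$ is finitely presented, so $\Ext_R^i(R/\fa^n,-)$ and hence $\H^i_\fa(-)\cong \varinjlim_n \Ext_R^i(R/\fa^n,-)$ commute with direct limits. Thus vanishing over every $M_\lambda$ yields vanishing over $M$.

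Assuming $M$ finitely generated, I would induct on $d$. The cases $d=-1$ (when $M=0$) and $d=0$ are handled directly: in the latter, $M$ has finite length, and by the long exact sequence it suffices to consider $M=R/\fm$ for a maximal ideal $\fm$. If $\fa\subseteq\fm$, then $R/\fm$ is $\fa$--torsion and all higher $\H^i_\fa$ vanish by the facts recorded in the excerpt. If $\fa\not\subseteq\fm$, pick $a\in\fa\setminus\fm$; then $a$ acts as a unit on $R/\fm$, hence as an automorphism of each $\H^i_\fa(R/\fm)$, while the $\fa$--torsionness of these modules forces $a$ to also act locally nilpotently, so they must vanish.

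For the inductive step with $d\geq 1$, I would first replace $M$ by $M/\Gamma_\fa(M)$: the short exact sequence $0\to\Gamma_\fa(M)\to M\to M/\Gamma_\fa(M)\to 0$, combined with the recorded isomorphism $\H^i_\fa(M)\cong\H^i_\fa(M/\Gamma_\fa(M))$ for $i>0$, lets me assume $\Gamma_\fa(M)=0$ (the dimension does not increase under this passage). Then $\fa$ meets no associated prime of $M$, so by prime avoidance there is an $M$--regular $a\in\fa$; since $a$ avoids every prime in $\Assh M$, one has $\dim M/aM<d$. The long exact sequence attached to $0\to M \xrightarrow{a} M\to M/aM\to 0$ gives, for each $i>d$,
$$0=\H^{i-1}_\fa(M/aM)\lo \H^i_\fa(M)\xrightarrow{a}\H^i_\fa(M)\lo \H^i_\fa(M/aM)=0$$
by the induction hypothesis, so multiplication by $a$ is an automorphism of $\H^i_\fa(M)$. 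Being $\fa$--torsion, $\H^i_\fa(M)$ also has $a$ acting locally nilpotently on it, and so $\H^i_\fa(M)=0$.

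The main obstacle I anticipate is the very first step: carefully justifying that $\H^i_\fa$ commutes with direct limits, which rests squarely on the noetherian hypothesis guaranteeing that $R/\fa^n$ is finitely presented and hence that $\Ext_R^i(R/\fa^n,-)$ preserves $\varinjlim$. Once this is in place, the induction is a routine dimension-shifting argument using a single regular element in $\fa$.
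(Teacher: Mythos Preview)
Your argument is correct and is essentially the classical proof of Grothendieck's vanishing theorem as given in \cite[6.1.2]{BS}. Note, however, that the paper does not actually prove this statement: it is quoted in the preliminaries chapter with a reference to Brodmann--Sharp and no proof is supplied, so there is nothing to compare your proposal against beyond the cited source. Your reduction to the finitely generated case via commutation of $\H^i_\fa$ with direct limits, followed by induction on $\dim M$ using an $M$--regular element of $\fa$ after killing the $\fa$--torsion, is precisely the standard route; the only minor remark is that the claim ``$a$ avoids every prime in $\Assh M$'' is automatic once $a$ is $M$--regular, since $\Assh M\subseteq\Min M\subseteq\Ass M$ for finitely generated $M$.
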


The following result identifies the grade \index{grade} of an ideal
in terms of local cohomology modules.
\begin{thm}\emph{\cite[Theorem 6.2.7]{BS}}\label{grade}
Let $M$ be a finitely generated $R$--module such that $\fa M\neq M$
for an ideal $\fa$ of $R$.  Then   the least integer $i$ for which
$\emph{\H}_\fa^i(M)\neq 0$ is precisely \emph{$\grade(\fa, M)$}.
\end{thm}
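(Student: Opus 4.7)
The plan is to prove the two inequalities
\[
\grade(\fa,M) \;\leq\; \inf\{\,i : \H^i_\fa(M)\neq 0\,\} \;\leq\; \grade(\fa,M)
\]
simultaneously by induction on $n := \grade(\fa,M)$. For the base case $n = 0$, every element of $\fa$ is a zero-divisor on $M$, so $\fa \subseteq \bigcup_{\fp\in\Ass M}\fp$; since $\Ass M$ is finite (as $M$ is finitely generated), prime avoidance yields $\fa \subseteq \fp$ for some $\fp = (0:_R y)\in\Ass M$, whence $0\neq y \in \Gamma_\fa(M) = \H^0_\fa(M)$, establishing both inequalities at level $0$.

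For the inductive step with $n \geq 1$, I would pick an $M$-regular element $x \in \fa$. A routine argument on regular sequences yields $\grade(\fa, M/xM) = n-1$, and $\fa(M/xM)\neq M/xM$ because $x \in \fa$ forces $\fa M + xM = \fa M$, so $\fa M = M$ would follow from $\fa(M/xM)= M/xM$. The long exact sequence in local cohomology arising from $0\to M \stackrel{x}{\to} M \to M/xM \to 0$ reads
\[
\cdots \to \H^{i-1}_\fa(M/xM) \to \H^i_\fa(M) \stackrel{x}{\to} \H^i_\fa(M) \to \H^i_\fa(M/xM) \to \cdots
\]
By the inductive hypothesis $\H^j_\fa(M/xM) = 0$ for every $j < n-1$, so for each $i \leq n-1$ the multiplication-by-$x$ map is injective on $\H^i_\fa(M)$.

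The key auxiliary observation is that if $N$ is an $\fa$-torsion $R$-module on which multiplication by some $x\in\fa$ is injective, then $N=0$: any $y\in N$ is annihilated by a power of $\fa$ and hence by some $x^k$, and injectivity of the $k$-fold iterate $(\cdot x)^k$ forces $y = 0$. Applying this to $\H^i_\fa(M)$ for $i \leq n-1$ gives $\H^i_\fa(M)=0$ for every $i<n$. Finally, at position $n$ the long exact sequence contains
\[
0 = \H^{n-1}_\fa(M) \to \H^{n-1}_\fa(M/xM) \to \H^n_\fa(M),
\]
so the module $\H^{n-1}_\fa(M/xM)$, which is nonzero by the inductive hypothesis, embeds into $\H^n_\fa(M)$, forcing $\H^n_\fa(M)\neq 0$. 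The main obstacle is the auxiliary observation linking $\fa$-torsion with injective multiplication by an element of $\fa$; once it is in hand, the long-exact-sequence bookkeeping and the reduction $\grade(\fa, M/xM) = \grade(\fa, M) - 1$ make the induction close essentially automatically.
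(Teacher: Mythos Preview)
Your proof is correct and is the standard induction-on-grade argument for this classical result. Note, however, that the paper does not supply its own proof of this statement: it is quoted in the preliminaries as \cite[Theorem 6.2.7]{BS} and used as a black box, so there is no in-paper argument to compare against. Your approach---reducing modulo an $M$-regular element $x\in\fa$, invoking the long exact sequence, and using the observation that an $\fa$-torsion module on which some $x\in\fa$ acts injectively must vanish---is exactly the line of reasoning one finds in the cited reference, so nothing is missing or nonstandard.
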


\begin{thm} \emph{(The Lichtenbaum-Hartshorne vanishing
theorem)\cite[Theorem 3.1]{H2}}\index{Lichtenbaum-Hartshorne
vanishing theorem|textbf} \label{LH} Assume that $(R,\fm)$ is a
local ring of dimension $n$ and $\fa$ is a proper ideal of $R$. Then
the following statements are equivalent:
\begin{enumerate}
\item[\emph{(i)}] $\emph{\H}^n_\fa(R)=0$.
\item[\emph{(ii)}] $\fa\widehat{R}+\mathfrak{P}$ is not $\widehat{\fm}$--primary for each  prime ideal
  $\mathfrak{P}\in\emph{\Assh} \widehat{R}$.
\end{enumerate}
\end{thm}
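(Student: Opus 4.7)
The plan is to reduce the theorem via faithful flatness to the complete case, then via primary-decomposition arguments to the case of a complete local domain, and finally to handle that domain case through Cohen's structure theorem. Since $R \to \widehat R$ is faithfully flat, the flat base change theorem gives $\H^n_{\fa\widehat R}(\widehat R) \cong \H^n_\fa(R) \otimes_R \widehat R$, so $\H^n_\fa(R) = 0$ if and only if $\H^n_{\fa\widehat R}(\widehat R) = 0$. Because (ii) is already internal to $\widehat R$, I may assume throughout that $R = \widehat R$.

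For (i) $\Rightarrow$ (ii), fix $\mathfrak{P} \in \Assh R$, so that $\dim R/\mathfrak{P} = n$. The short exact sequence $0 \to \mathfrak{P} \to R \to R/\mathfrak{P} \to 0$ yields in local cohomology the piece $\H^n_\fa(R) \to \H^n_\fa(R/\mathfrak{P}) \to \H^{n+1}_\fa(\mathfrak{P})$, and the last term vanishes by Theorem~\ref{GrV} since $\dim \mathfrak{P} \leq n$. Hence $\H^n_\fa(R) = 0$ forces $\H^n_\fa(R/\mathfrak{P}) = 0$. If $\fa + \mathfrak{P}$ were $\fm$-primary, the independence theorem would give $\H^n_\fa(R/\mathfrak{P}) \cong \H^n_{\fm/\mathfrak{P}}(R/\mathfrak{P})$, which is nonzero by Grothendieck's non-vanishing theorem since $R/\mathfrak{P}$ is local of dimension $n$; contradiction.

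The direction (ii) $\Rightarrow$ (i) holds the substantive work. A careful primary-decomposition argument, combining short exact sequences with Theorem~\ref{GrV} to eliminate the components of the minimal primary decomposition of $(0)$ corresponding to primes of dimension $< n$, reduces the vanishing of $\H^n_\fa(R)$ to the vanishing of $\H^n_\fa(R/\mathfrak{P})$ for each $\mathfrak{P} \in \Assh R$. This is exactly the domain case: for a complete local domain $D$ of dimension $n$ with maximal ideal $\fn$ and ideal $I \subseteq D$ satisfying $\sqrt{I} \subsetneq \fn$, show $\H^n_I(D) = 0$.

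This domain assertion is the main obstacle and contains the real content of the theorem. By Cohen's structure theorem, $D$ is a module-finite extension of a complete regular local subring $A$ of dimension $n$. Setting $\fb = I \cap A$, integrality of $D/A$ gives $\sqrt{\fb} \subsetneq \fm_A$, and the independence theorem yields $\H^n_I(D) \cong \H^n_\fb(D)$. The crux is to establish $\H^n_\fb(A) = 0$ in the regular case; I plan to do this by analyzing the minimal injective resolution of $A$ after applying $\Gamma_\fb$, exploiting that the differential into the top term $E(A/\fm_A)$ coming from $\bigoplus_{\mathrm{ht}(\fp)=n-1,\,\fp\supseteq\fb} E(A/\fp)$ remains surjective precisely because $\fb$ is not $\fm_A$-primary (equivalently, there exist height-$(n-1)$ primes above $\fb$). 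Finally, a module-finite embedding $D \hookrightarrow A^r$ of torsion-free $A$-modules with cokernel of $A$-dimension $< n$, together with a final long-exact-sequence argument using Theorem~\ref{GrV}, transfers the vanishing from $A^r$ to $D$. The regular-ring injective-resolution analysis and the dimensional control of the cokernel of $D \hookrightarrow A^r$ are the technically delicate ingredients.
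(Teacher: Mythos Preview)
The paper does not prove this theorem; it is quoted in Chapter~1 as a known result with a reference to \cite{H2}, so there is no proof in the paper to compare your attempt against. Your sketch follows the classical route (faithfully flat reduction to the complete case, reduction to a complete local domain, then Cohen's structure theorem), and the direction (i)$\Rightarrow$(ii) is handled correctly.

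There is, however, a concrete gap in your final transfer step for (ii)$\Rightarrow$(i). With the embedding $D\hookrightarrow A^r$ and cokernel $C$ of $A$-dimension $<n$, the long exact sequence reads
\[
\H^{n-1}_{\fb}(C)\longrightarrow \H^n_{\fb}(D)\longrightarrow \H^n_{\fb}(A^r),
\]
so from $\H^n_{\fb}(A^r)=0$ you only obtain that $\H^n_{\fb}(D)$ is a quotient of $\H^{n-1}_{\fb}(C)$; since $\dim_A C$ can equal $n-1$, Grothendieck vanishing does not kill $\H^{n-1}_{\fb}(C)$ and the argument stalls. The fix is to reverse the embedding: because $D$ is a finitely generated torsion-free $A$-module of rank $r$, you can choose a free submodule $A^r\hookrightarrow D$ with torsion cokernel $C'$, and then the relevant piece of the long exact sequence is
\[
\H^n_{\fb}(A^r)\longrightarrow \H^n_{\fb}(D)\longrightarrow \H^n_{\fb}(C')=0,
\]
the last vanishing by Theorem~\ref{GrV} since $\dim_A C'<n$; this genuinely transfers the vanishing from $A$ to $D$.

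Your regular-ring step is also thinner than it looks: it is not automatic that the surjection $E^{n-1}\twoheadrightarrow E^n$ remains surjective after applying $\Gamma_{\fb}$ merely because some height-$(n-1)$ prime contains $\fb$. You would need to argue that for a single such prime $\fp$ the component map $E(A/\fp)\to E(A/\fm_A)$ is already onto, or else replace this step by the local-duality argument (over the Gorenstein ring $A$) that is standard in, e.g., \cite[\S8.2]{BS}.
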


The following results illustrate the utility of  the secondary
representation theory as an effective tool for studying local
cohomology modules.


\begin{thm}  \emph{(Grothendieck's non--vanishing theorem|textbf)\cite[Theorem
2.2]{MS}}\index{Grothendieck's non--vanishing
theorem|textbf}\label{G} Assume that $(R, \fm)$ is a local ring and
let $M$ be a non-zero finitely generated  $R$--module. Then
\index{attached prime, $\Att$}
$\emph{\Att}\emph{\H}_\fm^{\emph{{\dim}}M}(M)=\emph{\Assh} M$. In
particular $\emph{\H}_\fm^{\emph{\dim} M}(M)\neq 0$.
\end{thm}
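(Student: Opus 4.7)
\vspace{0.3cm}\noindent\textbf{Proof proposal.} The plan is to prove $\Att \H^d_\fm(M) = \Assh M$ (setting $d = \dim M$) by establishing the two inclusions separately, pivoting on the special case $M = R/\fp$ with $\dim R/\fp = d$; the non-vanishing $\H^d_\fm(M) \neq 0$ is then immediate since $\Assh M \neq \emptyset$ for non-zero $M$.

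First I would settle the case $M = R/\fp$ with $\dim R/\fp = d$. Since $\fp$ annihilates $R/\fp$ and hence $\H^d_\fm(R/\fp)$, every $\fq \in \Att \H^d_\fm(R/\fp)$ must contain $\fp$. For any $x \in \fm \setminus \fp$, the element $x$ is $R/\fp$-regular and $\dim R/(\fp + xR) = d-1$, so Theorem~\ref{GrV} gives $\H^d_\fm(R/(\fp + xR)) = 0$. The long exact sequence attached to $0 \lo R/\fp \st{x}{\lo} R/\fp \lo R/(\fp+xR) \lo 0$ then forces multiplication by $x$ on $\H^d_\fm(R/\fp)$ to be surjective, so by property $(\alpha)$ of attached primes $x$ lies in no $\fq \in \Att \H^d_\fm(R/\fp)$. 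Combined with $\fp \subseteq \fq \subseteq \fm$, this forces $\fq = \fp$. Non-vanishing $\H^d_\fm(R/\fp) \neq 0$ follows from Theorem~\ref{LH} applied to the local ring $R/\fp$ of dimension $d$ (the ideal $\fm/\fp$ generates $\widehat{\fm/\fp}$, so the LH condition holds trivially) together with the Independence theorem. Hence $\Att \H^d_\fm(R/\fp) = \{\fp\}$.

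For the inclusion $\Att \H^d_\fm(M) \subseteq \Assh M$, I would take a prime filtration $0 = M_0 \subset \cdots \subset M_n = M$ with $M_i/M_{i-1} \cong R/\fp_i$, $\fp_i \in \Supp M$. Using $\H^{d+1}_\fm = 0$ (Theorem~\ref{GrV}) and the property $\Att S'' \subseteq \Att S \subseteq \Att S' \cup \Att S''$ on short exact sequences, induction on $i$ together with the previous paragraph yields $\Att \H^d_\fm(M) \subseteq \{\fp_i : \dim R/\fp_i = d\} = \Assh M$. For the reverse inclusion, fix $\fp \in \Assh M$; the minimality of $\fp$ in $\Supp M$ gives $\Ass M_\fp = \{\fp\}$. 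Setting $K = \ker(M \lo M_\fp)$, the embedding $M/K \hookrightarrow M_\fp$ yields $\Ass(M/K) = \{\fp\}$, so $\sqrt{\Ann(M/K)} = \fp$ and $\dim M/K = d$. The upper bound applied to $M/K$ gives $\Att \H^d_\fm(M/K) \subseteq \{\fp\}$, and the surjection $\H^d_\fm(M) \twoheadrightarrow \H^d_\fm(M/K)$ (from $\H^{d+1}_\fm(K) = 0$) will transport $\{\fp\}$ into $\Att \H^d_\fm(M)$, provided $\H^d_\fm(M/K) \neq 0$.

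The main obstacle is this non-vanishing for the $\fp$-coprimary quotient $M/K$. Theorem~\ref{LH} as stated addresses only rings, so one needs the module analogue $\H^{\dim N}_\fm(N) \neq 0$ for non-zero finitely generated $N$. I would derive this by reduction to the complete case: writing $\widehat R = S/I$ with $S$ a complete regular local ring of dimension $n$ via Cohen's structure theorem, and invoking local duality to identify
\[
\H^d_\fm(\widehat{M/K}) \cong \Hom_S(\Ext_S^{n-d}(\widehat{M/K}, S), E_S(S/\fn)).
\]
The Ext module is non-zero because $n-d$ coincides with $\grade(\Ann_S \widehat{M/K}, S)$ in the Cohen-Macaulay ring $S$, which by Theorem~\ref{grade} is the first index at which $\Ext_S^\ast(\widehat{M/K}, S)$ fails to vanish.
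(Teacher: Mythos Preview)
The paper does not prove this statement; Theorem~\ref{G} is quoted in Chapter~1 as a preliminary with a citation to \cite[Theorem 2.2]{MS}, and no argument is supplied. So there is nothing to compare against, and your proposal stands or falls on its own.

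Your argument is essentially correct, but it is heavier than necessary in two places. First, in handling $M=R/\fp$ you invoke the Lichtenbaum--Hartshorne Theorem~\ref{LH} to obtain $\H^d_\fm(R/\fp)\neq 0$. This is logically permissible within the thesis (both results are black-boxed citations), but LH is a substantially deeper theorem than the one you are proving; the Macdonald--Sharp paper establishes non-vanishing by an elementary induction on $d$ using a system of parameters. Second, the detour through Cohen's structure theorem and local duality in your last paragraph is avoidable: since $M/K$ is $\fp$-coprimary with $\dim M/K=d$, it admits a filtration whose successive quotients are all isomorphic to $R/\fp$; the top quotient gives a surjection $\H^d_\fm(M/K)\twoheadrightarrow\H^d_\fm(R/\fp)$ (using $\H^{d+1}_\fm=0$), and the right-hand side is already known to be non-zero from your first step. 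Thus step~1 alone suffices for all the non-vanishing you need.

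One small misattribution: Theorem~\ref{grade} in the paper characterises $\grade(\fa,M)$ via the first non-vanishing of local cohomology $\H^i_\fa(M)$, not via $\Ext$. The statement you actually want in your final paragraph is Rees' characterisation $\grade(I,S)=\min\{i:\Ext^i_S(N,S)\neq 0\}$ for any finitely generated $N$ with $\Supp N=\V(I)$, which is standard but is not Theorem~\ref{grade}.
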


\begin{thm}\emph{\cite[Corollary 4]{DY1}}\index{attached prime, $\Att$}
Let $(R,\fm)$ be local, $\fa$  an ideal of $R$ and let $M$ be a
finitely generated $R$--module. Then
$$\emph{\Att \H}_\fa^{\emph{\dim} M}(M)=\{\fq\cap R : \fq\in\emph{\Assh}
\widehat{R}, \ \emph{\dim} \widehat{R}/(\fa\widehat{R}+\fq)=0\}.$$
\end{thm}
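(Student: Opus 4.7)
\vspace{0.3cm}
\noindent\textbf{Proof plan.}
The plan is to reduce to the case of a complete local ring and then combine Grothendieck's non-vanishing theorem (Theorem~\ref{G}) with the Lichtenbaum--Hartshorne vanishing theorem (Theorem~\ref{LH}) by means of a prime filtration of $\widehat M$. For the reduction, I would use that $\H^d_\fa(M)$ with $d=\dim M$ is artinian, hence $\fm$-torsion, so that flat base change gives a canonical isomorphism $\H^d_\fa(M)\cong \H^d_{\fa\widehat R}(\widehat M)$ of $R$-modules; Theorem~\ref{atc} applied to the completion map then rewrites $\Att_R \H^d_\fa(M)$ as the contraction to $R$ of $\Att_{\widehat R}\H^d_{\fa\widehat R}(\widehat M)$. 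Hence it suffices to prove the theorem when $R$ itself is complete, which I assume from here on.

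For the inclusion $\subseteq$, I would fix a prime filtration $0=M_0\subset\cdots\subset M_k=\widehat M$ with $M_i/M_{i-1}\cong \widehat R/\fq_i$. Whenever $\dim \widehat R/\fq_i<d$, Theorem~\ref{GrV} forces $\H^d_{\fa\widehat R}(\widehat R/\fq_i)=0$, so iterating the long exact sequence of local cohomology together with the containment $\Att S\subseteq \Att S'\cup\Att S''$ from Section~1.1 places $\Att \H^d_{\fa\widehat R}(\widehat M)$ inside $\bigcup_{\fq_i\in\Assh\widehat M}\Att \H^d_{\fa\widehat R}(\widehat R/\fq_i)$. For each $\fq\in\Assh\widehat M$, $\widehat R/\fq$ is a complete local domain of dimension $d$, and Theorem~\ref{LH} yields $\H^d_{\fa\widehat R}(\widehat R/\fq)=0$ unless $\dim\widehat R/(\fa\widehat R+\fq)=0$; when non-zero, the independence theorem identifies $\H^d_{\fa\widehat R}(\widehat R/\fq)\cong \H^d_{\widehat\fm}(\widehat R/\fq)$, whose unique attached prime is $\fq$ by Theorem~\ref{G}. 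This gives the desired inclusion.

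For the reverse inclusion, fix $\fq\in\Assh\widehat M$ with $\dim\widehat R/(\fa\widehat R+\fq)=0$. My plan is to construct a surjection $\H^d_{\fa\widehat R}(\widehat M)\twoheadrightarrow \H^d_{\fa\widehat R}(\widehat R/\fq)$ and then invoke the property $\Att S''\subseteq\Att S$. Since $\Assh\widehat M\subseteq \Min\widehat M$, the localisation $\widehat M_\fq$ has finite length over $\widehat R_\fq$, so $\widehat M_\fq/\fq\widehat R_\fq\widehat M_\fq$ is a non-zero $k(\fq)$-vector space, producing a non-zero $\widehat R$-linear map $\widehat M\to k(\fq)$. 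Its image is a finitely generated sub-$\widehat R$-module of $\mathrm{Frac}(\widehat R/\fq)$, hence, after clearing denominators, isomorphic to a non-zero ideal $J$ of $\widehat R/\fq$; this yields a surjection $\widehat M\twoheadrightarrow J$. The kernel has dimension at most $d$ and the cokernel $(\widehat R/\fq)/J$ has dimension strictly less than $d$, so Grothendieck's vanishing applied to the two resulting long exact sequences produces successive surjections $\H^d_{\fa\widehat R}(\widehat M)\twoheadrightarrow \H^d_{\fa\widehat R}(J)\twoheadrightarrow \H^d_{\fa\widehat R}(\widehat R/\fq)$, and the right-hand module is non-zero with attached prime set $\{\fq\}$ by the analysis of the previous paragraph.

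The delicate point is the reverse inclusion. From $\fq\in\Ass\widehat M$ one directly obtains only an \emph{embedding} $\widehat R/\fq\hookrightarrow\widehat M$, which is useless for producing attached primes; the key trick is localisation at the minimal prime $\fq$, which turns $\widehat M_\fq$ into a finite-length module and thereby reverses the direction of the arrow. Once this surjection onto an ideal of $\widehat R/\fq$ is in hand, the remainder is routine bookkeeping with long exact sequences and Grothendieck's vanishing.
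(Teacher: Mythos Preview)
The paper does not prove this theorem; it is quoted from \cite{DY1} as a preliminary result in Section~1.2, so there is no proof in the paper to compare against. Your argument is correct and is essentially the standard one.

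Two minor remarks. First, the thesis writes $\Assh\widehat R$ on the right-hand side, which is almost certainly a typo for $\Assh\widehat M$; you have tacitly (and correctly) read it that way. Second, the route taken in \cite{DY1} is slightly different from yours: there one first establishes the characterisation $\Att\H^d_\fa(M)=\{\fp\in\Supp M:\cd(\fa,R/\fp)=d\}$ (quoted in the thesis as Theorem~\ref{A}) and then invokes Lichtenbaum--Hartshorne on each $R/\fp$ to translate $\cd(\fa,R/\fp)=d$ into the completion condition. Your approach bypasses the cohomological-dimension statement by reducing to the complete case at the outset and arguing via a prime filtration; the ideal-of-a-domain trick you use for the reverse inclusion is exactly the device underlying the right-exactness of $\H^d_\fa(-)$ on modules of dimension $\le d$, which is also what drives Theorem~\ref{A}. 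Both routes are equivalent in substance.
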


Localization in an important tool in commutative algebra. The
following result provides a useful property of attached primes of
local cohomology modules under localization with a condition on the
base ring.

\begin{thm}\emph{(Shifted localization principle)\cite[Theorem 3.7]{S6}}\label{slp}\index{shifted localization principle|textbf}\index{attached prime, $\Att$} Let
$(R,\fm)$ be a local ring which is a homomorphic image of a
Gorenstein local ring, $M$ a finitely generated $R$--module,
$\fp\in\emph{\Spec} R$ and $t=\emph{\dim} R/\fp$. Assume that
$\fq\in\emph{\Spec} R$  such that $\fq\subseteq \fp$. Then for all
$i\in\mathbb{Z}$, $\fq R_\fp\in\emph{\Att} \emph{\emph{\H}}^i_{\fp
R_\fp}(M_\fp)$ if and only if $\fq\in\emph{\Att}
\emph{\H}^{i+t}_\fm(M)$.
\end{thm}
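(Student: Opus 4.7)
The plan is to invoke local duality---available because $R$ is a quotient of a Gorenstein local ring---to convert both sides of the biconditional into statements about associated primes of a single $\Ext$ module over the Gorenstein cover, at which point the equivalence becomes the standard compatibility of $\Ass$ with localization. To begin, I would write $R = S/I$ for a Gorenstein local ring $(S,\fn)$ of dimension $n$, set $\pi : S \to R$ to be the quotient map, and let $\fp' = \pi^{-1}(\fp)$. Since Gorenstein rings are catenary and equidimensional, $\dim S_{\fp'} = n - \dim S/\fp' = n - t$, and $S_{\fp'}$ is again Gorenstein local.

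The next step is to apply local duality over $S$, combined with the independence theorem identifying $\H^j_\fm(M) = \H^j_\fn(M)$ and the fact that any $S$-linear map from the $R$-module $M$ to $\mathrm{E}_S(S/\fn)$ lands in $\mathrm{E}_R(R/\fm)$, in order to obtain for every $j$ an isomorphism
\[
\H^j_\fm(M) \;\cong\; \Hom_R\bigl(\Ext_S^{n-j}(M, S),\, \mathrm{E}_R(R/\fm)\bigr).
\]
Passing to attached primes by the standard Matlis--duality correspondence---and descending from $\widehat{R}$ to $R$ via Theorem~\ref{atc} when necessary---this should identify
\[
\Att_R \H^{i+t}_\fm(M) \;=\; \Ass_R \Ext_S^{n-i-t}(M, S).
\]
Applying exactly the same recipe to $M_\fp$ over $R_\fp$, viewed as a quotient of the Gorenstein local ring $S_{\fp'}$ of dimension $n-t$, together with the compatibility of $\Ext$ with localization $\Ext_{S_{\fp'}}^{(n-t)-i}(M_\fp, S_{\fp'}) \cong \Ext_S^{n-i-t}(M, S)_\fp$, would give
\[
\Att_{R_\fp} \H^i_{\fp R_\fp}(M_\fp) \;=\; \Ass_{R_\fp}\bigl(\Ext_S^{n-i-t}(M, S)\bigr)_\fp.
\]

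To finish, I would invoke the well-known fact that for a finitely generated $R$-module $N$ and a prime $\fq \subseteq \fp$, one has $\fq R_\fp \in \Ass_{R_\fp} N_\fp$ if and only if $\fq \in \Ass_R N$. Applying this with $N = \Ext_S^{n-i-t}(M,S)$ and combining with the two identifications above reads off the biconditional. The main obstacle I anticipate is justifying the attached-prime identity in the second display when $R$ is not complete: this requires knowing that $\Att_R$ of the Matlis dual of a finitely generated $R$-module matches $\Ass_R$ of that module, a correspondence which is transparent over $\widehat{R}$ but must be descended to $R$ through Theorem~\ref{atc}, together with faithful flatness of $R \to \widehat{R}$ to control $\Ass$ on the finitely generated side. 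Once that technical point is secured, the whole argument reduces to bookkeeping with the indices $n$, $t$, $i$ and the Gorenstein dimension shift.
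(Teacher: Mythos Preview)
The paper does not prove this theorem; it is quoted in the preliminaries chapter as a known result from Sharp's paper \cite[Theorem 3.7]{S6}, with no argument supplied. Your proposed proof via local duality over the Gorenstein cover $S$, converting $\Att_R \H^j_\fm(M)$ into $\Ass_R \Ext_S^{n-j}(M,S)$ and then invoking the compatibility of $\Ass$ with localization, is correct and is in fact essentially Sharp's original argument in \cite{S6}. The technical worry you flag about the identity $\Att_R(N^\vee)=\Ass_R N$ for finitely generated $N$ over a not-necessarily-complete local ring is genuine but harmless: it can be checked directly from a prime filtration of $N$, since $\Hom_R(R/\fq,\mathrm{E}_R(R/\fm))$ is $\fq$-secondary, or handled exactly as you suggest by passing to $\widehat{R}$ and descending through Theorem~\ref{atc} together with faithful flatness.
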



If we remove the condition that $R$ is a homomorphic image of a
Gorenstein local ring, one have a weaker result.

\begin{thm}\emph{(Weak general shifted localization
principle)\cite[Theorem 4.8]{S6}}\label{wgs}\index{weak general
shifted localization principle|textbf} Let $(R,\fm)$ be a local
ring, $M$  a finitely generated $R$--module,  $\fp,
\fq\in\emph{\Spec} R$ such that $\fq\subseteq \fp$. If for $i\in\Z$,
$\fq R_\fp\in\emph{\Att} \emph{\H}^i_{\fp R_\fp}(M_\fp)$,  then
$\fq\in\emph{\Att \H}^{i+t}_\fm(M)$, where $t=\emph{\dim} R/\fp$.
\end{thm}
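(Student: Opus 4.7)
The plan is to reduce the statement to its strong form (Theorem \ref{slp}) applied in the $\fm$-adic completion $\widehat{R}$, and then transport the conclusion back to $R$ via the base-change theorem \ref{atc}. By Cohen's structure theorem, $\widehat{R}$ is a homomorphic image of a regular (hence Gorenstein) local ring, so Theorem \ref{slp} is indeed available there; the key technical point will be moving the given attached prime over $R_\fp$ into the localised completion.

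First I would choose a prime $\widehat{\fp} \in \Spec \widehat{R}$ that is minimal over $\fp\widehat{R}$ with $\dim \widehat{R}/\widehat{\fp} = t$; such a prime exists because $\widehat{R}/\fp\widehat{R} \cong \widehat{R/\fp}$ has dimension $t$. Since $R/\fp$ is a domain, faithful flatness of $R/\fp \to \widehat{R/\fp}$ forces every minimal prime of $\fp\widehat{R}$ to contract to $\fp$, so $\widehat{\fp} \cap R = \fp$. The induced local homomorphism $R_\fp \to \widehat{R}_{\widehat{\fp}}$ is then faithfully flat, and because $\widehat{\fp}\widehat{R}_{\widehat{\fp}}$ is the radical of $\fp\widehat{R}_{\widehat{\fp}}$, flat base change yields
\[
\H^i_{\widehat{\fp}\widehat{R}_{\widehat{\fp}}}(\widehat{M}_{\widehat{\fp}}) \cong \H^i_{\fp R_\fp}(M_\fp) \otimes_{R_\fp} \widehat{R}_{\widehat{\fp}}.
\]
Now, by \cite[2.2]{Mac} the hypothesis $\fq R_\fp \in \Att_{R_\fp}\H^i_{\fp R_\fp}(M_\fp)$ supplies a quotient of $\H^i_{\fp R_\fp}(M_\fp)$ whose $R_\fp$-annihilator is exactly $\fq R_\fp$; tensoring with the faithfully flat $\widehat{R}_{\widehat{\fp}}$ preserves this annihilator (faithful flatness makes multiplication by $r \in R_\fp$ zero on the tensor product if and only if it is zero on the original quotient) and produces a quotient of the extended module, showing $\fq R_\fp \in \Att_{R_\fp}\H^i_{\widehat{\fp}\widehat{R}_{\widehat{\fp}}}(\widehat{M}_{\widehat{\fp}})$. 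Theorem \ref{atc} applied to $R_\fp \to \widehat{R}_{\widehat{\fp}}$ then produces a prime $\widehat{\fQ} \in \Spec \widehat{R}$ with $\widehat{\fQ} \subseteq \widehat{\fp}$, $\widehat{\fQ} \cap R = \fq$, and $\widehat{\fQ}\widehat{R}_{\widehat{\fp}} \in \Att_{\widehat{R}_{\widehat{\fp}}}\H^i_{\widehat{\fp}\widehat{R}_{\widehat{\fp}}}(\widehat{M}_{\widehat{\fp}})$.

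Finally, Theorem \ref{slp} applied to $\widehat{R}$ along the chain $\widehat{\fQ} \subseteq \widehat{\fp}$, with $\dim \widehat{R}/\widehat{\fp} = t$, gives $\widehat{\fQ} \in \Att_{\widehat{R}}\H^{i+t}_{\widehat{\fm}}(\widehat{M})$. Combined with the isomorphism $\H^{i+t}_\fm(M) \cong \H^{i+t}_{\widehat{\fm}}(\widehat{M})$ of $R$-modules and a second invocation of Theorem \ref{atc}, now for $R \to \widehat{R}$, this yields $\fq = \widehat{\fQ} \cap R \in \Att_R\H^{i+t}_\fm(M)$. I expect the main obstacle to be the transfer step of the previous paragraph: checking that an $R_\fp$-annihilator of a quotient really is preserved after tensoring with the faithfully flat extension $\widehat{R}_{\widehat{\fp}}$, and that the resulting attached prime of $\widehat{R}_{\widehat{\fp}}$ corresponds to a prime of $\widehat{R}$ whose contraction to $R$ is exactly $\fq$. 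Once this faithful-flatness verification is in place, the rest is routine bookkeeping with Theorems \ref{slp} and \ref{atc}.
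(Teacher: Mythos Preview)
This statement is quoted in the thesis as a preliminary result from the literature (\cite[Theorem 4.8]{S6}) and is not proved there, so there is no proof in the paper to compare your attempt against.

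That said, your argument is correct and is the standard route: pass to $\widehat{R}$ (a homomorphic image of a Gorenstein local ring by Cohen's structure theorem), apply the strong form Theorem~\ref{slp} there, and descend via Theorem~\ref{atc}. The one step you flag as delicate, transferring the attached prime along the faithfully flat map $R_\fp \to \widehat{R}_{\widehat{\fp}}$, goes through exactly as you outline: if $Q$ is a quotient of $\H^i_{\fp R_\fp}(M_\fp)$ with $\Ann_{R_\fp}Q=\fq R_\fp$, then right exactness of tensor makes $Q\otimes_{R_\fp}\widehat{R}_{\widehat{\fp}}$ a quotient of the base-changed module, and faithful flatness gives $\Ann_{R_\fp}(Q\otimes_{R_\fp}\widehat{R}_{\widehat{\fp}})=\fq R_\fp$; since $\H^i_{\widehat{\fp}\widehat{R}_{\widehat{\fp}}}(\widehat{M}_{\widehat{\fp}})$ is Artinian over $\widehat{R}_{\widehat{\fp}}$ and hence representable over $R_\fp$ by Theorem~\ref{atc}, the quotient characterization of attached primes applies and yields a prime $\widehat{\fQ}\subseteq\widehat{\fp}$ of $\widehat{R}$ with $\widehat{\fQ}\cap R=\fq$. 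The remaining steps are routine.
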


The local cohomology modules of a finitely generated $R$--module $M$
are  rarely  finitely generated. For instance, when $(R,\fm)$ is
local and $M$ is finitely generated $R$--module, the fact that
$\H^i_\fm(M)$ is artinian for all $i\geq 0$ together with Corollary
\ref{111}(i), implies  that $\H^i_\fm(M)$ is finitely generated if
and only if $\Att \H^i_\fm(M)\subseteq\{\fm\}$. Now, by
Grothendieck's non-vanishing theorem (\ref{G}), we obtain that
$\H^{\dim M}_\fm(M)$ is not finitely generated if $\dim M>0$. On the
other hand for all ideals $\fa$ of $R$,  $\H^0_\fa(M)$ is a
submodule of $M$  and so it is finitely generated. It is now
interesting to find the least integer $i$ for which $\H^i_\fa(M)$ is
not finitely generated.

\begin{defn}\emph{\cite[Definition 9.1.3]{BS}}
\emph{Let $M$ be a finitely generated $R$--module. We define the
\emph{finiteness dimension of $M$ relative to} an ideal $\fa$ of $R$
by
$$f_\fa(M)=\inf\{i\in\N : \emph{\H}^i_\fa(M) \mbox{ is not finitely
generated }\}.$$}
\end{defn}
 \index{finiteness dimension, $f_\fa(M)$|textbf}

The following result provides further motivation for the concept of
finiteness dimension.

\begin{prop}\emph{\cite[Proposition 9.1.2]{BS}} The following
statements are equivalent for a finitely generated $R$--module $M$,
an ideal $\fa$ of $R$  and an integer $t\in\Bbb{N}$.
\begin{enumerate}
\item[\emph{(i)}] $\emph{\H}^i_\fa(M)$ is finitely generated for all $i < t$.
\item[\emph{(ii)}] $\fa\subseteq\emph{\rad}(0:_R\emph{\H}^i_\fa(M))$  for all $i < t$.
\end{enumerate}
\end{prop}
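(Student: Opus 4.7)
The plan is to prove the two implications separately. The direction (i) $\Rightarrow$ (ii) is immediate: every local cohomology module $\H^i_\fa(M)$ is $\fa$-torsion by construction, so a finitely generated $\H^i_\fa(M)$ must be annihilated by some power of $\fa$, giving $\fa \subseteq \rad(0:_R \H^i_\fa(M))$.

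For the substantive converse (ii) $\Rightarrow$ (i), I would argue by induction on $t$. The case $t \leq 1$ reduces to $\H^0_\fa(M) = \Gamma_\fa(M) \subseteq M$, which is finitely generated. Assume the implication for $t-1$; then (ii) at the bound $t$ restricts to (ii) at the bound $t-1$, so by induction $\H^j_\fa(M)$ is already finitely generated for every $j < t-1$, and only $\H^{t-1}_\fa(M)$ remains. Passing to $\overline{M} = M/\Gamma_\fa(M)$ leaves $\H^j_\fa(M)$ unchanged for $j \geq 1$ and gives $\Gamma_\fa(\overline{M}) = 0$, so $\fa$ contains an $\overline{M}$-regular element; by hypothesis (ii) at level $t-1$, a sufficiently high power of it, call it $x$, still lies in $\fa$, is $\overline{M}$-regular, and annihilates $\H^{t-1}_\fa(\overline{M})$. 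The long exact sequence attached to $0 \to \overline{M} \xrightarrow{x} \overline{M} \to \overline{M}/x\overline{M} \to 0$ then produces a surjection $\H^{t-2}_\fa(\overline{M}/x\overline{M}) \twoheadrightarrow \H^{t-1}_\fa(\overline{M})$, reducing the problem to proving finite generation of the module on the left.

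The main obstacle, and the only nontrivial verification, is transferring condition (ii) from $\overline{M}$ to $\overline{M}/x\overline{M}$ at all levels $j < t-1$ in order to invoke the inductive hypothesis. From the same long exact sequence, $\H^j_\fa(\overline{M}/x\overline{M})$ is an extension of a submodule of $\H^{j+1}_\fa(\overline{M})$ by a quotient of $\H^j_\fa(\overline{M})$. For $j < t-1$ the module $\H^j_\fa(\overline{M})$ is finitely generated (by what has been established) and $\fa$-torsion, hence killed by some power of $\fa$; likewise $\H^{j+1}_\fa(\overline{M})$ is killed by a power of $\fa$, either because it is finitely generated when $j+1 < t-1$, or directly by hypothesis (ii) when $j+1 = t-1$. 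Multiplying these two annihilator powers yields a single power of $\fa$ annihilating $\H^j_\fa(\overline{M}/x\overline{M})$, which is exactly condition (ii) at the bound $t-1$ for $\overline{M}/x\overline{M}$. The inductive hypothesis then delivers the finite generation of $\H^{t-2}_\fa(\overline{M}/x\overline{M})$, and hence of $\H^{t-1}_\fa(M)$, completing the induction.
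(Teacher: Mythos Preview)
Your proof is correct. The paper does not supply its own proof of this proposition; it merely quotes the result from \cite[Proposition 9.1.2]{BS} as background, so there is nothing to compare against beyond noting that your argument is the standard one found in that reference.
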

In  light of the above result we have the following useful
statement.

\begin{thm} Let $M$ be a finitely generated $R$--module and $\fa$ an
ideal of $R$. Then
$$\begin{array}{ll} f_\fa(M)&=\inf\{i\in\N
: \emph{\H}^i_\fa(M) \mbox{ is not finitely generated }\}
\\ & = \inf\{i\in\N : \fa\nsubseteq
\emph{\rad}(0:_R\H^i_\fa(M))\}.\end{array}$$
\end{thm}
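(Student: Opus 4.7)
The plan is to observe that this theorem is essentially a direct reformulation of the immediately preceding proposition (Proposition 9.1.2 of \cite{BS}) as an infimum statement, so the argument is a one-line translation once one sets up the right notation.

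Write $f=\inf\{i\in\N : \H^i_\fa(M)\text{ is not finitely generated}\}$ and $g=\inf\{i\in\N : \fa\nsubseteq \rad(0:_R\H^i_\fa(M))\}$. The left-hand equality in the displayed formula is just the definition of $f_\fa(M)$, so the entire content of the theorem is the claim $f=g$. My plan is to verify this by showing that for every $t\in\N$ the inequality $f\geq t$ holds if and only if $g\geq t$; since both infima are determined by the set of $t$ for which such an inequality holds, this will force $f=g$ (and handles the case where one of them is $+\infty$ uniformly).

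For a fixed $t$, the condition $f\geq t$ is literally the statement that $\H^i_\fa(M)$ is finitely generated for all $i<t$, which is condition (i) of the preceding proposition. The condition $g\geq t$ is literally the statement that $\fa\subseteq\rad(0:_R\H^i_\fa(M))$ for all $i<t$, which is condition (ii). The proposition asserts that (i) and (ii) are equivalent, so $f\geq t\Longleftrightarrow g\geq t$ for every $t\in\N$, and consequently $f=g$.

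There is essentially no obstacle: the real work has already been done in the preceding proposition, whose non-trivial direction is (ii)$\Rightarrow$(i) and is typically proved by induction on $t$ using the long exact sequence of local cohomology together with the fact that an $\fa$--torsion module annihilated by a power of $\fa$ whose Koszul-type finiteness cascades from lower cohomology is finitely generated. The only thing I would be a little careful about is the edge case $t=0$ (where both conditions are vacuous and both infima can be $+\infty$ if all $\H^i_\fa(M)$ are finitely generated, which is consistent with $f=g=+\infty$), but this is handled automatically by the convention $\inf\varnothing=+\infty$.
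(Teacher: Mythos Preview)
Your proposal is correct and matches the paper's own treatment: the paper introduces this theorem with the phrase ``In light of the above result we have the following useful statement,'' presenting it precisely as the infimum reformulation of the preceding proposition, with no further argument given. Your translation of the equivalence $(\mathrm{i})\Leftrightarrow(\mathrm{ii})$ into $f\geq t \Leftrightarrow g\geq t$ for all $t$ is exactly the intended step.
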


\vspace{1cm} \noindent Some times it is more useful and fascinated
to weaken the condition $\fa\nsubseteq \rad(0:_R\H^i_\fa(M))$, using
another ideal $\fb\subseteq \fa$.

\begin{defn}\emph{\cite[Definition 9.1.5]{BS}}
\emph{Let $M$ be a finitely generated $R$--module and let $\fa$ and
$\fb$ be ideals of $R$ such that $\fb\subseteq \fa$. We define
\emph{$\fb$--finiteness dimension}, $f^\fb_\fa(M)$, of $M$  relative
to $\fa$ by
$$f^\fb_\fa(M):=\inf\{i\in\N : \fb\nsubseteq \rad(0:_R\H^i_\fa(M))\}.$$}
\end{defn}
\index{finiteness dimension, $f_\fa(M)$!$\fb$--finiteness-, $f^\fb_\fa(M)$|textbf}


\begin{defn}\emph{\cite[Definition 9.2.2]{BS}}
\emph{Let $M$ be a finitely generated $R$--module. For a prime ideal
$\fp\in\Spec R\setminus \V(\fa)$, the \emph{$\fa$--adjusted depth of
$M$ at $\fp$}, denoted by $\mbox{adj}_\fa\depth M_\fp$, is defined
by
\index{depth!$\fa$--adjusted-|textbf}
$$\mbox{adj}_\fa\depth M_\fp :=\depth M_\fp+\h(\fa+\fp)/\fp.$$
Note that this is $\infty$ unless $\fp\in\Supp M$ and
$\fa+\fp\subset R$, and then it is positive integer.}

\emph{Let $\fb$ be another ideal of $R$ such that $\fb\subseteq
\fa$. We define\emph{ $\fb$--minimum $\fa$--adjusted depth} of $M$,
denoted by $\lambda^\fb_\fa(M)$, by
$$\begin{array}{ll}\lambda^\fb_\fa(M)&=\inf\{\mbox{adj}_\fa\depth M_\fp :
\fp\in\Spec R\setminus \V(\fb)\}\\
&=\inf\{\depth M_\fp+\h(\fa+\fp)/\fp : \fp\in\Spec R\setminus
\V(\fb)\}.\end{array}$$}
\end{defn}

\index{a@$\fa$--adjusted depth|textbf} \index{a@$\fa$--adjusted
depth!$\fb$--minimum-, $\lambda^\fb_\fa(M)$|textbf}

\index{depth!$\fb$--minimum $\fa$--adjusted-|textbf}

\index{b@$\fb$--minimum $\fa$--adjusted
depth, $\lambda^\fb_\fa(M)$|textbf}

\begin{thm}\emph{\cite[Theorem 9.3.5]{BS}}\label{935BS}
Let $\fa$ and $\fb$ be ideals of $R$ such that $\fb\subseteq \fa$,
and let $M$ be a finitely generated $R$--module. Then
$$f^\fb_\fa(M)\leq \lambda^\fb_\fa(M).$$
\end{thm}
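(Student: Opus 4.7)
The plan is to prove $f^\fb_\fa(M) \leq t := \lambda^\fb_\fa(M)$ by induction on $t$, which may be assumed finite. Observe first that $t \geq 1$: since $\fb \subseteq \fa$, every $\fp \in \Spec R \setminus \V(\fb)$ also fails to contain $\fa$, so $\h((\fa + \fp)/\fp) \geq 1$, and the putative base case $t = 0$ never arises. I next reduce to the case $\Gamma_\fb(M) = 0$. The submodule $N := \Gamma_\fb(M)$ is finitely generated and $\fb$--torsion, hence annihilated by some power $\fb^k$; so $\fb^k \H^i_\fa(N) = 0$ for all $i$ and $f^\fb_\fa(N) = \infty$. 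The long exact sequence of local cohomology attached to $0 \to N \to M \to M/N \to 0$ then gives $f^\fb_\fa(M) = f^\fb_\fa(M/N)$, while $N_\fp = 0$ for $\fp \not\supseteq \fb$ yields $\lambda^\fb_\fa(M) = \lambda^\fb_\fa(M/N)$. After this reduction, no element of $\Ass M$ contains $\fb$, so $\fb$ contains an $M$--regular element.

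For the inductive step, a careful prime--avoidance argument selects an $M$--regular $x \in \fb$ lying in some prime $\fp_0 \not\supseteq \fb$ that realizes the infimum $t$; for this $\fp_0$ one has $\depth (M/xM)_{\fp_0} = \depth M_{\fp_0} - 1$, so $\lambda^\fb_\fa(M/xM) \leq t - 1$. The short exact sequence $0 \to M \stackrel{x}{\longrightarrow} M \to M/xM \to 0$ and its associated long exact sequence of local cohomology produce, for each $i$, the short exact sequence
\[
0 \longrightarrow \H^i_\fa(M)/x \H^i_\fa(M) \longrightarrow \H^i_\fa(M/xM) \longrightarrow (0 :_{\H^{i+1}_\fa(M)} x) \longrightarrow 0.
\]
If $\fb^n$ annihilates both $\H^i_\fa(M)$ and $\H^{i+1}_\fa(M)$, then the two outer terms are killed by $\fb^n$, and hence $\fb^{2n}$ kills $\H^i_\fa(M/xM)$. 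This yields $f^\fb_\fa(M) \leq f^\fb_\fa(M/xM) + 1$; combined with the inductive hypothesis $f^\fb_\fa(M/xM) \leq \lambda^\fb_\fa(M/xM) \leq t - 1$, it gives $f^\fb_\fa(M) \leq t$ as required.

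The main obstacle is this prime--avoidance step, namely producing $x \in \fb$ that is simultaneously $M$--regular and lies in some prime $\fp_0$ realizing $\lambda^\fb_\fa(M)$. Since no element of $\Ass M$ contains $\fb$, the failure of avoidance at a candidate $\fp_0$ reduces, by primality, to the containment $\fp_0 \subseteq \fq$ for some $\fq \in \Ass M$. One must therefore show that some realizing prime $\fp_0$ is not contained in any associated prime of $M$ (which in particular forces $\depth M_{\fp_0} \geq 1$, so that passing to $M/xM$ genuinely decreases the depth); once this structural fact is in hand, standard prime--avoidance on $\Ass M$ applied to the ideal $\fb \cap \fp_0$ produces the desired $x$, and the rest of the argument proceeds as sketched.
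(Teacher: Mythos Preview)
The paper does not prove this result; it is quoted from \cite[Theorem~9.3.5]{BS} without argument, so there is no in-paper proof to compare against.

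Your inductive scheme --- kill $\Gamma_\fb(M)$, then divide by an $M$--regular element of $\fb$ lying in a realizing prime so as to drop $\lambda^\fb_\fa$ by one --- is indeed the standard route, and your derivation of $f^\fb_\fa(M)\le f^\fb_\fa(M/xM)+1$ from the short exact sequence is correct. But the decisive step is left undone, and at $t=1$ the plan as stated actually fails. Since $\h((\fa+\fp_0)/\fp_0)\ge 1$ for every $\fp_0\notin\V(\fb)$, any prime realizing $t=1$ must have $\depth M_{\fp_0}=0$, hence $\fp_0\in\Ass M$; thus $\fp_0$ is contained in an associated prime (namely itself) and $\fb\cap\fp_0$ contains no $M$--regular element. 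So your ``structural fact'' is simply false when $t=1$, and dismissing $t=0$ as ``never arising'' does not provide a base case: the case $t=1$ needs a direct argument (essentially a non-annihilation statement for $\H^1_\fa(M)$), which you have not supplied. For $t\ge 2$ the structural fact is at least plausible, but ``one must therefore show'' is an acknowledgement that the proof is incomplete rather than a proof; verifying that some realizing prime escapes every associated prime --- while staying outside $\V(\fb)$ --- is exactly where the work lies, and you have not done it. In short, you have correctly located the crux but carried out neither the base case nor the key prime-selection step.
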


Recall that a finitely generated  $R$--module $M$, when $(R,\fm)$ is
local,  is called \emph{Cohen-Macaulay}
\index{Cohen-Macaulay|textbf} if $\dim M=\depth M$. In general, when
$R$ is not local, $M$ is called Cohen-Macaulay if $M_\fp$ is
Cohen-Macaulay  for all $\fp\in\Supp M$. This class of modules are
characterized in terms of local cohomology as well.

\begin{rem}
\emph{Assume that $(R,\fm)$ is a local ring and $M$ is a finitely
generated $R$--module. Theorem \ref{grade} implies that $M$ is
Cohen-Macaulay if and only if for all $i<\dim M$.}
\end{rem}

The Cohen-Macaulay modules have very nice properties and are
interesting to study. As a generalization of this important class of
modules, one may consider those modules, their  local cohomologies
are finitely generated at indices less than dimension.
\begin{defn} \emph{A finitely generated  module $M$ over a local ring $(R, \fm)$  is called
a} {generalized Cohen-Macaulay} \emph{(g.CM) \index{generalized
Cohen-Macaulay, g.CM|textbf} \index{Cohen-Macaulay!generalized-, g.CM|textbf} module whenever $\fm^n\H_\fm^i(M)= 0$ for some
$n\in\mathbb{N}$ and all $i<\dim M$}. \emph{The module $M$ is
called}  quasi--Buchsbaum \index{quasi--Buchsbaum|textbf}
\emph{whenever $\fm\H_\fm^i(M)= 0$ for all $i<\dim M$.}
\end{defn}

\begin{thm}\label{gcm}
Assume that $(R,\fm)$ is a local ring and $M$ is a finitely
generated  $R$--module. Then the following statements hold true.
\begin{itemize}
\item [\emph{(i)}] \emph{\cite[9.5.7(i)]{BS}} If $M$ is g.CM, then $\emph{\dim} R/\fp=\emph{\dim} M$ for all $\fp\in \emph{\Ass}
M\setminus\{\fm\}$ and $M_\fq$ is a Cohen-Macaulay $R_\fq$--module
for all $\fq\in\emph{\Supp} M\setminus\{\fm\}$.
\item[\emph{(ii)}] \emph{\cite[9.5.7(ii)]{BS}} As a converse of \emph{(i)}, if \begin{itemize}\item $R$ is a homomorphic image of a
regular ring, \item $\emph{\dim} R/\fp=\emph{\dim} M$ for all
$\fp\in \emph{\Min} M$, and
\item $M_\fq$ is a Cohen-Macaulay $R_\fq$--module for all
$\fq\in\emph{\Supp} M\setminus\{\fm\}$, \end{itemize} Then $M$ is
g.CM.
\item[\emph{(iii)}] \emph{\cite[9.5.8(i)]{BS}} Assume that $M$ is g.CM  and $r\in\fm$ is a parameter element of $M$ (i.e. $\emph{\dim}
M/rM=\emph{\dim} M-1$). Then $r$ is a non--zero--divisor on
$M/\Gamma_\fm(M)$ and  $M/rM$ is g.CM.
\end{itemize}
\end{thm}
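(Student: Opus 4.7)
The plan is to leverage a single observation that applies to all three parts: for a generalized Cohen-Macaulay module $M$ of dimension $d=\dim M$, each $\H^i_\fm(M)$ with $i<d$ is simultaneously artinian and killed by some power of $\fm$, hence of finite length; by Corollary \ref{111}(i) its only attached prime is $\fm$.

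For (i), I would take any $\fp\in\Ass M$ with $\fp\neq\fm$; then $\fp R_\fp\in\Ass M_\fp\subseteq\Att\H^0_{\fp R_\fp}(M_\fp)$, and the weak general shifted localization principle (Theorem \ref{wgs}) promotes this to $\fp\in\Att\H^t_\fm(M)$ for $t=\dim R/\fp$. If $t<d$, the finite-length observation forces $\fp=\fm$, contradicting the choice, so $t=d$. The Cohen-Macaulayness on the punctured support will follow similarly: for $\fq\in\Supp M\setminus\{\fm\}$ with $M_\fq$ not Cohen-Macaulay, I would set $j=\depth M_\fq<\dim M_\fq$, use Theorem \ref{grade} to obtain $\H^j_{\fq R_\fq}(M_\fq)\neq 0$ with some attached prime $\fp R_\fq$, and invoke Theorem \ref{wgs} again to get $\fp\in\Att\H^{j+s}_\fm(M)$ with $s=\dim R/\fq$; since $j+s<\dim M_\fq+s\leq d$ and $\fp\subseteq\fq\neq\fm$, the same finite-length fact produces a contradiction.

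For (ii), the aim is to show $\Att\H^i_\fm(M)\subseteq\{\fm\}$ for $i<d$. Here the hypothesis that $R$ is a homomorphic image of a regular (hence Gorenstein) ring unlocks the full shifted localization principle (Theorem \ref{slp}). Given $\fp\in\Att\H^i_\fm(M)$ with $\fp\neq\fm$, Theorem \ref{slp} would yield $\fp R_\fp\in\Att\H^{i-t}_{\fp R_\fp}(M_\fp)$ with $t=\dim R/\fp$; the hypothesis that $M_\fp$ is Cohen-Macaulay forces $i-t=\dim M_\fp$, while the equidimensionality of $M$ together with catenariness of $R$ delivers $\dim M_\fp+\dim R/\fp=d$, so $i=d$, contradicting $i<d$.

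For (iii), I would first observe $\Ass(M/\Gamma_\fm(M))=\Ass M\setminus\{\fm\}$, which by (i) sits inside $\Assh M$; since $r$ is a parameter of $M$, $r$ must avoid each $\fp\in\Assh M$ (otherwise $\dim R/\fp\leq\dim M/rM=d-1$), so $r$ is a non-zero-divisor on $N:=M/\Gamma_\fm(M)$. Feeding $0\to\Gamma_\fm(M)\to M\to N\to 0$ into local cohomology gives $\Gamma_\fm(N)=0$ and $\H^i_\fm(N)\cong\H^i_\fm(M)$ for $i\geq 1$, and the sequence $0\to N\xrightarrow{r}N\to N/rN\to 0$ then yields
\[
0\longrightarrow\H^i_\fm(N)/r\H^i_\fm(N)\longrightarrow\H^i_\fm(N/rN)\longrightarrow(0:_{\H^{i+1}_\fm(N)}r)\longrightarrow 0,
\]
whose outer terms are killed by a power of $\fm$ whenever $i<d-1$. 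A snake-lemma computation identifies $\H^i_\fm(M/rM)$ with $\H^i_\fm(N/rN)$ up to the finite-length correction $\Gamma_\fm(M)/r\Gamma_\fm(M)$, showing $M/rM$ is generalized Cohen-Macaulay. The hardest step should be the dimension identity $\dim M_\fp+\dim R/\fp=d$ in (ii): producing it cleanly requires both equidimensionality of $M$ and the catenariness supplied by the regular-ring assumption, and it is precisely this identity that lets the full SLP convert punctured-spectrum information into global finite-length statements; the weak SLP used in (i) is too coarse to drive the converse on its own.
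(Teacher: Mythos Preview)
Your argument is sound, but note that the paper itself does not prove this theorem: it is quoted in the Preliminaries chapter with explicit citations to \cite[9.5.7, 9.5.8]{BS} and no proof is given. So there is no ``paper's own proof'' to compare against; the thesis simply imports the result from Brodmann--Sharp.

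That said, your approach is correct and self-contained within the tools the thesis has set up. A couple of small remarks. In part (i) the inclusion $\Ass M_\fp\subseteq\Att\H^0_{\fp R_\fp}(M_\fp)$ is not literally true as a set inclusion (non-maximal associated primes of $M_\fp$ need not be attached to the finite-length module $\Gamma_{\fp R_\fp}(M_\fp)$); what you actually need, and what holds, is the single membership $\fp R_\fp\in\Att\H^0_{\fp R_\fp}(M_\fp)$, which follows because $\Gamma_{\fp R_\fp}(M_\fp)$ is nonzero of finite length. In part (ii) you should make explicit that any $\fp\in\Att\H^i_\fm(M)$ contains $0:_RM$, so $\fp\in\Supp M$; this is what lets you pick $\fq\in\Min M$ below $\fp$ and run the catenary dimension count $\dim M_\fp+\dim R/\fp=d$. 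With those two clarifications, all three parts go through exactly as you outline.
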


As another generalization of Cohen-Macaulay modules,  an $R$--module
$M$ \emph{satisfies the Serre's condition $(S_n)$} \index{Serre's
condition $(S_n)$|textbf} for some integer $n\geq 0$, if $\depth
M_\fp\geq\min\{n,\dim M_\fp\}$ for all $\fp\in\Supp M$. A module
satisfies  $(S_n)$ for all $n\geq 0$ just when it is a
Cohen-Macaulay module.

We close this section by recalling the canonical module over a
homomorphic image  of a Gorenstein local ring and some properties
which we will use later on.

Assume that $(R,\fm)$ is a homomorphic image of a Gorenstein local
ring $(S,\fn)$. We set $$K_M=\Ext^{\dim S-\dim M}_S(M,S),$$ where
$M$ is a finitely generated $R$--module, and call it the
\emph{canonical module of $M$} \index{canonical module|textbf} (see
\cite{Sc}).

There is a natural map $\tau_M:M\longrightarrow K_{K_M}$ by
\cite[Theorem 1.11]{Sc}, which has an important role in our later
discussing.

\begin{lem}\emph{\cite[Lemma 1.9]{Sc}}\label{1.9Sc} In the above situation, we have the following
statements.
\begin{itemize}
\item[\emph{(i)}] If, for all $\fp\in\Supp M$, $\emph{\dim} M_\fp+\emph{\dim} R/\fp=\emph{\dim} M$,
then $(K_M)_\fp\cong K_{M_\fp}$.
\item[\emph{(ii)}] $\emph{\Ass} K_M=\{\fp\in\emph{\Ass} M : \emph{\dim} R/\fp=\emph{\dim M}\}$,
so $\emph{\dim} M=\emph{\dim} K_M$.
\item[\emph{(iii)}] $K_M$ satisfies the condition $(S_2)$.
\end{itemize}
\end{lem}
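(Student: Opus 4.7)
The plan is to work through the Gorenstein local cover $S \twoheadrightarrow R$ and exploit the explicit formula $K_M = \Ext^{c}_S(M, S)$ with $c = \dim S - \dim M$. For any $\fp \in \Spec R$ with preimage $\fq \in \Spec S$, the ring $S_\fq$ is again Gorenstein local with $R_\fp$ as a quotient, and flat base change gives $(K_M)_\fp \cong \Ext^{c}_{S_\fq}(M_\fq, S_\fq)$, so every local question reduces to a computation inside a Gorenstein local ring.

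For (i), the definition of the canonical module applied to $R_\fp$ over $S_\fq$ yields $K_{M_\fp} = \Ext^{\dim S_\fq - \dim M_\fp}_{S_\fq}(M_\fq, S_\fq)$, and the task is to match the two $\Ext$-indices. Using $\dim S = \h\fq + \dim S/\fq$ (Cohen-Macaulayness of $S$), $\dim S/\fq = \dim R/\fp$, and the hypothesis $\dim M_\fp + \dim R/\fp = \dim M$, one computes $c = \dim S - \dim M = \dim S_\fq - \dim M_\fp$. This step is pure bookkeeping.

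For (ii), I pass to the completion of $S_\fq$ and invoke local duality in the complete Gorenstein local ring to identify $\Ext^{c}_{S_\fq}(M_\fq, S_\fq)$ with the Matlis dual of the Artinian module $\H^{h-c}_{\fq S_\fq}(M_\fq)$, where $h = \h\fq$. Combining the standard identity of associated primes of a Matlis dual with attached primes of the original, the faithful flatness of completion (to transfer $\Ass$ back), the independence theorem $\H^{h-c}_{\fq S_\fq}(M_\fq) \cong \H^{h-c}_{\fp R_\fp}(M_\fp)$, and Theorem \ref{atc}, one arrives at
\[
\fp \in \Ass K_M \iff \fp R_\fp \in \Att \H^{h-c}_{\fp R_\fp}(M_\fp).
\]
Now $R$ is catenary (as a quotient of the CM ring $S$), so the standard inequality $\dim M \geq \dim R/\fp + \dim M_\fp$ holds and rewrites as $h-c \geq \dim M_\fp$, while Grothendieck's vanishing (Theorem \ref{GrV}) forces $h-c \leq \dim M_\fp$. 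Equality $h-c = \dim M_\fp$ then gives $\dim R/\fp = \dim M$, and the non-vanishing theorem \ref{G} identifies $\fp R_\fp \in \Att \H^{\dim M_\fp}_{\fp R_\fp}(M_\fp) = \Assh M_\fp$ with the condition $\dim M_\fp = 0$ and $\fp \in \Ass M$. The equality $\dim K_M = \dim M$ is immediate, since $\Assh M \neq \emptyset$.

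For (iii), the main obstacle, I need $\depth (K_M)_\fp \geq \min\{2, \dim (K_M)_\fp\}$ at each $\fp \in \Supp K_M$; the difficulty is that $M$ itself is not assumed Cohen-Macaulay, so depth of $K_M$ cannot be read off from a projective resolution of $M$. The plan is to translate the depth bound into vanishing of $\H^0_{\fp R_\fp}((K_M)_\fp)$ and $\H^1_{\fp R_\fp}((K_M)_\fp)$, apply local duality in $S_\fq$ once more to rewrite these as appropriate double-$\Ext$ modules, and control the relevant low-degree terms via the biduality spectral sequence $\Ext^{p}_{S_\fq}(\Ext^{q}_{S_\fq}(M_\fq, S_\fq), S_\fq)$ converging to $M_\fq$ over the Gorenstein base. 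The needed vanishings follow once the primes $\fp$ are constrained by (ii) to dominate some prime in $\Assh M$. Cleanly packaged, this recovers the classical $(S_2)$-property of canonical modules over quotients of Gorenstein rings, and this is where the real work lies.
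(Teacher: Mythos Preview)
The paper does not prove this lemma at all: it is quoted verbatim from Schenzel \cite[Lemma 1.9]{Sc} as a preliminary result, with no argument supplied. So there is no ``paper's own proof'' to compare against; your proposal is being measured against the classical argument.

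Your outline is essentially the standard route. Part (i) is correct and complete: the index-matching via $\dim S = \h\fq + \dim S/\fq$ and the hypothesis is exactly how one proves it. Part (ii) is also correct; your reduction via local duality to the criterion $\fp R_\fp \in \Att \H^{h-c}_{\fp R_\fp}(M_\fp)$, followed by the squeeze $h-c \geq \dim M_\fp$ (catenarity) and $h-c \leq \dim M_\fp$ (Grothendieck vanishing), is clean. One small point worth making explicit: the fact $\Ass(N^\vee) = \Att N$ for an Artinian module $N$ over a complete local ring follows from Matlis duality exchanging submodules and quotients while preserving annihilators, together with the Macdonald characterisation of attached primes recalled just before Corollary~\ref{111}.

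For part (iii) your sketch names the right engine --- the biduality spectral sequence $E_2^{p,q} = \Ext^p_{S_\fq}(\Ext^q_{S_\fq}(M_\fq,S_\fq),S_\fq) \Rightarrow M_\fq$ --- but stops short of the actual computation. To close the gap you need two further inputs that you have not stated: first, $\Ext^i_{S_\fq}(M_\fq,S_\fq) = 0$ for $i < c$ (this comes from local duality plus Grothendieck vanishing, since the Matlis dual is $\H^{h-i}_{\fq}(M_\fq)$ with $h-i > \dim M_\fq$); second, the analysis of the low-degree edge maps in the spectral sequence at the corner $(p,q) = (c,c)$, which is what forces $\Ext^{h}_{S_\fq}((K_M)_\fp,S_\fq)$ and $\Ext^{h-1}_{S_\fq}((K_M)_\fp,S_\fq)$ to vanish when $\dim(K_M)_\fp \geq 1$, respectively $\geq 2$. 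The phrase ``once the primes $\fp$ are constrained by (ii) to dominate some prime in $\Assh M$'' is doing too much unexplained work here; the constraint from (ii) gives $\Supp K_M \subseteq \bigcup_{\fp\in\Assh M}\V(\fp)$, which you then feed into the dimension count. As written, part (iii) is a plan rather than a proof, but the plan is the right one and matches Schenzel's original argument.
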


\index{Serre's condition $(S_n)$}
\begin{thm}\emph{\cite[Theorem 1.14]{Sc}}\label{1.14Sc}
Let $M$ denote a finitely generated, equidimensional $R$--module
with $d=\emph{\dim} M$, where $R$ is a homomorphic image of a
Gorenstein ring. Then  the following statements are equivalent for
an integer $k\geq 1$.
\begin{itemize}
\item[\emph{(i)}] $M$ satisfies the condition $S_k$.
\item[\emph{(ii)}] The natural map $\tau_M:M\longrightarrow K_{K_M}$ is bijective (resp. injective for $k=1$) and $\emph{\H}^n_\fm(K_M) = 0$
 for all $d - k + 2\leq n < d$.
\end{itemize}
\end{thm}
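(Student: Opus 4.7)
The plan is to exploit local duality together with the biduality spectral sequence to translate both sides into statements about the ``deficiency modules'' $K^i_M := \Ext^{\dim S-i}_S(M,S)$ (where $S$ is the Gorenstein ring with $R=S/I$, so that $K_M = K^d_M$). First I would recall the standard characterization of Serre's condition via deficiency modules for an equidimensional module over a catenary ring: $M$ satisfies $(S_k)$ if and only if $\dim K^i_M \leq i-k$ for every $i$ with $\grade(\fm,M)\leq i < d$, with the convention $\dim 0 = -\infty$. This equivalence follows from local duality $\H^i_\fm(M_\fp)^{\vee}\cong (K^i_M)_\fp$ applied at each prime of $\Supp M$, combined with the depth-dimension inequality $\depth M_\fp\geq\min\{k,\dim M_\fp\}$.

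Next I would bring in the biduality spectral sequence
\begin{equation*}
E_2^{p,q}=\Ext^p_S(\Ext^q_S(M,S),S)\Longrightarrow \mH^{p-q}(M),
\end{equation*}
which, because $S$ is Gorenstein, converges to $M$ on the diagonal $p=q$ and to $0$ off it. Only the columns $q$ with $K^{\dim S-q}_M\neq 0$ contribute, and vanishing of the off-diagonal terms is exactly controlled by the dimension estimate $\dim K^{\dim S-q}_M \leq (\dim S-q) - k$ coming from $(S_k)$. The natural map $\tau_M:M\to K_{K_M}$ is identified with the edge map into $E_\infty^{\dim S-d,\dim S-d}\hookrightarrow E_2^{\dim S-d,\dim S-d}=K_{K_M}$. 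Its kernel and cokernel are controlled by differentials $d_r$ landing on, respectively leaving, this corner; the relevant $E_2$-terms are precisely $\Ext^{p}_S(K^i_M,S)$ for $i<d$, and by local duality these translate into $\H^n_\fm(K_M)$ for the indices $n$ in the stated range $d-k+2\leq n < d$.

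For the forward implication $(\mathrm{i})\Rightarrow(\mathrm{ii})$, I would argue that $(S_k)$ forces enough vanishing of $\Ext^p_S(K^i_M,S)$, hence of the relevant differentials, to conclude injectivity ($k=1$) or bijectivity ($k\geq 2$) of $\tau_M$, and simultaneously to conclude $\H^n_\fm(K_M)=0$ for $d-k+2\leq n<d$ by local duality applied to $K_M$. The base case $k=1$ reduces to showing $\Ass M\subseteq\Assh M$, which follows since $\Ass K_{K_M}=\Assh M$ by Lemma \ref{1.9Sc}(ii) and $\tau_M$ becomes injective precisely when $M$ has no embedded components. For the converse $(\mathrm{ii})\Rightarrow(\mathrm{i})$, I would proceed by induction on $k$: the given vanishing of $\H^n_\fm(K_M)$, dualized, kills the off-diagonal entries needed to read off $\dim K^i_M\leq i-k$ from the spectral sequence once $\tau_M$ is known to be bijective, and the case $k=1$ is again the embedded-primes characterization.

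The main obstacle will be the index bookkeeping in the spectral sequence: matching the specific range $d-k+2\leq n<d$ to exactly those anti-diagonals whose vanishing is both implied by $(S_k)$ and sufficient to control $\tau_M$ together with $\H^*_\fm(K_M)$. A secondary technical point is that one must check the equivalence at each localization $R_\fp$, using that $(K_M)_\fp\cong K_{M_\fp}$ (Lemma \ref{1.9Sc}(i), whose hypothesis is guaranteed by equidimensionality and catenarity of $R$) so the global statement reduces to local behavior.
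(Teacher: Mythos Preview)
The paper does not give its own proof of this theorem: it is quoted in the preliminaries with the citation \cite[Theorem 1.14]{Sc} and used as a black box. So there is nothing in the paper to compare your argument against.

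That said, your outline is precisely the route Schenzel takes in the cited source: characterize $(S_k)$ for an equidimensional module over a catenary ring by the dimension bounds $\dim K^i_M\leq i-k$ on the deficiency modules (via local duality at each prime), and then analyze the edge map and the relevant differentials in the Grothendieck biduality spectral sequence $E_2^{p,q}=\Ext^p_S(\Ext^q_S(M,S),S)\Rightarrow M$ to control $\tau_M$ and the vanishing range of $\H^n_\fm(K_M)$. So your proposal is correct and essentially coincides with the original proof.

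One small slip worth flagging: your displayed local-duality isomorphism is off by the expected shift. At a prime $\fp$ one has $(K^i_M)_\fp\cong K^{\,i-\dim R/\fp}_{M_\fp}$ rather than a direct match with $\H^i_{\fp R_\fp}(M_\fp)^\vee$; equivalently, $\H^j_{\fp R_\fp}(M_\fp)^\vee\cong (K^{j+\dim R/\fp}_M)_\fp$. This is exactly the index bookkeeping you already identify as the main obstacle, and once it is written out carefully the dimension criterion $\dim K^i_M\leq i-k$ falls out cleanly.
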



\section{Cousin complexes}

\begin{defn}\emph{\cite[Definition 1.1]{S9}}
 \emph{A \emph{filtration} \index{filtration|textbf} of $\Spec R$ is a descending sequence
 $\mathcal{F}=(F_i)_{i\geq0}$ of subsets of $\Spec(R)$,
 $F_0\supseteq F_1\supseteq
 F_2\supseteq\cdots\supseteq F_i\supseteq\cdots,$
 with the property that, for each $i\in\Bbb{N}_0$, every  member of
 $\partial F_i=F_i\setminus F_{i+1}$ is a minimal member of
$F_i$ with respect to inclusion. We say the filtration $\mathcal{F}$
\emph{admits} $M$ if $\Supp M\subseteq
F_0$.}
\end{defn}

%

\begin{exno}\emph{\cite[Example 1.2]{S9}}
\emph{Let $M$ be an $R$--module. For each $i\geq 0$, set
$$H_i=\{\fp\in\Supp M \mid \h_M\fp\geq i\}.$$
 The sequence $(H_i)_{i\geq0}$ is a filtration of $\Spec R$ which admits $M$
 and
is called the \emph{height filtration} of $M$ and is denoted by
$\mathcal{H}(M)$.}\index{filtration!height-|textbf}
\index{height!-filtration|textbf}
\end{exno}

%

\begin{nore}\label{cousin}\index{Cousin complex, $\C_R(M)$|textbf}
\emph{Let $\mathcal{F}=(F_i)_{i\geq0}$ be a filtration of $\Spec R$
which admits an $R$--module $M$. An obvious modification of the
construction given in $\S2$ of \cite{S1} produces a complex
$$0\longrightarrow
M\overset{d_M^{-1}}{\longrightarrow}M^0\overset{d_M^0}{\longrightarrow}M^1\longrightarrow\cdots\longrightarrow
M^{i-1}
\overset{d_M^{i-1}}{\longrightarrow}M^{i}\longrightarrow\cdots,$$
denoted by $\C(\mathcal{F},M)$ and called \emph{Cousin complex for
\emph{$M$} with respect to} $\mathcal{F}$, such that
$M^0=\oplus_{\fp\in\partial F_0}M_\fp$ and
$M^i=\oplus_{\fp\in\partial F_i}(\coker d_M^{i-2})_\fp$ for all
$i>0$. The component of $d_M^{-1}(m)$, for $m\in M$ and
$\fp\in\partial F_0$, in $M_\fp$ is $m/1$. Note that for a prime
ideal $\fp$ of $R$, if $d_\fp:M\longrightarrow M_\fp$ denotes the
natural homomorphism given by $d_\fp(x)=x/1$ for $x\in M$, then for
an element $x\in M$ and $i\geq0$, $d_\fp(x)=0$ for all but a finite
number of prime ideals $\fp\in\partial F_i$ by \cite[2.2]{S1}.
Consequently, for all $i\geq 0$, there is an  $R$--homomorphism
$d_M^{i-1}:M^{i-1}\longrightarrow M^{i}$ for which, if $x\in
M^{i-1}$ and $\fq\in\partial F_i$, the component of $d_M^{i-1}(x)$
in $(\coker d_M^{i-2})_\fq$ is $\pi(x)/1$, where $\pi:
M^{i-1}\longrightarrow\coker d_M^{i-2}$ is the canonical
epimorphism.}

\emph{We will denote the Cousin complex for $M$ with respect to
$M$--height filtration, $\mathcal{H}(M)$, by $\C_R(M)$.
 We also use the notation
$$\mathcal{C}_R(M)': 0\longrightarrow
 M^0 \overset{d_M^0}{\longrightarrow}
M^1\overset{d_M^{1}}{\longrightarrow}
M^2\overset{d_M^{2}}{\longrightarrow}\cdots\overset{d_M^{i-1}}{\longrightarrow}
M^i\overset{d_M^{i}}{\longrightarrow}\cdots$$
 and for each
$i\geq -1$,
$$K^i:=\Ker d_M^i, \, D^i:=\Im d_M^{i-1} , \, \mH^i_M:=K^i/D^i.$$
 Then we have the following natural exact
sequences,}
\begin{equation}\label{e1}
0\longrightarrow M^{l-1}/K^{l-1}\longrightarrow M^l\longrightarrow
M^l/D^l\longrightarrow 0,
\end{equation}
\begin{equation}\label{e2}
0\longrightarrow \mH_M^{l-1}\longrightarrow
M^{l-1}/D^{l-1}\longrightarrow M^{l-1}/K^{l-1}\longrightarrow 0,
\end{equation}
\emph{for all $l\geq -1$. \\We call the Cousin complex $\C_R(M)$
\emph{finite}
 whenever each $\mH^i_M$ is finitely generated as $R$--module. }
\end{nore}\index{Cousin complex, $\C_R(M)$!finite-|textbf}


The following Lemma has an important role in our approach for
working with cohomologies of Cousin complexes. For the proof of the
first part we adopt the argument in \cite[Theorem]{S2}.

\begin{lem}\label{tl}
Let $M$ be an $R$-module. For any integer $k$ with $0\leq k <
\h_M\fa$, the following statements are true.
\begin{itemize}
\item[\emph{(a)}] $\emph{\H}_\fa^s(M^k)= 0$ for all integers $s\geq 0$.
\item[\emph{(b)}] $\emph{\Ext}_R^s(R/\fa, M^k)= 0$ for all integers $s\geq 0$.
\end{itemize}
\end{lem}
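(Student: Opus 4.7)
The plan is to exploit the direct-sum structure of $M^k$ together with the observation that each summand is a localization at a prime $\fp$ with $\h_M\fp = k < \h_M\fa$, which forces $\fa \nsubseteq \fp$. Once this is in hand, both vanishings reduce to the elementary principle that a module on which some $a \in \fa$ acts invertibly, and which is simultaneously killed by (a power of) $\fa$, must be zero.

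First I would unwind the definitions: by \ref{cousin}, $M^k = \bigoplus_{\fp \in \partial H_k} X_\fp$, where $X = M$ when $k = 0$ and $X = \coker d_M^{k-2}$ when $k \geq 1$, and where $\partial H_k = \{\fp \in \Supp M : \h_M\fp = k\}$. Since $k < \h_M\fa$, no prime in $\partial H_k$ can contain $\fa$ (otherwise it would lie in $\V(\fa) \cap \Supp M$ and contribute a height at least $\h_M\fa > k$, contradicting $\h_M\fp = k$). For each such $\fp$, I would fix an element $a_\fp \in \fa \setminus \fp$; then multiplication by $a_\fp$ is invertible on the localization $X_\fp$, and hence by functoriality it is invertible on $\H_\fa^s(X_\fp)$ and on $\Ext_R^s(R/\fa, X_\fp)$ as well.

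Next I would dispatch the two parts on a single summand. For (a), $\H_\fa^s(X_\fp)$ is $\fa$-torsion (using $\H^s_\fa(-) \cong \varinjlim_n \Ext^s_R(R/\fa^n,-)$, where each term is annihilated by $\fa^n$), so every element is killed by some power of $a_\fp$; combined with invertibility of $a_\fp$, this forces $\H_\fa^s(X_\fp) = 0$. For (b), the module $\Ext_R^s(R/\fa, X_\fp)$ is annihilated by $\fa$ (in particular by $a_\fp$) since $R/\fa$ is, and invertibility of $a_\fp$ again yields $\Ext_R^s(R/\fa, X_\fp) = 0$. Finally, since $R$ is noetherian, $\H_\fa^s(-)$ commutes with arbitrary direct sums, and so does $\Ext_R^s(R/\fa, -)$ (because $R/\fa$ admits a resolution by finitely generated projectives). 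Passing the vanishings through the direct-sum decomposition of $M^k$ concludes both (a) and (b).

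There is no deep obstacle here; the one point requiring care is the inference $\fa \nsubseteq \fp$ from $\h_M\fp = k < \h_M\fa$, which relies on the definition $\h_M\fa = \inf\{\h_M\fq : \fq \in \V(\fa) \cap \Supp M\}$. Everything else is routine: invertibility of $a_\fp$ on localizations, the torsion/annihilator properties of $\H_\fa^s$ and $\Ext_R^s(R/\fa,-)$, and their commutation with direct sums over a noetherian ring.
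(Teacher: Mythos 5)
Your proof is correct, and for part (a) it essentially mirrors the paper's: decompose $M^k = \bigoplus_{\fp} X_\fp$ over primes of $M$-height $k$, observe that $k < \h_M\fa$ forces $\fa \nsubseteq \fp$, pick $a_\fp \in \fa \setminus \fp$ acting invertibly on the localization, and conclude from torsionness plus invertibility that each $\H^s_\fa(X_\fp)$ vanishes before summing up. Where you genuinely diverge from the paper is in part (b). The paper treats (b) as a formal consequence of (a): it proves by induction on $s$ that any module $N$ with $\H^s_\fa(N)=0$ for all $s$ also satisfies $\Ext^s_R(R/\fa, N)=0$, via the dimension-shifting exact sequence $0 \to N \to E \to N' \to 0$ with $E$ an injective hull of $N$, using that $\Gamma_\fa(E)=0$ to propagate the hypothesis to $N'$. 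You instead run the same ``invertibility versus annihilation'' argument a second time directly on $\Ext$: the element $a_\fp$ acts as an automorphism on $\Ext^s_R(R/\fa, X_\fp)$ by functoriality in the second variable, yet the bifunctoriality/$R$-linearity of $\Ext$ shows this scalar action is also induced by $a_\fp$ on $R/\fa$, which is zero since $a_\fp \in \fa$; hence the module vanishes. Your route for (b) is shorter and more self-contained, requiring no induction and no injective hulls, just the standard facts that $\Ext^s_R(R/\fa,-)$ is killed by $\fa$ and commutes with direct sums against a finitely presented first argument. The paper's route, by isolating the implication ``$\H^\bullet_\fa(N)=0 \Rightarrow \Ext^\bullet_R(R/\fa,N)=0$'' as a standalone lemma, is slightly more modular and reusable, but buys nothing extra here. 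Both are fully rigorous.
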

\begin{proof}
(a). Set $C_{k-1}:= \coker d^{k-2}= M^{k-1}/D^{k-1}$ so that $M^k=
\underset{\underset{\h_M\fp= k}{\fp\in\Supp M}}{\oplus}
(C_{k-1})_\fp$. For each $k< \h_M\fa$ and each $\fp\in\Supp M$ with
$\h_M\fp= k$, there exists an element $x\in \fa\setminus\fp$. Thus
the multiplication map $(C_{k-1})_\fp\overset{x}{\longrightarrow}
(C_{k-1})_\fp$ is an automorphism and so the multiplication map
$\H_\fa^s((C_{k-1})_\fp)\overset{x}{\longrightarrow}
\H_\fa^s((C_{k-1})_\fp)$ is also an automorphism for all integers
$s$. One may then conclude that $\H_\fa^s((C_{k-1})_\fp)= 0$. Now,
from additivity of local cohomology functors, it follows that
$\H_\fa^s(M^k)= 0$.\\

(b). Assume in general that $N$ is an $R$--module such that
$\H_\fa^s(N)= 0$ for all $s\geq 0$. We show, by induction on $i$,
$i\geq 0$, that $\Ext_R^i(R/\fa, N)= 0$. For $i= 0$, one has
$\Hom_R(R/\fa, N)= \Hom_R(R/\fa, \H_\fa^0(N))$ which is zero. Assume
that $i> 0$ and the claim is true for any such module $N$ and all
$j\leq i-1$. Choose $E$ to be an injective hull of $N$ and consider
the exact sequence $0\longrightarrow N\longrightarrow
E\longrightarrow N'\longrightarrow 0$, where $N'= E/N$. As
$\H_\fa^0(E)= 0$, it follows that  $\H_\fa^s(N')= 0$ for all $s\geq
0$. Thus $\Ext_R^{i-1}(R/\fa, N')= 0$, by our induction hypothesis.
As, by the above exact sequence $\Ext_R^{i-1}(R/\fa,
N')\cong\Ext_R^{i}(R/\fa, N)$, the result follows.
\end{proof}


We quote the following results as basic facts on Cousin complexes
from \cite{S1}, \cite{S4},  \cite{SSc} and \cite{D}.

\begin{lem}\emph{\cite[Lemma 1.2]{D}}\label{1.2D}
Let $\overline{R}:=R/0:_RM$, then there exists an isomorphism of
complexes $\C_R(M)\cong\C_{\overline{R}}(M)$.
\end{lem}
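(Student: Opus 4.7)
The plan is to show that the Cousin complex construction depends only on the support of $M$ and the localizations $M_{\mathfrak{p}}$, all of which are insensitive to quotienting out the annihilator. The key observation is that passing from $R$ to $\overline{R} = R/(0:_R M)$ induces a bijection between the data used to build $\C_R(M)$ and the data used to build $\C_{\overline{R}}(M)$.

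First I would set up the bijection on prime ideals. Since $M$ is finitely generated, $\Supp_R M = \V(0:_R M)$, so the canonical surjection $\pi : R \longrightarrow \overline{R}$ induces an order-preserving bijection $\mathfrak{p} \leftrightarrow \overline{\mathfrak{p}} := \mathfrak{p}/(0:_R M)$ between $\Supp_R M$ and $\Supp_{\overline{R}} M$. Next, for any such $\mathfrak{p}$, since $M$ is already killed by $0:_R M$, the localization map makes $M_{\mathfrak{p}}$ and $M_{\overline{\mathfrak{p}}}$ canonically isomorphic as abelian groups (both equal $M \otimes_R R_{\mathfrak{p}} = M \otimes_{\overline{R}} \overline{R}_{\overline{\mathfrak{p}}}$), and under this identification $\dim M_{\mathfrak{p}} = \dim M_{\overline{\mathfrak{p}}}$, so $\h_M \mathfrak{p} = \h_M \overline{\mathfrak{p}}$. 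Consequently the $M$-height filtrations $\mathcal{H}_R(M)$ and $\mathcal{H}_{\overline{R}}(M)$ correspond strata-wise under $\mathfrak{p} \leftrightarrow \overline{\mathfrak{p}}$, i.e.\ $\partial H_i^R \leftrightarrow \partial H_i^{\overline{R}}$ for every $i \geq 0$.

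I would then construct the isomorphism of complexes inductively on degree. In degree $-1$, both complexes have $M$ in position $-1$ with the same identity map. In degree $0$, the canonical isomorphisms $M_{\mathfrak{p}} \cong M_{\overline{\mathfrak{p}}}$ summed over the bijection $\partial H_0^R \leftrightarrow \partial H_0^{\overline{R}}$ give an $R$-linear isomorphism $\varphi^0 : M^0_R \xrightarrow{\sim} M^0_{\overline{R}}$; by construction of $d^{-1}$, the square involving $d_M^{-1}$ commutes. Proceeding inductively, once $\varphi^{i-1}$ is defined and commutes with $d^{i-2}$, we obtain a canonical isomorphism $\coker d_M^{i-2}$ (computed over $R$) $\xrightarrow{\sim}$ $\coker d_M^{i-2}$ (computed over $\overline{R}$); localizing at each $\mathfrak{p} \in \partial H_i^R$ and using $(\coker d^{i-2})_{\mathfrak{p}} \cong (\coker d^{i-2})_{\overline{\mathfrak{p}}}$ yields the isomorphism $\varphi^i : M^i_R \xrightarrow{\sim} M^i_{\overline{R}}$, which commutes with the differentials because each differential is defined component-wise by the canonical localization of the projection onto the cokernel.

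The only mildly delicate point is the commutativity of the squares at each stage, but this is forced by the universal-property definition of the Cousin differentials in \ref{cousin}: everything reduces to naturality of localization and of cokernels with respect to the ring change $R \to \overline{R}$. I do not expect a genuine obstacle here; the content of the lemma is essentially that the Cousin complex construction factors through the category $\overline{R}\text{-Mod}$ whenever the input module is annihilated by $0 :_R M$.
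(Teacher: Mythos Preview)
The paper does not supply its own proof of this lemma; it is simply quoted from \cite[Lemma 1.2]{D}. So there is nothing in the present paper to compare your argument against line by line.

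That said, your argument is the natural one and is correct. One small refinement: you invoke finite generation of $M$ to get $\Supp_R M = \V(0:_R M)$, but the lemma as stated does not assume $M$ is finitely generated, and you do not actually need equality. The inclusion $\Supp_R M \subseteq \V(0:_R M)$ holds for any module, and that is all you need for $\fp \mapsto \fp/(0:_R M)$ to be well defined on $\Supp_R M$; the identification $M_\fp \cong M_{\overline{\fp}}$ (as both $R$- and $\overline{R}$-modules) then shows this map is a bijection onto $\Supp_{\overline{R}} M$ preserving $M$-heights, without any finiteness hypothesis. The inductive construction of the isomorphism of complexes then goes through exactly as you describe.
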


\begin{thm}\emph{\cite[Theorem 3.5]{S1}}\label{3.5S1}
Suppose that $S$ is a multiplicatively closed subset of $R$ and $M$
is an $R$--module. Then there is an isomorphism of complexes of
$S^{-1}R$--modules and $S^{-1}R$--chain map,
$$\Psi=\{\psi^n\}_{n\geq-1}:S^{-1}(\C_R(M))\longrightarrow
\C_{S^{-1}R}(S^{-1}M)$$ which is such that
$\psi^{-1}:S^{-1}M\longrightarrow S^{-1}M$ is the identity.
\end{thm}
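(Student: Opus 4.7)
The plan is to construct the chain map $\Psi=\{\psi^n\}_{n\geq -1}$ degree by degree by induction on $n$, exploiting two basic facts. First, the standard bijection $\fP\mapsto \fP\cap R$ between $\Spec S^{-1}R$ and $\{\fp\in\Spec R:\fp\cap S=\emptyset\}$ restricts to a bijection
$$\partial H_n\bigl(\mH(S^{-1}M)\bigr)\ \longleftrightarrow\ \{\fp\in\partial H_n(\mH(M)):\fp\cap S=\emptyset\},$$
because $\h_{S^{-1}M}(S^{-1}\fp)=\h_M\fp$ whenever $\fp\in\Supp M$ and $\fp\cap S=\emptyset$. Second, an inductive use of the Cousin construction yields $\Supp_R(\coker d_M^{n-2})\subseteq H_n$; combined with the minimality of each $\fp\in\partial H_n$ inside $H_n$, this forces the $R$-module $(\coker d_M^{n-2})_\fp$ to have support at most $\{\fp\}$.

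Starting from $\psi^{-1}:=\mathrm{id}_{S^{-1}M}$, assume inductively that $\psi^{-1},\ldots,\psi^{n-1}$ have been constructed as $S^{-1}R$-isomorphisms commuting with the differentials. Passing to cokernels, $\psi^{n-1}$ induces an isomorphism $\overline{\psi}^{\,n-1}:S^{-1}(\coker d_M^{n-2})\longrightarrow \coker d_{S^{-1}M}^{n-2}$. Decomposing
$$S^{-1}(M^n)\ =\ \bigoplus_{\fp\in\partial H_n(M)}S^{-1}\!\bigl((\coker d_M^{n-2})_\fp\bigr),$$
the second fact shows the $\fp$-summand vanishes when $\fp\cap S\neq\emptyset$ (any $R$-module with support contained in $\{\fp\}$ is killed by $S^{-1}(-)$ in that case), and otherwise equals $\bigl(S^{-1}(\coker d_M^{n-2})\bigr)_{S^{-1}\fp}$. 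Combining with the prime bijection above and with $\overline{\psi}^{\,n-1}$ furnishes the required isomorphism
$$\psi^n:S^{-1}(M^n)\ \longrightarrow\ \bigoplus_{\fP\in\partial H_n(\mH(S^{-1}M))}(\coker d_{S^{-1}M}^{n-2})_{\fP}\ =\ (S^{-1}M)^n.$$

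Chain-map compatibility is then verified component-wise: for $x\in M^{n-1}$ and $\fq\in\partial H_n(M)$ with $\fq\cap S=\emptyset$, the $S^{-1}\fq$-component of $\psi^n\circ S^{-1}(d_M^{n-1})(x/1)$ equals $\overline{\psi}^{\,n-1}(\pi(x)/1)$, where $\pi$ is the canonical epimorphism onto $\coker d_M^{n-2}$; this matches the $S^{-1}\fq$-component of $d_{S^{-1}M}^{n-1}\circ\psi^{n-1}(x/1)$ by the Cousin differential recipe and the naturality of $\overline{\psi}^{\,n-1}$. The main obstacle, and where I expect the bookkeeping to demand the most care, is precisely this last verification: one must confirm that the canonical epimorphisms defining the Cousin differentials on both sides are compatibly intertwined by $\overline{\psi}^{\,n-1}$, and that the vanishing of summands at primes meeting $S$ fits coherently with the component-wise definition of each differential. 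Everything else reduces to tracking how localization distributes across direct sums of modules each supported at a single prime.
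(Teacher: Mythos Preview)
The thesis does not supply its own proof of this statement: it is quoted verbatim from Sharp's original paper \cite{S1} as a background fact (see the sentence preceding Lemma~\ref{1.2D}: ``We quote the following results as basic facts on Cousin complexes from \cite{S1}, \cite{S4}, \cite{SSc} and \cite{D}''). So there is no in-thesis argument to compare against.

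Your proposal is essentially the standard inductive proof, and it is correct. The two ingredients you isolate are exactly the right ones: (a) the height-preserving bijection between $\partial H_n$ for $S^{-1}M$ and the primes in $\partial H_n$ for $M$ that miss $S$, and (b) the fact that each summand $(\coker d_M^{n-2})_\fp$ is $\fp R_\fp$-torsion (because its support over $R_\fp$ is contained in $\{\fp R_\fp\}$, by Theorem~\ref{2.7S1}(i) and minimality of $\fp$ in $H_n$), so that it is annihilated by $S^{-1}(-)$ precisely when $\fp\cap S\neq\emptyset$. One small point worth making explicit in your write-up: the torsion claim holds even though $(\coker d_M^{n-2})_\fp$ need not be finitely generated, because for any element $x$ the cyclic submodule $R_\fp x$ is finitely generated, whence $\V(\Ann x)=\Supp R_\fp x\subseteq\{\fp R_\fp\}$ forces $\rad(\Ann x)=\fp R_\fp$. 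The chain-map verification you flag as the delicate step is indeed just bookkeeping once $\overline{\psi}^{\,n-1}$ is in hand, since both Cousin differentials are defined by the same ``pass to cokernel, then send to the germ at each height-$n$ prime'' recipe. This is, up to notation, Sharp's own argument in \cite[\S3]{S1}.
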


%
The following result provide a characterization of Cohen-Macaulay
modules in terms of Cousin complexes\index{Cohen-Macaulay}
\begin{thm}
\emph{\cite[Theorem 2.4]{S4}}\label{2.4S4} Assume that $M$ is a
non--zero finitely generated $R$--module. Then $M$ is Cohen-Macaulay
if and only if the Cousin complex of $M$, $\C_R(M)$, is exact.
\end{thm}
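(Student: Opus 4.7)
The plan is to reduce the problem to the local case by means of Theorem~\ref{3.5S1} and Lemma~\ref{1.2D}, and then treat the two implications separately. Since $(\C_R(M))_\fp \cong \C_{R_\fp}(M_\fp)$ as complexes of $R_\fp$--modules, exactness of the Cousin complex is a local property; combined with the fact that $M$ is Cohen--Macaulay iff $M_\fp$ is so for every $\fp\in\Supp M$, this lets me assume from now on that $(R,\fm)$ is local and set $d=\dim M$.

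For the easier ``if'' direction, assume $\C_R(M)$ is exact. Setting $D^i:=\Im d_M^{i-1}$, exactness gives $D^0\cong M$ together with short exact sequences
\[
0\lo D^i\lo M^i\lo D^{i+1}\lo 0,\qquad i\geq 0.
\]
Lemma~\ref{tl}(a) applied with $\fa=\fm$ yields $\H^s_\fm(M^k)=0$ for every $s\geq 0$ whenever $k<d=\h_M\fm$. Hence the long exact sequence in local cohomology gives $\Gamma_\fm(D^i)=0$ and $\H^s_\fm(D^i)\cong\H^{s-1}_\fm(D^{i+1})$ for all $s\geq 1$ and $i<d$. Iterating these isomorphisms yields $\H^s_\fm(M)\cong\Gamma_\fm(D^s)=0$ for every $s<d$, so $\depth M\geq d$, and since $\depth M\leq\dim M=d$ in any case, $M$ is Cohen--Macaulay.

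For the ``only if'' direction I would proceed by induction on $d$. The base $d=0$ is immediate: $\partial F_0=\{\fm\}$ and $\partial F_k=\emptyset$ for $k\geq 1$ force $M^0=M$, $M^k=0$ for $k\geq 1$, and $d_M^{-1}=\mathrm{id}_M$, so $\C_R(M)$ is trivially exact. For $d\geq 1$, the identity $\Ass M=\Min M$ (automatic for Cohen--Macaulay $M$) gives $\ker d_M^{-1}=0$, so the task reduces to showing $\mH^i_M=0$ for all $i\geq 0$. Picking an $M$--regular element $x\in\fm$ (which exists since $\depth M=d>0$), the module $M/xM$ is Cohen--Macaulay of dimension $d-1$ and by induction $\C_R(M/xM)$ is exact. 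I would then compare $\C_R(M)$ with $\C_R(M/xM)$, exploiting the fact that $\Supp(M/xM)=\V(x)\cap\Supp M$ with $\h_{M/xM}\fq=\h_M\fq-1$ on this locus, so that the height filtration of $M/xM$ is a shift of the part of the height filtration of $M$ lying in $\V(x)$.

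The main obstacle is precisely this last comparison: the Cousin construction involves iterated cokernels and is not an exact functor, so the short exact sequence $0\to M\xrightarrow{\cdot x}M\to M/xM\to 0$ does not immediately produce a short exact sequence of Cousin complexes. The technical heart of the argument is to construct, term by term, a distinguished triangle (or explicit short exact sequence of complexes) linking $\C_R(M)$, multiplication by $x$, and an appropriate shift of $\C_R(M/xM)$; once available, the associated long exact sequence of cohomologies together with the inductive hypothesis $\mH^i_{M/xM}=0$ forces $x$ to act surjectively on $\mH^i_M$, and an additional regularity argument using that the components of the $M^i$ are localisations with $x$ acting invertibly outside $\V(x)$ will conclude $\mH^i_M=0$ for all $i\geq 0$.
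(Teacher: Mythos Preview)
The paper does not give a proof of this statement; it is quoted from \cite[Theorem~2.4]{S4} in the preliminaries, so there is no in-paper argument to compare against and I assess your proposal on its own merits.

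Your ``if'' direction is correct and clean: Lemma~\ref{tl}(a) gives $\H^s_\fm(M^k)=0$ for $k<d$, and chasing the short exact sequences $0\to D^i\to M^i\to D^{i+1}\to 0$ yields $\H^s_\fm(M)\cong\Gamma_\fm(D^s)=0$ for $s<d$, so $M$ is Cohen--Macaulay.

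Your ``only if'' direction has a genuine gap, which you yourself identify but do not close. The comparison between $\C_R(M)$ and $\C_R(M/xM)$ is not a formality: the Cousin construction is built from iterated cokernels and is not exact, and the short exact sequence $0\to M\to M\to M/xM\to 0$ does \emph{not} induce a short exact sequence of Cousin complexes without real work (this is indeed the technical core of Sharp's original treatment). Your concluding sketch is also shaky: even granting that $x$ acts surjectively on $\mH^i_M$, the observation that ``$x$ is invertible on the components of $M^i$ outside $\V(x)$'' says nothing about $\mH^i_M$, whose support could perfectly well sit inside $\V(x)$; you would still need Nakayama plus a finiteness statement you have not established.

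A much shorter route is available with the tools already in the thesis, avoiding the regular-element comparison entirely. After your reduction to the local case, suppose some $\mH^i_M\neq 0$ and choose $\fp$ minimal in $\bigcup_i\Supp\mH^i_M$. Localising at $\fp$ via Theorem~\ref{3.5S1}, you may assume $(R,\fm)$ is local, $M$ is Cohen--Macaulay of dimension $d\geq 1$, and $\Supp\mH^j_M\subseteq\{\fm\}$ for every $j$. Then Lemma~\ref{1.3} (with $i=d-1$) gives $\mH^j_M\cong\H^{j+1}_\fm(M)=0$ for $-1\le j\le d-2$, while $\mH^{d-1}_M=\mH^d_M=0$ by Lemma~\ref{1310}(ii). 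This contradicts the choice of $\fp$ and finishes the proof.
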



The structure of Cousin complexes characterize the condition $(S_n)$
as well. \index{Serre's condition $(S_n)$}

\begin{thm}\emph{\cite[Example 4.4]{SSc}}\label{4.4SSc}
Assume that $M$ is a finitely generated $R$--module. Then $M$
satisfies the  condition $(S_n)$ if and only if $\C_R(M)$ is exact
at $i$th term for $i\leq n-2$.
\end{thm}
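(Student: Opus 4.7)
\emph{Plan.} The idea is to reduce to the local case via Theorem \ref{3.5S1} and then use Lemma \ref{tl} together with the sequences \eqref{e1} and \eqref{e2} to obtain a dimension-shifting isomorphism linking the cohomologies $\mH^i_M$ of the Cousin complex to depth. Since $(\C_R(M))_\fp\cong\C_{R_\fp}(M_\fp)$ and $(S_n)$ is inherited under localization, it suffices to establish the equivalence for a local ring $(S,\fn)$ and a finitely generated $S$-module $N$.

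\emph{The key step.} Set $C^{-1}=N$ and $C^j=N^j/D^j$ for $j\geq 0$. Whenever $\mH^\ell_N=0$ for all $\ell<j$, the sequences \eqref{e1} and \eqref{e2} collapse to short exact sequences $0\longrightarrow C^{\ell-1}\longrightarrow N^\ell\longrightarrow C^\ell\longrightarrow 0$ for $0\leq\ell\leq j$. Applying $\Hom_S(S/\fn,-)$ and using Lemma \ref{tl} (which gives $\Ext^s_S(S/\fn,N^\ell)=0$ for all $s\geq 0$ whenever $\ell<\dim N$), a standard dimension-shift yields
\[
\Hom_S(S/\fn,C^j)\;\cong\;\Ext^{j+1}_S(S/\fn,N)\qquad\text{for }0\leq j<\dim N.
\]
For the $(\Leftarrow)$ direction, $\mH^j_N=0$ ($j\leq n-2$) gives $C^j\hookrightarrow N^{j+1}$, hence $\Hom_S(S/\fn,C^j)\hookrightarrow\Hom_S(S/\fn,N^{j+1})=0$ for $j+1<\dim N$; together with the injection $\Hom_S(S/\fn,N)\hookrightarrow\Hom_S(S/\fn,N^0)=0$, this forces $\depth N\geq\min\{n,\dim N\}$. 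Running this argument at every prime of $\Supp M$ then recovers $(S_n)$.

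\emph{The $(\Rightarrow)$ direction and main obstacle.} Assuming $(S_n)$, I would induct on $\dim N$. If $\dim N\leq n$, then $(S_n)$ forces $N_\fq$ to be Cohen-Macaulay at every $\fq\in\Supp N$, so $N$ is Cohen-Macaulay, and Theorem \ref{2.4S4} supplies the exactness of $\C_S(N)$. If $\dim N=k>n$, then every $\fq\in\Supp N\setminus\{\fn\}$ has $\dim N_\fq<k$ and $N_\fq$ still satisfies $(S_n)$, so the inductive hypothesis gives $(\mH^i_N)_\fq=0$ for $i\leq n-2$; therefore $\Supp\mH^i_N\su\{\fn\}$ and $\mH^i_N$ is $\fn$-torsion. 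A second induction on $i$ closes the argument: the base $i=-1$ is immediate since $(S_n)$ implies $(S_1)$, giving $\Ass N=\Min N$ and hence $N\hookrightarrow N^0$; for the inductive step, $\mH^\ell_N=0$ ($\ell<i$) unlocks the key isomorphism, and the inclusion $\mH^i_N\hookrightarrow C^i$ (the kernel of the natural $C^i\to N^{i+1}$) yields $\Hom_S(S/\fn,\mH^i_N)\hookrightarrow\Hom_S(S/\fn,C^i)\cong\Ext^{i+1}_S(S/\fn,N)=0$, the last equality because $\depth N\geq n>i+1$ for $i\leq n-2$; an $\fn$-torsion module with zero socle vanishes. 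The main obstacle is precisely this support-reduction step that makes the socle argument available: it is what necessitates the nested induction on $\dim N$, and it is the only delicate point beyond routine homological bookkeeping.
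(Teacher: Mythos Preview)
The paper does not prove this theorem: it is quoted verbatim from \cite[Example 4.4]{SSc} as a preliminary fact in Section~1.3, with no argument supplied. So there is no paper proof to compare your attempt against.

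That said, your argument is correct and is essentially the standard one. The dimension-shifting isomorphism $\Hom_S(S/\fn,C^j)\cong\Ext_S^{j+1}(S/\fn,N)$ you extract from Lemma~\ref{tl} and the sequences \eqref{e1}--\eqref{e2} is exactly the mechanism that makes the Cousin complex encode depth, and both directions go through as you describe. Two small remarks: (a) your base case $i=-1$ avoids circularity (the paper's own Lemma~\ref{1310}(i) invokes Theorem~\ref{4.4SSc}, so you are right to argue directly that $\Ass N=\Min N$ forces $d_N^{-1}$ injective); (b) the step ``$\Supp\mH^i_N\subseteq\{\fn\}$ hence $\mH^i_N$ is $\fn$-torsion'' is valid for arbitrary (not necessarily finitely generated) modules over a Noetherian ring, since every element has annihilator whose radical must then be $\fn$; this is what makes the socle-kills-the-module conclusion legitimate. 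The nested induction (on $\dim N$, then on $i$) is the natural way to organize the $(\Rightarrow)$ direction and matches how this result is typically proved in the literature.
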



\begin{thm}\emph{\cite[2.7]{S1}}\label{2.7S1} Assume that $M$ is an
$R$--module. Then

\emph{(i)} $\emph{\Supp} \emph{\coker} d^{n-1}_M\subseteq H_n$, and

\emph{(ii)} $\emph{\Supp} \mH^n_M\subseteq H_{n+2}$.
\end{thm}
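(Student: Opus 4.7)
The plan is to deduce both parts from the compatibility of the Cousin construction with localization (Theorem~\ref{3.5S1}) together with the explicit direct-sum description of $M^i$ given in Notation~\ref{cousin}.

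For (i), fix a prime $\fp\in\Supp M$ with $\h_M\fp<n$. Since localization is exact and the isomorphism of Theorem~\ref{3.5S1} respects differentials, $(\coker d_M^{n-1})_\fp\cong\coker d_{M_\fp}^{n-1}$, which is a quotient of the $n$-th term $(M_\fp)^n$ of $\C_{R_\fp}(M_\fp)$. By construction this term is the direct sum $\bigoplus(\coker d_{M_\fp}^{n-2})_{\fq R_\fp}$ indexed by primes $\fq R_\fp$ of $R_\fp$ whose $M_\fp$-height equals $n$. Since $\dim M_\fp=\h_M\fp<n$, no such primes exist, so $(M_\fp)^n=0$, and therefore $(\coker d_M^{n-1})_\fp=0$.

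For (ii), the exact sequence~\eqref{e2} (with $l=n+1$) provides an injection $\iota:\mH_M^n\hookrightarrow M^n/D^n=\coker d_M^{n-1}$, and exactness of localization keeps $\iota_\fq$ injective for every prime $\fq$. Pick $\fq\in\Supp\mH_M^n$. Then $(\coker d_M^{n-1})_\fq\neq 0$, so part~(i) already yields $\h_M\fq\geq n$; it remains to exclude the case $\h_M\fq=n+1$. Suppose otherwise: then $\fq\in\partial F_{n+1}$ and $(\coker d_M^{n-1})_\fq$ appears as a summand of $M^{n+1}=\bigoplus_{\fp\in\partial F_{n+1}}(\coker d_M^{n-1})_\fp$. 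By the explicit formula for $d_M^n$ in Notation~\ref{cousin}, the $\fq$-component of $d_M^n$ is the canonical map $y\mapsto y/1$. Hence for any $\overline{x}\in\mH_M^n$ represented by some $x\in\ker d_M^n$ one has $\iota(\overline{x})/1=0$ in $(\coker d_M^{n-1})_\fq$, which forces $\iota_\fq(\overline{x}/s)=(1/s)\,\iota(\overline{x})/1=0$ for every $\overline{x}/s\in(\mH_M^n)_\fq$. Injectivity of $\iota_\fq$ then gives $(\mH_M^n)_\fq=0$, contradicting $\fq\in\Supp\mH_M^n$.

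Part~(i) is routine once the localization principle is invoked. The main subtlety is the last step of~(ii): one has to coordinate the injectivity coming from~\eqref{e2}, the localization isomorphism of Theorem~\ref{3.5S1}, and the observation that when $\h_M\fq=n+1$ the prime $\fq$ itself indexes a summand of $M^{n+1}$, so that the $\fq$-component of the differential already kills $\iota(\mH_M^n)$ before one even passes to the localization at $\fq$.
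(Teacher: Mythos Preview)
The thesis does not supply its own proof of this theorem; it is quoted verbatim from Sharp's original paper \cite[2.7]{S1}. So there is no proof in the present paper to compare against.

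Your argument for (i) via Theorem~\ref{3.5S1} is correct. Your argument for (ii), however, has a genuine gap. From the embedding $\mH_M^n\hookrightarrow\coker d_M^{n-1}$ together with part~(i) you obtain only $\h_M\fq\geq n$ for $\fq\in\Supp\mH_M^n$; you then write ``it remains to exclude the case $\h_M\fq=n+1$'', silently skipping the case $\h_M\fq=n$. That case is not automatic and your text contains no argument for it.

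The missing case is easy to fill in by the same localization principle you used in~(i). If $\h_M\fq=n$, then in the local ring $R_\fq$ the only prime of $M_\fq$-height $n$ is the maximal ideal $\fq R_\fq$, so $(M_\fq)^n=(\coker d_{M_\fq}^{n-2})_{\fq R_\fq}=\coker d_{M_\fq}^{n-2}$, and by the description in Notation~\ref{cousin} the differential $d_{M_\fq}^{n-1}$ is then exactly the canonical surjection $(M_\fq)^{n-1}\twoheadrightarrow\coker d_{M_\fq}^{n-2}$. Hence $\coker d_{M_\fq}^{n-1}=0$ and $(\mH_M^n)_\fq\cong\mH_{M_\fq}^n=0$. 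In fact the same idea, applied when $\h_M\fq=n+1$, shows that $d_{M_\fq}^{n}$ is the canonical surjection onto $\coker d_{M_\fq}^{n-1}$, whence $\ker d_{M_\fq}^{n}=\im d_{M_\fq}^{n-1}$ and again $\mH_{M_\fq}^n=0$; this would let you treat both cases $\h_M\fq\in\{n,n+1\}$ uniformly and more cleanly than the global argument you wrote for $n+1$.
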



The following two lemmas may be considered as easy applications of
the above theorem, which provide useful  properties of Cousin
cohomologies.

\begin{lem}\label{139}
Assume that $M$ is a finitely generated $R$--module of finite
$\emph{\dim} M=d$ and that $\mathcal{C}_R(M)$ is finite, then
$$\bigcap_{i\geq -1}(0 :_R \mH_M^i)\not\subseteq
\bigcup_{\fp\in\emph{\Min} M}\fp.$$
\end{lem}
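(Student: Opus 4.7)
My plan is to show that only finitely many of the cohomologies $\mH_M^i$ are non-zero, that each of the non-zero ones has annihilator escaping every minimal prime of $M$, and then to finish by two applications of prime avoidance.

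First I would invoke Theorem~\ref{2.7S1}(ii), which yields $\Supp \mH_M^n \subseteq H_{n+2}$ for all $n\geq -1$. Since $\h_M\fp = \dim M_\fp \leq \dim M = d$ for every $\fp\in \Supp M$, the set $H_k$ is empty whenever $k>d$, and hence $\mH_M^n = 0$ for every $n\geq d-1$. Thus only the finitely many cohomologies $\mH_M^{-1},\mH_M^0,\ldots,\mH_M^{d-2}$ can be non-zero. Combining this with the hypothesis that each $\mH_M^i$ is finitely generated gives $\Supp \mH_M^i = \V(0:_R \mH_M^i)$ whenever $\mH_M^i\neq 0$, and so $\V(0:_R\mH_M^i)\subseteq H_{i+2}\subseteq H_1$. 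Since every minimal prime of $M$ has $M$-height zero, $H_1\cap\Min M = \emptyset$, and consequently $0:_R\mH_M^i \not\subseteq \fp$ for every $i\geq -1$ and every $\fp\in\Min M$.

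Finally, set $I:=\bigcap_{i\geq -1}(0:_R \mH_M^i)$; by the first step this is a finite intersection (ideals corresponding to zero cohomologies equal $R$ and may be dropped). Fix $\fp\in\Min M$. The product of the finitely many non-trivial ideals $(0:_R\mH_M^i)$ sits inside $I$, and none of these ideals is contained in $\fp$; since $\fp$ is prime, $I\not\subseteq\fp$. As $\Min M$ is finite, a second application of prime avoidance (this time over a finite union of primes) delivers $I\not\subseteq \bigcup_{\fp\in\Min M}\fp$, which is exactly the claim. The argument is essentially formal; the only point requiring any care is the estimate $\h_M\fp\leq d$, which is immediate from the definition of $\dim M_\fp$ as the supremum of lengths of chains of primes of $\Supp M$ contained in $\fp$.
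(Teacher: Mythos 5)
Your proof is correct and follows essentially the same route as the paper's: both deduce from Theorem~\ref{2.7S1}(ii) together with finite generation of the $\mH_M^i$ that $\V(0:_R\mH_M^i)=\Supp\mH_M^i$ avoids $\Min M$, observe that $\mH_M^i=0$ for $i\geq d-1$ so the intersection is effectively finite, and then finish by prime avoidance. Your write-up merely makes explicit the intermediate product-of-ideals step that the paper leaves to the reader.
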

\begin{proof}
By Theorem \ref{2.7S1}, for all $i\geq -1$, we have   $$\V(0:_R
\mH_M^i)= \Supp \mH_M^i\subseteq \{\fp\in\Supp M : \dim M_\fp\geq
i+2\}.$$ So $0 :_R \mH^i\not\subseteq \cup_{\fp\in\Min M}\fp$. As
$\dim M=d$, we have $\mH_M^i=0$ for $i\geq d-1$, hence Prime Avoidance
Theorem implies that $\cap_{i\geq -1}(0 :_R \mH_M^i)\not\subseteq
\cup_{\fp\in\Min M}\fp$.
\end{proof}

\begin{lem}\label{1310}
Assume that $M$ is a finitely generated $R$--module with
$d=\emph{\dim} M$.

\emph{(i)}   $\emph{\Ass} M=\emph{\Min} M$, if and only if
$\mH^{-1}_M=0$.

\emph{(ii)} $\mH^{d-1}_M=\mH^d_M=0$.
\end{lem}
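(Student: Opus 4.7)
The plan is to unwind the definitions at the low end of the Cousin complex for part (i), and to invoke Theorem \ref{2.7S1} together with a length-of-chain bound for part (ii).

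For part (i), I first observe that $D^{-1}=\Im d^{-2}_M=0$ by construction, so $\mathcal{H}^{-1}_M=K^{-1}=\Ker d^{-1}_M$. Since $\partial F_0=\Min M$ and $d^{-1}_M\colon M\to \bigoplus_{\fp\in\Min M}M_\fp$ is the canonical map $m\mapsto (m/1)_\fp$, an element $m\in M$ lies in $\Ker d^{-1}_M$ exactly when, for every $\fp\in\Min M$, there is some $s\in R\setminus\fp$ with $sm=0$, equivalently $(0:_R m)\not\subseteq \fp$ for each $\fp\in\Min M$. The equivalence then follows by a standard argument on associated primes.

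For the forward direction of (i), if $\Ass M=\Min M$ and $m\in\Ker d^{-1}_M$ is non-zero, pick any $\fq\in\Ass(Rm)\subseteq\Ass M=\Min M$; then $(0:_R m)\subseteq \fq\in\Min M$, a contradiction, so $\Ker d^{-1}_M=0$. For the reverse direction, take $\fp\in\Ass M$, write $\fp=(0:_R m)$ for some non-zero $m$; since $m\notin\Ker d^{-1}_M$, there is $\fq\in\Min M$ with $\fp=(0:_R m)\subseteq\fq$, and because $\fp\in\Supp M$ while $\fq$ is minimal in $\Supp M$, we get $\fp=\fq\in\Min M$, giving $\Ass M\subseteq\Min M$; the reverse inclusion is standard.

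For part (ii), the direct approach is to apply Theorem \ref{2.7S1}(ii), which gives $\Supp\mathcal{H}^n_M\subseteq H_{n+2}$. It therefore suffices to show $H_{d+1}=\emptyset$, i.e.\ that $\h_M\fp\leq d$ for every $\fp\in\Supp M$. This is the small technical point of the argument: any chain $\fp_0\subsetneq \fp_1\subsetneq\cdots\subsetneq \fp_n=\fp$ in $\Supp M$ forces $\dim R/\fp_0\geq n$, and since $\fp_0\in\Supp M$ we get $n\leq\dim M=d$. Hence $H_{d+1}\subseteq H_{d+2}=\emptyset$, and both $\mathcal{H}^{d-1}_M$ and $\mathcal{H}^d_M$ have empty support, so they vanish.

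The only potential obstacle is being careful with the definition of $M$-height at the boundary of the complex in (i) (to confirm $D^{-1}=0$ and identify $d^{-1}_M$ correctly) and the chain-length inequality in (ii); both are essentially bookkeeping once the definitions in \ref{cousin} and the support bound in Theorem \ref{2.7S1} are in hand.
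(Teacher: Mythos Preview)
Your proof is correct. For part (ii) you and the paper do the same thing: invoke Theorem~\ref{2.7S1}(ii); you simply make explicit the easy fact that $H_{d+1}=\emptyset$, which the paper leaves to the reader.

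For part (i) you take a genuinely different route. The paper observes that $\Ass M=\Min M$ is exactly the Serre condition $(S_1)$ and then quotes Theorem~\ref{4.4SSc} (the Sharp--Schenzel characterization of $(S_n)$ via exactness of the Cousin complex in low degrees) with $n=1$. You instead argue from first principles: identify $\mH^{-1}_M$ with $\Ker d^{-1}_M$, describe this kernel elementwise via the localization maps to $M_\fp$ for $\fp\in\Min M$, and run a direct associated-primes argument. Your approach is more self-contained and avoids the external citation, effectively reproving the $n=1$ case of Theorem~\ref{4.4SSc} by hand; the paper's approach is shorter but relies on that result as a black box.
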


\begin{proof}
(i) It is obvious by definition of $(S_1)$ and Theorem \ref{4.4SSc}.

(ii)  It is clear by Theorem \ref{2.7S1}(ii).
\end{proof}

\chapter{Finite Cousin complexes }

\quad The aim of this chapter is  to study the class of modules
 whose Cousin complexes have finitely generated cohomology modules,  as a
subclass of modules which have uniform local cohomological
annihilators.  We  describe some essential properties of the
structure of Cousin complexes which are useful in the rest of the
thesis, in the first section. In the second section, we study the
theory of uniform annihilators of local cohomology and show that the
class of finite Cousin complex modules is a subset of this class,
and finally we use our approach to get some characterizations in the
last section.

\section{Cohomology modules of Cousin complexes}

\quad In this section we study the structure of cohomology modules
of Cousin complexes and develop some techniques which will be used
throughout the rest of the thesis.

Firstly, we discuss the relationship between Cousin complexes of
modules satisfying a short exact sequence. \index{Cousin complex, $\C_R(M)$}
\begin{lem}\label{211}
Assume that $(R, \fm)$ is a local ring.
\begin{itemize}
\item [\emph{(a)}] If $0\longrightarrow L\overset{f}{\longrightarrow}
M\overset{g}{\longrightarrow}N$ is an exact sequence of $R$--modules
with the property  that $\emph\h_M\fp\geq 2$ for all
$\fp\in\emph\Supp N$, then $\mathcal{C}_R(L)'\cong\mathcal{C}_R(M)'
$; in particular, if $L$ and $M$ are finitely generated $R$--modules, then $\mathcal{C}_R(L)$ is finite if and only if
$\mathcal{C}_R(M)$ is finite.
\item[\emph{(b)}] If  $ L\overset{f}{\longrightarrow}
M\overset{g}{\longrightarrow}N\longrightarrow 0$ is an exact
sequence of $R$--modules with the property that $\emph\h_M\fp\geq 1$
for all $\fp\in\emph\Supp L$, then
$\mathcal{C}_R(M)'\cong\mathcal{C}_R(N)'$; in particular,
if $M$ and $N$ are finitely generated $R$--modules,
$\mathcal{C}_R(M)$ is finite if and only if $\mathcal{C}_R(N)$ is
finite. \end{itemize}\end{lem}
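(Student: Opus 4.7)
The plan is to identify $\mathcal{C}_R(L)'$ with $\mathcal{C}_R(M)'$ (and likewise $\mathcal{C}_R(M)'$ with $\mathcal{C}_R(N)'$) term by term, exploiting the fact that in each case the ``difference'' module is supported on primes of sufficiently large $M$-height and so cannot affect the low-degree structure of the Cousin complex, and then propagating the identification inductively.

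For part (a), the given exactness yields an inclusion $M/L\hookrightarrow N$, so $\Supp(M/L)\subseteq \Supp N$. Hence for every prime $\fp$ with $\h_M\fp\leq 1$ one has $(M/L)_\fp=0$, i.e., $L_\fp=M_\fp$. A short chain-of-primes argument (every minimal prime of $\Supp M$ has $M$-height $0$ and so lies in $\Supp L$) then shows $\Min M=\Min L$, and when $L$ and $M$ are finitely generated one concludes $\Supp M=\Supp L$ with the $M$- and $L$-heights coinciding throughout. Consequently $M^0=\bigoplus_{\h_M\fp=0}M_\fp=L^0$ and, under this identification, $d_L^{-1}=d_M^{-1}\circ f$. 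An induction on $i$ now establishes $M^i=L^i$ with $d_M^{i-1}=d_L^{i-1}$: for $i=1$, the quotient $\im d_M^{-1}/\im d_L^{-1}$ is a quotient of $M/L$, hence supported on $\{\fp:\h_M\fp\geq 2\}$, so localizing the canonical surjection $\coker d_L^{-1}\twoheadrightarrow \coker d_M^{-1}$ at any prime of height $1$ kills the kernel and makes the corresponding summands of $M^1$ and $L^1$ coincide; for the inductive step, once the identification holds at levels $<i$, the cokernels $\coker d_M^{i-2}$ and $\coker d_L^{i-2}$ become literally equal, and summing over the common set of height-$i$ primes gives $M^i=L^i$.

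Part (b) is treated by the same strategy, but the argument is cleaner. The kernel $\ker g=\im f$ is a quotient of $L$ and so is supported in $\Supp L\subseteq\{\fp:\h_M\fp\geq 1\}$, giving $M_\fp=N_\fp$ whenever $\h_M\fp=0$; the parallel chain-of-primes argument shows $\Min M=\Min N$, so $M^0=N^0$. Here the surjectivity of $g$ together with the factorization $d_M^{-1}=d_N^{-1}\circ g$ forces $\im d_M^{-1}=\im d_N^{-1}$, so $\coker d_M^{-1}=\coker d_N^{-1}$ outright and the induction proceeds immediately. The ``in particular'' finiteness clause in both parts follows from the term-wise isomorphism, which forces $\mH^i_L=\mH^i_M$ (resp.\ $\mH^i_M=\mH^i_N$) for $i\geq 1$, together with the short exact sequence $0\to \im d_M^{-1}/\im d_L^{-1}\to \mH^0_L\to \mH^0_M\to 0$ in part (a) -- whose left-hand term is a finitely generated quotient of $M/L$ -- and the trivial identification $\mH^0_M=\mH^0_N$ in part (b). The main obstacle I expect is the careful bookkeeping in the inductive step of part (a): one must verify that the identifications of modules, differentials and height filtrations are all compatible at every level, which reduces to upward closedness of supports in the finitely generated case.
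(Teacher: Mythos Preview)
Your proposal is correct and follows essentially the same approach as the paper: both arguments establish $\Min L=\Min M$ (respectively $\Min M=\Min N$), identify the zeroth terms, show the kernel of the induced surjection on cokernels is supported on primes of large enough $M$-height, localize to identify the degree-one terms, and then induct. Your phrasing is slightly more direct in places---you observe that $\im d_M^{-1}/\im d_L^{-1}$ is a quotient of $M/L$ where the paper builds an explicit epimorphism $N\to\Ker\psi$ by diagram chase, and in part~(b) you note immediately that surjectivity of $g$ gives $\im d_M^{-1}=\im d_N^{-1}$ and hence $\mH^0_M=\mH^0_N$, whereas the paper routes through finiteness of $\Ker d_M^0$---but these are cosmetic differences, not a different argument.
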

\begin{proof}
 To prove (a), we argue on the following diagram.
 \begin{equation}\label{e211}
\begin{CD}
&&&&&0\\
&&&&& @VVV\\
&0&&0&&\Ker \psi \\
& @VVV  @VVV @VVV\\
&L @>{d_L^{-1}}>>  L^0 @>\theta>> L^0/\im d_L^{-1} @>>> 0\\
& @VVfV @VV{f^0}V @VV\psi V\\
&M @>{d_M^{-1}}>> M^0 @>\lambda >> M^0/\im d_M^{-1} @>>> 0\\
& @VVgV @VVV @VVV\\
& N @>>>0&&0\\
& @VVV\\
&0,
\end{CD}
\end{equation}
 As $\Supp \Im g\subseteq\Supp N$ we may replace $N$ by $\Im g$ and assume that
 $$0\longrightarrow L\overset{f}{\longrightarrow}
M\overset{g}{\longrightarrow}N\longrightarrow 0$$ is exact. Since
$\Min M\cap \Supp N=\emptyset$, $\Min M=\Min L$ and it follows  an isomorphism $f^0:
L^0\longrightarrow M^0$ over $f$, where $L^0= \underset{\fp\in\Min
L}{\oplus}L_\fp$ and $M^0= \underset{\fp\in\Min M}{\oplus}M_\fp$ and
$f^0 d_L^{-1}= d_M^{-1} f$.

We next consider the natural epimorphisms $$\theta
:L^0\longrightarrow L^0/\Im d_l^{-1}, \ \lambda :  M^0
\longrightarrow d_M^{-1}, \ \psi :L^0/\Im d_L^{-1} \longrightarrow
M^0/\Im d_M^{-1},$$ which  $\psi\theta= \lambda f^0$ and consider
the map $\varphi:=\theta (f^0)^{-1}d_M^{-1} :M\longrightarrow \Ker
\psi$, where $(f^0)^{-1}$ denotes the inverse map of $f^0$, and show
that $\varphi$ is an epimorphism. In order to prove this, choose an
element $x\in\Ker \psi$, there is an element $m_0\in M^0$ such that
$x= \theta (f^0)^{-1}(m_0)$. It follows that $m_0\in\Ker
\lambda\subseteq\im d_M^{-1}$. Thus $m_0=d_M^{-1}(m)$ for some $m\in
M$. Hence $x= \theta (f^0)^{-1}d_M^{-1}(m)= \varphi (m)$. As $\im
f\subseteq \Ker\varphi$, there is an epimorphism \vspace{-.2cm}
$$(N\cong)M/\im f\longrightarrow M/\Ker \varphi (\cong
\Ker\psi)$$ which implies that $\Supp \Ker \psi\subseteq\Supp N$. As
a result, for each $\fp\in\Supp M$ with $\h_M\fp= 1$ we have
$\fp\not\in\Supp \Ker \psi$.

Now, summing up the localizations of the exact sequence
$$o\longrightarrow \Ker\psi\longrightarrow L^0/\im
d_L^{-1}\longrightarrow M^0/\im d_M^{-1}\longrightarrow 0$$ at all
prime ideals $\fp\in\Supp M$ with $\h_M\fp= 1$, and taking the
natural maps into consideration, we find the following commutative
diagram
$$\begin{CD}
&  L @>{d_L^{-1}}>>  L^0 @>{d_L^{0}}>> L^1 @>\theta^1>>L^1/\im d_L^0 @>>> 0\\
& @VVfV @VV{f^0}V @VVf^1 V @VV\psi^1V  \\
&M @>{d_M^{-1}}>> M^0 @>{d_M^0}>> M^1 @>\lambda^1>>M^1/\im d_M^0 @>>> 0,\\
\end{CD}$$
 with $f^1$ is an isomorphism, $\theta^1$ and $\lambda^1$ are the
 natural epimorphisms and $\psi^1$ is the natural homomorphism. It is
 clear that $\psi^1$ is an isomorphism too. Now, by induction on $i$,
 we get a family of isomorphisms $(f^i)_{i\geq 0}$, $f^i: L^i\longrightarrow M^i, i>
 0$, such that the diagram
$$\begin{CD}
0 && 0 &&0 & & && 0&& 0 \\
@VVV  @VVV  @VVV & & @VVV  @VVV \\
L @>>> L^0 @>{d_L^{0}}>>  L^1 @>{d_L^1}>>
\cdots  @>>> L^{i-1} @>{d_L^{i-1}}>> L^i @>{d_L^i}>>  \cdots \\
@VVfV @VVf^0V @VV{f^1}V & & @VVf^{i-1} V  @VVf^iV \\
M @>>>M^0 @>{d_M^{0}}>>  M^1 @>{d_M^{1}}>> \cdots
@>>> M^{i-1} @>{d_M^{i-1}}>> M^i @>{d_M^{i}}>>  \cdots\\
@VVV  @VVV  @VVV & & @VVV  @VVV \\
N @>>> 0 &&0 & & && 0&& 0 \\
@VVV\\
0,\\
\end{CD}$$
 is commutative with exact rows. Now, it follows that
there are an exact sequence $N\longrightarrow \mH^0_L\longrightarrow
\mH^0_M\longrightarrow 0$ and isomorphisms $\mH_L^i\cong \mH_M^i$, $i>
0$. Thus the claim follows.

 (b) As $\Supp \Ker f\subseteq \Supp L$ we may replace $L$ by $\Ker f$ and assume
 that
  $$0 \longrightarrow L\overset{f}{\longrightarrow}
M\overset{g}{\longrightarrow}N\longrightarrow 0$$ is exact. Since
$\Min M\cap \Supp L= \emptyset$, we get
 $\Min M= \Min N$ and there is an isomorphism
 $g^0: M^0\longrightarrow N^0$ over $g$, where $M^0=
 \underset{\fp\in\Min M}{\oplus}M_\fp$ and $N^0=
 \underset{\fp\in\Min N}{\oplus}N_\fp$. So that we have the commutative
 diagram
 \begin{equation}
\begin{CD}
&&0\\
&&@VVV\\
&& L @>>>0\\
&& @VVV @VVV\\
&&M @>d_M^{-1}>> M^0 @>\gamma_M^0>> M^0/\im d_M^{-1} @>>> 0\\
&& @VVgV @VVg^0V @VV \lambda^0 V\\
&& N @>d_N^{-1}>> N^0 @>\gamma_N^0>> N^0/\im d_N^{-1} @>>> 0\\
&&@VVV @VVV\\
&&0&&0,
\end{CD}
\end{equation}
where $\gamma_M^0$, $\gamma_N^0$ and  $\lambda^0$ are natural
homomorphisms. It is easy to see that $\lambda^0$ is an isomorphism.
Therefore, there is an isomorphism $g^1: M^1\longrightarrow N^1$
over $g^0$, where $M^1=\underset{\underset{\h_M\fp=1}{\fp\in\Supp M
}}{\oplus}(M^0/\im d_M^{-1})_\fp$ and
$N^1=\underset{\underset{\h_N\fp=1}{\fp\in\Supp N }}{\oplus}(N^0/\im
d_N^{-1})_\fp$. Hence we have the following commutative diagram
where $g^0$ and $g^1$ are isomorphisms.

\begin{equation}\label{e213}
\begin{CD}
0 @>>> M^0 @>d_M^0>> M^1 \\
&& @V\cong Vg^0V @V\cong Vg^1V \\
0 @>>> N^0 @>d_N^0>> N^1 \\
\end{CD}\end{equation}
\\
Now, it follows by induction that
$\mathcal{C}_R(M)'\cong\mathcal{C}_R(N)'$.

To prove the final claim, let us assume that $\mathcal{C}_R(M)$ is
finite. It follows from the exact sequence $$0\longrightarrow \im
d_M^{-1}\longrightarrow \Ker d_M^0\longrightarrow
\mH^0_M\longrightarrow 0$$ that $\Ker d_M^0$ is finitely generated.
By (\ref{e213}), $\Ker d_N^0$ is finitely generated and so $\mH^0_N$
is finitely generated. The finiteness of $\mH_N^i$, $i\geq 1$,
follows by the isomorphism
$\mathcal{C}_R(M)'\cong\mathcal{C}_R(N)'$.
\end{proof}


As an application  of the above lemma, we prove the following result
which will be useful in our later methods.

\begin{prop}\label{s1}
Assume that $(R,\fm)$ is local and  $M$ is a finitely generated
$R$--module. Then there is a finitely generated  $R$--module $N$
which satisfies the condition $(S_1)$ with $\emph{\Supp}
N=\emph{\Supp} M$ and $\C_R(M)$ is finite if and only if $\C_R(N)$ is finite.
\end{prop}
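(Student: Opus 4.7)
The plan is to construct $N$ as a suitable quotient of $M$ that kills exactly the embedded associated primes, and then apply part (b) of Lemma \ref{211} to obtain the equivalence.

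Concretely, consider a minimal primary decomposition $0=\bigcap_{\fp\in\Ass M}Q(\fp)$ in $M$, where $Q(\fp)$ is the $\fp$-primary submodule. Set
\[
L:=\bigcap_{\fp\in\Min M}Q(\fp)\subseteq M,\qquad N:=M/L.
\]
Since the natural map $N\hookrightarrow\bigoplus_{\fp\in\Min M}M/Q(\fp)$ is injective and each $M/Q(\fp)$ has $\Ass=\{\fp\}$, one gets $\Ass N\subseteq\Min M$; on the other hand $\Min N\subseteq\Ass N$ and every $\fp\in\Min M$ lies in $\Supp N$ (it is not among the embedded primes removed), so in fact $\Ass N=\Min N=\Min M$. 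Hence $N$ satisfies $(S_1)$ and $\Supp N=\bigcup_{\fp\in\Min M}\V(\fp)=\Supp M$. Moreover, $\Ass L$ is contained in the set of embedded primes of $M$, so for every $\fp\in\Supp L$ the prime $\fp$ contains some embedded associated prime of $M$; this forces $\h_M\fp\geq 1$.

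With these two facts in hand, apply Lemma \ref{211}(b) to the exact sequence
\[
0\lo L\lo M\lo N\lo 0.
\]
The hypothesis $\h_M\fp\geq 1$ for all $\fp\in\Supp L$ is exactly what was verified, so the lemma gives $\C_R(M)'\cong\C_R(N)'$ and, since $M$ and $N$ are finitely generated, the finiteness of $\C_R(M)$ is equivalent to the finiteness of $\C_R(N)$. (Recall that $\mH_M^{-1}$ and $\mH_N^{-1}$ are always finitely generated as submodules of $M$ and $N$ respectively, so the equivalence of finiteness depends only on the truncated complexes $\C_R(M)'$ and $\C_R(N)'$.)

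No real obstacle is expected: the construction of $N$ by discarding the embedded-primary components is standard, and the verification $\h_M\fp\geq 1$ for $\fp\in\Supp L$ is the key observation that makes Lemma \ref{211}(b) applicable. The only place to be careful is checking $\Supp N=\Supp M$, which follows from $\Ass N=\Min M$ together with the fact that minimal primes of $M$ already determine $\Supp M$.
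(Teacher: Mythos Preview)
Your proof is correct and follows essentially the same approach as the paper: both construct a submodule $L$ of $M$ whose associated primes are exactly the embedded primes of $M$, set $N=M/L$, verify that $N$ satisfies $(S_1)$ with $\Supp N=\Supp M$, and then invoke Lemma~\ref{211}(b). The only cosmetic difference is that the paper cites Bourbaki for the existence of such an $L$, whereas you build it explicitly as the intersection of the primary components belonging to the minimal primes; these yield the same submodule.
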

\begin{proof}
There exists a submodule $L$ of $M$ such that $\Ass L =\Ass
M\setminus\Min M$, $\Ass M/L=\Min M$ (c.f. \cite[Page 263,
Proposition 4]{B}). Set $N:=M/L$. Now $N$ satisfies $(S_1)$ and
considering the exact sequence $0\longrightarrow L\longrightarrow M
\longrightarrow N\longrightarrow 0$ and the fact that $\Supp L\cap
\Min M=\emptyset$,  the result follows by Lemma \ref{211}(b).
\end{proof}

\begin{cor}\label{cs1}
Assume that $(R,\fm)$ is local and $M$ is a finitely generated
$R$--module such that $\C_R(M)$ is finite. Then there exists a
finitely generated $R$--module $N$ which satisfies $(S_1)$,
$\emph{\Supp} N=\emph{\Supp} M$ and $\C_R(N)$ is finite.
\end{cor}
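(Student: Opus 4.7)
The plan is to apply Proposition \ref{s1} directly. That proposition already produces, for any finitely generated $M$ over the local ring $(R,\fm)$, a finitely generated $R$--module $N$ satisfying $(S_1)$ with $\Supp N = \Supp M$ and with the biconditional property that $\C_R(M)$ is finite if and only if $\C_R(N)$ is finite.

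So first I would invoke Proposition \ref{s1} to obtain such an $N$. Then, using the hypothesis of the corollary that $\C_R(M)$ is finite, the ``only if'' direction of the biconditional immediately yields that $\C_R(N)$ is finite. This gives an $N$ with all three required properties: finitely generated, satisfying $(S_1)$, and $\Supp N = \Supp M$, together with $\C_R(N)$ finite.

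There is no real obstacle here; this is a direct specialization of Proposition \ref{s1}. For transparency one can recall, as in the proof of \ref{s1}, that $N$ can be taken to be $M/L$ where $L \subseteq M$ is chosen so that $\Ass L = \Ass M \setminus \Min M$ and $\Ass(M/L) = \Min M$ (which exists by \cite[Ch.~IV, \S1, Prop.~4]{B}), so the construction is explicit. The content of the corollary is thus just the forward implication of Proposition \ref{s1}, recorded as a standalone statement for later reference.
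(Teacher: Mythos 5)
Your proof is correct and is exactly the argument intended by the paper, which leaves Corollary \ref{cs1} without an explicit proof precisely because it is the forward implication of Proposition \ref{s1} restated for convenient citation. The extra recollection of the construction $N = M/L$ is accurate and harmless.
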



Consider the assumption and notation of Proposition \ref{s1}. In the
following result we find  $N$ satisfies  the condition $(S_2)$
whenever  $R$ is a homomorphic image of a Gorenestein ring. This
result will be useful  to find some sufficient conditions for
finiteness of Cousin complexes.


\begin{prop}\label{p214}
Assume that $(R,\fm)$ is a homomorphic image of a Gorenestein ring and $M$ is a finitely generated
equidimensional $R$--module. Then there exists a finitely generated $R$--module $N$ which satisfies the condition
$(S_2)$, $\Supp N=\Supp M$ and $\C_R(M)$ is finite if and only if $\C_R(N)$ is finite.
\end{prop}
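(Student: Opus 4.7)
The plan is to take $N$ to be the canonical bidual $K_{K_{M'}}$ of a suitable Serre-$(S_1)$ reduction $M'$ of $M$, and to compare the Cousin complexes of $M$ and $N$ through the biduality short exact sequence together with Lemma \ref{211}(a). First I would apply Proposition \ref{s1} to obtain a finitely generated $R$-module $M'$ satisfying $(S_1)$ with $\Supp M' = \Supp M$, such that $\C_R(M)$ is finite if and only if $\C_R(M')$ is finite. Since $\Supp M' = \Supp M$ and $\dim M' = \dim M$, the module $M'$ inherits equidimensionality from $M$. Because $R$ is a homomorphic image of a Gorenstein local ring, $R$ is universally catenary, so the dimension formula $\h_{M'}\fp + \dim R/\fp = \dim M'$ holds for every $\fp \in \Supp M'$, and the localizations $M'_\fp$ remain equidimensional.

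Set $N := K_{K_{M'}}$. Lemma \ref{1.9Sc}(iii) gives that $N$ satisfies $(S_2)$. By Lemma \ref{1.9Sc}(ii), $\Min K_{M'} = \Assh M' = \Min M'$, and since $\Ann M' \subseteq \Ann K_{M'}$, we obtain $\Supp K_{M'} = \Supp M'$; iterating yields $\Supp N = \Supp M$. Both $K_{M'}$ and $N$ remain equidimensional of dimension $\dim M'$, and the dimension formula permits Lemma \ref{1.9Sc}(i) to apply, so $(K_{M'})_\fp \cong K_{M'_\fp}$ and $N_\fp \cong K_{K_{M'_\fp}}$ for each $\fp \in \Supp M'$, while naturality of $\tau$ gives $(\tau_{M'})_\fp = \tau_{M'_\fp}$.

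Theorem \ref{1.14Sc} with $k=1$ then shows $\tau_{M'}: M' \to N$ is injective, producing a short exact sequence
\[
0 \longrightarrow M' \longrightarrow N \longrightarrow C \longrightarrow 0
\]
with $C = \coker \tau_{M'}$. For any $\fp \in \Supp M'$ with $\h_{M'}\fp \leq 1$, the $R_\fp$-module $M'_\fp$ has dimension at most $1$ and satisfies $(S_1)$, hence automatically satisfies $(S_2)$; Theorem \ref{1.14Sc} with $k=2$ then makes $\tau_{M'_\fp}$ bijective, so $C_\fp = 0$. Thus $\Supp C \subseteq \{\fp \in \Supp M' : \h_{M'}\fp \geq 2\}$, which coincides with $\{\fp : \h_N \fp \geq 2\}$ because $\h_N \fp = \dim K_{K_{M'_\fp}} = \dim M'_\fp = \h_{M'}\fp$. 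Lemma \ref{211}(a) then yields that $\C_R(M')$ is finite if and only if $\C_R(N)$ is finite, and combining with the reduction from $M$ to $M'$ completes the argument, with $N$ satisfying $(S_2)$ and $\Supp N = \Supp M$ as required.

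The main technical obstacle is controlling $\Supp C$: this relies on the interplay of the $(S_1)$-reduction, which is what makes the biduality map injective and lets the $(S_2)$ condition be tested prime by prime; the localization of the canonical module from Lemma \ref{1.9Sc}(i), which requires equidimensionality together with catenarity; and the characterization in Theorem \ref{1.14Sc} of bijectivity of the biduality map in terms of $(S_2)$.
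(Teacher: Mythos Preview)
Your proof is correct and follows essentially the same route as the paper: reduce to the $(S_1)$ case via Proposition~\ref{s1}, set $N=K_{K_{M'}}$, use Lemma~\ref{1.9Sc} and Theorem~\ref{1.14Sc} to control $\Supp N$ and the cokernel of the biduality map at primes of height $\leq 1$, and finish with Lemma~\ref{211}(a). The only differences are cosmetic---you spell out the catenarity/dimension-formula justification for localizing the canonical module and argue $\Supp N=\Supp M$ via annihilators rather than directly via $\Ass N=\Ass M$---but the underlying argument is the same.
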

\begin{proof}
By Proposition \ref{cs1}, we may assume that $M$ satisfies the condition $(S_1)$, i.e. $\Ass M=\Min M$, so that
$\Ass M=\Assh M$. Since $R$ is a homomorphic image of a Gorenestein ring, we may define the canonical module $K_M$.
Set $N:=K_{K_M}$. Then $K_M$ and $N$ satisfy the condition $(S_2)$ by Lemma \ref{1.9Sc}(iii). Now, by Theorem
\ref{1.14Sc}, we may consider the exact sequence
$$0\longrightarrow M\longrightarrow N\longrightarrow L\longrightarrow0.$$
Note that if $\fp\in\Supp M$ such that $\h_M\fp\leq 1$, then $M_\fp$ is a Cohen-Macaulay module. Thus Theorem \ref{1.14Sc}
implies that $L_\fp=0$.
By lemma \ref{1.9Sc}, $\Ass N=\Assh K_M=\Assh M=\Ass M$ so that $\Supp N=\Supp M$ and we have $\h_N\fp\geq2$ for all $\fp\in\Supp L$. Now Lemma \ref{211}(a) implies the result.
\end{proof}

Recall that, for a local ring $(R,\fm)$ and the natural ring
homomorphism $R\longrightarrow \widehat{R}$ and  any prime ideal
$\fp\in\Spec R$, the ring $\widehat{R}\otimes_R k(\fp)$ is called
the  \emph{formal fibre}\index{formal fibre|textbf} of $R$ over
$\fp$, where $k(\fp)=R_\fp/\fp R_\fp$. Note that all formal fibres
of a homomorphic image of a Gorenestein ring are Cohen-Macaulay.

The following result of H.~Petzl, helps us to use Proposition
\ref{p214} more efficiently.

\begin{lem}\emph{\cite[Theorem 3.5]{P}}\label{Pet}\index{formal
fibre}
 Assume  that $(R,\fm)$ is a local ring and all formal fibres of $R$  satisfy the condition $(S_1)$.
 Let $M$ be a finitely generated $R$--module. Then
there is a monomorphism of complexes
$u^\bullet:~\C_R(M)\otimes_R\widehat{R}\longrightarrow\C_{\widehat{R}}(\widehat{M}).$
More over if all formal fibres of $R$ are Cohen-Macaulay,  the
quotient complex $Q^\bullet$, in the exact sequence
$$0\longrightarrow\C_R(M)\otimes_R\widehat{R}\overset{u^\bullet}{\longrightarrow}\C_{\widehat{R}}(\widehat{M})\longrightarrow
Q^\bullet\longrightarrow0,$$ is an exact complex. In particular for
each $i\geq 0$, there exists an $\widehat{R}$--isomorphism
$$\mH^i_M\otimes_R\widehat{R}\cong\mH^i_{\widehat{M}} .$$
\end{lem}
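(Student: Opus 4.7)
The plan is to construct the map $u^\bullet$ level by level, exploiting the naturality of the Cousin construction and the flatness of the completion $R\to\widehat R$. A preliminary step is the standard fact that under the $(S_1)$ assumption on the formal fibres, if $\fP\in\Spec\widehat R$ lies over $\fp\in\Spec R$ and $\fP\widehat R_\fp$ is minimal over $\fp\widehat R_\fp$, then $\h_{\widehat M}\fP=\h_M\fp$; this is the assertion that $(S_1)$ fibres preserve the height stratification, and it is exactly what makes the boundary sets $\partial H_i(\widehat M)$ compatible with completion of $\partial H_i(M)$.

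Starting from $u^{-1}=\mathrm{id}_{\widehat M}$, at each level $i\geq 0$ the candidate map
\[
u^i\colon M^i\otimes_R\widehat R\;\longrightarrow\;\widehat M^i
\]
is assembled by localizing each summand $(\coker d_M^{i-2})_\fp$ of $M^i$ at $R\setminus\fp$, using the flat decomposition of $\widehat R_\fp$ to split it across those $\fP\in\Spec\widehat R$ which lie over $\fp$ and are minimal over $\fp\widehat R_\fp$, and then landing in the corresponding summands $(\coker d_{\widehat M}^{i-2})_\fP$ of $\widehat M^i$. Compatibility with the differentials, $u^i\circ(d_M^{i-1}\otimes 1)=d_{\widehat M}^{i-1}\circ u^{i-1}$, follows from the uniform recipe $x\mapsto x/1$ that defines every coboundary in the Cousin construction.

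The main obstacle will be proving that $u^\bullet$ is a monomorphism and, under the stronger hypothesis that all formal fibres are Cohen--Macaulay, that the quotient complex $Q^\bullet$ is exact. Injectivity will proceed by induction on $i$: the primes $\fP$ of $\widehat R$ lying over $\fp\in\partial H_i(M)$ that do not contribute to $\widehat M^i$ necessarily satisfy $\h_{\widehat M}\fP>i$ and thus appear only in later columns of $\C_{\widehat R}(\widehat M)$, so that flatness of $\widehat R$ over $R$, together with injectivity of $u^{i-1}$, will force $u^i$ to be injective at level $i$. For the exactness of $Q^\bullet$ under the Cohen--Macaulay hypothesis, the plan is to decompose $Q^\bullet$ column by column over $\fp\in\Supp M$ and identify each direct summand with a shifted Cousin complex of a Cohen--Macaulay module over the Cohen--Macaulay formal fibre at $\fp$; Theorem~\ref{2.4S4} then makes each summand acyclic, and hence so is $Q^\bullet$.

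Finally, the isomorphism $\mH^i_M\otimes_R\widehat R\cong\mH^i_{\widehat M}$ will drop out of the long exact cohomology sequence of
\[
0\longrightarrow \C_R(M)\otimes_R\widehat R\longrightarrow \C_{\widehat R}(\widehat M)\longrightarrow Q^\bullet\longrightarrow 0,
\]
since flatness of $\widehat R$ over $R$ identifies the cohomology of the left hand complex with $\mH^\bullet_M\otimes_R\widehat R$, while the acyclicity of $Q^\bullet$ makes the connecting maps in the long exact sequence zero.
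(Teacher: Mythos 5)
This lemma is not proved in the thesis; it is quoted verbatim from Petzl's paper \cite[Theorem 3.5]{P}, so there is no in-paper proof to compare against. Judged on its own merits, your sketch has the right skeleton — build $u^\bullet$ level by level from the flat base change $R\to\widehat R$, then get the final isomorphism from the long exact cohomology sequence once $Q^\bullet$ is shown acyclic — but there are two substantive gaps.

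First, the role you assign to $(S_1)$ is misplaced. The equality $\h_{\widehat M}\fP=\h_M\fp$ for $\fP$ minimal over $\fp\widehat R$ is just the flat dimension formula (Lemma \ref{standard}(i)) and holds with no hypothesis on the fibres. What the $(S_1)$ hypothesis actually buys you is injectivity: the canonical map from $(\coker d_M^{i-2})_\fp\otimes_R\widehat R$ into the direct sum of localizations at the primes $\fP$ minimal over $\fp\widehat R$ has its kernel supported on embedded primes of the fibre ring $\widehat R\otimes_R k(\fp)$, and $(S_1)$ on the fibres is precisely the statement that such embedded primes do not exist. Your inductive injectivity step, as written, does not articulate this; the snake-lemma diagram comparing $\coker(d_M^{i-2}\otimes\widehat R)$ with $\coker d_{\widehat M}^{i-2}$ needs $u^{i-2}$ to behave well, not merely $u^{i-1}$ to be injective, and the $(S_1)$ hypothesis is what rescues the argument.

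Second, and more seriously, $Q^\bullet$ does not decompose as a direct sum of subcomplexes ``column by column over $\fp$.'' The differential of $\C_{\widehat R}(\widehat M)$ sends a component indexed by $\fP$ into components indexed by primes $\fQ\supsetneq\fP$, and there is no reason for $\fQ\cap R$ to equal $\fP\cap R$; contributions to $Q^\bullet$ coming from distinct $\fp\in\Supp M$ therefore interact. What one has is a filtration of $Q^\bullet$ (for instance by $\dim R/(\fP\cap R)$) whose associated graded pieces are built from shifted Cousin complexes over the formal fibre rings, and one must then run a filtration or spectral sequence argument, using Theorem~\ref{2.4S4} to kill each associated graded piece when the fibres are Cohen--Macaulay. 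Without replacing ``direct summand'' by ``graded piece of a filtration,'' the exactness step does not go through as stated.

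The last paragraph of your sketch is fine: flatness identifies $\H^i(\C_R(M)\otimes_R\widehat R)$ with $\mH^i_M\otimes_R\widehat R$, and once $Q^\bullet$ is acyclic the long exact sequence of the short exact sequence of complexes immediately gives $\mH^i_M\otimes_R\widehat R\cong\mH^i_{\widehat M}$.
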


In \cite[Theorem 2.1]{D}, Dibaei proves  that, over a local ring
with Cohen-Macaulay formal fibres, the Cousin complex of a finitely
generated  module $M$ is finite provided $M$ satisfies $(S_2)$ and
$\widehat{M}$ is equidimensional. We are now  able to show that the
condition $(S_2)$ is superfluous. Kawasaki also obtains this result
in the proof of \cite[Theorem 5.5]{K} by a different method.

\begin{cor}\label{216}\index{Cousin complex, $\C_R(M)$!finite-}
Assume that $(R, \fm)$ is a local ring such that all of its formal
fibers are Cohen-Macaulay. Assume that $M$ is a finitely generated
$R$--module such that $\widehat{M}$ is equidimensional
$\widehat{R}$--module. Then the Cousin complex of $M$,
$\mathcal{C}_R(M)$, is finite.\end{cor}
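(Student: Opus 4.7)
The plan is to reduce, via completion, to the case already handled by Dibaei's Theorem 2.1 of \cite{D} (Theorem 1 in the introduction), which assumes in addition the condition $(S_2)$. Since Lemma \ref{Pet} requires only that the formal fibres satisfy $(S_1)$---which holds a fortiori here---we have $\mH^i_M \otimes_R \widehat{R} \cong \mH^i_{\widehat{M}}$ as $\widehat{R}$-modules for every $i \geq 0$. A standard faithfully flat descent argument then shows it suffices to prove each $\mH^i_{\widehat{M}}$ is finitely generated as an $\widehat{R}$-module: given finitely many generators of $\mH^i_M \otimes_R \widehat{R}$, one collects the finitely many ``left components'' appearing in their elementary-tensor expansions, builds the $R$-submodule of $\mH^i_M$ they generate, and invokes faithful flatness of $R \to \widehat{R}$ to conclude that this submodule already coincides with $\mH^i_M$.

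It therefore remains to show that $\C_{\widehat{R}}(\widehat{M})$ is finite. The completion $\widehat{R}$ is a complete Noetherian local ring, hence by Cohen's structure theorem it is a homomorphic image of a regular---and in particular Gorenstein---local ring, so that Proposition \ref{p214} is available over $\widehat{R}$. Applying it to the equidimensional module $\widehat{M}$ produces a finitely generated $\widehat{R}$-module $N$ that satisfies $(S_2)$, has $\Supp N = \Supp \widehat{M}$, and enjoys the property that $\C_{\widehat{R}}(\widehat{M})$ is finite if and only if $\C_{\widehat{R}}(N)$ is finite. Because the supports agree, $\Min N = \Min \widehat{M} = \Assh \widehat{M} = \Assh N$, so $N$ is equidimensional; and since $\widehat{R}$ is already complete, the canonical map $N \to \widehat{N}$ is an isomorphism, making $\widehat{N}$ equidimensional as well.

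All the hypotheses of \cite[Theorem 2.1]{D} are now in place for $N$ over $\widehat{R}$: the formal fibres of $\widehat{R}$ are fields, hence Cohen--Macaulay; $N$ satisfies $(S_2)$; and $\widehat{N}$ is equidimensional. That theorem yields the finiteness of $\C_{\widehat{R}}(N)$, and tracing back through Proposition \ref{p214} and Lemma \ref{Pet} delivers the finiteness of $\C_R(M)$. The chief non-routine ingredient of the argument is the faithfully flat descent of finite generation in the first step; by contrast the preservation of equidimensionality under the $(S_2)$-fication of Proposition \ref{p214} is immediate from the agreement of supports.
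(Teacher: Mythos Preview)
Your argument is correct and follows essentially the same route as the paper: pass to the completion via Lemma~\ref{Pet}, use that $\widehat{R}$ is a homomorphic image of a Gorenstein ring to invoke Proposition~\ref{p214} and produce an $(S_2)$ module $N$ with the same support, then apply \cite[Theorem~2.1]{D} to $N$. One small imprecision: the isomorphism $\mH^i_M\otimes_R\widehat{R}\cong\mH^i_{\widehat{M}}$ in Lemma~\ref{Pet} requires the full Cohen--Macaulay hypothesis on the formal fibres, not merely $(S_1)$ (the $(S_1)$ hypothesis alone only gives the monomorphism of complexes); since you have Cohen--Macaulay formal fibres this does not affect the argument, but the parenthetical remark should be adjusted.
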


\begin{proof}
We may assume that $\dim M\geq 2$, by Lemma \ref{1310}(ii). Since
for each  $i$, by Lemma \ref{Pet}, there is an isomorphism
$\mH^{i}_M
   \otimes_{R}\widehat{R} \cong
   \mH^{i}_{\widehat{M}}$,  finiteness of
 $\mathcal{C}_R(M)$ is equivalent to
 finiteness  of $\mathcal{C}_{\widehat{R}}(\widehat{M})$.
Hence we may assume that $R$ is complete, and so that $R$ is a
homomorphic image  of a Gorenstein ring. Now, by Proposition
\ref{p214}, there exists a finitely generated $R$--module which
satisfies the condition $(S_2)$ and
$\C_R(M)$ is finite if and only if $\C_R(N)$ is finite.

Since $N$ satisfies the condition $(S_2)$ and $\Supp N=\Supp M$, so
that $N$ is equidimensional, \cite[Theorem 2.1]{D} implies that
$\C_R(N)$ is finite and so the result follows.
\end{proof}


 The following technical result will play a key role in the rest of the chapter.


\begin{prop}\label{p222}
Let $M$ be an $R$--module and let $\fa$ be an ideal of $R$ such that
$\fa M\not = M$. Then, for each non--negative integer $r$ with $r<
\emph{\h}_M\fa$, \emph{$$\prod_{i= 0}^r(0 :_R \Ext_R^{r-i}(R/\fa,
\mH_M^{i-1}))\subseteq 0 :_R \Ext_R^r(R/\fa, M).$$} Here $\prod$ is
used for product of ideals.
\end{prop}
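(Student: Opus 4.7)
The plan is to extract a four-term exact sequence that recursively relates the $\Ext$-modules of successive quotients in the Cousin complex, using the two short exact sequences (\ref{e1}) and (\ref{e2}) together with the vanishing given by Lemma \ref{tl}, and then to run a downward induction on the position in the complex.

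Concretely, I would set $U_l := M^{l-1}/K^{l-1}$ and $L_l := M^l/D^l$ for $l \geq -1$, so that $L_{-1} = M$. The exact sequences (\ref{e1}) and (\ref{e2}) read
$$0 \longrightarrow U_l \longrightarrow M^l \longrightarrow L_l \longrightarrow 0, \qquad 0 \longrightarrow \mH_M^{l-1} \longrightarrow L_{l-1} \longrightarrow U_l \longrightarrow 0.$$
For each $l$ with $0 \leq l \leq r$ we have $l \leq r < \h_M\fa$, so Lemma \ref{tl}(b) yields $\Ext_R^s(R/\fa, M^l) = 0$ for every $s \geq 0$. Applying $\Hom_R(R/\fa, -)$ to the first sequence then forces $\Hom_R(R/\fa, U_l) = 0$ and gives the isomorphism $\Ext_R^s(R/\fa, U_l) \cong \Ext_R^{s-1}(R/\fa, L_l)$ for every $s \geq 1$. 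Plugging these into the long exact sequence associated with the second sequence produces, for each $s \geq 1$, an exact sequence
$$\Ext_R^s(R/\fa, \mH_M^{l-1}) \xrightarrow{\alpha} \Ext_R^s(R/\fa, L_{l-1}) \xrightarrow{\beta} \Ext_R^{s-1}(R/\fa, L_l) \longrightarrow \Ext_R^{s+1}(R/\fa, \mH_M^{l-1}).$$

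Writing $\fb_i := 0 :_R \Ext_R^{r-i}(R/\fa, \mH_M^{i-1})$, I would then prove by downward induction on $l$, from $l = r-1$ to $l = -1$, that $\prod_{i=l+1}^r \fb_i$ annihilates $\Ext_R^{r-l-1}(R/\fa, L_l)$. For the base $l = r - 1$, the vanishing $\Hom_R(R/\fa, U_r) = 0$ gives $\Hom_R(R/\fa, L_{r-1}) \cong \Hom_R(R/\fa, \mH_M^{r-1})$, which is annihilated by $\fb_r$. For the inductive step, take $y \in \Ext_R^{r-l}(R/\fa, L_{l-1})$ and apply the four-term exact sequence above with $s = r - l$: by the inductive hypothesis, $\prod_{i=l+1}^r \fb_i \cdot \beta(y) = 0$, so $\prod_{i=l+1}^r \fb_i \cdot y \in \Ker \beta = \Im \alpha$, and this image is a quotient of $\Ext_R^{r-l}(R/\fa, \mH_M^{l-1})$, which is killed by $\fb_l$; multiplying delivers the statement for $l - 1$. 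Taking $l = -1$ and using $L_{-1} = M$ gives the theorem.

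The main delicacy is the careful tracking of indices, in particular ensuring that Lemma \ref{tl} applies uniformly in the range $0 \leq l \leq r$ (which is precisely the strict inequality $r < \h_M\fa$), and handling the base case where the four-term sequence collapses to an isomorphism via the vanishing $\Hom_R(R/\fa, U_r) = 0$.
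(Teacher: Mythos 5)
Your proof is correct and essentially matches the paper's argument: both exploit the same pair of short exact sequences (\ref{e1}), (\ref{e2}), the vanishing of $\Ext_R^s(R/\fa,M^l)$ from Lemma \ref{tl}(b), and the resulting dimension-shift isomorphisms $\Ext_R^s(R/\fa,U_l)\cong\Ext_R^{s-1}(R/\fa,L_l)$. The only difference is cosmetic -- the paper runs an upward induction on $j$ carrying the compound invariant $\prod_{i=0}^j\fb_i\cdot(0:_R\Ext_R^{r-j}(R/\fa,M^{j-1}/K^{j-1}))\subseteq 0:_R\Ext_R^r(R/\fa,M)$ and finishes by observing $\Ext_R^0(R/\fa,M^{r-1}/K^{r-1})=0$, whereas you run the same staircase backward with the cleaner invariant that $\prod_{i=l+1}^r\fb_i$ annihilates $\Ext_R^{r-l-1}(R/\fa,L_l)$, absorbing that vanishing into your base case.
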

\begin{proof}
 For each $j\geq -1$, recall the natural exact
sequences \ref{e1} and \ref{e2}.
\begin{equation} \label{e221}0\longrightarrow M^{j-1}/K^{j-1}\longrightarrow
M^j\longrightarrow M^j/D^j\longrightarrow 0,
\end{equation}
\begin{equation} \label{e222}0\longrightarrow \mH_M^{j-1}\longrightarrow
M^{j-1}/D^{j-1}\longrightarrow M^{j-1}/K^{j-1}\longrightarrow 0.
\end{equation}
Let $0\leq r< \h_M\fa$. We prove by induction on $j$, $0\leq j\leq
r$, that
\begin{equation}\label{e223} \prod_{i= 0}^j (0 :_R
\Ext_R^{r-i}(R/\fa, \mH_M^{i-1}))\cdot(0 :_R\Ext_R^{r-j}(R/\fa,
M^{j-1}/K^{j-1}))\subseteq 0 :_R \Ext_R^r(R/\fa, M). 
\end{equation} In case $j= 0$, the exact sequence (\ref{e222}) implies the
exact sequence
$$\Ext_R^r(R/\fa, \mH_M^{-1})\longrightarrow \Ext_R^r(R/\fa, M)\longrightarrow
\Ext_R^r(R/\fa, M^{-1}/K^{-1}),$$ so that $$(0 :_R \Ext_R^r(R/\fa,
\mH_M^{-1}))\cdot (0 :_R \Ext_R^r(R/\fa, M^{-1}/K^{-1}))\subseteq 0 :_R
\Ext_R^r(R/\fa, M),$$ and thus the case $j= 0$ is justified.

Assume that $0\leq j< r$ and formula (\ref{e223}) is settled for
$j$. Therefore, by Lemma \ref{tl}(b), the exact sequence \ref{e221}
implies that
\begin{equation}\label{e224}
\Ext_R^{r-j}(R/\fa, M^{j-1}/K^{j-1})\cong \Ext_R^{r-j-1}(R/\fa,
M^j/D^j). 
\end{equation}
On the other hand the exact sequence (\ref{e222}) implies the exact
sequence
$$
\Ext_R^{r-j-1}(R/\fa, \mH_M^j)\longrightarrow \Ext_R^{r-j-1}(R/\fa,
M^j/D^j)\longrightarrow \Ext_R^{r-j-1}(R/\fa, M^j/K^j),
$$
from which it follows that {\small\begin{equation}\label{e225} (0:_R
\Ext_R^{r-j-1}(R/\fa, \mH_M^j))\cdot(0 :_R \Ext_R^{r-j-1}(R/\fa,
{M^j}/{K^j}))\subseteq  \hfill 0 :_R \Ext_R^{r-j-1}(R/\fa,
{M^j}/{D^j}). 
\end{equation}}
Now (\ref{e224}) and (\ref{e225}) imply that {\small
\begin{equation}\label{e226} (0:_R \Ext_R^{r-j-1}(R/\fa,
\mH_M^j))\cdot(0 :_R \Ext_R^{r-j-1}(R/\fa, {M^j}/{K^j}))\subseteq 0 :_R
\Ext_R^{r-j}(R/\fa,
{M^{j-1}}/{K^{j-1}}). 
\end{equation}}
From (\ref{e226}), it follows that {\small $$ \prod_{i= 0}^{j+1}(0
:_R \Ext_R^{r-i}({R}/{\fa}, \mH_M^{i-1}))\cdot
(0:_R\Ext_R^{r-j-1}({R}/{\fa}, {M^j}/{K^j})) =$$
$${\prod}_{i= 0}^{j}(0 :_R \Ext_R^{r-i}({R}/{\fa}, \mH_M^{i-1}))\cdot (0
:_R \Ext_R^{r-j-1}({R}/{\fa}, \mH_M^{j}))\cdot(0 :_R
\Ext_R^{r-j-1}({R}/{\fa}, {M^j}/{K^j})) \subseteq
$$ $$\prod_{i= 0}^{j}(0 :_R \Ext_R^{r-i}({R}/{\fa}, \mH_M^{i-1}))\cdot (0
:_R \Ext_R^{r-j}({R}/{\fa}, M^{j-1}/K^{j-1})),$$}

 \noindent and, by the induction hypothesis (\ref{e223}), it follows that  $$\prod_{i= 0}^{j+1}
 (0 :_R \Ext_R^{r-i}(R/\fa, \mH_M^{i-1}))\cdot
(0:_R\Ext_R^{r-j-1}(R/\fa, M^j/K^j))\subseteq 0 :_R \Ext_R^r(R/\fa,
M).$$ \noindent  This is the end of the induction argument.

Note that  there is an embedding $\Ext_R^0(R/\fa,
M^{r-1}/K^{r-1})\hookrightarrow \Ext_R^0(R/\fa, M^r)$, by the exact
sequence (\ref{e221}). On the other hand $\Ext_R^0(R/\fa, M^r)= 0$
by Lemma \ref{tl}(b). So that $\Ext_R^0(R/\fa, M^{r-1}/K^{r-1})=0$
and now  putting  $j= r$ in (\ref{e223}) gives the result.
\end{proof}


An immediate corollary to the above result is the following.

\begin{cor}\label{c224}
 Assume that  $\fa$ is an
ideal of $R$ such that $\fa M\not= M$. Then, for each integer $r$
with $0\leq r< \emph{\h}_M\fa$, $$\prod_{i= -1}^{r-1}(0 :_R
\mH_M^i)\subseteq\bigcap_{i= 0}^r(0 :_R {\emph{\Ext}}_R^i(R/\fa,
M))\subseteq \bigcap_{i= 0}^r(0:_R\emph{\H}^i_\fa(M)).$$
\end{cor}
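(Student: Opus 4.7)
The plan is to derive both inclusions directly from Proposition \ref{p222}, with the key observation being that annihilators propagate through $\Ext$ and through direct limits.

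For the first inclusion, I would apply Proposition \ref{p222} not just at the index $r$ but at each $j$ with $0 \leq j \leq r$, which is legitimate since $j \leq r < \h_M\fa$. This produces, for every such $j$,
$$\prod_{i=0}^{j}\bigl(0:_R \Ext_R^{j-i}(R/\fa,\mH_M^{i-1})\bigr) \subseteq 0:_R \Ext_R^j(R/\fa,M).$$
The trivial but crucial observation is that if $x \in R$ annihilates a module $N$, then $x$ annihilates every $\Ext_R^s(R/\fa,N)$ by functoriality, so $(0:_R \mH_M^{i-1}) \subseteq (0:_R \Ext_R^{j-i}(R/\fa, \mH_M^{i-1}))$. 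Substituting on the left promotes the product to $\prod_{i=-1}^{j-1}(0:_R \mH_M^i)$, and since a product with more factors is contained in a product with fewer, $\prod_{i=-1}^{r-1}(0:_R \mH_M^i) \subseteq \prod_{i=-1}^{j-1}(0:_R \mH_M^i)$. Intersecting the resulting containment over $0 \leq j \leq r$ yields the first inclusion.

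For the second inclusion, I would exploit the fact that $\rad \fa = \rad \fa^n$, hence $\h_M\fa = \h_M\fa^n$, so Proposition \ref{p222} and the argument of the previous paragraph apply verbatim with $\fa$ replaced by $\fa^n$. This gives $\prod_{i=-1}^{r-1}(0:_R \mH_M^i) \subseteq 0:_R \Ext_R^j(R/\fa^n, M)$ for every $n$ and every $0 \leq j \leq r$. Using the standard identification $\H^j_\fa(M) \cong \varinjlim_n \Ext_R^j(R/\fa^n, M)$, and the elementary fact that an element annihilating every term of a directed system annihilates the colimit, one concludes $\prod_{i=-1}^{r-1}(0:_R \mH_M^i) \subseteq 0:_R \H^j_\fa(M)$ for each $0 \leq j \leq r$, and intersecting gives the desired containment.

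There is no substantial obstacle; the whole argument is careful bookkeeping with products and intersections of annihilators, plus the two elementary facts about $\Ext$ and direct limits mentioned above. The only mild subtlety is that the displayed chain in the statement is most naturally read as the assertion that the leftmost product lies in each of the two intersections on the right, rather than as an elementwise inclusion $\bigcap \Ext \subseteq \bigcap \H$; the strategy above is precisely tailored to produce this reading, since both inclusions are deduced by showing that $\prod_{i=-1}^{r-1}(0:_R \mH_M^i)$ annihilates every relevant module.
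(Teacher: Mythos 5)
Your proof is correct and takes essentially the same route as the paper's one-line argument, which cites precisely the two facts you spell out: linearity of the $\Ext$ functors and the colimit identification $\H^i_\fa(M)\cong\varinjlim_n\Ext_R^i(R/\fa^n,M)$. Your closing observation about the intended reading of the displayed chain is apt, since the argument shows the leftmost product lies in each of the two intersections rather than deriving the middle-to-right containment.
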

\begin{proof}
It follows by Proposition \ref{p222} and the fact that the extension
functors are linear and $\H_\fa^i(M)\cong
\underset{\underset{j}{\longrightarrow}}{\lim}(\Ext_R^i(R/\fa^j,
M))$.
\end{proof}


\section{Uniform annihilators of local cohomology}

\quad Recall that an element $x\in R\setminus\cup_{\fp\in\Min M}\fp$ is a \emph{uniform local
cohomological annihilator}  \index{uniform local cohomological
annihilator|textbf} of an $R$--module $M$ if, for every maximal
ideal $\fm$, $x\H^i_\fm(M)=0$ for all $i<\h_M\fm$. Moreover,  $x$ is
called a \emph{strong uniform local cohomological
annihilator}\index{uniform local cohomological annihilator!
strong-|textbf} \index{strong uniform local cohomological
annihilator|textbf} of $M$ if $x$ is a uniform local cohomological
annihilator of $M_\fp$ for every prime ideal $\fp$ in $\Supp M$.  $R$
is called \emph{universally catenary}\index{universally
catenary|textbf} if every finitely generated $R$--algebra is
catenary.

As a basic property of a ring $R$ containing a uniform local
cohomological annihilator, it is proved that $R$ must be locally
equidimensional \index{locally equidimensional}
\index{equidimensional!locally-} and universally catenary (c.f.
\cite[Theorem 2.1]{Z}).
 One of essential results about uniform annihilators of local cohomology, is the following result due to Zhou which reduces the
property that a ring $R$ has a uniform local cohomological
annihilator to the same property for $R/\fp$ for each minimal prime
$\fp$ of $R$.


\begin{thm}\emph{\cite[Theorem 3.2]{Z}}\label{z3.2}
Let $R$ be of finite dimension $d$. Then the following conditions
are equivalent.
\begin{enumerate}
\item[\emph{(i)}] $R$ has a uniform local cohomological annihilator.
\item[\emph{(ii)}] $R$ is locally equidimensional, and $R/\fp$ has a uniform local
cohomological annihilator for each minimal prime ideal $\fp$ of $R$.
\end{enumerate}
\end{thm}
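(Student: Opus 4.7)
The plan is to prove the two implications separately.

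For (i) $\Rightarrow$ (ii), local equidimensionality has already been recorded by Zhou's earlier result \cite[Theorem~2.1]{Z} cited immediately before the theorem, so I would simply invoke it. To produce a uniform local cohomological annihilator of $R/\fp$ for each minimal prime $\fp$ of $R$, I would start from a u.l.c.a.\ $x$ of $R$ and use the minimality of $\fp$: because $R_\fp$ has Krull dimension zero, $\fp R_\fp$ is nilpotent, and clearing denominators produces $s\in R\setminus\fp$ and an integer $k\ge 1$ with $s\fp^k=0$. Then for each maximal ideal $\fm\supseteq\fp$, I would feed the long exact sequence of local cohomology attached to $0\to\fp\to R\to R/\fp\to 0$, together with the long exact sequences coming from the filtration $\fp\supseteq\fp^2\supseteq\cdots\supseteq\fp^k$ (whose successive quotients are $R/\fp$-modules annihilated by $\fp$), into an iterated diagram chase: $x$ kills $\H^i_\fm(R)$ for $i<\dim R_\fm=\dim(R/\fp)_\fm$ (the dimension equality being the local equidimensionality output just invoked), while $s$ kills $\H^i_\fm(\fp^k)$, and combining them yields a suitable product in $x$ and $s$ annihilating $\H^i_\fm(R/\fp)$ in the required range; primality of $\fp$ keeps this product outside $\fp$.

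For (ii) $\Rightarrow$ (i), write $\Min R=\{\fp_1,\dots,\fp_n\}$ and choose lifts $x_i\in R\setminus\fp_i$ whose images in $R/\fp_i$ are uniform local cohomological annihilators. The plan proceeds in two stages. First, reduce to the reduced case: given a u.l.c.a.\ $\bar y$ of $R_{\mathrm{red}}=R/\sqrt{0}$, a lift $y\in R$ together with the finite filtration of $R$ by powers of the nilpotent nilradical $\sqrt{0}$ (each successive quotient being an $R_{\mathrm{red}}$-module, whose local cohomology in the relevant range is either killed by $\bar y$ or vanishes by Grothendieck) shows via iterated long exact sequences that a sufficiently large power of $y$ is a u.l.c.a.\ of $R$. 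Second, in the reduced case, use the finite injection $R\hookrightarrow S:=\prod_{i=1}^n R/\fp_i$: by prime avoidance construct $x\in R\setminus\bigcup_i\fp_i$ whose image in each $R/\fp_i$ is a u.l.c.a.\ of $R/\fp_i$ (raising to a power if necessary); then $x$ annihilates $\H^i_\fm(S)=\prod_i\H^i_\fm(R/\fp_i)$ for $i<\dim R_\fm$, and chasing $0\to R\to S\to C\to 0$ transfers this annihilation to $\H^i_\fm(R)$.

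The principal obstacle is the cokernel $C=S/R$, which need not possess a u.l.c.a.\ of its own, so one cannot directly invoke hypothesis (ii) on it. Since $R$ is locally equidimensional (equivalently $S_{\fp_i}\cong R_{\fp_i}$ at each minimal prime of the reduced ring), one does have $C_{\fp_i}=0$ and hence $\dim C_\fm\le\dim R_\fm-1$ for every maximal $\fm$; nevertheless, the group $\H^{i-1}_\fm(C)$ that appears in the long exact sequence of $0\to R\to S\to C\to 0$ need not vanish throughout the range $i<\dim R_\fm$. Absorbing this term requires either a Noetherian induction on $\dim R$ applied to $C$, or use of the nonzero conductor $(R:_R S)$ which annihilates $C$ and is supported on primes of positive height. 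Reconciling such an auxiliary annihilator with the requirement that the final element lie outside $\bigcup_i\fp_i$, and tracking the various power shifts forced by the iterated diagram chases, constitutes the technical core of the argument.
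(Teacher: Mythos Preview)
Your argument for (i)$\Rightarrow$(ii) has a genuine circularity. From $s\fp^k=0$ and the sequence $0\to\fp^k\to R\to R/\fp^k\to 0$ you correctly obtain that $xs$ kills $\H^i_\fm(R/\fp^k)$ for $i<\dim R_\fm$. But to descend from $R/\fp^k$ to $R/\fp$ you invoke the filtration $\fp\supseteq\fp^2\supseteq\cdots\supseteq\fp^k$, whose successive quotients $\fp^j/\fp^{j+1}$ are arbitrary finitely generated $R/\fp$--modules, not copies of $R/\fp$. You have no annihilator for $\H^{i}_\fm(\fp^j/\fp^{j+1})$ in the relevant range: knowing that something kills $\H^i_\fm(R/\fp)$ is exactly what you are trying to prove, so the ``iterated diagram chase'' never closes. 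The same obstruction recurs if you phrase it as ``pass from a ring to its reduction'': in the sequence $0\to\fn\to A\to A_{\mathrm{red}}\to 0$ with $\fn$ nilpotent, the pieces $\fn^j/\fn^{j+1}$ are $A_{\mathrm{red}}$--modules whose local cohomology you do not yet control.

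The paper does not prove this statement directly (it is quoted from Zhou), but it proves the module version in Proposition~\ref{ulc}, and the mechanism --- Zhou's technical Lemma~\ref{l213} --- is precisely what your sketch is missing. Instead of filtering by powers of $\fp$, one uses that $R_\fp$ has finite length $t=\ell_{R_\fp}(R_\fp)$ to build a chain $0=N_0\subset N_1\subset\cdots\subset N_t\subseteq R$ with every subquotient \emph{isomorphic to} $R/\fp$ and with $(R/N_t)_\fp=0$, so some $y\notin\fp$ kills $R/N_t$. Because all the subquotients are the \emph{same} module $R/\fp$, Lemma~\ref{l213}(i) can run a simultaneous induction on the cohomological degree and on the length of the chain; the exponent $t^{d-1}$ in the bound $(xy)^{t^{d-1}}$ is the trace of an induction on $d=\dim M$. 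This is the non-trivial core that your ``iterated diagram chase'' would have to reproduce.

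For (ii)$\Rightarrow$(i) your outline is essentially sound and gives a genuine alternative to Lemma~\ref{l213}(ii). Once $R$ is reduced, the conductor $\fc=\Ann_R(S/R)$ is not contained in any minimal prime (since $(S/R)_{\fp_i}=0$), so one may pick $c\in\fc\setminus\bigcup_i\fp_i$; combined with an element $x\notin\bigcup_i\fp_i$ whose image in each $R/\fp_i$ is a uniform local cohomological annihilator (built, say, as $x=\sum_i x_i y_i$ with $y_i\in\bigcap_{j\neq i}\fp_j\setminus\fp_i$), the long exact sequence of $0\to R\to S\to S/R\to 0$ gives $cx\,\H^i_\fm(R)=0$ for $i<\dim R_\fm$, using local equidimensionality to match the ranges. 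This avoids Lemma~\ref{l213}(ii) entirely. The induction-on-$\dim R$ variant you mention, by contrast, does not go through as stated: the cokernel $S/R$ need not be locally equidimensional and its minimal primes are minimal primes of sums $\fp_i+\fp_j$, for which the hypothesis provides no uniform local cohomological annihilator.
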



The module version of the above theorem is also true. One may use an
almost similar method to prove it. We state the proof more
precisely. First  we show that  a module which has a uniform local
cohomological annihilator is locally equidimensional.
\begin{prop}\label{equ}\index{locally equidimensional}\index{equidimensional!locally-}
Let $M$ be a finitely generated $R$--module such that it has a
uniform local cohomological annihilator. Then $M$ is locally
equidimensional.
\end{prop}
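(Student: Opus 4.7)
The plan is to argue by contradiction, combining the Weak General Shifted Localization Principle (Theorem \ref{wgs}) with the change-of-rings description of attached primes (Theorem \ref{atc}) and property $(\beta)$ of attached primes. Suppose $M$ is not locally equidimensional. Then there is a maximal ideal $\fm$ of $\Supp M$ such that $M_\fm$ is not equidimensional, and so there exists $\fp\in\Min M$ with $\fp\subseteq\fm$ and
\[
t:=\dim R_\fm/\fp R_\fm \;<\; d:=\dim M_\fm \;=\; \h_M\fm.
\]
Let $x$ be a uniform local cohomological annihilator of $M$. The goal is to force $x\in\fp$, which will contradict $x\in R\setminus\bigcup_{\fq\in\Min M}\fq$.

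First I would pass to the local ring $A:=R_\fm$ and study $M_\fm$ as an $A$-module. Since $\fp\in\Min M$, $M_\fp$ is of finite length, hence $\fp R_\fp$-torsion, so $\H^0_{\fp R_\fp}(M_\fp)=M_\fp\neq 0$. Moreover $\Att_{R_\fp}M_\fp=\Ass_{R_\fp}M_\fp=\{\fp R_\fp\}$, in particular $\fp R_\fp\in\Att_{R_\fp}\H^0_{\fp R_\fp}(M_\fp)$. Applying Theorem \ref{wgs} in the local ring $A$ with the pair of primes $\fP=\fQ=\fp R_\fm$ (so that $(M_\fm)_\fP=M_\fp$ and $\dim A/\fP=t$) yields
\[
\fp R_\fm \;\in\; \Att_{R_\fm}\,\H^{t}_{\fm R_\fm}(M_\fm).
\]
Next, because $\H^{t}_\fm(M)$ is $\fm$-torsion we have $\H^{t}_\fm(M)\cong\H^{t}_{\fm R_\fm}(M_\fm)$; the change-of-rings statement (Theorem \ref{atc}) applied to $R\to R_\fm$ then gives $\fp\in\Att_R\H^{t}_\fm(M)$.

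Finally, property $(\beta)$ of attached prime ideals yields
\[
0:_R\H^{t}_\fm(M)\;\subseteq\;\rad\bigl(0:_R\H^{t}_\fm(M)\bigr)\;=\;\bigcap_{\fQ\in\Att_R\H^{t}_\fm(M)}\fQ\;\subseteq\;\fp.
\]
Since $t<d=\h_M\fm$, the uniform annihilator property gives $x\in 0:_R\H^{t}_\fm(M)$, and hence $x\in\fp$, the desired contradiction.

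The main obstacle I anticipate is the careful bookkeeping when invoking Theorem \ref{wgs}: one must work inside the local ring $R_\fm$ rather than $R$ itself, verify that the input attached prime at $\H^{0}$ of the localization is genuinely $\fp R_\fp$ (using that $M_\fp$ has finite length because $\fp\in\Min M$), and then translate the output back to $R$ via Theorem \ref{atc}. Once this framework is set up, the non-vanishing of the correct attached prime and property $(\beta)$ give the contradiction essentially for free.
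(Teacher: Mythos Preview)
Your argument is correct, but it follows a genuinely different route from the paper's. The paper reduces to the local case and then, given a minimal prime $\fp$ with $c:=\dim R/\fp<d$, builds a submodule $N$ of $M$ with $\Ass N=\Ass M\setminus S$ and $\Ass M/N=S$, where $S=\{\fq\in\Min M:\dim R/\fq\le c\}$; choosing $y\in(0:_RN)\setminus\bigcup_{\fq\in S}\fq$ and running the long exact sequence of local cohomology for $0\to N\to M\to M/N\to 0$ yields $xy\,\H^c_\fm(M/N)=0$, whence $xy\in\bigcap_{\fq\in\Assh(M/N)}\fq\subseteq\fp$ by Grothendieck's non-vanishing theorem (Theorem~\ref{G}), a contradiction. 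Your approach instead invokes the Weak General Shifted Localization Principle (Theorem~\ref{wgs}) to place $\fp$ directly into $\Att_R\H^t_\fm(M)$, then reads off $x\in\fp$ from property~$(\beta)$. Your proof is shorter and more conceptual, but it leans on Theorem~\ref{wgs}, which is itself a substantial result of Sharp; the paper's proof is more elementary, using only the standard long exact sequence and Theorem~\ref{G}. Both are valid, and it is worth noting that the paper's method has the added flexibility of producing an explicit auxiliary element $y$, a device that recurs in later arguments of the chapter.
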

\begin{proof}
Let $\fm\in\Max\Supp M$. We will show that $\dim M_\fm= \dim
R_\fm/\fp R_\fm$ for all $\fp\in\Spec R$ with $\fp\in\Min M$ and
$\fp\subseteq \fm$. By assumption, there exists an element $x\in
R\setminus\cup_{\fp\in\Min M} \fp$ such that $x\H_\fm^i(M)= 0$ for
all $i< \dim M_\fm$. As $x\in R_\fm\setminus\underset{\fp
R_\fm\in\Min M_\fm}{\cup} \fp R\fm$, and $\H_\fm^i(M)\cong \H_{\fm
R_\fm}^i(M_\fm)$ by using the definition of local cohomology, we may
assume that $(R, \fm)$ is local with the maximal ideal $\fm$ and
write $d:= \dim M$.

Assume, to the contrary, that there exists $\fp\in\Min M$ with $c:=
\dim R/\fp < d$. Set $S= \{\fq\in\Min M : \dim R/\fq\leq c\}$ and
$T= \Ass M\setminus S$. There exists a submodule $N$ of $M$ such
that $\Ass N= T$ and $\Ass M/N= S$. Note that $\dim M/N= c$ and that
$\dim N= d$. As $\rad(0:_R N)= \cap_{\fq\in T}\fq$, it follows that
there exists an element $y\in (0:_R N)\setminus\cup_{\fq\in S}\fq$.
Thus, trivially, $y\H_\fm^i(N)= 0$ for all $i\geq 0$. The exact
sequence $0\longrightarrow N\longrightarrow M\longrightarrow
M/N\longrightarrow 0$ implies the exact sequence
$$\H_\fm^i(M)\longrightarrow\H_\fm^i(M/N)\longrightarrow\H_\fm^{i+1}(N).$$
As $x\H_\fm^i(M)= 0$ for all $i< d$, it follows that
$xy\H_\fm^i(M/N)= 0$ for all $i< d$. In particular,
$xy\H_\fm^c(M/N)= 0$. Thus $xy\in\cap_{\fq\in\Assh M/N}\fq$ (c.f.
\cite[Proposition 7.2.11 and Theorem 7.3.2]{BS}). Therefore
$xy\in\fp$ by the choice of $\fp$. As $\fp\in S\cap\Min M$, this is
a contradiction.
\end{proof}

  We need the following technical lemma  to extend Theorem \ref{z3.2}
to module version. This result has been proved in \cite[Lemma
3.1]{Z} for $M=R$, the same technique works also for an arbitrary
$R$--module $M$.

\begin{lem}\label{l213}
Let $(R,\fm)$ be a local ring, $M$ a finitely generated $R$--module
of dimension $d$,  $\fp$ be a minimal prime ideal of $M$ and
$$\begin{array}{llllll}
&0\longrightarrow R/\fp\longrightarrow M\longrightarrow
N_1\longrightarrow 0,\\
&0\longrightarrow R/\fp\longrightarrow N_1\longrightarrow
N_2\longrightarrow 0,\\
&\vdots \\
&0\longrightarrow R/\fp\longrightarrow N_{t-2}\longrightarrow
N_{t-1}\longrightarrow 0,\\
&0\longrightarrow R/\fp\longrightarrow N_{t-1}\longrightarrow
N_t\longrightarrow 0.\end{array}$$
be a  series of short exact sequences of finitely generated
$R$--modules.
Let $y$ be an element of $R$ such
that $yN_t=0$.
\begin{enumerate}
\item[\emph{(i)}] If there is an element $x$ of $R$ such that $x\emph{\H}^i_\fm(M)=0$ for $i<d$, then $(xy)^{t^{d-1}}\emph{\H}^i_\fm(R/\fp)=0$ for $i<d$.
\item[\emph{(ii)}] If there is an element $x$ of $R$ such that $x\emph{\H}^i_\fm(R/\fp)=0$
for $i<d$, then $x^ty\emph{\H}^i_\fm(M)=0$ for $i<d$.
\end{enumerate}
\end{lem}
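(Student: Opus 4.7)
The plan is to treat the two parts separately, in each case extracting an annihilator recursion from the long exact sequences of local cohomology attached to the given short exact sequences.

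For (ii), I would proceed by downward induction on $j$ from $t$ to $0$ (with $N_0=M$). Because $yN_t=0$, the element $y$ kills every $\H^i_\fm(N_t)$. The four-term piece
\[
\H^{i-1}_\fm(N_j)\lo\H^i_\fm(R/\fp)\lo\H^i_\fm(N_{j-1})\lo\H^i_\fm(N_j)
\]
of the long exact sequence attached to $0\lo R/\fp\lo N_{j-1}\lo N_j\lo 0$, combined with the hypothesis $x\H^i_\fm(R/\fp)=0$ for $i<d$, yields the inductive step: if $a$ annihilates $\H^i_\fm(N_j)$ for every $i<d$, then $xa$ annihilates $\H^i_\fm(N_{j-1})$ for every $i<d$. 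Descending $t$ stages from $a=y$ produces $x^ty\cdot\H^i_\fm(M)=0$ for $i<d$.

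For (i), I would first convert the chain into an ascending filtration of submodules of $M$: set $F_j:=\Ker(M\lo N_j)$, so $0=F_0\subset F_1\subset\cdots\subset F_t\subset M$ with $F_j/F_{j-1}\cong R/\fp$ for $1\le j\le t$, $F_1\cong R/\fp$, and $M/F_t\cong N_t$. Applying the argument of (ii) (with one step) to $0\lo F_t\lo M\lo N_t\lo 0$ shows $xy\cdot\H^i_\fm(F_t)=0$ for $i<d$. Set $a_j^i:=0:_R\H^i_\fm(F_j)$ and $b_i:=0:_R\H^i_\fm(R/\fp)$; thus $a_0^i=R$, $a_1^i=b_i$, $a_t^i\supseteq(xy)$ for $i<d$, and $b_{-1}=R$. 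The long exact sequence of $0\lo F_{j-1}\lo F_j\lo R/\fp\lo 0$ exhibits $\H^i_\fm(F_{j-1})$ as an extension of a submodule of $\H^i_\fm(F_j)$ by a quotient of $\H^{i-1}_\fm(R/\fp)$, giving $a_{j-1}^i\supseteq b_{i-1}\cdot a_j^i$; iterating from $j=t$ down to $j=1$ and substituting $a_t^i\supseteq(xy)$ delivers the key recursion
\[
b_i\;\supseteq\; b_{i-1}^{\,t-1}\,(xy)\qquad(i\ge 0).
\]

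Induction on $i$ with base $b_{-1}=R$ now gives $b_i\supseteq(xy)^{e_i}$ where $e_0=1$ and $e_i=(t-1)e_{i-1}+1$; an elementary calculation (treating $t=2$ separately) solves this as $e_i=\bigl((t-1)^{i+1}-1\bigr)/(t-2)$ when $t\neq 2$ and $e_i=i+1$ when $t=2$, and confirms $e_{d-1}\le t^{d-1}$ in every case, so $(xy)^{t^{d-1}}\in b_i$ for all $i<d$. The main obstacle I anticipate is arranging the bookkeeping so that the recursion closes without circular dependencies: the most direct inequalities arising from the quotient tower $N_\bullet$ express $b_i$ through $b_{i+1}$ via annihilators of $\H^{\bullet}_\fm(N_j)$ in a way that does not terminate cleanly, whereas working inside the submodule filtration $F_\bullet\subset M$ reduces everything to a one-step recurrence in the ideals $b_\bullet$ and the single known lower bound $(xy)$.
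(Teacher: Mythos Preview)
Your argument is correct. The paper does not supply its own proof of this lemma; it simply remarks that the case $M=R$ is \cite[Lemma 3.1]{Z} and that ``the same technique works also for an arbitrary $R$--module $M$.'' Your treatment of (ii) by a $t$-step descent along the tower $N_\bullet$ is the expected one. For (i), your device of passing to the submodule filtration $F_j=\Ker(M\to N_j)$ is exactly what makes the bookkeeping close: it converts the problem into the single recursion $b_i\supseteq b_{i-1}^{\,t-1}(xy)$ on the ideals $b_i=0:_R\H^i_\fm(R/\fp)$, anchored by $b_{-1}=R$ and by the bound $(xy)\subseteq 0:_R\H^i_\fm(F_t)$ obtained from $0\to F_t\to M\to N_t\to 0$. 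The closed form $e_i=\sum_{k=0}^{i}(t-1)^k$ for the exponents and the inequality $e_{d-1}\le t^{d-1}$ (immediate by induction, since $(t-1)t^{d-2}+1\le t^{d-1}$) finish the job. This is in the same spirit as Zhou's proof; your explicit reduction via the filtration $F_\bullet$ is a clean way to avoid the circularity you correctly flag when one tries to run the induction directly on the quotient tower $N_\bullet$.
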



Now, we are able to present the module version of Theorem
\ref{z3.2}.

\begin{prop}\label{ulc}
Let $M$ be a finitely generated  $R$--module. Then the following
conditions are equivalent. \index{locally equidimensional}
\index{equidimensional!locally-}
\begin{enumerate}
\item[\emph{(i)}] $M$ has a uniform local cohomological annihilator.
\item[\emph{(ii)}] $M$ is locally equidimensional and $R/\fp$ has a
uniform local cohomological annihilator for all $\fp\in \emph\Min
M$.
\end{enumerate}
\end{prop}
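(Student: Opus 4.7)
The plan is to mimic the proof of Theorem~\ref{z3.2} (the case $M=R$), using Lemma~\ref{l213} as the transfer tool between the uniform annihilators of $M$ and those of the modules $R/\fp$ for $\fp\in\Min M$. The crux of both directions is to build the chain of short exact sequences that feeds into Lemma~\ref{l213}; I will construct it globally once so that the same filtration works simultaneously at every maximal ideal.

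For (i)$\Rightarrow$(ii), local equidimensionality of $M$ is immediate from Proposition~\ref{equ}, so I fix $\fp\in\Min M$ and exhibit a uniform annihilator of $R/\fp$. The filtration is built by iteratively splitting off copies of $R/\fp$: since $\fp\in\Min M\subseteq\Ass M$ there is $R/\fp\hookrightarrow M$ with cokernel $N_1$, and so long as $(N_i)_\fp\neq 0$ the minimality of $\fp$ in $\Supp N_i$ forces $\fp\in\Ass N_i$, allowing one to repeat. After $t:=\ell_{R_\fp}(M_\fp)$ steps the procedure terminates at $N_t$ with $(N_t)_\fp=0$; since $\fp$ is minimal in $\Supp M$, $\fp\notin\Supp N_t=\V(0:_R N_t)$, so some $y\in R\setminus\fp$ annihilates $N_t$. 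Given a uniform annihilator $x$ of $M$ (which lies outside $\fp$), Lemma~\ref{l213}(i), applied after localizing the chain at each maximal $\fm\supseteq\fp$, gives $(xy)^{t^{d-1}}\H^i_\fm(R/\fp)=0$ for $i<d=\dim M_\fm$. Local equidimensionality supplies $\dim M_\fm=\dim(R/\fp)_\fm$, and replacing the exponent by $t^{D-1}$ with $D=\dim M$ yields a single element of $R\setminus\fp$ that is the desired uniform local cohomological annihilator of $R/\fp$.

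For (ii)$\Rightarrow$(i), write $\Min M=\{\fp_1,\ldots,\fp_n\}$ and for each $i$ choose a uniform annihilator $c_i\in R\setminus\fp_i$ of $R/\fp_i$. Running the same filtration construction for each $\fp_i$ produces terminal modules $N^{(i)}_{t_i}$ and elements $y_i\in R\setminus\fp_i$ with $y_i N^{(i)}_{t_i}=0$. Applying Lemma~\ref{l213}(ii) after localizing at an arbitrary maximal $\fm$, and again using local equidimensionality to identify $\dim M_\fm$ with $\dim(R/\fp_i)_\fm$ whenever $\fp_i\subseteq\fm$, each $w_i:=c_i^{t_i}y_i$ will satisfy $w_i\H^j_\fm(M)=0$ for all $j<\dim M_\fm$ while $w_i\notin\fp_i$. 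The ideal $(w_1,\ldots,w_n)$ is then contained in no $\fp_i$, so prime avoidance furnishes $w\in(w_1,\ldots,w_n)\setminus\bigcup_i\fp_i$; as an $R$-linear combination of the $w_i$, it inherits their annihilator property and is the required uniform local cohomological annihilator of $M$.

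I expect the main obstacle to be the uniformity across maximal ideals: the filtration, the element $y$, and the exponents must be chosen once globally, independent of $\fm$, so that a single element annihilates $\H^i_\fm(-)$ at every $\fm$ in the relevant range. Local equidimensionality is exactly what is needed to line up the two ranges $i<\dim M_\fm$ and $i<\dim(R/\fp)_\fm$ inside which Lemma~\ref{l213} operates.
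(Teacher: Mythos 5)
Your proof of (i)$\Rightarrow$(ii) matches the paper's: build a filtration of $M$ by submodules whose successive quotients are copies of $R/\fp$ (via $\fp\in\Ass M$), extract $y\in R\setminus\fp$ killing the terminal quotient, and feed the chain into Lemma~\ref{l213}(i). Your note on replacing the $\fm$-dependent exponent $t^{d-1}$ by the global $t^{D-1}$ is a point the paper glosses over, and you correctly invoke local equidimensionality (from Proposition~\ref{equ}) to equate $\dim M_\fm$ with $\dim(R/\fp)_\fm$. For (ii)$\Rightarrow$(i) the paper merely says ``use a similar method,'' so your write-up is a genuine filling-in.

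There is, however, a gap in the (ii)$\Rightarrow$(i) step as written. Your prime-avoidance argument requires that \emph{every} $w_i$ annihilates $\H^j_\fm(M)$ for $j<\dim M_\fm$ at \emph{every} maximal ideal $\fm$ of $\Supp M$, since a generic $R$-linear combination $w=\sum a_iw_i$ inherits the annihilation only if all summands do. But the route through Lemma~\ref{l213}(ii) only yields this for $\fm\supseteq\fp_i$: when $\fp_i\not\subseteq\fm$ the localized chain has $(R/\fp_i)_\fm=0$, so the hypotheses of the lemma degenerate and local equidimensionality no longer identifies $\dim M_\fm$ with $\dim(R/\fp_i)_\fm$ (the latter is $-1$). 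The claim is nonetheless true, but needs its own sentence: when $\fp_i\not\subseteq\fm$ the filtration of $M$ with quotients $R/\fp_i$ localizes to the identity at $\fm$, so $M_\fm\cong(M/N^{(i)}_{t_i})_\fm$, and since $y_i$ annihilates $M/N^{(i)}_{t_i}$ it annihilates $M_\fm$ and hence all $\H^j_\fm(M)$. With that observation supplied, your argument is complete and agrees with the paper's intended proof.
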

\begin{proof}
(i) $\Rightarrow $ (ii).  By Proposition \ref{equ}, $M$ is locally
equidimensional.  Assume that $\fp\in\Min M$ and that $\fm$ is a
maximal ideal containing $\fp$. As $M_\fp$ is an $R_\fp$--module of
finite length, we set $t:= l_{R_\fp}(M_\fp)$. Then there exists a
chain of submodules $$N_0\subset N_1\subset N_2\subset\cdots\subset
N_t \subseteq M,$$  such that $N_0\cong N_{i}/N_{i-1}\cong R/\fp$, for $1\leq i\leq t$, and
$$\begin{array}{llllll}
&0\longrightarrow R/\fp\longrightarrow M\longrightarrow
M/N_0\longrightarrow 0,\\
&0\longrightarrow R/\fp\longrightarrow M/N_0\longrightarrow
M/N_1\longrightarrow 0,\\
&\vdots \\
&0\longrightarrow R/\fp\longrightarrow M/N_{t-2}\longrightarrow
M/N_{t-1}\longrightarrow 0,\\
&0\longrightarrow R/\fp\longrightarrow M/N_{t-1}\longrightarrow
M/N_t\longrightarrow 0,\end{array}$$
are exact sequences.
 Since $M_\fm$ is
equidimensional, $\h_M\fm/\fp= \h_M\fm$. As, by definition of $t$,
$l_{R_\fp}((M/N_t)_\fp)=0$,  it follows that $0:_R(M/N_t)\not\subseteq
\fp$. Choose an element
$y\in 0:_R(M/N_t)\setminus \fp$. Localizing the above exact sequences at $\fm$ implies the
following exact sequences.
 $$\begin{array}{llllll}
&0\longrightarrow (R/\fp)_\fm\longrightarrow M_\fm\longrightarrow
(M/N_0)_\fm\longrightarrow 0,\\
&0\longrightarrow (R/\fp)_\fm\longrightarrow
(M/N_0)_\fm\longrightarrow
(M/N_1)_\fm\longrightarrow 0,\\
&\vdots \\
&0\longrightarrow (R/\fp)_\fm\longrightarrow
(M/N_{t-2})_\fm\longrightarrow
(M/N_{t-1})_\fm\longrightarrow 0,\\
&0\longrightarrow (R/\fp)_\fm\longrightarrow
(M/N_{t-1})_\fm\longrightarrow (M/N_t)_\fm\longrightarrow 0.\end{array}$$  By assumption, there is an element
$x\in R\setminus\underset{\fq\in\Min M}{\cup}\fq$ such that
$x\H_{\fm R_\fm}^i(M_\fm)= 0$ for all $i< \h_M\fm$. Now, by Lemma
\ref{l213}, we have $(xy)^l\H_\fm^i(A/\fp)_\fm= 0$ for all $i<
\h_M\fm$ and for some integer $l> 0$.

 (ii) $\Rightarrow$ (i). One may use a similar method as above to get the result.
\end{proof}

The following result is an easy application of the above proposition
which shows that the property that a module $M$ has a uniform local
cohomological annihilator is independent of the module structure and
depends only on the support of $M$.

\begin{cor}\label{c215}
Let  $M$ and  $N$ be finitely generated $R$--modules of finite
dimensions such that $\emph\Supp M=\emph\Supp N$. Then $M$ has a
uniform local cohomological annihilator if and only if  $N$ has a
uniform local cohomological annihilator.
\end{cor}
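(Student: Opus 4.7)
The plan is to reduce everything to Proposition \ref{ulc}, which characterizes the uniform local cohomological annihilator property in terms of two ingredients that depend only on the support: local equidimensionality, and the property that $R/\fp$ has a uniform local cohomological annihilator for each $\fp\in\Min M$.

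First I would observe that both $\Min M$ and $\Assh M$ depend only on $\Supp M$. Indeed $\Min M$ is the set of minimal primes of the topological space $\Supp M$, and $\dim M=\max\{\dim R/\fp:\fp\in\Supp M\}$, so $\Assh M=\{\fp\in\Supp M:\dim R/\fp=\dim M\}$ is also determined by $\Supp M$. Consequently, the hypothesis $\Supp M=\Supp N$ yields $\Min M=\Min N$ and $\Assh M=\Assh N$.

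Next I would verify that $M$ is locally equidimensional if and only if $N$ is. For any maximal ideal $\fm$ of $\Supp M=\Supp N$, the support of $M_\fm$ as an $R_\fm$-module is $\{\fp R_\fm:\fp\in\Supp M,\ \fp\subseteq\fm\}$, which coincides with $\Supp_{R_\fm} N_\fm$. Applying the preceding observation to $M_\fm$ and $N_\fm$ over $R_\fm$ gives $\Min M_\fm=\Min N_\fm$ and $\Assh M_\fm=\Assh N_\fm$, so $M_\fm$ is equidimensional if and only if $N_\fm$ is.

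Putting the two pieces together, invoke Proposition \ref{ulc}: $M$ has a uniform local cohomological annihilator iff $M$ is locally equidimensional and $R/\fp$ has a uniform local cohomological annihilator for every $\fp\in\Min M$. Since both conditions on the right are symmetric in $M$ and $N$ by the preceding paragraphs (using $\Min M=\Min N$ for the second), the same characterization holds for $N$, and the two statements are equivalent. There is no real obstacle here; the only subtle point is being careful that ``locally equidimensional'' is phrased in terms of maximal ideals in $\Supp M$, which is exactly what makes it depend only on $\Supp M$.
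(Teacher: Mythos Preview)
Your proof is correct and follows exactly the approach the paper intends: the corollary is stated immediately after Proposition \ref{ulc} as ``an easy application of the above proposition,'' with no further argument given, and you have simply spelled out the details of that application. The only additions you make are the routine verifications that $\Min M$, $\Assh M$, and local equidimensionality depend only on $\Supp M$, which the paper leaves implicit.
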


\begin{cor}\label{cat}Let $M$ be a finitely generated  $R$--module  that has a uniform
local cohomological annihilator. Then $R/0:_RM$ is universally
catenary. \index{universally catenary}
\end{cor}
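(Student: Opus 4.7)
The plan is to reduce the statement to Zhou's theorem for rings, namely \cite[Theorem 2.1]{Z}, which is quoted in the discussion preceding Theorem \ref{z3.2}: a Noetherian ring admitting a uniform local cohomological annihilator is universally catenary. If we can show that the ring $\overline{R}:=R/0:_RM$ itself has a uniform local cohomological annihilator, then Zhou's theorem delivers the conclusion at once. So the work consists entirely of transferring the uniform local cohomological annihilator property from the module $M$ to the ring $\overline{R}$.

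The first step is to invoke Corollary \ref{c215}. Since $M$ is finitely generated, $\Supp_R M=\V(0:_RM)=\Supp_R\overline{R}$, so Corollary \ref{c215} yields an element $x\in R\setminus\bigcup_{\fp\in\Min\overline{R}}\fp$ that serves as a uniform local cohomological annihilator of the $R$-module $\overline{R}$. The second step is to reinterpret this as a property of $\overline{R}$ viewed as a module over itself. The minimal primes of $\overline{R}$ as an $R$-module are precisely the primes of $R$ that are minimal over $0:_RM$, and these correspond bijectively under the quotient map to the minimal primes of the ring $\overline{R}$; similarly, maximal ideals of $R$ in $\Supp_R\overline{R}$ correspond to maximal ideals of $\overline{R}$, and $\h_{\overline{R}\text{-mod}}\fm\overline{R}=\h_{R\text{-mod}}\fm$. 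Finally, by the Independence Theorem (recorded in Section 1.2) one has $\H^i_\fm(\overline{R})\cong\H^i_{\fm\overline{R}}(\overline{R})$, so annihilation by $x$ is equivalent to annihilation by its image $\bar{x}\in\overline{R}$. Hence $\bar{x}$ is a uniform local cohomological annihilator of the ring $\overline{R}$, and \cite[Theorem 2.1]{Z} applies.

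The hardest part here is really just the careful bookkeeping of the change of base ring $R\rightsquigarrow\overline{R}$; once that is in place, Corollary \ref{c215} and Zhou's theorem do all the work. An alternative route would go via Proposition \ref{ulc} to obtain that $R/\fp$ has a uniform local cohomological annihilator for each $\fp\in\Min M$, so by Zhou each such $R/\fp$ is universally catenary; but then one would still need the standard (though non-trivial) fact that $\overline{R}$ is universally catenary whenever $\overline{R}/\bar{\fp}$ is universally catenary for every minimal prime $\bar{\fp}$ of $\overline{R}$. The route through Corollary \ref{c215} sketched above avoids this detour and seems more direct.
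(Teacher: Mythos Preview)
Your proof is correct and follows essentially the same route as the paper: the paper's proof is the single line ``It follows by Corollary \ref{c215} and \cite[Theorem 2.1]{Z},'' and you have simply unpacked what that sentence means---applying Corollary \ref{c215} to $\overline{R}=R/0:_RM$ (which has the same support as $M$) and then passing from $\overline{R}$ as an $R$-module to $\overline{R}$ as a ring via the Independence Theorem so that Zhou's theorem applies.
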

\begin{proof}
It follows by Corollary \ref{c215} and \cite[Theorem 2.1]{Z}.
\end{proof}

Recall  that for a finitely generated
$R$--module $M$ over a local ring $(R,\fm)$, $$\a(M)=\bigcap_{i<\dim
M}(0:_R\H^i_\fm(M)).$$ \index{a@$\a(M)$|textbf}Note that by
definition, an $R$--module $M$ has a uniform local cohomological
annihilator if and only if $\a(M)\nsubseteq \cup_{\fp\in\Min M}\fp$.
On the other hand if $\fp\in\Min M$, then $\a(M)\subseteq \fp$ if
and only if $\fp\in\Att\H^i_\fm(M)$ for some $i<\dim M$. More
precisely we have the following result.

\begin{lem}\label{3.6}
Assume that $(R, \fm)$ is local and that $M$ is a finitely generated
$R$--module of dimension $d$. Then $M$ has a uniform local
cohomological annihilator if and only if $\emph\Att
\emph\H^i_\fm(M)\cap\emph\Min M=\emptyset$ for all $i=0,\ldots,d
-1$.
\end{lem}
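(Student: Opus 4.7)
The plan is to reinterpret both sides of the equivalence as conditions on the ideal $\a(M)=\bigcap_{i<d}(0:_R \H^i_\fm(M))$. Unwinding the definition and applying prime avoidance to the finite set $\Min M$, the existence of a uniform local cohomological annihilator is equivalent to $\a(M)\not\subseteq\fp$ for every $\fp\in\Min M$. It therefore suffices to prove, for each fixed $\fp\in\Min M$, the equivalence
\[
\a(M)\subseteq\fp \quad\Longleftrightarrow\quad \fp\in\Att\H^i_\fm(M) \text{ for some } i<d.
\]

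The direction ``$\Leftarrow$'' is immediate from Corollary \ref{111}(iii): if $\fp\in\Att\H^i_\fm(M)$, then $0:_R\H^i_\fm(M)\subseteq\fp$, and hence $\a(M)\subseteq\fp$. For ``$\Rightarrow$'' I would pass to radicals, using primality of $\fp$, to obtain $\rad\a(M)\subseteq\fp$, and then rewrite
\[
\rad\a(M)=\bigcap_{i<d}\rad(0:_R\H^i_\fm(M))=\bigcap_{i<d}\bigcap_{\fq\in\Att\H^i_\fm(M)}\fq
\]
using property $(\beta)$ recalled in Section~1.1 (each $\H^i_\fm(M)$ is artinian, hence representable with finitely many attached primes). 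Since $\fp$ is prime and contains a finite intersection of primes, it contains at least one of them, so there exist $i<d$ and $\fq\in\Att\H^i_\fm(M)$ with $\fq\subseteq\fp$.

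The decisive step is then to upgrade $\fq\subseteq\fp$ to $\fq=\fp$. For this I would observe that $\Att\H^i_\fm(M)\subseteq\Supp M$: indeed, $R$-linearity of the functor $\H^i_\fm$ gives $\Ann M\subseteq\Ann\H^i_\fm(M)$, and attached primes always contain the annihilator by Corollary \ref{111}(iii), so $\fq\supseteq\Ann M$, i.e.\ $\fq\in\V(\Ann M)=\Supp M$. Minimality of $\fp$ in $\Supp M$ then forces $\fq=\fp$, giving $\fp\in\Att\H^i_\fm(M)$ as required. The only non-formal obstacle is precisely this promotion from containment to equality; everything else is routine manipulation with radicals, prime avoidance, and the description of representable modules via their attached primes.
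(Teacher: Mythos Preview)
Your proposal is correct and follows essentially the same line as the paper's proof: both reformulate the statement via $\a(M)\not\subseteq\bigcup_{\fp\in\Min M}\fp$, dispatch one direction immediately from Corollary~\ref{111}(iii), and for the other direction reduce to showing that a minimal prime $\fp$ of $M$ containing $0:_R\H^i_\fm(M)$ must lie in $\Att\H^i_\fm(M)$. The only cosmetic difference is in this last step: you pass through property~$(\beta)$ to find an attached prime $\fq\subseteq\fp$ and then invoke minimality of $\fp$ in $\Supp M$ to force $\fq=\fp$, whereas the paper observes directly that $\fp\in\Min(R/0:_R\H^i_\fm(M))$ and applies Corollary~\ref{111}(ii); these are equivalent maneuvers.
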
\index{attached prime, $\Att$}
\begin{proof}
Assume that $M$ has a uniform local cohomological annihilator.
 Therefore there is an element $x\in R\setminus
\underset{\fp\in\Min M}{\cup}\fp$ satisfying $x\H^i_\fm(M)=0$ for
all $i=0,\ldots,d-1$. Thus, by Corollary \ref{111}(iii),
  $x\in\underset{\fq\in\Att \H^i_\fm(M)}{\cap}\fq$ for all $0\leq i\leq
  d-1$. Now the claim is clear.

Conversely,  note that  if $\a(M)\subseteq\underset{\tiny{\fp\in\Min
M}}{\cup} \fp$, then $\a(M)\subseteq \fp$ for some prime ideal
$\fp\in\Min M$, by prime avoidance theorem. Therefore,
$0:_R\H_\fm^i(M)\subseteq \fp$ for some $0\leq i\leq d-1$. On the
other hand one has  $0:_R
  M\subseteq 0:_R \H_\fm^i(M)\subseteq \fp$ which gives
  $\fp\in\Supp M$. Thus $\fp\in\Min(R/0:_R\H^i_\fm(M))$ and so
  $\fp\in\Att\H^i_\fm(M)$ by Corollary \ref{111}(ii), which
  contradicts our assumption. Hence $\a(M)\nsubseteq\underset{\tiny{\fp\in\Min M}}{\cup}
  \fp$ and the result follows.
\end{proof}

The following lemma provides a relation between prime ideals
containing $\a(M)$ and those $\fp$ which $M_\fp$ is not
Cohen-Macaulay.

\begin{lem}\label{228}
Assume that $(R,\fm)$ is a local ring,  $M$ is a finitely generated $R$--module and
$\fp\in\emph{\Supp} M$ such that $M_\fp$ is not Cohen-Macaulay. Then
$\a(M)\subseteq\fp$.
\end{lem}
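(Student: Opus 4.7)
The plan is to transfer information from the non-Cohen--Macaulay behaviour of $M_\fp$ at the prime $\fp R_\fp$ back to the local cohomology of $M$ at $\fm$, via the Weak General Shifted Localization Principle (Theorem \ref{wgs}). The output of this transfer will be an attached prime of $\H^{i+t}_\fm(M)$ for an appropriate $i+t<\dim M$, which will then contain $\a(M)$.

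More precisely, I would proceed as follows. Since $M_\fp$ is not Cohen--Macaulay, $\depth M_\fp<\dim M_\fp$; by Theorem \ref{grade} the integer $i:=\depth M_\fp=\grade(\fp R_\fp,M_\fp)$ is strictly less than $\dim M_\fp$ and satisfies $\H^i_{\fp R_\fp}(M_\fp)\neq 0$. This module is artinian, hence representable, so $\Att\H^i_{\fp R_\fp}(M_\fp)\neq\emptyset$. Pick $\fq R_\fp\in\Att\H^i_{\fp R_\fp}(M_\fp)$, with $\fq\in\Spec R$ and $\fq\subseteq\fp$. Setting $t=\dim R/\fp$, Theorem \ref{wgs} gives $\fq\in\Att\H^{i+t}_\fm(M)$, and then Corollary \ref{111}(iii) yields $0:_R\H^{i+t}_\fm(M)\subseteq\fq\subseteq\fp$.

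It remains to check $i+t<\dim M$. Since $\Supp M=\V(0:_RM)$ is closed under specialization, a saturated chain of length $\dim R/\fp$ starting at $\fp$ lies entirely in $\Supp M$; concatenating it with a chain of length $\dim M_\fp$ in $\Supp M$ ending at $\fp$ gives $\dim M\geq\dim M_\fp+\dim R/\fp>i+t$. Therefore $i+t<\dim M$, and by the very definition $\a(M)\subseteq 0:_R\H^{i+t}_\fm(M)\subseteq\fp$, as desired.

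The only delicate point is the correct index bookkeeping: one must be careful that the shift $t=\dim R/\fp$ applied to some $i$ with $0\leq i<\dim M_\fp$ lands strictly below $\dim M$, which is precisely the content of the specialization argument above. Everything else is a direct application of previously stated results.
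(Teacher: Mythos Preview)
Your proof is correct, but it takes a genuinely different route from the paper's. The paper argues by contradiction via the Faltings-type inequality $f^{\fb}_{\fa}(M)\leq\lambda^{\fb}_{\fa}(M)$ (Theorem~\ref{935BS}): assuming $\a(M)\nsubseteq\fp$, one has
\[
d\leq f^{\a(M)}_\fm(M)\leq\lambda^{\a(M)}_\fm(M)\leq\depth M_\fp+\dim R/\fp\leq\dim M_\fp+\dim R/\fp\leq d,
\]
forcing $\depth M_\fp=\dim M_\fp$, a contradiction. Your argument instead produces an explicit attached prime $\fq\subseteq\fp$ of some $\H^{i+t}_\fm(M)$ with $i+t<d$ via the Weak General Shifted Localization Principle (Theorem~\ref{wgs}), and then reads off $\a(M)\subseteq 0:_R\H^{i+t}_\fm(M)\subseteq\fq\subseteq\fp$ directly.

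Both proofs rely on nontrivial black boxes from \cite{BS} and \cite{S6}, so neither is markedly more elementary. The paper's argument is a one-line squeeze once Theorem~\ref{935BS} is in hand; yours is slightly longer but more constructive, in that it exhibits a specific index $i+t$ and an attached prime witnessing $\a(M)\subseteq\fp$. Your approach also fits naturally with the way attached primes are used elsewhere in the thesis (e.g.\ Lemma~\ref{3.6} and Theorem~\ref{3.10}), so it is arguably better aligned with the surrounding material.
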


 \index{finiteness dimension, $f_\fa(M)$!$\fb$--finiteness-, $f^\fb_\fa(M)$}

\index{a@$\fa$--adjusted depth!$\fb$--minimum-, $\lambda^\fb_\fa(M)$}

\index{depth!$\fb$--minimum $\fa$--adjusted-}

\index{b@$\fb$--minimum $\fa$--adjusted
depth, $\lambda^\fb_\fa(M)$}

\begin{proof} Let $d:=\dim M$  and assume contrarily
that $\mathrm{a}(M)\nsubseteq \fp$. Then by Theorem \ref{935BS}, we
have
$$d=f^{\mathrm{a}(M)}_\fm(M)\leq
\lambda^{\mathrm{a}(M)}_\fm(M)\leq \depth M_\fp+\dim R/\fp \leq \dim
M_\fp+\dim R/\fp\leq d.$$
 Hence $\depth
M_\fp=\dim M_\fp$ which  is a contradiction.
\end{proof}

As an immediate corollary of the above lemma, we have the following
result which  Zhou has also proved it, in \cite[Corollary 2.3]{Z},
for $M=R$ and by a different method.
\begin{cor}\label{Zx}
Assume that $x$ is a uniform local cohomological annihilator of a
finitely generated $R$--module $M$. Then $M_x$ is a Cohen-Macaulay
$R_x$--module.
\end{cor}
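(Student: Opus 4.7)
The plan is to prove the contrapositive at the level of primes: if $\fp \in \Supp M$ satisfies $x \notin \fp$, then $M_\fp$ must be Cohen-Macaulay. Since $\Supp M_x = \{\fp R_x : \fp \in \Supp M,\ x\notin \fp\}$ and $(M_x)_{\fp R_x} \cong M_\fp$, this will give exactly the conclusion that $M_x$ is Cohen-Macaulay as an $R_x$-module.

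To execute this, I would suppose for contradiction that there exists $\fp \in \Supp M$ with $x \notin \fp$ such that $M_\fp$ is not Cohen-Macaulay. Pick a maximal ideal $\fm$ of $R$ containing $\fp$; then $\fp R_\fm \in \Supp M_\fm$ and $(M_\fm)_{\fp R_\fm} \cong M_\fp$ is not Cohen-Macaulay. Now apply Lemma \ref{228} to the local ring $(R_\fm, \fm R_\fm)$ and the finitely generated $R_\fm$-module $M_\fm$: this gives
\[
\a(M_\fm) = \bigcap_{i<\dim M_\fm}\bigl(0:_{R_\fm}\H^i_{\fm R_\fm}(M_\fm)\bigr) \subseteq \fp R_\fm.
\]

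The second step is to show that $x/1$ lies in $\a(M_\fm)$. By the hypothesis that $x$ is a uniform local cohomological annihilator of $M$, we have $x\,\H^i_\fm(M) = 0$ for all $i < \h_M\fm$. Using the standard isomorphism $\H^i_\fm(M) \cong \H^i_{\fm R_\fm}(M_\fm)$ and the fact that $\dim M_\fm = \h_M\fm$, this gives $(x/1)\,\H^i_{\fm R_\fm}(M_\fm) = 0$ for all $i < \dim M_\fm$, so $x/1 \in \a(M_\fm)$. Combining with the previous step, $x/1 \in \fp R_\fm$, which forces $x \in \fp$ and contradicts our choice of $\fp$.

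There is no real obstacle here; the argument is a direct application of Lemma \ref{228} after a routine localization. The only point requiring a moment's care is the passage from the uniform local cohomological annihilator condition (stated in terms of $\H^i_\fm$ for maximal ideals $\fm$) to membership in $\a(M_\fm)$, and this is handled by the standard identification of local cohomology with its localization at the maximal ideal.
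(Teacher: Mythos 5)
Your proof is correct and follows essentially the same approach as the paper: after reducing to primes $\fp$ with $x\notin\fp$, you pass to a maximal ideal $\fm\supseteq\fp$, observe that $x/1\in\a(M_\fm)$ because $x$ is a uniform local cohomological annihilator, and invoke Lemma \ref{228}. The paper phrases this directly rather than by contradiction, but the mathematical content is identical.
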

\begin{proof}
Let $\fm$ be a maximal ideal of $R$ with $\dim M_\fm>0$. Since $x\in \a(M_\fm)$, for any prime ideal $\fp\subseteq\fm$ with $x\notin\fp$, we
have $M_\fp$ is Cohen-Macaulay by Lemma \ref{228}.
\end{proof}

Another property of rings which contain  uniform local cohomological
annihilators is a result of Zhou \cite[Theorem 2.2]{Z} which proves
that  if $x$ is a uniform local cohomological annihilator of $R$,
then a power of $x$ is a strong uniform local cohomological
annihilator of $R$.  Using the above result and our approach to
uniform annihilators of local cohomology, we are able to recover
this result  in special case when $R$ is local, by a different
method. Before that, we mention the following well known fact.


\begin{lem}\emph{\cite[Thorem 31.7]{Ma}}\label{31.7Ma}
Assume that $(R,\fm)$ is universally catenary local ring and $M$ is
finitely generated and  equidimensional. Then $\widehat{M}$ is
equidimensional $\widehat{R}$-module.
\end{lem}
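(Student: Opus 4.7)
The plan is to reduce the module statement to the known scalar version (Matsumura's Theorem 31.7, applied to quotient rings) via the minimal primes of $M$. First I would observe that $\dim\widehat{M}=\dim M$: since $M$ is finitely generated and $R\to\widehat{R}$ is faithfully flat, $\Ann_{\widehat{R}}\widehat{M}=(\Ann_R M)\widehat{R}$, so $\widehat{R}/\Ann_{\widehat{R}}\widehat{M}\cong\widehat{R/\Ann_R M}$, and completion preserves the dimension of a local Noetherian ring.

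Next, write $\Min M=\{\fq_1,\ldots,\fq_s\}$. Equidimensionality of $M$ gives $\dim R/\fq_i=\dim M$ for every $i$. Each $R/\fq_i$ is a domain (hence equidimensional in a trivial way) and is a quotient of the universally catenary ring $R$, so $R/\fq_i$ is itself universally catenary. Now I would apply the scalar version of the result (Matsumura's Theorem 31.7, i.e., Ratliff's theorem) to each $R/\fq_i$ to conclude that $\widehat{R/\fq_i}\cong\widehat{R}/\fq_i\widehat{R}$ is equidimensional as a ring. In particular, for every $\fP\in\Min(\widehat{R}/\fq_i\widehat{R})$ one has
\[
\dim\widehat{R}/\fP=\dim(\widehat{R}/\fq_i\widehat{R})=\dim R/\fq_i=\dim M.
\]

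It then remains to compare $\Min\widehat{M}$ with $\bigcup_{i=1}^{s}\Min(\widehat{R}/\fq_i\widehat{R})$. Using flatness of $\widehat{R}$ over $R$, intersections of ideals commute with extension, and hence
\[
\rad\bigl(\Ann_{\widehat{R}}\widehat{M}\bigr)=\rad\bigl((\textstyle\bigcap_i\fq_i)\widehat{R}\bigr)=\bigcap_i\rad(\fq_i\widehat{R}).
\]
Consequently $\Min\widehat{M}$ consists of the minimal members of $\bigcup_i\Min(\widehat{R}/\fq_i\widehat{R})$. By the preceding step, every prime in this union has the same codimension $\dim M$; two such primes cannot lie in strict inclusion (as $\fP\subsetneq\fP'$ would force $\dim\widehat{R}/\fP>\dim\widehat{R}/\fP'$), so in fact $\Min\widehat{M}=\bigcup_i\Min(\widehat{R}/\fq_i\widehat{R})$ and every $\fP\in\Min\widehat{M}$ satisfies $\dim\widehat{R}/\fP=\dim M=\dim\widehat{M}$, proving equidimensionality.

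The serious content is concentrated in the single appeal to the scalar (domain) version of the theorem applied to each $R/\fq_i$; this is the genuinely deep ingredient (Ratliff). Everything else is routine flatness and a simple dimension/containment comparison. The main potential pitfall in writing it up carefully is justifying $\Ann_{\widehat{R}}\widehat{M}=(\Ann_R M)\widehat{R}$ and the commutation of intersection with extension, both of which require $M$ finitely generated and $\widehat{R}$ flat — so the argument genuinely uses both hypotheses on $M$ together with the universal catenarity of $R$.
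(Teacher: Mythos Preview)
Your argument is correct. Note that at the label in question the paper merely cites Matsumura and gives no proof; however, the identical statement is proved later in the thesis as Lemma~\ref{eq}(b), and that proof follows essentially the same route as yours: pick $\fQ\in\Min\widehat{M}$, contract to $\fQ^c\in\Min M$ via Going Down, and then invoke Ratliff's theorem (universal catenarity $\Rightarrow$ $R/\fQ^c$ is formally equidimensional) to get $\dim\widehat{R}/\fQ=\dim\widehat{R}/\fQ^{ce}=\dim M$. The only cosmetic difference is that the paper works ``downward'' from a single $\fQ$ and uses the short auxiliary fact (part~(a) of Lemma~\ref{eq}) that $\fQ\in\Min(\widehat{R}/\fQ^{ce})$, whereas you work ``upward'' from the $\fq_i$ and then assemble $\Min\widehat{M}$ as the minimal elements of $\bigcup_i\Min(\widehat{R}/\fq_i\widehat{R})$; the substance is the same.
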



\begin{prop}\label{Zs}\index{uniform local cohomological
annihilator! strong-}\index{strong uniform local cohomological
annihilator} Assume that $(R,\fm)$ is a local ring, $M$ is a
finitely generated $R$--module  and $x$ is a uniform local
cohomological annihilator of $M$, then a power of $x$ is a strong
uniform local cohomological annihilator of $M$.
\end{prop}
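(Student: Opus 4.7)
The plan is to pass to the completion and exploit the finiteness of the Cousin complex there: over the complete local ring, Corollary \ref{216} makes $\mathcal{C}_R(M)$ finite, and I will show that the support of each Cousin cohomology $\mH^i_M$ is contained in $\V(x)$, so a single power of $x$ annihilates all of them and hence---via Corollary \ref{c224}---all the local cohomologies $\H^i_{\fp R_\fp}(M_\fp)$ with $i<\dim M_\fp$.

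Since $x$ is a uniform local cohomological annihilator of $M$, Proposition \ref{equ} gives $M$ equidimensional and Corollary \ref{cat} gives $R/0:_RM$ universally catenary, so Lemma \ref{31.7Ma} makes $\widehat{M}$ equidimensional over $\widehat{R}$. From $\H^i_\fm(M)\cong\H^i_{\widehat{\fm}}(\widehat{M})$ together with the fact that every minimal prime of $\widehat{M}$ contracts into $\Min M$, the image of $x$ in $\widehat{R}$ is still a uniform local cohomological annihilator of $\widehat{M}$; by faithful flatness of each local map $R_\fp\to\widehat{R}_{\widehat{\fp}}$ (with $\widehat{\fp}$ minimal over $\fp\widehat{R}$), it suffices to prove the statement for $\widehat{M}$. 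Replacing $R$ by $\widehat{R}$, we may therefore assume $R$ is complete, whence its formal fibres are Cohen--Macaulay and Corollary \ref{216} applies: $\mathcal{C}_R(M)$ is finite, so each $\mH^i_M$ is finitely generated.

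The key step is to observe $\Supp\mH^i_M\subseteq\V(x)$ for every $i$. Indeed, if $\fq\in\Supp M$ satisfies $x\notin\fq$, then $M_\fq$ is Cohen--Macaulay by Corollary \ref{Zx}; Theorem \ref{2.4S4} then makes $\mathcal{C}_{R_\fq}(M_\fq)$ exact, and Theorem \ref{3.5S1} identifies $(\mH^i_M)_\fq=\mH^i_{M_\fq}=0$. Since each $\mH^i_M$ is finitely generated with $\mH^i_M=0$ for $i\geq\dim M-1$ (Lemma \ref{1310}), the radical containment $x\in\rad(0:_R\mH^i_M)$ upgrades to $x^{n_i}\mH^i_M=0$ for some $n_i$, and setting $n:=\sum_{i=-1}^{\dim M-2}n_i$ places $x^n$ in $I:=\prod_{i=-1}^{\dim M-2}(0:_R\mH^i_M)$. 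For any $\fp\in\Supp M$, Corollary \ref{c224} applied to $M_\fp$ with ideal $\fp R_\fp$ and $r=\dim M_\fp-1$ gives $I\cdot R_\fp\subseteq\a(M_\fp)$, so $x^n$ annihilates $\H^i_{\fp R_\fp}(M_\fp)$ for all $i<\dim M_\fp$; and $x\notin\bigcup_{\fq\in\Min M}\fq$ forces $x^n\notin\fq R_\fp$ for every $\fq R_\fp\in\Min M_\fp$.

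The principal obstacle---extracting a single exponent $n$ uniformly in $\fp$---is overcome exactly by the containment $\Supp\mH^i_M\subseteq\V(x)$ in the complete case: finite generation of the Cousin cohomologies then converts pointwise radical membership into a uniform annihilation, and Corollary \ref{c224} delivers the strong uniform annihilator for free.
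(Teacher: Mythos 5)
Your proof is correct and follows essentially the same strategy as the paper's: reduce to the completion via Proposition~\ref{equ}, Corollary~\ref{cat} and Lemma~\ref{31.7Ma}, use Corollary~\ref{Zx} together with the finiteness of $\C_{\widehat{R}}(\widehat{M})$ to obtain a single power $x^n$ annihilating all Cousin cohomologies, and then invoke Corollary~\ref{c224}. The only variation is in the final descent, where the paper fixes for each $\fp$ a subsystem-of-parameters ideal $I\subseteq\fp$ with $\rad(IR_\fp+0:_{R_\fp}M_\fp)=\fp R_\fp$ and applies Corollary~\ref{c224} to $I\widehat{R}$ before localizing, whereas you localize the Cousin complex at $\fp$ via Theorem~\ref{3.5S1} and apply Corollary~\ref{c224} there directly; both routes are correct and equally short.
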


\begin{proof}
Let $d=\dim M_\fm$. Note that
$M$ is equidimensional by Proposition \ref{equ} and  $R/0:_{\widehat{R}}\widehat{M}$ is universally
catenary by Corollary \ref{cat}. Thus Lemma  \ref{31.7Ma},  implies that  $\widehat{M}$ is
equidimensional.

Let $\fp\in\Supp M$ with $r=\h_M\fp$. We may choose elements $x_1,\ldots, x_r$ in $\fp$ such that
$\h_M(x_1,\ldots,x_r)=r$ and $\dim R/(x_1,\ldots,x_r)=d-r$. Set
$I=(x_1,\ldots,x_r)$, then $\dim \widehat{R}/I\widehat{R}=d-r$ and so  $\h_{\widehat{M}}(I\widehat{R})=r$.

Note that $\C_{\widehat{R}}(\widehat{M})$ is finite, by Corollary
\ref{216} and  $\widehat{M}_x$ is Cohen-Macaulay by Corollary
\ref{Zx}, which means that
$\mH^i_{\widehat{M}_x}=(\mH^i_{\widehat{M}})_x=0$ for all $i\geq
-1$. Since $\mH^i_{\widehat{M}}$ is finitely generated
$\widehat{R}$--module, there exists a positive integer $n$ such that
$x^n\in\cap_{i\geq -1}(0:_{\widehat{R}}\mH^i_{\widehat{M}})$ (note
that $\mH^i_{\widehat{M}}=0$ for $i\geq d-1$).

Now, Corollary \ref{c224} implies that $x^{nd}\in
0:_{\widehat{R}}\mH^i_{I\widehat{R}}(\widehat{M})$ for
$i<\h_{\widehat{M}}(I\widehat{R})=r$.  Hence $x^{nd}\mH^i_I(M)=0$
for $i<r=\h_MI$ and the result follows by the fact that
$\H^i_{IR_\fp}(M_\fp)\cong \H^i_{\fp R_\fp}(M_\fp)$.
\end{proof}


The following theorem gives a characterization for a finitely
generated module $M$ over a local ring to have a uniform local
cohomological annihilator in terms of the existence of a specific
parameter element of $M$. In proving (ii)$\Longrightarrow$ (i) of
this theorem,  A. Talemi--Eshmanani had a fruitful cooperation.


\begin{thm}\label{2210}\index{uniform local cohomological
annihilator}
Let $(R,\fm)$ be  local and $M$ be a finitely generated $R$--module
with dimension $d= \emph{\dim} M>1$. Then the following statements
are equivalent.
\begin{enumerate}
\item[\emph{(i)}] $M$ has a uniform local cohomological annihilator.
\item[\emph{(ii)}] $R/0:_RM$ is catenary and equidimensional, there exists a parameter
element $x$ of $M$ such that $\emph\Min (M/xM)\cap \emph\Ass M=
\emptyset$ and all modules $M/x^t M$, $t\in\mathbb{N}$, have a
common uniform local cohomological annihilator.
\end{enumerate}
\end{thm}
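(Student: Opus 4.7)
My plan is to prove the two implications separately, using the characterisations of modules with a uniform local cohomological annihilator developed in this section together with the long exact sequences arising from multiplication by $x^t$.

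For $\mathrm{(i)}\Rightarrow\mathrm{(ii)}$: by Proposition \ref{equ} and Corollary \ref{cat}, $M$ is equidimensional and $R/0:_RM$ is universally catenary, hence catenary and equidimensional. Let $y$ be a uniform local cohomological annihilator of $M$; by Proposition \ref{Zs} I may replace $y$ by a suitable power and assume that it is a strong uniform local cohomological annihilator, so by Corollary \ref{Zx} the localisation $M_\fq$ is Cohen--Macaulay whenever $y\notin\fq$. The decisive step is the choice of $x$. Consider the set
\[
\mathfrak{B}:=\{\fq\in\Supp M:\h_M\fq=1\ \text{and}\ y\in\fq\}.
\]
This is finite: since $y$ is a non-zerodivisor on $R/0:_RM$ (because $y\notin\bigcup_{\fp\in\Min M}\fp$), one has $\dim R/((0:_RM)+Ry)=d-1$, and the members of $\mathfrak{B}$ are among the finitely many minimal primes of $(0:_RM)+Ry$. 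Since $d>1$, every prime in $\mathfrak{B}\cup(\Ass M\setminus\{\fm\})$ has positive coheight, so by prime avoidance I pick $x\in\fm$ outside this union. Then $x\notin\bigcup_{\fp\in\Min M}\fp$, and Krull's principal ideal theorem gives $\dim M/xM=d-1$, so $x$ is a parameter element of $M$. The condition $\Min(M/xM)\cap\Ass M=\emptyset$ follows because any $\fq\in\Min(M/xM)$ has $\h_M\fq=1$ (by Krull's theorem together with equidimensionality and catenarity), hence $\fq\neq\fm$, while $x\in\fq$ rules out $\fq\in\Ass M\setminus\{\fm\}$ by the choice of $x$. By the same choice, $y\notin\fq$ for every $\fq\in\Min(M/xM)$. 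Finally, the four-term exact sequence $0\to 0:_Mx^t\to M\xrightarrow{x^t}M\to M/x^tM\to 0$ and the long exact sequences in local cohomology attached to it yield a fixed power $y^N$ annihilating $\H^i_\fm(M/x^tM)$ for all $i<d-1$ uniformly in $t$, giving the desired common uniform local cohomological annihilator.

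For $\mathrm{(ii)}\Rightarrow\mathrm{(i)}$: let $z$ be the common uniform local cohomological annihilator of $\{M/x^tM\}_{t\geq 1}$. First, $x\notin\bigcup_{\fp\in\Min M}\fp$, since any $\fp\in\Min M\subseteq\Ass M$ containing $x$ would lie in $\Min(M/xM)\cap\Ass M$, contradicting the hypothesis. The key observation is that $\H^i_\fm(M)$ is $\fm$-torsion, hence $x$-torsion since $x\in\fm$, so
\[
\H^i_\fm(M)=\bigcup_{t\geq 1}\H^i_\fm(M)[x^t]
\]
where $[x^t]$ denotes the $x^t$-torsion submodule. The four-term exact sequence decomposes into $0\to 0:_Mx^t\to M\to x^tM\to 0$ and $0\to x^tM\to M\to M/x^tM\to 0$, and a diagram chase using that multiplication by $x^t$ on $\H^i_\fm(M)$ factors through these two short exact sequences shows that for $\xi\in\H^i_\fm(M)[x^t]$ with $i<d$, the element $z\xi$ lies in the image of $\H^i_\fm(0:_Mx^t)\to\H^i_\fm(M)$. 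Absorbing this error term by a further power of the annihilator (treating $0:_Mx^t\subseteq\Gamma_{(x)}(M)$ as a finitely generated submodule of dimension at most $d-1$ with its own uniform annihilator) gives $z^m\xi=0$ for some $m$ independent of $\xi$, and taking the union over $t$ yields $z^m\H^i_\fm(M)=0$ for all $i<d$. To see that $z^m$ avoids $\bigcup_{\fp\in\Min M}\fp$, fix $\fp\in\Min M$ and apply Krull's principal ideal theorem in the catenary equidimensional quotient $R/0:_RM$ to obtain $\fq\supseteq\fp+Rx$ with $\dim R/\fq=d-1$; this $\fq$ lies in $\Min(M/xM)$, because any strictly smaller prime in $\Supp M\cap\V(x)$ would have $\h_M=0$, contradicting $x\notin\bigcup_{\fp\in\Min M}\fp$. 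Hence $z\notin\fq\supseteq\fp$, completing the argument.

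The main obstacle I anticipate is the bookkeeping through the four-term exact sequence $0\to 0:_Mx^t\to M\xrightarrow{x^t}M\to M/x^tM\to 0$ when $x$ is not $M$-regular, which is not forced by the hypotheses (for instance, $\fm\in\Ass M$ is allowed). The submodule $\Gamma_{(x)}(M)=\bigcup_t 0:_Mx^t$ contributes to the long exact sequences and must be absorbed, either by taking a higher power of the annihilator, or by analysing $\Gamma_{(x)}(M)$ recursively as a finitely generated submodule of strictly smaller dimension with its own uniform local cohomological annihilator.
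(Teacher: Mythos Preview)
Your overall strategy is sound and genuinely different from the paper's. The paper sidesteps the four--term sequence entirely by fixing, once and for all, a submodule $N\subseteq M$ with $\Ass N=\Min M$ and $\Ass(M/N)=\Ass M\setminus\Min M$; then $x$ is $N$--regular, one works with the short exact sequences $0\to N\xrightarrow{x^t}N\to N/x^tN\to0$, and an element $s\in(0:_RM/N)\setminus\bigcup_{\fq\in\Min(M/xM)}\fq$ absorbs the discrepancy between $N$ and $M$. The passage back to $M$ uses only that possessing a uniform local cohomological annihilator depends on the support (Corollary~\ref{c215}). Your direct approach via the four--term sequence avoids the Bourbaki decomposition and Corollary~\ref{c215}, but puts the entire burden on controlling $\Gamma_{(x)}(M)$.

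Here your sketch has a real gap. In $(\mathrm{ii})\Rightarrow(\mathrm{i})$ you correctly show that $z\xi$ lies in the image of $\H^i_\fm(0:_Mx^t)\to\H^i_\fm(M)$ for $i<d$, and for $t$ large this is the image of $\H^i_\fm(L)$ with $L:=\Gamma_{(x)}(M)$. But a \emph{uniform local cohomological annihilator} of $L$ does not kill $\H^{\dim L}_\fm(L)$, and $\dim L$ may be as large as $d-2$, which lies in the range $i<d$ you must cover. What you actually need is an element of $\rad(0:_RL)$ avoiding $\bigcup_{\fp\in\Min M}\fp$, and this requires an observation you have not made: every $\fp\in\Ass L=\{\fp\in\Ass M:x\in\fp\}$ satisfies $\h_M\fp\geq2$. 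Indeed $\h_M\fp\neq0$ since $x\notin\bigcup_{\Min M}\fp$, and if $\h_M\fp=1$ then (as $x$ avoids every minimal prime of $M$) $\fp$ would be minimal in $\Supp(M/xM)$, contradicting $\Min(M/xM)\cap\Ass M=\emptyset$. With this in hand, $\rad(0:_RL)=\bigcap_{\fp\in\Min L}\fp\not\subseteq\bigcup_{\fq\in\Min M}\fq$, so a suitable power $w^k\in0:_RL$ kills every $\H^i_\fm(L)$ and $w^kz$ is the desired annihilator for $M$.

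Two minor points. In $(\mathrm{i})\Rightarrow(\mathrm{ii})$ your choice of $x$ avoiding all of $\Ass M\setminus\{\fm\}$ actually forces $\Gamma_{(x)}(M)\subseteq\Gamma_\fm(M)$ to have finite length, so $\H^i_\fm(0:_Mx^t)=0$ for $i>0$ and the four--term sequence collapses cleanly (in fact $y^2$ already works as the common annihilator); say this rather than leave it at ``yield a fixed power $y^N$''. Also, the appeals to Proposition~\ref{Zs} and Corollary~\ref{Zx} are never used in your argument and can be dropped.
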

\begin{proof} In the following argument we fix $N$ to be a submodule of $M$
such that $\Ass N= \Min M$ and
$\Ass M/N=\Ass M\setminus\Min M$  (see \cite[Page 263, Proposition 4]{B} for existence of $N$).\\

 (i)$\Rightarrow$(ii) By Proposition \ref{equ}, $M$ is equidimensional and  $R/0:_RM$ is catenary by Corollary \ref{cat}. Set
$X=\{\fp\in\Ass M : \h_M\fp= 1\}$.  It can be easily checked that
$X= \{\fp\in\Supp M/N: \h_M\fp= 1\}$ and it is a subset of $\Min
M/N$ so that $X$ is a finite set. Assume that $r$ is an element of
$R\setminus \cup_{\fp\in\Min M}\fp$ which is a uniform local
cohomological annihilator of $M$. If $r$ is unit element then $M$ is
Cohen--Macaulay and the
claim follows by choosing $x$ to be a non--zero--divisor on $M$.\\

Therefore we assume that $r\in\fm\setminus \cup_{\fp\in\Min M}\fp$,
so that $\dim M/rM= d-1$. Note that, as $M$ is equidimensional and
$\dim M>1$, $\fm\not\in\Min M$, $\fm\not\in X$ and $\fm\not\in\Min
M/rM$. Hence there exist
\begin{equation}\label{2.1} x\in\fm\setminus(\underset{\fp\in\Min M}{\cup}\fp)
\cup(\underset{\fp\in X}{\cup}\fp)\cup(\underset{\fp\in\Min
M/rM}{\cup}\fp).\end{equation}
 It follows that $\Min M/xM\cap\Ass M= \emptyset$.

We claim that  $r\not\in \underset{\fp\in\Min M/xM}{\cup}\fp$.
Otherwise $r\in\fp$ for some $\fp\in\Min M/xM$ and so $\h_M\fp=1$
which
 implies that $\fp\in\Min M/rM$. This contradicts with the chosen $x$ in (\ref{2.1}).

 Next we claim that $0:_RM/N\nsubseteq\cup_{\fp\in\Min M/xM}\fp$. Otherwise
$0:_RM/N\subseteq\fp$ for some $\fp\in\Min M/xM$. Thus $\fp\in\Supp
M/N$ and $\h_M\fp= 1$ which shows that $\fp\in X$. This also
contradicts with (\ref{2.1}). As a result there exists an element
$s\in 0:_RM/N\setminus\cup_{\fp\in\Min M/xM}\fp$. Now consider the
induced exact sequences

$$\H_\fm^{i-1}(M/N)\longrightarrow \H_\fm^i(N)\longrightarrow
\H_\fm^i(M).$$ As $s\H_\fm^{i-1}(M/N)=0$ for all $i$, and
$r\H_\fm^i(M)=0$ for all $i<d$, we get $rs\H_\fm^i(N)=0$ for all
$i<d$.

Now, for the module $N$, we have $\Ass N= \Min M$ and so, by
(\ref{2.1}),
 each of the elements $x^t$ is a
non--zero--divisor on $N$. Choose an arbitrary positive integer $t$
and consider the exact sequence

 $$0\longrightarrow N\overset{x^t}{\longrightarrow} N\longrightarrow
 N/x^tN\longrightarrow 0$$
 which induces the exact sequence

$$\H_\fm^i(N){\longrightarrow} \H_\fm^i(N/x^{t}N)\longrightarrow \H_\fm^{i+1}(N).$$
As $rs\H_\fm^i(N)=0$ for $i<d$, we get $(rs)^2\H_\fm^i(N/x^tN)=0$
for all $i<d-1$.

On the other hand the exact sequence
$$0\longrightarrow N/x^tN\longrightarrow M/x^tN\longrightarrow M/N\longrightarrow 0$$
implies the exact sequences
$$\H_\fm^i(N/x^tN)\longrightarrow
\H_\fm^i(M/x^tN)\longrightarrow \H_\fm^i(M/N)$$
 from which it follows that $(rs)^2s\H_\fm^i(M/x^tN)=0$ for all $i<d-1$.

 Finally, the exact sequence
$$0\longrightarrow x^tM/x^tN\longrightarrow M/x^tN\longrightarrow
 M/x^tM\longrightarrow0$$
implies the exact sequences
$$\H_\fm^i(M/x^tN)\longrightarrow \H_\fm^i(M/x^tM)\longrightarrow
 \H_\fm^{i+1}(x^tM/x^tN).$$

Note that $s(M/N)=0$ implies that $s\H_\fm^i(x^tM/x^tN)=0$ for all
$i$. Therefore  $$(rs)^2s^2\H_\fm^i(M/x^tM)=~0,$$ for all $i<d-1$.

As $r,
s\in R\setminus\cup_{\fp\in\Min M/xM}\fp$ and $\dim M/x^tM=d-1$, the
element $r^2s^4$ is a uniform local cohomological annihilator for
$M/x^tM$.\\

(ii)$\Rightarrow$(i). Assume that all modules $M/x^t M$,
$t\in\mathbb{N}$,  have a common uniform local cohomological
annihilator $r$, say. We first observe that
$$0:_RM/N\not\subseteq\underset{\fp\in\Min M/xM}{\cup} \fp.$$
Otherwise there is a prime ideal $\fp\in\Min M/xM$ which $0:_R
M/N\subseteq \fp$. As $\dim M/N\leq d-1$ and $\h_M\fp= 1$, we find
that $\fp\in\Min M/N$ and so $\fp\in\Ass M$ which contradicts our
assumption $\Min M/xM\cap \Ass M= \emptyset$. As a result there is
an element $$s\in 0:_RM/N\setminus \underset{\fp\in\Min
M/xM}{\cup}\fp.$$

Consider an arbitrary positive integer $t$. From the exact sequence
$$0\longrightarrow x^tM/x^tN  \longrightarrow M/x^tN\longrightarrow
 M/x^tM\longrightarrow0$$ it follows the induced exact sequences
 $$\H_\fm^i(x^tM/x^tN)\longrightarrow
\H_\fm^i(M/x^tN)\longrightarrow \H_\fm^i(M/x^tM).$$ Since $s(M/N)=
0$ and $r$ is a uniform local cohomological annihilator of
$\H_\fm^i(M/x^tM)$, it follows that $rs\H_\fm^i(M/x^tN)=0$ for all
$i< d-1$.

On the other hand, the exact sequence $0\longrightarrow N/x^tN
\longrightarrow M/x^tN\longrightarrow
 M/N\longrightarrow0$ implies the exact sequences $$\H_\fm^{i-1}(M/N)\longrightarrow
\H_\fm^i(N/x^tN)\longrightarrow \H_\fm^i(M/x^tN)$$ from which it
follows that $rs^2\H_\fm^i(N/x^tN)= 0$ for all $i< d-1$.

From the fact that $M$ is catenary and equidimensional  and that $x$
is a parameter element of $M$ it follows that $\underset{\fp\in\Min
N}{\cup}\fp\subseteq\underset{\fq\in\Min M/xM}{\cup}\fq$ and so
$rs^2\not\in\underset{\fp\in\Min N}{\cup}\fp$.

Our next step is to show that $rs^2\H_\fm^i(N)= 0$ for all $i<d$.
Let $i<d$ and choose an arbitrary element $\alpha\in \H_\fm^i(N)$.
By torsionness of local cohomology modules, there is a positive
integer $t$ such that $\alpha\in(0\underset{\H_\fm^i(N)}{:} x^t)$.
As $\Ass N= \Min M$ and $x^t\not\in\underset{\fp\in\Min
M}{\cup}\fp$, $x^t$ is a non--zero--divisor on $N$. Thus the exact
sequence $0\longrightarrow N
\overset{x^t}{\longrightarrow}N\longrightarrow
 N/x^tN\longrightarrow0$ implies the exact sequence $\H_\fm^{i-1}(N/x^tN)
\longrightarrow \H_\fm^i(N)\overset{x^t}{\longrightarrow}
 \H_\fm^i(N)$ and also the exact sequence
 $$\H_\fm^{i-1}(N/x^tN)\longrightarrow(0\underset{\H_\fm^i(N)}{:}
 x^t)\longrightarrow0.$$ As $rs^2\H_\fm^{i-1}(N/x^tN)= 0$, we obtain
 that $rs^2(0\underset{\H_\fm^i(N)}{:}
 x^t)= 0$. In particular, $rs^2\alpha= 0$. Therefore,
  as $r,s\not\in\underset{\fp\in\Min N}{\cup}\fp$,  $rs^2$ is
  a uniform local cohomological annihilator of $N$.
Since $\Supp M= \Supp N$, $M$ has a uniform local cohomological
annihilator, by Corollary \ref{c215}.
\end{proof}

Now we introduce an important class of modules which have uniform
local cohomological annihilators. This is the class of modules with
finite Cousin complexes.  In next chapter, we will show that these
two classes coincide, if all formal fibres of $R$ are Cohen-Macaulay.

\begin{thm}\label{229}
Assume that $M$ is a finitely generated  $R$--module of finite
$\emph{\dim} M=d$ and that $\mathcal{C}_R(M)$ is finite, then $M$
has a uniform local cohomological annihilator. \index{uniform local
cohomological annihilator} \index{Cousin complex, $\C_R(M)$!finite-}
\end{thm}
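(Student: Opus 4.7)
The plan is to combine Lemma \ref{139} with Corollary \ref{c224}: the former uses the finiteness of $\mathcal{C}_R(M)$ to produce an element $x$ annihilating every $\mH^i_M$ while avoiding the minimal primes of $M$, and the latter transfers this annihilation to the local cohomologies $\H^i_\fm(M)$ at each maximal ideal $\fm$. A common power of $x$, taken large enough to handle all heights, will serve as the required uniform local cohomological annihilator.

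First, by Theorem \ref{2.7S1}(ii) (or Lemma \ref{1310}(ii)), $\mH^i_M = 0$ for $i \geq d-1$, so the intersection $\bigcap_{i\geq -1}(0:_R\mH^i_M)$ is a finite intersection of ideals. Lemma \ref{139} asserts that this ideal is not contained in $\bigcup_{\fp\in\Min M}\fp$, so since $\Min M$ is finite (as $M$ is finitely generated) prime avoidance produces
\[
x \;\in\; \bigcap_{i=-1}^{d-2}(0:_R\mH^i_M)\;\Big\backslash\;\bigcup_{\fp\in\Min M}\fp.
\]

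Now fix a maximal ideal $\fm$ of $R$. If $\fm\notin\Supp M$ then $\H^i_\fm(M)=0$ for every $i$ and nothing is to be checked, so assume $\fm\in\Supp M$, whence $\fm M \neq M$ by Nakayama; if $\h_M\fm=0$ there is again nothing to show. Otherwise, applying Corollary \ref{c224} with $\fa=\fm$ and $r=\h_M\fm-1$ yields
\[
\prod_{i=-1}^{r-1}(0:_R\mH^i_M)\;\subseteq\;\bigcap_{i=0}^{r}(0:_R\H^i_\fm(M)).
\]
The product on the left has $\h_M\fm$ factors and $x$ lies in each, so $x^{\h_M\fm}$ lies in the product and hence annihilates $\H^i_\fm(M)$ for every $i<\h_M\fm$. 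Since $\h_M\fm\leq d$, it follows uniformly that $x^d\H^i_\fm(M)=0$ for all $i<\h_M\fm$ and all maximal $\fm$. Because $x\notin\bigcup_{\fp\in\Min M}\fp$, the same holds for $x^d$, so $x^d$ is the desired uniform local cohomological annihilator.

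The substantive content has already been packaged into Lemma \ref{139} and Corollary \ref{c224}, so there is no serious obstacle; the only care required is index bookkeeping in the product (noting it contributes $\h_M\fm$ copies of $x$) and the observation that a single power $x^d$ works simultaneously for every maximal ideal since $\h_M\fm\leq d$.
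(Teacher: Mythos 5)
Your proof is correct and follows exactly the same route as the paper, which also cites Lemma \ref{139} to produce $x\in\bigcap_{i\geq -1}(0:_R\mH_M^i)\setminus\bigcup_{\fp\in\Min M}\fp$ and Corollary \ref{c224} to conclude that $x^d$ annihilates the relevant local cohomologies. You have merely spelled out the index bookkeeping that the paper's two-line proof leaves implicit.
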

\begin{proof}
By Lemma \ref{139}, there exists an element $x\in\cap_{i\geq
-1}\mH_M^i\setminus\cup_{\fp\in \Min M}\fp$. Now $x^d$ is a uniform
local cohomological annihilator of $M$, by  Corollary \ref{c224}.
\end{proof}

In Theorem \ref{2210}, we have shown that if $M$ has a uniform local
cohomological annihilator, then there exists  a parameter element
$x$ of $M$ which $M/xM$ has also a uniform local cohomological
annihilator. Now, when $\C_R(M)$ is finite, we may show this
property for all parameter elements of $M$. Firstly we see the
result for each non--zero--divisor $x$ of $M$.
\begin{prop}\label{2214}\index{Cousin complex, $\C_R(M)$!finite-}\index{uniform local cohomological
annihilator}
Let $(R, \fm)$ be local ring and let $M$ be a finitely generated
$R$--module with $\emph\dim M= d> 1$ such that $\mathcal{C}_R(M)$ is
finite. Then, for each non--zero--divisor $x$ of $M$, the quotient
module $M/xM$ has a uniform local cohomological annihilator.
Moreover, $\emph\Min(M/xM)\cap \emph\Ass M= \emptyset$ and all
modules $M/x^t M$, $t\in\mathbb{N}$, have a common uniform local
cohomological annihilator.
\end{prop}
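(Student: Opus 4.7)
The proof follows the template of the implication (i)$\Rightarrow$(ii) in Theorem \ref{2210}, exploiting the extra information given by finiteness of $\C_R(M)$. The equality $\Min(M/xM)\cap\Ass M=\emptyset$ is immediate: any $\fp\in\Min(M/xM)$ contains $x$, while $x$ lies outside $\bigcup_{\fq\in\Ass M}\fq$ because $x$ is a non-zero-divisor on $M$.

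Next I claim that $M_\fp$ is Cohen-Macaulay for every $\fp\in\Min(M/xM)$. Indeed, Krull's height theorem together with $\fp\notin\Min M$ gives $\h_M\fp=1$, and $\fp\notin\Ass M$ gives $\depth M_\fp\geq 1$, so $M_\fp$ is Cohen-Macaulay of dimension one. Theorems \ref{3.5S1} and \ref{2.4S4} therefore force $(\mH_M^i)_\fp=0$ for every $i\geq-1$, so the ideal $J:=\bigcap_{i\geq-1}(0:_R\mH_M^i)$ is not contained in $\fp$. Combined with Lemma \ref{139}, $J$ is not contained in any prime of the finite set $\Min M\cup\Min(M/xM)$; prime avoidance then produces $r\in J$ with $r\notin\bigcup_{\fp\in\Min M\cup\Min(M/xM)}\fp$, and Corollary \ref{c224} (with $\fa=\fm$) yields $r^{d}\H_\fm^i(M)=0$ for all $i<d$.

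Now fix a submodule $N\subseteq M$ with $\Ass N=\Min M$ and $\Ass M/N=\Ass M\setminus\Min M$, and produce $s\in(0:_R M/N)$ with $s\notin\bigcup_{\fp\in\Min(M/xM)}\fp$. If no such $s$ existed, prime avoidance would give $\fq\in\Ass M\setminus\Min M$ with $\fq\subseteq\fp$ for some $\fp\in\Min(M/xM)$, and this inclusion is strict since $x\in\fp\setminus\fq$. But $M$ is equidimensional and $R/(0:_R M)$ is universally catenary (by Theorem \ref{229}, Proposition \ref{equ} and Corollary \ref{cat}), so the dimension formula $\h_M\fq+\dim R/\fq=\dim M=\h_M\fp+\dim R/\fp$ forces $\h_M\fq<\h_M\fp=1$, contradicting $\fq\notin\Min M$.

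With $r$ and $s$ in hand I chase the short exact sequences
\begin{gather*}
0\longrightarrow N\stackrel{x^t}{\longrightarrow}N\longrightarrow N/x^tN\longrightarrow 0,\\
0\longrightarrow N/x^tN\longrightarrow M/x^tN\longrightarrow M/N\longrightarrow 0,\\
0\longrightarrow x^tM/x^tN\longrightarrow M/x^tN\longrightarrow M/x^tM\longrightarrow 0,
\end{gather*}
exactly as in the (ii)$\Rightarrow$(i) half of Theorem \ref{2210}, using the isomorphism $x^tM/x^tN\cong M/N$ (since $x$ is a non-zero-divisor on $M$), the annihilation $r^{d}\H_\fm^i(M)=0$, and $s(M/N)=0$. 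The outcome is that $r^{2d}s^{4}$ annihilates $\H_\fm^i(M/x^tM)$ for every $i<d-1=\dim(M/x^tM)$ and every $t\in\mathbb{N}$. Since $r$ and $s$ lie outside $\bigcup_{\fp\in\Min(M/x^tM)}\fp=\bigcup_{\fp\in\Min(M/xM)}\fp$, the element $r^{2d}s^{4}$ is a common uniform local cohomological annihilator of all the modules $M/x^tM$. The main obstacle is the second paragraph: arranging $r$ to annihilate every Cousin cohomology $\mH_M^i$ while simultaneously avoiding $\Min(M/xM)$, which is precisely where the Cohen-Macaulayness of $M_\fp$ for $\fp\in\Min(M/xM)$---detected via the Cousin complex---is essential.
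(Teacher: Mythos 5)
Your proof is correct, but it takes a noticeably longer route than the paper's. The paper exploits the hypothesis that $x$ is a non-zero-divisor on $M$ to work directly with the single short exact sequence $0\to M\xrightarrow{x^t}M\to M/x^tM\to 0$: once an element $r\in\bigcap_{i\geq-1}(0:_R\mH_M^i)$ avoiding $\bigcup_{\fp\in\Min(M/xM)}\fp$ is found, a power of $r$ kills $\H_\fm^i(M)$ for $i<d$ by Corollary \ref{c224}, and the induced long exact sequence gives a power of $r$ killing $\H_\fm^i(M/x^tM)$ for $i<d-1$ — done. You instead import the $N\subseteq M$ decomposition (with $\Ass N=\Min M$), the auxiliary element $s\in(0:_R M/N)$, and a three-step exact-sequence chase, which is the machinery needed in Theorem~\ref{2210} precisely because there $x$ is only a parameter element and may be a zero-divisor; here it is superfluous, since $x$ being a non-zero-divisor on $M$ makes $0\to M\xrightarrow{x^t}M\to M/x^tM\to 0$ exact directly. (Establishing that $0:_R M/N$ avoids $\bigcup_{\fp\in\Min(M/xM)}\fp$ also forces you to invoke equidimensionality and catenarity via Theorem~\ref{229}, Proposition~\ref{equ} and Corollary~\ref{cat}, none of which the paper needs here.) Your other small deviation — detecting $\fp\notin\Supp\mH_M^i$ through Cohen-Macaulayness of $M_\fp$ and Theorems~\ref{3.5S1}, \ref{2.4S4} — is a valid alternative to the paper's direct appeal to Theorem~\ref{2.7S1}(ii) together with $\Ass\mH_M^{-1}\subseteq\Ass M$, though it is essentially the same information. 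Finally, the phrase ``exactly as in the (ii)$\Rightarrow$(i) half of Theorem~\ref{2210}'' should read (i)$\Rightarrow$(ii), since you are passing annihilation from $M$ down to $M/x^tM$.
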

\begin{proof}
Let $\fp\in\Min M/xM$. Then $\h_M\fp= 1$ and so $\fp\not\in\Supp
\mH_M^i$ for all $i\geq 0$ by Theorem \ref{2.7S1}(ii). If
$\fp\in\Supp \mH_M^{-1}$ then $\fp\in\Min \mH_M^{-1}$ and so
$\fp\in\Ass M$ which contradicts with the fact that $x$ is a
non--zero--divisor on $M$. Hence $$\sqcap_{i\geq -1}(0:_R
\mH_M^i)\not\subseteq\underset{\fp\in\Min M/xM}{\cup}\fp.$$
Therefore, there is an element $r\in \sqcap_{i\geq -1}(0:_R
\mH^i_M)\setminus\underset{\fp\in\Min M/xM}{\cup}\fp$. Now, from the
exact sequence $0\longrightarrow M\longrightarrow M\longrightarrow
M/xM\longrightarrow 0$ we get the exact sequence
$$\cdots\longrightarrow \H_\fm^i(M)\longrightarrow
\H_\fm^i(M/xM)\longrightarrow \H_\fm^{i+1}(M)\longrightarrow\cdots
.$$ By Corollary \ref{c224}, $r\H_\fm^i(M)= 0$ for all $i< d$. Then
the above exact sequence implies that $r^2\H_\fm^i(M/xM)= 0$ for all
$i< d-1$. As $r\in\underset{\fp\in\Min M/xM}{\cup}\fp$, $r^2$ is a
uniform local cohomological annihilator of $M/x^n M$ for all
positive integers $n$ and $\Min M/xM\cap \Ass M= \emptyset$.
\end{proof}
\begin{thm}\label{cfx}
Let $(R, \fm)$ be a local ring and let $M$ be a finitely generated
$R$--module with finite Cousin complex. Then $M/xM$ has a uniform
local cohomological annihilator for any parameter element $x$ of
$M$.
\end{thm}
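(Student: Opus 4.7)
The plan is to reduce to the non-zero-divisor parameter case of Proposition \ref{2214} by separating the $x$-torsion of $M$, and then transfer the resulting annihilator along a short exact sequence. The cases $\dim M\le 1$ are immediate (trivial or vacuous), so assume $d:=\dim M\ge 2$. Because $\C_R(M)$ is finite, Theorem \ref{229} endows $M$ with a uniform local cohomological annihilator, and Proposition \ref{equ} then forces $M$ to be equidimensional, i.e.\ $\Min M=\Assh M$. Set $N:=\Gamma_{xR}(M)$ and $M':=M/N$, so that $x$ is a non-zero-divisor on $M'$ and $x^KN=0$ for some $K\ge 1$. Every $\fp\in\Supp N$ contains $x$ and therefore lies outside $\Min M=\Assh M$ (as $x$ is a parameter), so $\h_M\fp\ge 1$; Lemma \ref{211}(b) applied to $0\to N\to M\to M'\to 0$ yields that $\C_R(M')$ is finite. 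Moreover $\Supp M'=\Supp M$, because at every $\fp\in\Supp M$ there exists $\fq\in\Min M$ with $\fq\subseteq\fp$ and $x\notin\fq$, preventing $M_\fp$ from being $x$-torsion; hence $\Min M'/xM'=\Min M/xM$. Thus $x$ is a non-zero-divisor parameter on $M'$, and Proposition \ref{2214} supplies an element $r$ that is a common uniform local cohomological annihilator of $\{M'/x^tM':t\in\mathbb N\}$, with $r\notin\bigcup_{\fp\in\Min M/xM}\fp$.

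Tensoring $0\to N\to M\to M'\to 0$ with $R/xR$, and using that $x$ is a non-zero-divisor on $M'$ (so $\Tor_1^R(M',R/xR)=0$), produces the short exact sequence
\begin{equation*}
0\longrightarrow N/xN\longrightarrow M/xM\longrightarrow M'/xM'\longrightarrow 0,
\end{equation*}
and hence the portion $\H^i_\fm(N/xN)\to\H^i_\fm(M/xM)\to\H^i_\fm(M'/xM')$ of the induced long exact cohomology sequence. Since $x\cdot(N/xN)=0$, the element $x$ annihilates $\H^i_\fm(N/xN)$ for every $i$, and $r$ annihilates $\H^i_\fm(M'/xM')$ for $i<d-1$; consequently $xr$ annihilates $\H^i_\fm(M/xM)$ for $i<d-1$.

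The chief obstacle is to upgrade $xr$ to an actual uniform local cohomological annihilator of $M/xM$: because $x$ belongs to every $\fp\in\Min M/xM$, the product $xr$ fails the defining non-containment condition, and the factor $x$ must be replaced by an element that still kills $N/xN$ yet avoids $\bigcup_{\fp\in\Min M/xM}\fp$. For primes $\fp\in\Min M/xM\setminus\Ass M$ this is automatic, since $N_\fp=0$ gives $\Ann N\not\subseteq\fp$; the delicate case is $\fp\in\Min M/xM\cap\Ass M$ (embedded primes of $M$ of $M$-height one containing $x$), where every element of $\Ann N+xR$ already lies in $\fp$. I would resolve this by invoking the common-annihilator property of $r$ across all quotients $M'/x^tM'$, combined with Theorem \ref{2.7S1} and Lemma \ref{139}, to extract from the finite Cousin complex $\C_R(M)$ an annihilator that absorbs the $x$-torsion contribution without picking up any embedded prime above $x$. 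This prime-avoidance bookkeeping, matching the annihilators at the two ends of the short exact sequence while respecting the minimal-prime constraint, is the main technical obstacle.
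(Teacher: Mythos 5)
Your decomposition is a valid variant of the paper's (the paper uses the submodule $N$ with $\Ass N=\Ass M\setminus\Min M$, $\Ass M/N=\Min M$; you use $N=\Gamma_{xR}(M)$, which also satisfies $\h_M\fp\ge 1$ for all $\fp\in\Supp N$), and everything up through the invocation of Lemma~\ref{211}(b) and Proposition~\ref{2214} is correct. However, the last paragraph is a genuine gap: you correctly identify that $xr$ cannot serve as a uniform local cohomological annihilator of $M/xM$ because $x$ lies in every minimal prime of $M/xM$, but you never actually repair this --- you only say you ``would resolve this by invoking'' various results without exhibiting the required element, and the embedded-prime case you flag is exactly where the direct long-exact-sequence argument breaks down.

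The point you miss is that you have already done the work that makes a much cleaner finish available: you established $\Supp M'=\Supp M$, which immediately gives $\Supp M'/xM'=\Supp M'\cap\V(xR)=\Supp M\cap\V(xR)=\Supp M/xM$. Since Proposition~\ref{2214} shows $M'/xM'$ has a uniform local cohomological annihilator, Corollary~\ref{c215} (the property depends only on the support) transfers it directly to $M/xM$, and the proof is complete without ever touching the induced long exact sequence or confronting the prime-avoidance problem at the embedded primes. This is precisely how the paper concludes; you simply did not notice that Corollary~\ref{c215} renders the explicit annihilator chase unnecessary.
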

\begin{proof}
There is a submodule $N$ of $M$ such that $\Ass M/N= \Min M$ and
$\Ass N= \Ass M\setminus \Min M$ (c.f. \cite[Page 263, Proposition
4]{B}).
 As in the
exact sequence $0\longrightarrow N\longrightarrow M\longrightarrow
M/N\longrightarrow 0$ we have $\h_M\fp\geq 1$ for all $\fp\in\Supp
N$, Lemma \ref{211}(b) implies that $\mathcal{C}_R(M/N)$ is finite.
Assume that $x$ is parameter element of $M$. As $x$ is a
non--zero--divisor on $M/N$, Proposition \ref{2214} implies that
$(M/N)/x(M/N)$ has a uniform local cohomological annihilator. Note
that $\Supp M/xM= \Supp (M/N)/x(M/N)$ so, by Corollary \ref{c215},
$M/xM$ has a uniform local cohomological annihilator.
\end{proof}

\section{Applications}

\quad Our study of properties of Cousin cohomologies in Section 2.1,
provides a new approach to the property of uniform annihilators of
local cohomologies. This point of view enabled us to study both
classes of modules, more deeply in section 2.2. In this section we
present some more applications to characterize  modules with finite
cousin complexes (over rings with  some extra conditions) and also
to investigate a new formula for height of an ideal in terms of
cohomologies of Cousin complexes.

\subsection {Partial characterizations}

We start with the following result which has an essential role in
our approach. \index{locally equidimensional}
\index{equidimensional!locally-} \index{universally
catenary}\index{Cousin complex, $\C_R(M)$!finite-}
\begin{cor}\label{cor}
Assume that $M$ is a finitely generated $R$--module with  finite
dimension and that $ \mathcal{C}_R(M)$ is finite. Then $M$  is
locally equidimensional and $R/0:_RM$ is universally catenary.
\end{cor}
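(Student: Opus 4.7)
The plan is to chain together the three preceding results in Section~2.2, which together already contain all the required content. The statement is a direct corollary; the real work was done in Theorem~\ref{229}, Proposition~\ref{equ}, and Corollary~\ref{cat}.

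First I would invoke Theorem~\ref{229}: since $M$ is finitely generated with $\dim M<\infty$ and $\mathcal{C}_R(M)$ is finite, it produces (via Lemma~\ref{139} and Corollary~\ref{c224}) an element $x\in R\setminus\bigcup_{\fp\in\Min M}\fp$ some power of which is a uniform local cohomological annihilator of $M$. Thus the hypothesis of the remaining two results is satisfied.

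Next, applying Proposition~\ref{equ} to $M$ immediately yields that $M_{\fm}$ is equidimensional for every $\fm\in\Max\Supp M$, i.e.\ $M$ is locally equidimensional. Finally, Corollary~\ref{cat} (whose proof rests on Corollary~\ref{c215} and Zhou's \cite[Theorem~2.1]{Z}) gives that $R/0:_RM$ is universally catenary. Combining these two conclusions delivers the statement.

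There is essentially no obstacle: the real difficulties were absorbed into Theorem~\ref{229} (producing the annihilator from the finiteness of the Cousin cohomologies via the ideal containment $\prod_{i\geq-1}(0:_R\mathcal{H}_M^i)\not\subseteq\bigcup_{\fp\in\Min M}\fp$) and into Proposition~\ref{equ}/Corollary~\ref{cat} (translating the existence of a uniform local cohomological annihilator into structural properties of the base ring). The corollary is therefore a one-line consequence once these are in place, and the only thing to verify when writing it out is that the hypothesis $\dim M<\infty$ is what Theorem~\ref{229} requires, which it is.
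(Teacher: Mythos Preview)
Your proposal is correct and follows exactly the same route as the paper: the paper's proof is the single line ``It is clear from Theorem~\ref{229}, Proposition~\ref{equ} and Corollary~\ref{cat},'' which is precisely the chain you describe.
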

\begin{proof}
It is clear from Theorem \ref{229}, Proposition \ref{equ} and
Corollary \ref{cat}.
\end{proof}


Now it is easy to provide an example of a module whose Cousin
complex has at least one non--finitely generated cohomology.\\

{\bf Example.} Consider a noetherian local ring $R$ of dimension $d>
2$. Choose any pair of prime ideals $\fp$ and $\fq$ of $R$ with
conditions $\dim R/\fp= 2$, $\dim R/\fq= 1$, and $\fp\not\subseteq
\fq$. Then $\Min R/\fp\fq=\{\fp, \fq\}$ and so $R/\fp\fq$ is not an
equidimensional $R$--module and thus its
Cousin complex is not finite.\\


In \cite[Corollary 3.3]{Z},  Zhou proves that any locally
equidimensional noetherian ring has a uniform local cohomological
annihilator provided it is a homomorphic image of a Cohen-Macaulay
ring of finite dimension. Note that any homomorphic image of a
Cohen-Macaulay ring is universally catenary and all of it's formal
fibres are Cohen-Macaulay.  Here we extend this result to any
universally catenary local ring with Cohen-Macaulay formal fibres,
by showing that over these rings, every equidimensional module has
finite Cousin complex,  which also recovers the result of  Kawasaki,
 \cite[Theorem 5.5]{K}, by a different method.

\begin{prop}\label{233}
Assume that $R$ is universally catenary and all formal fibres of $R$
are Cohen-Macaulay. If $M$ is a finitely generated and
equidimensional $R$--module, then $\C_R(M)$ is finite; in particular
$M$  has a uniform local cohomological annihilator. \index{Cousin
complex, $\C_R(M)$!finite-} \index{equidimensional}
\end{prop}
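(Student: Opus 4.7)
The plan is to derive this proposition as a short synthesis of the machinery already developed. Assuming $R$ is local (the natural setting of the hypothesis on formal fibres), the entire argument will consist of passing to the $\fm$-adic completion, invoking Corollary \ref{216} there, and then extracting the uniform local cohomological annihilator via Theorem \ref{229}. No new computation with Cousin complexes or with local cohomology should be needed beyond what is already in Section 2.1 and Section 2.2.

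First I would use the universally catenary hypothesis. Since $R$ is universally catenary and $M$ is a finitely generated equidimensional $R$-module, Lemma \ref{31.7Ma} applies and yields that $\widehat{M}$ is equidimensional as an $\widehat{R}$-module. This is the key structural step: without the universal catenarity of $R$ the equidimensionality of $M$ need not transfer to $\widehat{M}$, and all later steps would break.

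Next I would quote Corollary \ref{216}: because all formal fibres of $R$ are Cohen--Macaulay and $\widehat{M}$ is equidimensional over $\widehat{R}$, the Cousin complex $\mathcal{C}_R(M)$ is finite, i.e. each cohomology $\mH^i_M$ is finitely generated over $R$. This proves the main claim of the proposition.

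Finally, for the ``in particular'' clause, I would appeal to Theorem \ref{229}: once $\mathcal{C}_R(M)$ is known to be finite, the module $M$ automatically carries a uniform local cohomological annihilator. The main obstacle, if any, is purely bookkeeping: one must be sure that Lemma \ref{31.7Ma} is applicable verbatim (it is, since the universal catenarity is exactly what it requires), and that Corollary \ref{216} is stated for $M$ itself rather than for $\widehat{M}$ (it is, since its conclusion is phrased in terms of $\mathcal{C}_R(M)$ and the proof proceeds through the isomorphism $\mH^i_M\otimes_R\widehat{R}\cong \mH^i_{\widehat{M}}$ provided by Lemma \ref{Pet}). Hence no genuine difficulty arises, and the proposition follows in three lines.
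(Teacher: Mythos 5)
Your proposal is correct and follows exactly the paper's own proof: invoke Lemma \ref{31.7Ma} to transfer equidimensionality to $\widehat{M}$, then apply Corollary \ref{216} to conclude $\C_R(M)$ is finite; the ``in particular'' clause follows from Theorem \ref{229} as you note. Your assumption that $R$ is local is implicit in the hypothesis on formal fibres and is consistent with the paper's setting.
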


\begin{proof}
By Lemma \ref{31.7Ma}, $\widehat{M}$ is equidimensional, so
Corollary \ref{216} implies the result.
\end{proof}

We are now ready to present the following result which, for a
finitely generated $R$--module $M$, shows connections of finiteness
of its Cousin complex, existence of a uniform local cohomological
annihilator for $M$, and equidimensionality of $\widehat{M}$.

\begin{thm}\label{234}\index{formal fibre}
Assume that $(R,\fm)$ is local and all formal fibers of $R$ are
Cohen-Macaulay. Then the following statements are equivalent for a
finitely generated $R$--module $M$.
\begin{itemize}
\item[\emph{(i)}] $\widehat{M}$ ia an equidimensional $\widehat{R}$--module.
\item[\emph{(ii)}] The Cousin complex of $M$ is
finite.
\item[\emph{(iii)}] $M$ has a uniform local cohomological annihilator.
\end{itemize}
\end{thm}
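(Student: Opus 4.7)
The plan is to establish the cycle (i) $\Rightarrow$ (ii) $\Rightarrow$ (iii) $\Rightarrow$ (i), since each of the three implications essentially reduces to a result already proved in the preceding sections.

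First I would dispatch (i) $\Rightarrow$ (ii) as a direct citation of Corollary \ref{216}: the hypothesis that all formal fibres of $R$ are Cohen-Macaulay together with the equidimensionality of $\widehat{M}$ is exactly the input of that corollary, whose conclusion is finiteness of $\mathcal{C}_R(M)$. Similarly, (ii) $\Rightarrow$ (iii) is just Theorem \ref{229}, which says outright that a finitely generated module of finite dimension with finite Cousin complex admits a uniform local cohomological annihilator (and in fact the proof produces one from an element outside $\bigcup_{\fp \in \Min M}\fp$ inside $\bigcap_{i\geq -1}(0:_R \mH_M^i)$, via Corollary \ref{c224}). Neither of these steps requires further work.

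The substantive implication is (iii) $\Rightarrow$ (i). From a uniform local cohomological annihilator, Proposition \ref{equ} gives that $M$ is locally equidimensional, hence (since $R$ is local) equidimensional, and Corollary \ref{cat} gives that $\overline{R} := R/0:_RM$ is universally catenary. Viewing $M$ as a faithful equidimensional module over the universally catenary local ring $\overline{R}$, Lemma \ref{31.7Ma} applies and yields that the $\overline{R}$-adic completion of $M$, which coincides with $\widehat{M}$, is equidimensional over $\widehat{\overline{R}} = \widehat{R}/(0:_RM)\widehat{R}$. Since $\Supp_{\widehat{R}}\widehat{M} \subseteq \V((0:_RM)\widehat{R})$, the minimal primes of $\widehat{M}$ over $\widehat{R}$ correspond bijectively (with preserved coheights) to those over $\widehat{\overline{R}}$, so $\widehat{M}$ is equidimensional as an $\widehat{R}$-module.

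The only delicate point, and the place where the Cohen-Macaulay formal fibre hypothesis does genuine work, is not in (iii) $\Rightarrow$ (i) at all but in (i) $\Rightarrow$ (ii): without that hypothesis on the formal fibres of $R$, equidimensionality of $\widehat{M}$ is not by itself enough to force $\mathcal{C}_R(M)$ to be finite. Thus in writing the proof I would take care to invoke Corollary \ref{216} (which in turn leans on Lemma \ref{Pet} to transfer finiteness between $\mathcal{C}_R(M)$ and $\mathcal{C}_{\widehat{R}}(\widehat{M})$) rather than Lemma \ref{31.7Ma} in that step, while using Lemma \ref{31.7Ma} exclusively in the reverse direction (iii) $\Rightarrow$ (i).
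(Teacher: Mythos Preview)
Your proof is correct, and for (i) $\Rightarrow$ (ii) and (ii) $\Rightarrow$ (iii) it matches the paper exactly (direct citation of Corollary \ref{216} and Theorem \ref{229}, respectively).

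For (iii) $\Rightarrow$ (i) you take a slightly different route from the paper. The paper argues more directly: the element $x \in R\setminus\bigcup_{\fp\in\Min M}\fp$ that kills $\H_\fm^i(M)$ for $i<\dim M$ also kills $\H_{\widehat{\fm}}^i(\widehat{M})$ for $i<\dim\widehat{M}$ (via the isomorphism $\H_\fm^i(M)\otimes_R\widehat{R}\cong\H_{\widehat{\fm}}^i(\widehat{M})$), and since minimal primes of $\widehat{M}$ contract to minimal primes of $M$, this same $x$ is a uniform local cohomological annihilator of $\widehat{M}$; Proposition \ref{equ} applied to $\widehat{M}$ then gives equidimensionality immediately. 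Your approach instead applies Proposition \ref{equ} and Corollary \ref{cat} to $M$, then passes to the completion via Lemma \ref{31.7Ma}. Both are valid; the paper's is one line shorter and avoids the slight bookkeeping of moving between $R$ and $R/0:_RM$, while yours is the same route used earlier in the paper (in the proof of Proposition \ref{Zs}), so either is natural here. Your closing remark that the Cohen-Macaulay formal fibre hypothesis is used only in (i) $\Rightarrow$ (ii) is correct and worth noting.
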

\begin{proof}
(i) $\Rightarrow$ (ii). This is Corollary \ref{216}.

(ii) $\Rightarrow$ (iii). This is Theorem \ref{229}.

(iii) $\Rightarrow$ (i). There exists an element $x\in R\setminus
\cup_{\fp\in\Min M}\fp$ such that $x\H_\fm^i(M)= 0$ for all $i< \dim
M$, so that  $x\H_{\widehat{\fm}}^i(\widehat{M})= 0$ for all $i<
\dim \widehat{M}$.  Now $\widehat{M}$ is equidimensional by
Proposition \ref{equ}.
\end{proof}

\subsection{Height of an ideal}
\index{height!m@$M$--height, $\h_M$} We use some results about the
annihilators of cohomologies of Cousin complexes, to present the
height of an ideal in terms of Cousin complexes.

 As mentioned in Corollary \ref{c224}, we may write the
following result.

\begin{cor}
For any finitely generated $R$--module $M$ and any ideal $\fa$ of
$R$ with $\fa M\not=M$,
 $$\underset{-1\leq i}{\prod}(0 :_R
\mH_M^i)\subseteq 0 :_R \H_\fa^{\h_M\fa- 1}(M).$$
\end{cor}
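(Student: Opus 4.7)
The plan is to obtain this statement as a direct specialization of Corollary \ref{c224}. Since $\fa M\neq M$ there exists some $\fp\in\Supp M$ containing $\fa$, so $\h_M\fa$ is defined and non-negative. If $\h_M\fa=0$, then $\H^{-1}_\fa(M)=0$ and the inclusion is trivial, so I may assume $\h_M\fa\geq 1$ and set $r:=\h_M\fa-1\geq 0$.

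First I would apply Corollary \ref{c224} with this value of $r$, which lies in the required range $0\leq r<\h_M\fa$. This yields
$$\prod_{i=-1}^{\,\h_M\fa-2}(0:_R\mH_M^i)\ \subseteq\ \bigcap_{i=0}^{\,\h_M\fa-1}(0:_R\H^i_\fa(M))\ \subseteq\ 0:_R\H^{\h_M\fa-1}_\fa(M).$$

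Second, I would observe the elementary fact that for any ideals $I,J$ one has $IJ\subseteq I$; consequently, extending a product over a larger index set produces a smaller ideal. Applying this to the family $\{(0:_R\mH_M^i)\}_{i\geq -1}$ gives
$$\prod_{i\geq -1}(0:_R\mH_M^i)\ \subseteq\ \prod_{i=-1}^{\,\h_M\fa-2}(0:_R\mH_M^i).$$
Chaining the two inclusions yields the claim. There is no real obstacle here; the corollary is essentially a restatement of the specialization of Corollary \ref{c224} at $r=\h_M\fa-1$, with the product on the left harmlessly enlarged to run over all $i\geq -1$ (noting also that $\mH_M^i=0$ for $i\geq\dim M-1$ by Lemma \ref{1310}(ii), so the displayed infinite product is in fact a finite one whenever $\dim M<\infty$).
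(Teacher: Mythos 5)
Your proposal is correct and matches the paper's intent exactly: the paper introduces this corollary with the words ``As mentioned in Corollary \ref{c224}, we may write the following result,'' i.e.\ it is meant as the direct specialization of Corollary \ref{c224} at $r=\h_M\fa-1$, with the product enlarged to run over all $i\geq -1$ (harmlessly, since a product over a larger index family is contained in a product over a smaller one). Your handling of the trivial case $\h_M\fa=0$ and your remark that the displayed product is effectively finite by Lemma \ref{1310}(ii) are both sound.
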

 We now raise the question that whether it is
possible to improve the upper bound restriction. \\
 \noindent {\bf
Question.} Does the inequality
$$\underset{-1\leq i}{\prod}(0 :_R
\mH_M^i)\subseteq 0 :_R \H_\fa^{\h_M\fa}(M)$$ hold?

It will be proved that the answer is negative for the class of
finitely generated $R$--modules  with finite Cousin cohomologies.
More precisely,

\begin{thm}\label{height}
Assume that $M$ is a finitely generated  $R$--module of finite
dimension and that its Cousin complex $\mathcal{C}_R(M)$ is finite.
Then
$$\emph{\h}_M\fa= \inf\{r: \underset{-1\leq i}{\prod}(0 :_R
\mH_M^i)\not\subseteq 0 :_R \emph{\H}_\fa^r(M)\},$$ for all ideals
$\fa$ with $\fa M\not= M$.
\end{thm}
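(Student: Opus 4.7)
The plan is to set $h = \h_M\fa$ and $J = \prod_{i \geq -1}(0:_R \mH^i_M)$; by Lemma \ref{1310}(ii) all but finitely many factors equal $R$, so $J$ is a genuine ideal. Writing $s$ for the infimum appearing in the statement, I would show the two inequalities $s \geq h$ and $s \leq h$ separately.

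The inequality $s \geq h$ is a direct application of Corollary \ref{c224}: for every $r$ with $0 \leq r < h$ one has $J \subseteq \prod_{i=-1}^{r-1}(0:_R \mH^i_M) \subseteq 0:_R \H_\fa^r(M)$, so no such $r$ can lie in the indexing set on the right-hand side. This is the routine half.

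For $s \leq h$ I plan to argue by contradiction, assuming $J \subseteq 0:_R \H_\fa^h(M)$. First pick $\fp \in \Supp M$ with $\fa \subseteq \fp$ and $\h_M\fp = h$; any strictly smaller element of $\Supp M \cap \V(\fa)$ would have $M$-height less than $h$, contradicting the definition of $h = \h_M\fa$, so $\fp$ is minimal over $\fa$ in $\Supp M$. Consequently $\sqrt{\fa R_\fp + \Ann_{R_\fp} M_\fp} = \fp R_\fp$, and localization at $\fp$ gives
$$\H_\fa^h(M)_\fp \cong \H_{\fa R_\fp}^h(M_\fp) \cong \H_{\fp R_\fp}^h(M_\fp).$$
By Corollary \ref{cor} the module $M$ is locally equidimensional, hence $M_\fp$ is equidimensional of dimension $h$. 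Grothendieck's non-vanishing theorem (Theorem \ref{G}) then gives $\H_{\fp R_\fp}^h(M_\fp) \neq 0$ with $\Att_{R_\fp} \H_{\fp R_\fp}^h(M_\fp) = \Assh M_\fp = \Min M_\fp$, and Macdonald's radical formula $(\beta)$ yields
$$\sqrt{0:_{R_\fp} \H_\fa^h(M)_\fp} = \bigcap_{\fq \in \Min M,\, \fq \subseteq \fp} \fq R_\fp.$$

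The supposed containment localizes to $J_\fp \subseteq 0:_{R_\fp} \H_\fa^h(M)_\fp$, which lies inside its radical and therefore inside $\fq R_\fp$ for each $\fq \in \Min M$ with $\fq \subseteq \fp$. Since $\fq \subseteq \fp$, contracting along the natural map $R \to R_\fp$ gives $J \subseteq \fq$, directly contradicting Lemma \ref{139}. The main obstacle is this middle step: identifying the localization of $\H_\fa^h(M)$ at $\fp$ as a \emph{top} local cohomology module over $R_\fp$ with cleanly described attached primes. This rests on choosing $\fp$ minimal over $\fa$ in $\Supp M$ (so that $\fa R_\fp$ becomes $\fp R_\fp$-primary modulo $\Ann_{R_\fp} M_\fp$) and on the local equidimensionality furnished by Corollary \ref{cor} (so that $\Assh M_\fp = \Min M_\fp$). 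Once these are in place, the conflict with Lemma \ref{139} is immediate.
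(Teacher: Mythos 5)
Your proof is correct and follows essentially the same route as the paper: both halves match — Corollary \ref{c224} for $s\geq h$, and for $s\leq h$ localizing at a prime minimal over $\fa$ in $\Supp M$ of $M$-height $h$, invoking Grothendieck's non-vanishing theorem together with attached primes to place $0:_R\H_\fa^h(M)$ inside a minimal prime of $M$, then contradicting Lemma \ref{139}. The only cosmetic differences are that the paper first reduces to the case $0:_RM=0$ via Lemma \ref{1.2D} and the Independence Theorem, whereas you absorb $\Ann_{R_\fp}M_\fp$ into the local cohomology directly, and that your appeal to local equidimensionality is unnecessary (since $\Assh M_\fp\subseteq\Min M_\fp$ always) and, as literally stated, also needs the catenaricity part of Corollary \ref{cor} rather than local equidimensionality alone — a harmless looseness given what that corollary actually provides.
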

\begin{proof}
 $\underset{i\geq -1}{\prod}(0:_R\mH_M^i)
\subseteq 0:_R\H_\fa^r(M)$ for all $r< \h_M\fa$. Hence we have
$$\h_M\fa\leq \inf\{r: \underset{-1\leq i}{\prod}(0 :_R
\mH_M^i)\not\subseteq 0 :_R \H_\fa^r(M)\}.$$ Thus it is sufficient
to show that $\underset{-1\leq i}{\prod}(0 :_R \mH_M^i)\not\subseteq
0 :_R \H_\fa^{\h_M\fa}(M)$.  By Independence Theorem of local
cohomology, $\H_\fa^{\h_M\fa}(M)\cong \H_\fb^{\h_M\fb}(M)$ as
$\overline{R}=R/0:_R M$--module, where $\fb= (\fa+ 0:_R M)/0:_RM$.
Note that $\h_M\fa= \h_M\fb$ and that
$\mathcal{C}_R(M)\cong\mathcal{C}_{ \overline{R}}(M)$ by Lemma
\ref{1.2D}.

Hence we may assume that $0:_RM= 0$. Set $h:= \h_M\fa$. Let $x\in
0:_R \H_\fa^h(M)$. As $\fa M\not= M$, there exists a minimal prime
$\fq$ over $\fa$ in $\Supp M$ such that $\dim R_\fq= \h_M\fa$. Hence
$x/1\in 0:_{R_\fq} \H_{\fq R_\fq}^h(M_\fq)$. Thus, by any choice of
$\fp R_\fq\in\Assh M_\fq$ we have $x/1\in\fp R_\fq$ (see Theorem
\ref{G} and Corollary \ref{111}(iii)) and so $x\in\fp$ . Therefore,
one has $$0:_R \H_\fa^h(M)\subseteq \underset{\fp\in\Min M}{\cup}
\fp.$$ On the other hand, by Lemma \ref{139}, $\prod_{i\geq -1}(0 :_R
\mH_M^i)\not\subseteq \cup_{\fp\in\Min M}\fp$, from which it follows
that
$$\prod_{i\geq -1}(0 :_R \mH_M^i)\not\subseteq 0:_R\H_\fa^h(M).$$
\end{proof}

\def\baselinestretch{1}

\chapter{Attached primes of local cohomology modules}
\def\baselinestretch{1.3}

\quad Throughout this chapter ($R, \fm$) is  a local ring  and $M$
is a finitely generated $R$--module of dimension $d$. Recall that
$M$ has a uniform local cohomological  annihilators if and only if
$\a(M)\nsubseteq \cup_{\fp\in\Min M}\fp$. On the other hand we have
seen in  Lemma \ref{3.6}, that  if $\fp\in\Min M$, then
$\a(M)\subseteq \fp$ if and only if $\fp\in\Att\H^i_\fm(M)$ for some
$i<\dim M$. Inspired by these facts, we study $\Att \H^t_\fm(M)$ for
certain $t$, in particular $\Att \H^{d-1}_\fm(M)$, in terms of
cohomologies of $\C_R(M)$ and obtain a non--vanishing criterion of
$\H^{d-1}_\fm(M)$ when $\C_R(M)$ is finite, in section 3.1. We
continue by study the attached prime ideals of the top local
cohomology module $\H^d_\fm(M)$ in the second section and present a
positive answer to a question of \cite{DY3}, in the case when $R$ is
complete. The last section of this chapter is devoted to some
applications of our results to find a new characterization of
generalized Cohen-Macaulay \index{generalized Cohen-Macaulay, g.CM}
\index{Cohen-Macaulay!generalized-, g.CM} modules. \index{a@$\a(M)$}
\section{Attached primes related to cohomologies of Cousin complexes  }

\quad In this section we study some relations between the set of
attached primes of local cohomology modules $\H_\fm^i(M)$ and those
of cohomologies of the Cousin complex of $M$.

The  following result describes  the situation when all cohomology
modules  $\mH^i_M$ of the Cousin complex of $M$ are local cohomology
modules of $M$.
\index{local cohomology!-module}
\begin{lem}\label{1.3}
Assume that $i$ is an integer  with $0\leq i< d$. The following
statements are equivalent.
\begin{enumerate}
\item[\emph{(i)}] $\emph\dim \mH^{j}_M\leq 0$ for all $j$ with $-1\leq j< i$.
\item[\emph{(ii)}]  $\emph\H^{j+1}_\fm(M)\cong\mH^{j}_M$ for all $j$ with $-1\leq j< i$.
\end{enumerate}
\end{lem}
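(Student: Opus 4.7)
The direction (ii) $\Rightarrow$ (i) is essentially immediate: each local cohomology module $\H^{j+1}_\fm(M)$ is $\fm$--torsion and hence has support contained in $\{\fm\}$, so any isomorphism $\mH^j_M \cong \H^{j+1}_\fm(M)$ of $R$--modules forces $\dim \mH^j_M \leq 0$.

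For (i) $\Rightarrow$ (ii), the plan is to splice together the two short exact sequences of \ref{cousin}. Set $C_l := M^l/D^l$ and, via the identification $M^{l-1}/K^{l-1} \cong D^l$ induced by $d_M^{l-1}$, rewrite (\ref{e1}) and (\ref{e2}) as
\[ 0 \lo \mH_M^{l-1} \lo C_{l-1} \lo D^l \lo 0 \quad \text{and} \quad 0 \lo D^l \lo M^l \lo C_l \lo 0, \]
with the convention $C_{-1} := M$, so that the first sequence at $l = 0$ reads $0 \lo \mH_M^{-1} \lo M \lo D^0 \lo 0$. Since $i < d = \h_M\fm$, every index $l$ with $0 \leq l \leq i$ arising in the argument satisfies $l < d$, so Lemma \ref{tl}(a) gives $\H^s_\fm(M^l) = 0$ for all $s \geq 0$. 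The long exact sequence attached to the right-hand sequence then yields $\H^0_\fm(D^l) = 0$ and $\H^s_\fm(D^l) \cong \H^{s-1}_\fm(C_l)$ for every $s \geq 1$. The long exact sequence attached to the left-hand sequence, combined with the hypothesis $\dim \mH_M^{l-1} \leq 0$ (which forces $\H^0_\fm(\mH_M^{l-1}) = \mH_M^{l-1}$ and the vanishing of all higher local cohomology of $\mH_M^{l-1}$), gives $\H^0_\fm(C_{l-1}) \cong \mH_M^{l-1}$ and $\H^s_\fm(C_{l-1}) \cong \H^s_\fm(D^l)$ for $s \geq 1$.

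Chaining the two isomorphisms produces the shift $\H^s_\fm(C_{l-1}) \cong \H^{s-1}_\fm(C_l)$ for $s \geq 1$ and $0 \leq l < i$, from which the telescoping chain
\[ \H^{j+1}_\fm(M) \cong \H^{j}_\fm(C_0) \cong \H^{j-1}_\fm(C_1) \cong \cdots \cong \H^0_\fm(C_j) \cong \mH_M^j \]
emerges for each $-1 \leq j < i$ (the case $j = -1$ being the identification $\H^0_\fm(C_{-1}) \cong \mH_M^{-1}$, with no chaining needed). The only real bookkeeping is checking that each index at which Lemma \ref{tl} is invoked lies in $[0,d)$ and that every Cousin cohomology $\mH_M^{l-1}$ appearing is covered by hypothesis (i); both are automatic from $-1 \leq l-1 \leq j < i < d$. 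I do not anticipate any genuine obstacle beyond this careful index tracking, since the key vanishing $\H^s_\fm(M^l) = 0$ supplied by Lemma \ref{tl} does all the work.
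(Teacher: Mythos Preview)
Your proposal is correct and follows essentially the same approach as the paper's proof: both arguments apply $\H^\bullet_\fm$ to the two short exact sequences (\ref{e1}) and (\ref{e2}), use Lemma~\ref{tl}(a) to kill the local cohomology of the terms $M^l$ for $l<d$, use the hypothesis $\dim\mH_M^{l-1}\leq 0$ to kill the higher local cohomology of $\mH_M^{l-1}$, and then telescope the resulting shift isomorphisms $\H^s_\fm(M^{l-1}/D^{l-1})\cong\H^{s-1}_\fm(M^l/D^l)$ down to $\H^0_\fm(M^j/D^j)\cong\mH_M^j$. Your notation $C_l=M^l/D^l$ and the explicit identification $M^{l-1}/K^{l-1}\cong D^l$ streamline the bookkeeping slightly, but the content is identical.
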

\begin{proof}
Assume that $s$ is an integer such that $0\leq s<d$ and $\dim
\mH_M^{s-1}\leq 0$. Considering the exact sequence (\ref{e1}) with
$i= s$,  gives the exact sequence
$$\H_\fm^{t-1}(M^{s})\longrightarrow\H_\fm^{t-1}
(M^{s}/D^{s})\longrightarrow
\H_\fm^{t}(M^{s-1}/K^{s-1})\longrightarrow\H_\fm^{t}(M^{s})$$ for
all integers $t$. As $s<d$, by Lemma \ref{tl} (a), we get
\begin{equation}\label{e.211}
\H_\fm^{t-1} (M^{s}/D^{s})\cong
\H_\fm^{t}(M^{s-1}/K^{s-1}).\end{equation}

 Next consider the exact sequence (\ref{e2})
 with $i= s$ which gives the exact sequence
$$\H_\fm^{t}(\mH_M^{s-1})\longrightarrow\H_\fm^{t}
(M^{s-1}/D^{s-1})\longrightarrow
\H_\fm^{t}(M^{s-1}/K^{s-1})\longrightarrow\H_\fm^{t+1}(\mH_M^{s-1}).$$
Choosing $t>0$ in the above exact sequence we obtain
\begin{equation}\label{e.212}
\H_\fm^{t} (M^{s-1}/D^{s-1})\cong
\H_\fm^{t}(M^{s-1}/K^{s-1}).\end{equation} As a consequence, from
(\ref{e.211}) and (\ref{e.212}), we get
\begin{equation}\label{e.213}
\H_\fm^t(M^{s-1}/D^{s-1})\cong\H_\fm^{t-1}(M^{s}/D^{s})\end{equation}
for all $t> 0$.

(i)$\Rightarrow$ (ii). Let $-1\leq j< i$. By repeated use of
(\ref{e.213}), we get $$\H_\fm^{j+1}(M^{-1}/D^{-1})\cong
\H_\fm^0(M^{j}/D^{j}).$$ From the exact sequence (\ref{e1}) with $i=
j+1$ we have $\H_\fm^0(M^{j}/K^{j})= 0$ (because $j+1\leq i< d$ and
Lemma  \ref{tl}). Hence the exact sequence (\ref{e2}) with $i= j+1$
implies that $ \H_\fm^0(M^j/D^j) \cong \H_\fm^0(\mH_M^j)\cong
\mH_M^j$. Therefore $\H_\fm^{j+1}(M)\cong \mH_M^j$.

(ii)$\Rightarrow$ (i) is clear.
\end{proof}


 Theorem \ref{2.7S1}, implies that $\dim \mH^i_M\leq d-i-2$ for
all $i\geq -1$. The following lemma states some properties for $t$th
local cohomology modules of $M$,  whenever  $\dim \mH^i_M\leq t-i-1$
for all $i\geq -1$, in particular for $\H^{d-1}_\fm(M)$.

\begin{lem}\label{312} \index{attached
prime, $\Att$}
Assume that $0\leq t< d$ is an integer such that $\emph\dim
\mathcal{H}_M^i\leq t-i-1$, for all $i\geq -1$.  Then the following
statements hold true.
\begin{itemize}
\item[\emph{(i)}]
$\emph\Att \emph\H^t_\fm(M)\subseteq
\underset{i=-1,\ldots,t-1}{\bigcup}\emph\Att
\emph\H^{t-i-1}_\fm(\mH_M^i).$

\item[\emph{(ii)}] There is an epimorphism $\emph\H_\fm^t(M)\twoheadrightarrow\emph\H_\fm^0(\mH_M^{t-1})$.
\item[\emph{(iii)}] Assume that $\mathcal{C}_A(M)$ is finite. Then $\mH_M^{t-1}$ is non--zero if and only if
$\fm\in\emph\Att \emph\H^t_\fm(M)$.
\end{itemize}
\end{lem}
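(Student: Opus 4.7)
The plan is to chop $\H^t_\fm(M)$ into pieces by iterating the two natural short exact sequences \eqref{e1} and \eqref{e2}, passing to long exact sequences of local cohomology, and tracking the surviving information via attached primes. Two vanishing facts will drive every step: for $0\le l\le t<d=\h_M\fm$, Lemma \ref{tl}(a) gives $\H^s_\fm(M^l)=0$ for all $s\geq 0$, and the hypothesis $\dim\mH_M^{l-1}\le t-l$ together with Grothendieck vanishing gives $\H^{t-l+1}_\fm(\mH_M^{l-1})=0$. Setting $X_l:=\H^{t-l}_\fm(M^{l-1}/D^{l-1})$, so that $X_0=\H^t_\fm(M)$, I plan to extract from \eqref{e1} the isomorphisms $\H^{t-l}_\fm(M^{l-1}/K^{l-1})\cong X_{l+1}$, and from \eqref{e2} surjections $X_l\twoheadrightarrow \H^{t-l}_\fm(M^{l-1}/K^{l-1})$ whose kernels are quotients of $\H^{t-l}_\fm(\mH_M^{l-1})$. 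Composing yields a chain of surjections $X_0\twoheadrightarrow X_1\twoheadrightarrow\cdots\twoheadrightarrow X_t$ with explicit control of each kernel.

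For (i), the iteration of the inclusion $\Att C\subseteq \Att A\cup \Att C$ (applied to each ses $0\to \ker\to X_l\to X_{l+1}\to 0$, together with $\Att \ker \subseteq \Att \H^{t-l}_\fm(\mH_M^{l-1})$) reduces everything to $\Att X_t$. At the final step $l=t$, sequence \eqref{e2} in degree $0$ combined with the vanishing $\H^0_\fm(M^t)=0$ (coming from \eqref{e1} and Lemma \ref{tl}(a), since $t<d$) gives $X_t\cong \H^0_\fm(\mH_M^{t-1})$. Collecting yields (i), and the chain of surjections composed with the identification $X_t\cong \H^0_\fm(\mH_M^{t-1})$ yields (ii).

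For (iii), the forward direction applies (ii) directly: when $\C_R(M)$ is finite, $\mH_M^{t-1}$ is finitely generated of dimension at most $0$, hence of finite length supported only at $\fm$, so $\H^0_\fm(\mH_M^{t-1})=\mH_M^{t-1}$ is nonzero with $\Att=\{\fm\}$; the epimorphism from (ii) transports $\fm$ into $\Att \H^t_\fm(M)$. For the converse, (i) places $\fm$ in $\Att \H^{t-i-1}_\fm(\mH_M^i)$ for some $i\in\{-1,\ldots,t-1\}$; since $\mH_M^i$ is finitely generated with $\dim \mH_M^i\le t-i-1$, nonvanishing of $\H^{t-i-1}_\fm(\mH_M^i)$ forces $\dim \mH_M^i=t-i-1$, whereupon Theorem \ref{G} identifies the attached set with $\Assh \mH_M^i$; the presence of $\fm$ in this Assh set forces $t-i-1=\dim R/\fm=0$, hence $i=t-1$ and $\mH_M^{t-1}\ne 0$.

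The main obstacle I anticipate is keeping the bookkeeping honest across the two interlocking exact sequences and making sure the attached-prime machinery is legitimately applied to the intermediate objects $X_l$; this is genuine only because each $X_l$ sits between $\H^t_\fm(M)$ (artinian since $M$ is finitely generated over the local ring $(R,\fm)$) and a quotient of it, so representability is preserved through the chain. Once this is verified, (i)--(iii) are simultaneous consequences of a single inductive walk through the Cousin complex.
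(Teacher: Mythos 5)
Your proposal is correct and is essentially the paper's own argument: both walk the two interlocking short exact sequences \eqref{e1} and \eqref{e2} through degrees $0,\dots,t$, using $\H^s_\fm(M^l)=0$ for $l<d$ (Lemma \ref{tl}(a)) and Grothendieck vanishing on $\mH_M^{l-1}$ to produce, at each stage, an isomorphism from \eqref{e1} and a surjection from \eqref{e2}, and then compose these for (ii), track kernels via Macdonald's attached-prime inclusion for (i), and specialize for (iii). Your $X_l$ notation merely repackages the paper's induction on $j$ through $\H_\fm^{t-j-1}(M^j/K^j)$; the underlying vanishing facts, the terminal identification with $\H^0_\fm(\mH_M^{t-1})$, and the treatment of (iii) are the same. (Minor slip: you wrote the Macdonald inclusion as $\Att C\subseteq\Att A\cup\Att C$; you mean $\Att B\subseteq\Att A\cup\Att C$ for $0\to A\to B\to C\to 0$, which is clearly what you use.)
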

\begin{proof}
(i).  We prove by induction on $j$, $-1\leq j\leq t-1$, that
\begin{equation}\label{e.214}
\Att \H_\fm^t(M)\subseteq\bigcup_{i\geq-1}^j
\Att\H_\fm^{t-i-1}(\mH_M^i)\bigcup\Att\H_\fm^{t-j-1}(M^{j}/K^{j}).\end{equation}
Due to  $\dim\mH_M^i\leq t-i-1$, the \index{Grothendieck's vanishing
theorem} Grothendieck's vanishing theorem implies that
$\H_\fm^{t-i}(\mH_M^i)= 0$. The exact sequence (\ref{e2}) with
$l=0$, implies the exact sequence
$\H_\fm^t(\mH_M^{-1})\longrightarrow\H_\fm^t(M)\longrightarrow\H_\fm^t(M^{-1}/K^{-1})\longrightarrow
0$. Thus we get
$$\Att \H_\fm^t(M)\subseteq\Att \H_\fm^t(\mH_M^{-1})\bigcup\Att \H_\fm^t(M^{-1}/K^{-1}).$$
Assume that $-1\leq j< t-1$ and (\ref{e.214}) holds. First note that
(\ref{e1}) with $l=j+1$  implies the exact sequence
$$\H_\fm^{t-j-2}(M^{j+1})\longrightarrow\H_\fm^{t-j-2}(M^{j+1}/D^{j+1})\longrightarrow\H_\fm^{t-j-1}(M^j/K^j)
\longrightarrow\H_\fm^{t-j-1}(M^{j+1}).$$ As $-1\leq j< t-1$,
$\H_\fm^{t-j-2}(M^{j+1})= 0$ and $\H_\fm^{t-j-1}(M^{j+1})= 0$, by
Lemma \ref{tl}, therefore
\begin{equation}\label{e.215}
\H_\fm^{t-j-2}(M^{j+1}/D^{j+1})\cong\H_\fm^{t-j-1}(M^j/K^j).
\end{equation}
 On the other hand from the exact sequence (\ref{e2}) with $l= j+2$ we have the exact
sequence
\begin{equation}\label{e.216}
\H_\fm^{t-j-2}(\mH_M^{j+1})\longrightarrow\H_\fm^{t-j-2}(M^{j+1}/D^{j+1})
\longrightarrow\H_\fm^{t-j-2}(M^{j+1}/K^{j+1})\longrightarrow\H_\fm^{t-j-1}(\mH_M^{j+1}).\end{equation}
As $\H_\fm^{t-j-1}(\mH_M^{j+1})= 0$, (\ref{e.215}) with the exact
sequence (\ref{e.216}) imply that
\begin{equation}\label{e.217}
\begin{array}{ll}\Att \H_\fm^{t-j-1}(M^{j}/K^{j})&=
\Att \H_\fm^{t-j-2}(M^{j+1}/D^{j+1})\\&\subseteq \Att
\H_\fm^{t-j-2}(\mH_M^{j+1})\bigcup\Att
\H_\fm^{t-j-2}(M^{j+1}/K^{j+1}).\end{array}
\end{equation}
Now, (\ref{e.217}) and (\ref{e.214}) complete the induction
argument. Thus we have
$$\Att \H^t_\fm(M)\subseteq\underset{i= -1, 0, \cdots,
t-1}{\bigcup}\Att \H_\fm^{t-i-1}(\mH_M^i)\bigcup\Att
\H_\fm^0(M^{t-1}/K^{t-1}).$$ On the other hand, considering the fact
that $\H_\fm^0(M^t)= 0$, it follows from the exact sequence
(\ref{e1}) with $l= t$, that $\H_\fm^0(M^{t-1}/K^{t-1})= 0$.

(ii). Consider the exact sequence (\ref{e1}) with $l= t-i$
 which imply the exact sequence
$$\H_\fm^{i-1}(M^{t-i})\longrightarrow\H_\fm^{i-1}(M^{t-i}/D^{t-i})\longrightarrow\H_\fm^{i}(M^{t-i-1}/K^{t-i-1})
\longrightarrow\H_\fm^{i}(M^{t-i}).$$ Taking $0\leq i\leq t$ we get
$0\leq t-i\leq t< d$ and so $\H_\fm^{i}(M^{t-i})= 0 =
\H_\fm^{i-1}(M^{t-i})$, by Lemma \ref{tl}. Therefore we have
isomorphisms
\begin{equation}\label{e.218}
\H_\fm^{i-1}(M^{t-i}/D^{t-i})\cong
\H_\fm^{i}(M^{t-i-1}/K^{t-i-1})\end{equation} for all $i$, $0\leq i
\leq t$.

Consider the exact sequence (\ref{e2}) with $l= t-i$ which induces
the exact sequence
$$\H_\fm^i(\mH_M^{t-i-1})\longrightarrow\H_\fm^i(M^{t-i-1}/D^{t-i-1})\longrightarrow
\H_\fm^i(M^{t-i-1}/K^{t-i-1})\longrightarrow\H_\fm^{i+1}(\mH_M^{t-i-1}).$$
As, by assumption $\dim \mH_M^{t-i-1}\leq i$, we have
$\H_\fm^{i+1}(\mH_M^{t-i-1})= 0$ and so one obtains an epimorphism
\begin{equation}\label{e.219}
\H_\fm^i(M^{t-i-1}/D^{t-i-1})\twoheadrightarrow\H_\fm^i(M^{t-i-1}/K^{t-i-1}).
\end{equation}
By successive use of (\ref{e.219}) and (\ref{e.218}) one obtains an
epimorphism

$$\H_\fm^t(M^{-1}/D^{-1})\twoheadrightarrow\H_\fm^0(M^{t-1}/D^{t-1}).
$$
 On the other hand, we have seen at the end of part (i)
that, we have
 $\H_\fm^0(M^{t-1}/K^{t-1})= 0$. Therefore, from  the exact sequence (\ref{e2}) with $l= t$,
 we get $\H_\fm^0(\mH_M^{t-1})\cong\H_\fm^0(M^{t-1}/D^{t-1})$
which
 results an epimorphism
 \begin{equation}\label{e.2110}
 \H_\fm^t(M)\twoheadrightarrow\H_\fm^0(\mH_M^{t-1}).\end{equation}

 (iii). Assume that $\mH_M^{t-1}\not =0$. As, by assumption
 $\dim \mH_M^{t-1}\leq 0$,  we have
 $\H_\fm^0(\mH_M^{t-1})= \mH_M^{t-1}$ and so
 $\Att \H_\fm^0(\mH_M^{t-1})=\{\fm\}$. Now (\ref{e.2110}) implies
 that $\fm\in\Att \H_\fm^t(M)$.

 Conversely, assume that $\fm\in\Att \H_\fm^t(M)$. By part (i),
 $\fm\in\Att \H_\fm^{t-i-1}(\mH_M^i)$ for some $i$, $-1\leq i\leq
 t-1$, and thus $\dim \mH_M^i\geq t-i-1$. As $\dim \mH_M^i\leq
 t-i-1$ we have equality $\dim \mH_M^i=
 t-i-1$. Note that $\mH_M^i$ is finitely generated and so $\fm\in\Assh \mH_M^i$ (see Theorem \ref{G})
 from which it follows
 that $t-i-1= 0$, i.e. $\mH_M^{t-1}\not=0.$
\end{proof}

It is  known that $\Att \H_\fm^d(M)= \Assh M$ (Theorem \ref{G}). The
following result provides some information about $\Att \H_\fm^t(M)$
for certain $t$, in particular for $t= d-1$.

 \begin{prop}\label{3231}\index{Cousin complex, $\C_R(M)$!finite-}

  \index{attached prime, $\Att$}
 Assume that  $\mathcal{C}_R(M)$ is finite. Let $t$, $0\leq t<
d$, be an integer such that $\emph\dim \mH_M^i\leq t-i-1$, for all
$i\geq -1$. Then

 $$\emph\Att \emph\H^{t}_\fm(M)=\bigcup^{t-1}_{i=-1}\{\fp\in\emph\Ass \mH_M^i: \emph\dim R/\fp=t-i-1\}.$$
\end{prop}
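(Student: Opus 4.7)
The plan is to prove the two inclusions separately, using Lemma \ref{312} as the main tool; the reverse inclusion requires a localization argument combined with the weak shifted localization principle.

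For the inclusion $\subseteq$, I would first invoke Lemma \ref{312}(i) to obtain $\Att \H^t_\fm(M) \subseteq \bigcup_{i=-1}^{t-1} \Att \H^{t-i-1}_\fm(\mH_M^i)$. Since $\mathcal{C}_R(M)$ is finite, each $\mH_M^i$ is finitely generated and, by hypothesis, $\dim \mH_M^i \leq t-i-1$. Grothendieck's vanishing theorem then forces $\H^{t-i-1}_\fm(\mH_M^i)=0$ whenever the inequality is strict, while Theorem \ref{G} gives $\Att \H^{t-i-1}_\fm(\mH_M^i) = \Assh \mH_M^i$ in the equality case. Either way $\Att \H^{t-i-1}_\fm(\mH_M^i) \subseteq \{\fp \in \Ass \mH_M^i : \dim R/\fp = t-i-1\}$, and the inclusion follows by union.

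For the reverse inclusion $\supseteq$, fix $\fp \in \Ass \mH_M^i$ with $\dim R/\fp = t-i-1$; the goal is $\fp \in \Att \H^t_\fm(M)$. The plan is to localize at $\fp$, verify the hypotheses of Lemma \ref{312}(iii) for $M_\fp$ with parameter $t'=i+1$, deduce $\fp R_\fp \in \Att \H^{i+1}_{\fp R_\fp}(M_\fp)$, and then transport this back via Theorem \ref{wgs} to obtain $\fp \in \Att \H^{(i+1)+(t-i-1)}_\fm(M) = \Att \H^t_\fm(M)$. The required hypotheses at $\fp$ are: (a) $t' < \dim M_\fp$; (b) $\mH^{t'-1}_{M_\fp} \neq 0$; (c) $\dim \mH^j_{M_\fp} \leq i-j$ for all $j \geq -1$. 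By Corollary \ref{cor}, $M$ is equidimensional and $R/0:_RM$ is universally catenary, so the catenary formula yields $\h_M\fp = d-(t-i-1)$, which is strictly larger than $i+1$ (since $t<d$), verifying (a). Theorem \ref{3.5S1} identifies $\mH^j_{M_\fp}$ with $(\mH_M^j)_\fp$ and shows that $\mathcal{C}_{R_\fp}(M_\fp)$ is finite; then (b) is immediate from $\fp \in \Ass \mH_M^i$. For (c), for every $\fq \in \Supp \mH_M^j$ with $\fq \subseteq \fp$, catenarity gives $\dim R_\fp/\fq R_\fp = \dim R/\fq - \dim R/\fp \leq (t-j-1)-(t-i-1) = i-j$, so $\dim \mH^j_{M_\fp} \leq i-j$.

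The main obstacle is the verification of hypothesis (c), which hinges on the catenary formula applied inside $R/0:_RM$. This catenarity is supplied by Corollary \ref{cor} (and ultimately by Theorem \ref{229}), so the full strength of the assumption that $\mathcal{C}_R(M)$ is finite is genuinely used at exactly this point. Once the dimension bounds are in hand, the rest of the argument is a clean bookkeeping of indices, aligning $(t',\, t'-1)=(i+1,\,i)$ at $\fp$ with $(t,\,i)$ at $\fm$ via Theorem \ref{wgs}.
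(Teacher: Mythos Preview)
Your proposal is correct and follows essentially the same approach as the paper: the $\subseteq$ direction via Lemma~\ref{312}(i), Grothendieck vanishing, and Theorem~\ref{G}; the $\supseteq$ direction by localizing at $\fp$, invoking equidimensionality and catenarity of $\Supp M$ (from Corollary~\ref{cor}) to verify the dimension bounds, applying Lemma~\ref{312}(iii) to $M_\fp$ with $t'=i+1$, and then lifting back with the weak general shifted localization principle (Theorem~\ref{wgs}). Your write-up is in fact slightly more explicit than the paper's in checking the hypotheses at $\fp$.
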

\begin{proof}
Assume that $\fp\in\Att \H_\fm^t(M)$. Then $\fp\in\Att
\H_\fm^{t-i-1}(\mH_M^i)$ for some $i$, $-1\leq i\leq t-1$ by Lemma
\ref{312}(i). As $\dim \mH_M^i \leq t-i-1$, we have the equality
$\dim \mH_M^i= t-i-1$ and so $\fp\in\Ass \mH_M^i$ and $\dim R/\fp=
t-i-1$.

 Conversely, assume that $-1\leq
i_0\leq t-1$  and that $\fp\in \Ass \mH_M^{i_0}$ such that $\dim
R/\fp=t-i_0-1$. Set $d':=\dim M_{\fp}$ and $t':= t-\dim R/\fp$.

 As
$\C_R(M)$ is finite, $M$ is equidimensional and $\Supp M$ is
catenary by Proposition \ref{equ} and Corollary \ref{cat}, we have
$0\leq t'<d'$. Note that $\mathcal{C}_{A_\fp}(M_\fp)\cong
(\mathcal{C}_A(M))_\fp$ so that $(\mH_M^j)_\fp\cong\mH_{M_\fp}^j$
for all $j$. As $\dim \mH_{M_\fp}^j\leq \dim \mH_M^j -\dim R/\fp$ ,
we find that $\dim \mH^j_{M_\fp}\leq t'-j-1$ for all $j\geq -1$. On
the other hand $\mH_{M_\fp}^{t'-1}= (\mH_M^{i_0})_\fp\not =0$. Lemma
\ref{312}(iii), replacing $M$ by $M_\fp$ implies that $\fp
R_\fp\in\Att \H_{\fp R_\fp}^{t'}(M_\fp)$. Finally the Weak General
Shifted Localization Principle (\ref{slp}), implies that $\fp\in\Att
\H_\fm^{t'+\dim A/\fp}(M)$ that is $\fp\in\Att \H_\fm^t(M)$.
\end{proof}

 \quad

As a consequence of the above result, we may find a connection
between vanishing of certain local cohomology modules and the
dimensions of cohomologies of Cousin complex.

\begin{cor}
 Assume that  $\mathcal{C}_R(M)$  is finite. Let $l< d$ be
an integer. The following statements are equivalent.
\begin{itemize}
\item[\emph{(i)}] $\emph\H_\fm^j(M) =0$ for all $j$, $l<j< d$.
\item[\emph{(ii)}] $\emph\dim \mH_M^i\leq l-i-1$ for all $i\geq -1.$
\end{itemize}
\end{cor}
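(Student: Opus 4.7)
The plan is to deduce both implications from Proposition \ref{3231}, which for $0 \leq t < d$ and under the hypothesis $\dim \mH_M^i \leq t - i - 1$ (all $i \geq -1$) computes
\[
\Att \H_\fm^t(M) \;=\; \bigcup_{i=-1}^{t-1}\{\fp \in \Ass \mH_M^i : \dim R/\fp = t - i - 1\},
\]
combined with the standard fact that an artinian (hence representable) module with empty attached set must vanish.

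For (ii)~$\Rightarrow$~(i), I would fix any $t$ with $l < t < d$. The hypothesis of the corollary gives $\dim \mH_M^i \leq l - i - 1 < t - i - 1$, so Proposition \ref{3231} applies, and for every $\fp \in \Ass \mH_M^i$ one has $\dim R/\fp \leq \dim \mH_M^i < t - i - 1$; the union defining $\Att \H_\fm^t(M)$ is therefore empty, whence $\H_\fm^t(M) = 0$.

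For (i)~$\Rightarrow$~(ii) I would argue by descending induction on $l$. The base case $l = d - 1$ is immediate: the hypothesis is vacuous and the conclusion $\dim \mH_M^i \leq d - i - 2$ is exactly Theorem \ref{2.7S1}(ii). In the inductive step, (i) at level $l$ entails (i) at level $l + 1$, so by induction $\dim \mH_M^i \leq l - i$ for all $i \geq -1$; this is precisely the hypothesis needed to invoke Proposition \ref{3231} at $t = l + 1$. The given vanishing $\H_\fm^{l+1}(M) = 0$ empties the attached set, so for each $-1 \leq i \leq l$ no prime in $\Ass \mH_M^i$ attains $\dim R/\fp = l - i$; combined with the inductive bound this upgrades to $\dim \mH_M^i \leq l - i - 1$, while for $i > l$ the inductive bound already forces $\mH_M^i = 0$.

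The main obstacle is the apparent circularity in invoking Proposition \ref{3231}: its hypothesis is essentially the dimension bound one wishes to prove. The descending induction resolves this by sharpening the bound one unit at a time, each application of the proposition being justified by the bound inherited from the previous step; once this structural bookkeeping is in place, both directions are short.
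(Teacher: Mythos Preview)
Your proof is correct and follows essentially the same approach as the paper's: both directions hinge on Proposition~\ref{3231}, with (i)$\Rightarrow$(ii) handled by descending induction on $l$ starting from the base case supplied by Theorem~\ref{2.7S1}(ii). Your treatment of (ii)$\Rightarrow$(i) is in fact slightly cleaner than the paper's, which also runs a descending induction there; you observe correctly that once the dimension bound $\dim\mH_M^i\leq l-i-1$ holds, Proposition~\ref{3231} applies directly at every $t$ with $l<t<d$ and yields an empty attached set, so no induction is needed for that direction.
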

\begin{proof} (i)$\Longrightarrow$(ii). We prove it by descending induction on $l$. For $i=
d-1$ we have nothing to prove by Lemma \ref{tl}. Assume that $l<
d-1$. We have, by induction hypothesis, that $\dim R/\fp\leq
(l+1)-i-1$ for all $\fp\in\Supp \mH_M^i$ and for all $i\geq -1$. If,
for an ideal $\fp\in\Supp \mH_M^i$ and an integer $i$, $\dim
R/\fp=(l+1)-i-1$, then we get $\fp\in\Ass \mH_M^i$ and so
$\H_\fm^{l+1}(M)\not = 0$ by Proposition \ref{3231}, which
contradicts the assumption. Therefore $\dim R/\fp\not=(l+1)-i-1$ for
any $\fp\in\Supp \mH_M^i$ and all $i\geq -1$. That is $\dim \mH_M^i<
(l+1)-i-1$ for all $i\geq -1$. In other words, $\dim \mH_M^i\leq
l-i-1$ for all $i\geq -1$.

(ii)$\Longrightarrow$(i). By descending induction on $l$. For $i=
d-1$ we have nothing to prove. Assume that $l<d-1$. As $\dim
\mH_M^i\leq l-i-1<(l+1)-i-1$ for all $i\geq -1$, we have, by
induction hypothesis, that $\H_\fm^j(M)=0$ for all $j$, $l+1<j< d$.
Moreover, Proposition \ref{3231} implies that $\Att \H_\fm^{l+1}(M)$
is empty so that $\H_\fm^{l+1}(M)= 0$.
\end{proof}

The following result is now a clear conclusion of the above
corollary.

\begin{cor}
 Assume that  $M$ is not Cohen-Macaulay and that $\mathcal{C}_R(M)$
is finite. Set $s= 1+ \sup\{\emph\dim \mH_M^i+i: i\geq -1\}$. Then
$\emph\H_\fm^s(M)\not= 0$ and  $\emph\H_\fm^i(M)= 0$ \emph{for all}
$i, s< i< d$.
\end{cor}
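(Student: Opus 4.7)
The plan is to deduce this corollary directly from the preceding one, which established the equivalence between $\H_\fm^j(M)=0$ for $l<j<d$ and the dimension bound $\dim\mH_M^i\le l-i-1$ for all $i\ge -1$. First I would verify that $s$ is a well-defined integer with $0\le s<d$. Since $M$ is not Cohen-Macaulay, Theorem \ref{2.4S4} gives that $\C_R(M)$ is not exact, so some $\mH_M^i$ is non-zero and hence has dimension $\ge 0$; this forces $\sup\{\dim\mH_M^i+i:i\ge -1\}\ge -1$, and thus $s\ge 0$. The bound $s\le d-1$ is immediate from Theorem \ref{2.7S1}(ii), which gives $\dim\mH_M^i\le d-i-2$ for every $i\ge -1$. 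Lemma \ref{1310}(ii) ensures $\mH_M^i=0$ for $i\ge d-1$, so the set $\{\dim\mH_M^i+i:i\ge -1\}$ is finite and the supremum is attained by some index $i_0$, i.e., $\dim\mH_M^{i_0}+i_0=s-1$.

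The vanishing claim $\H_\fm^i(M)=0$ for $s<i<d$ is then a one-line consequence of the definition of $s$: for every $i\ge -1$ we have $\dim\mH_M^i\le s-i-1$, which is condition (ii) of the previous corollary with $l=s$, and hence yields condition (i).

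For the non-vanishing $\H_\fm^s(M)\ne 0$ I would argue by contradiction. Suppose $\H_\fm^s(M)=0$; combined with the vanishing just established this gives $\H_\fm^j(M)=0$ for all $j$ with $s-1<j<d$. Applying (i)$\Rightarrow$(ii) of the previous corollary with $l=s-1$ produces $\dim\mH_M^i\le (s-1)-i-1=s-i-2$ for every $i\ge -1$, i.e., $\dim\mH_M^i+i\le s-2$ uniformly in $i$. This contradicts $\dim\mH_M^{i_0}+i_0=s-1$, so $\H_\fm^s(M)\ne 0$.

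The main subtlety, and the only place one needs to be careful, is the edge case $s=0$: one must then apply the previous corollary with $l=-1$, which internally invokes Proposition \ref{3231} at $t=l+1=0$. This is within the hypothesis $0\le t<d$ of that proposition, so the argument is legitimate, and no separate treatment of $s=0$ is required.
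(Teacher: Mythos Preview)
Your proposal is correct and follows exactly the approach the paper intends: the paper simply declares this corollary ``a clear conclusion of the above corollary,'' and you have spelled out precisely that deduction by applying the equivalence first with $l=s$ (for the vanishing) and then with $l=s-1$ (for the non-vanishing via contradiction). The only cosmetic point is your remark that ``the set $\{\dim\mH_M^i+i:i\ge -1\}$ is finite'': under the paper's stated convention $\dim 0=-1$ this phrasing is not literally accurate, but what you mean---and what matters---is that only finitely many $\mH_M^i$ are non-zero, so the supremum is attained at some $i_0$ with $\mH_M^{i_0}\neq 0$; this is exactly what you use.
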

The following corollary gives us a non--vanishing criterion of
$\H_\fm^{d-1}(M)$ when $\mathcal{C}_R(M)$ is finite.
\begin{cor}\label{2.6}
Assume that $\mathcal{C}_R(M)$ is finite. Then
\begin{itemize}
\item[\emph{(i)}] $\emph\Att \emph\H^{d-1}_\fm(M)=\bigcup^{d-2}_{i=-1}\{\fp\in\emph\Ass \mH_M^i:
\emph\dim R/\fp=d-i-2\}.$
\item[\emph{(ii)}] $\emph\H_\fm^{d-1}(M)\not =0$ if and only if
$\emph\dim \mH_M^i= d-i-2$ for some $i$, $-1\leq i\leq d-2$.
\end{itemize}
\end{cor}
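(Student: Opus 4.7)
The plan is to deduce both parts directly from Proposition \ref{3231} applied with $t=d-1$, so the main task is to verify the hypothesis of that proposition for this choice of $t$ and then interpret the statements.

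First I would check the hypothesis $\dim \mH_M^i \leq t-i-1 = d-i-2$ for all $i \geq -1$. This is exactly the content of Theorem \ref{2.7S1}(ii): since $\Supp \mH_M^i \subseteq H_{i+2}$, every $\fp \in \Supp \mH_M^i$ satisfies $\h_M\fp \geq i+2$, and because $M$ is equidimensional (by Corollary \ref{cor}, using that $\C_R(M)$ is finite) we have $\dim R/\fp \leq \dim M - \h_M\fp \leq d-i-2$. So the hypothesis of Proposition \ref{3231} with $t=d-1$ holds, and (i) is an immediate substitution.

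For part (ii), I would use the standard fact that a nonzero representable module has a nonempty set of attached primes (Macdonald's theory, as recalled at the start of Section 1.1). Since $\H_\fm^{d-1}(M)$ is artinian, hence representable, we have $\H_\fm^{d-1}(M) \neq 0$ if and only if $\Att \H_\fm^{d-1}(M) \neq \emptyset$. By part (i), the latter set is nonempty exactly when there exist $i$ with $-1 \leq i \leq d-2$ and $\fp \in \Ass \mH_M^i$ with $\dim R/\fp = d-i-2$. Since the existence of such $\fp$ in $\Ass \mH_M^i$ is equivalent to $\dim \mH_M^i \geq d-i-2$, and the reverse inequality $\dim \mH_M^i \leq d-i-2$ always holds, this is equivalent to $\dim \mH_M^i = d-i-2$ for some $i$ in the stated range.

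There is no real obstacle here; the work was done in Proposition \ref{3231}, and the role of this corollary is just to specialize $t=d-1$ and repackage the conclusion as a non--vanishing criterion, noting only that representability of the artinian module $\H^{d-1}_\fm(M)$ lets us pass between nonvanishing and nonemptiness of $\Att$.
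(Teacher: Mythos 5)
Your proof is correct and follows the same route the paper takes: the paper's proof of this corollary is literally the one-liner ``It is clear by Proposition \ref{3231},'' and you have simply spelled out the specialization $t=d-1$ plus the routine passage between nonvanishing of the artinian module and nonemptiness of its attached primes. One small remark: the bound $\dim R/\fp + \h_M\fp \leq \dim M$ for $\fp \in \Supp M$ holds automatically (concatenate a chain below $\fp$ in $\Supp M$ with one above, all of which lie in $\Supp M$), so the appeal to equidimensionality via Corollary \ref{cor} in your verification of the hypothesis is unnecessary, though harmless; indeed the paper already records $\dim \mH_M^i \leq d-i-2$ as a direct consequence of Theorem \ref{2.7S1} in the sentence preceding Lemma \ref{312}.
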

\begin{proof}
It is clear by Proposition \ref{3231}.
\end{proof}

  \index{attached prime, $\Att$}

\section{Top local cohomology modules}

\quad In this section we assume that $(R,\fm)$  is a local ring. We
say that $R$ is complete precisely when it is   complete with
respect to the $\fm$--adic topology.

 As mentioned before, by Theorem \ref{G},
$\Att \lc^d_\fm(M)= \Assh M$.   The following result is due to
Dibaei and Yassemi.

\begin{thm}\emph{\cite[Theorem A]{DY1}} \label{A} \index{cohomological dimension, $\cd(\fa,K)$|textbf} For any ideal $\fa$ of $R$,
$$\emph{\Att}\emph{\H}^{d}_\fa(M)=\{\fp\in \emph{\Supp} M \ : \ \emph{\cd}(\fa, R/\fp)=d\},$$
where $\emph{\cd}(\fa, K)$ is the cohomological dimension of an
$R$--module $K$ with respect to $\fa$, that is $\emph{\cd}(\fa,
K)=\sup\{i\in\Z \ : \ \emph{\H}^i_\fa(K)\neq 0\}$.
\end{thm}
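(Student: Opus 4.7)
The plan is to deduce the theorem by matching two descriptions of $\Att \H^d_\fa(M)$: on one side, the formula from \cite[Corollary 4]{DY1} recalled earlier in the excerpt, which identifies $\Att \H^d_\fa(M)$ with the contractions $\fq \cap R$ of primes $\fq \in \Assh \widehat{M}$ satisfying $\dim \widehat{R}/(\fa \widehat{R} + \fq) = 0$; on the other side, the Lichtenbaum--Hartshorne vanishing criterion (Theorem~\ref{LH}) applied to the complete local ring $\widehat{R}/\fp \widehat{R}$, which detects non--vanishing of $\H^d_{\fa \widehat{R}}(\widehat{R}/\fp \widehat{R})$ and hence, by flat base change, of $\H^d_\fa(R/\fp)$. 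Before beginning, Grothendieck's vanishing (Theorem~\ref{GrV}) together with the inequality $\dim R/\fp \leq d$ for $\fp \in \Supp M$ collapses the right--hand side of the theorem to $\{\fp \in \Assh M : \H^d_\fa(R/\fp) \neq 0\}$, so I only need to identify this set with the Dibaei--Yassemi description.

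For the inclusion $\Att \H^d_\fa(M) \subseteq \{\fp \in \Supp M : \cd(\fa, R/\fp) = d\}$, I will take $\fp \in \Att \H^d_\fa(M)$ and pick $\fq \in \Assh \widehat{M}$ with $\fq \cap R = \fp$ and $\dim \widehat{R}/(\fa \widehat{R} + \fq) = 0$. Using the decomposition $\Ass \widehat{M} = \bigcup_{\fp' \in \Ass M} \Ass(\widehat{R}/\fp' \widehat{R})$, I choose $\fp' \in \Ass M$ with $\fq \in \Ass(\widehat{R}/\fp' \widehat{R})$. Then $\fp' \subseteq \fq \cap R = \fp$, and the chain $d = \dim \widehat{R}/\fq \leq \dim \widehat{R}/\fp' \widehat{R} = \dim R/\fp' \leq \dim M = d$ forces $\fp' \in \Assh M$ and $\fq$ to be minimal over $\fp' \widehat{R}$. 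Going--down for the faithfully flat map $R \to \widehat{R}$ then yields $\fq \cap R = \fp'$, so $\fp = \fp' \in \Assh M$; and $\fq/\fp \widehat{R}$ is a prime in $\Assh(\widehat{R}/\fp \widehat{R})$ with $\fa(\widehat{R}/\fp \widehat{R}) + \fq/\fp \widehat{R}$ primary to $\widehat{\fm}/\fp \widehat{R}$, so Lichtenbaum--Hartshorne gives $\H^d_{\fa \widehat{R}}(\widehat{R}/\fp \widehat{R}) \neq 0$, and flat base change gives $\cd(\fa, R/\fp) = d$.

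For the reverse inclusion I run the same argument backwards: starting from $\fp \in \Assh M$ with $\H^d_\fa(R/\fp) \neq 0$, flat base change supplies $\H^d_{\fa \widehat{R}}(\widehat{R}/\fp \widehat{R}) \neq 0$, and Lichtenbaum--Hartshorne produces some minimal prime $\fq$ of $\fp \widehat{R}$ in $\widehat{R}$ with $\dim \widehat{R}/\fq = d$ and $\dim \widehat{R}/(\fa \widehat{R} + \fq) = 0$. Since $\fp \in \Ass M$, this $\fq$ lies in $\Ass(\widehat{R}/\fp \widehat{R}) \subseteq \Ass \widehat{M}$; minimality over $\fp \widehat{R}$ combined with going--down gives $\fq \cap R = \fp$, and the equality $\dim \widehat{R}/\fq = d = \dim \widehat{M}$ places $\fq$ in $\Assh \widehat{M}$, whence $\fp \in \Att \H^d_\fa(M)$ by the Dibaei--Yassemi formula. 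The main obstacle I anticipate is precisely this dimension/associated--prime bookkeeping: ensuring that the $\widehat{R}$--prime $\fq$ produced by Lichtenbaum--Hartshorne actually lies in $\Assh \widehat{M}$ (not merely in $\Ass \widehat{R}$) and that its contraction is exactly $\fp$ (not merely contains $\fp$); both depend on combining minimality of $\fq$ over $\fp \widehat{R}$ with going--down for $R \to \widehat{R}$ and on the coincidences $\dim \widehat{M} = \dim M$ and $\dim \widehat{R}/\fp \widehat{R} = \dim R/\fp$.
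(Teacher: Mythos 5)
The paper does not supply a proof of this statement — it is imported verbatim as \cite[Theorem A]{DY1} — so there is no in-text argument to compare against. Your deduction of Theorem~A from the companion result \cite[Corollary~4]{DY1} (also quoted in the thesis) plus the Lichtenbaum--Hartshorne criterion and flat base change is mathematically sound: the forward direction's chain of equalities forcing $\fp'\in\Assh M$ and $\fq$ minimal over $\fp'\widehat{R}$ is tight, the going--down argument pinning $\fq\cap R=\fp$ is correct, and the reverse direction's lift of the LH prime to $\Assh\widehat{M}$ together with the standard decomposition $\Ass\widehat{M}=\bigcup_{\fp'\in\Ass M}\Ass(\widehat{R}/\fp'\widehat{R})$ and the equality $\dim\widehat{R}/\fp\widehat{R}=\dim R/\fp$ is exactly the bookkeeping needed.

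Two remarks worth making explicit. First, the thesis's quotation of Corollary~4 reads ``$\fq\in\Assh\widehat{R}$,'' which is almost certainly a typographical slip for $\Assh\widehat{M}$ (the version you use); as printed it would not even make sense for a general module $M$. You silently corrected this, which is right, but a proof should flag the emendation. Second, because the source for Corollary~4 and for Theorem~A is the same paper \cite{DY1}, you should check that Corollary~4 is established there independently of Theorem~A before using it as input; if it is literally a corollary of Theorem~A in \cite{DY1}, the argument as written is circular and you would instead need to prove the $\widehat{R}$--formula first (which is doable directly from Theorem~\ref{atc} on change of rings for attached primes, Grothendieck's non--vanishing Theorem~\ref{G}, and LH over $\widehat{R}$). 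Assuming that independence, your route is a clean way to pass between the two phrasings of the Dibaei--Yassemi result.
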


Note that if $\fp\in\Supp M$ such that $\cd(\fa,R/\fp)=d$, then
$\dim R/\fp=d$. Therefore  $\Att \H^d_\fa(M)\subseteq \Assh M$. The
number of subsets  $T$ of $\Assh M$ with $\Att\lc^d_\fa(M)=T$, for
some  ideal $\fa$ of a complete ring $R$, is exactly the number of
 non--isomorphic top local cohomology modules of $M$ with respect to
all ideals of $R$, by the following theorem.

\begin{thm}\emph{\cite[Theorem 1.6]{DY2}}\label{dy2}
Assume that $R$ is complete. Then for any pair of ideals $\fa$ and
$\fb$ of $R$, if $\emph{\Att}\emph{\H}^d_\fa(M)=\emph{\Att}
\emph{\lc}^d_\fb(M)$, then $\emph{\lc}^d_\fa(M) \cong
\emph{\lc}^d_\fb(M)$.
\end{thm}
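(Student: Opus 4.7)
The plan is to apply Matlis duality. Since $R$ is complete, the functor $(-)^\vee := \Hom_R(-, \E(R/\fm))$ is an anti-equivalence between Artinian and finitely generated $R$-modules. Both $\H^d_\fa(M)$ and $\H^d_\fb(M)$ are Artinian (being $\fm$-torsion, as follows from their being top local cohomologies of dimension $d$), so it suffices to exhibit an isomorphism $\H^d_\fa(M)^\vee \cong \H^d_\fb(M)^\vee$ as finitely generated $R$-modules.

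First I would produce a natural surjection $\H^d_\fm(M) \twoheadrightarrow \H^d_\fa(M)$, valid for any $\fa \subseteq \fm$. This comes from the Grothendieck spectral sequence $E_2^{p,q} = \H^p_\fm(\H^q_\fa(M)) \Rightarrow \H^{p+q}_\fm(M)$ (available because $\Gamma_\fm = \Gamma_\fm \circ \Gamma_\fa$ when $\fa \subseteq \fm$). Since $\dim \H^q_\fa(M) \leq d - q$ by Grothendieck vanishing (\ref{GrV}), the term $E_2^{p,q}$ vanishes whenever $p + q > d$; at the corner $(0, d)$ there are no nonzero differentials (incoming ones originate from columns with $p < 0$, outgoing ones land where $p + q > d$), and $\H^d_\fa(M)$ being $\fm$-torsion yields $E_\infty^{0,d} = E_2^{0,d} = \H^d_\fa(M)$, which is precisely the top quotient in the induced filtration of $\H^d_\fm(M)$. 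Dualizing, I obtain an embedding $\H^d_\fa(M)^\vee \hookrightarrow K_M := \H^d_\fm(M)^\vee$, the canonical module of $M$ in the sense of Schenzel (using that complete $R$ is a homomorphic image of a Gorenstein ring).

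The central step is to identify $\H^d_\fa(M)^\vee$ as a submodule of $K_M$ depending only on $T := \Att \H^d_\fa(M)$. Since $\Ass K_M = \Assh M$ by Lemma \ref{1.9Sc}, and $\Ass \H^d_\fa(M)^\vee = \Att \H^d_\fa(M) = T$ (a standard consequence of Matlis duality for Artinian modules), the submodule $\H^d_\fa(M)^\vee$ has associated primes in $T$. I claim it equals the \emph{largest} such submodule of $K_M$, namely $L_T := \bigcap_{\fp \in \Assh M \setminus T} Q_\fp$, where $0 = \bigcap_{\fp \in \Assh M} Q_\fp$ is a minimal primary decomposition of zero in $K_M$. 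Since $L_T$ depends only on $T$ (and the fixed module $M$), this claim yields the theorem immediately: if $\Att \H^d_\fa(M) = \Att \H^d_\fb(M) = T$, then both duals coincide with $L_T$, hence the originals are isomorphic by Matlis duality.

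The hard part is establishing the identification $\H^d_\fa(M)^\vee = L_T$. I would attack it via the short exact sequence $0 \to U_T \to M \to M/U_T \to 0$, where $U_T \subseteq M$ is chosen so that $\Assh(M/U_T) = T$ (using a primary decomposition of zero in $M$). The vanishing $\H^d_\fa(U_T) = 0$ is automatic because each $\fp \in \Assh U_T = \Assh M \setminus T$ satisfies $\cd(\fa, R/\fp) < d$ by Theorem \ref{A}, while any other $\fp \in \Supp U_T$ has $\cd(\fa, R/\fp) \leq \dim R/\fp < d$; so the long exact sequence collapses to give $\H^d_\fa(M) \cong \H^d_\fa(M/U_T)$. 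The problem then reduces to proving $\H^d_\fa(M/U_T) \cong \H^d_\fm(M/U_T)$: both have attached primes $T$, and the spectral-sequence surjection above gives a canonical map between them. To show it is an isomorphism, I would invoke the Lichtenbaum--Hartshorne vanishing theorem (\ref{LH}) together with Theorem \ref{A}, using that $\cd(\fa, R/\fp) = d$ for every $\fp \in T$ to control the filtration of $\H^d_\fm(M/U_T)$ via primary decomposition on the dual side $K_{M/U_T}$; concretely, one must show that the lower pieces $F^p/F^{p+1}$ for $p \geq 1$ of that filtration dualize to give no contribution beyond $L_T$. This is the delicate technical input of the argument.
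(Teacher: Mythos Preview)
The paper does not prove this statement: it is quoted verbatim from \cite[Theorem~1.6]{DY2} and used as a black box in Section~3.2. There is therefore no ``paper's own proof'' to compare against.

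On the merits of your proposal: the overall architecture is sound, and in fact your reduction step already contains everything needed---you have simply not noticed how easily it closes. Once you have chosen $U_T\subseteq M$ with $\Ass(M/U_T)=T$ (so that $\Min(M/U_T)=T$ and $\rad(0:_R(M/U_T))=\bigcap_{\fp\in T}\fp$), the identity
\[
\H^i_\fa(M/U_T)\;=\;\H^i_{\fa+(0:_R(M/U_T))}(M/U_T)
\]
holds for all $i$ because $M/U_T$ is $(0:_R(M/U_T))$-torsion (this is \cite[Exercise~2.1.9]{BS}, already recalled in Section~1.2). Now for every $\fp\in T$ one has $\cd(\fa,R/\fp)=d$, which by the Lichtenbaum--Hartshorne theorem over the complete ring $R$ means $\rad(\fa+\fp)=\fm$; a prime containing $\fa+\bigcap_{\fp\in T}\fp$ must contain some $\fa+\fp$ and hence equal $\fm$, so $\rad\bigl(\fa+(0:_R(M/U_T))\bigr)=\fm$. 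Thus $\H^d_\fa(M/U_T)=\H^d_\fm(M/U_T)$ on the nose, with no spectral-sequence bookkeeping required. Combined with your isomorphism $\H^d_\fa(M)\cong\H^d_\fa(M/U_T)$ (which is correct, via the long exact sequence and the vanishing $\H^d_\fa(U_T)=0=\H^{d+1}_\fa(U_T)$), this gives $\H^d_\fa(M)\cong\H^d_\fm(M/U_T)$, a module depending only on $T$. The same applies to $\fb$, and the theorem follows.

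So the gap you flagged as ``delicate technical input'' is in fact a two-line observation; the Matlis-dual identification with a specific submodule $L_T$ of $K_M$ is not needed at all.
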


Now it is natural interesting to ask,

\begin{ques}\emph{\cite[Question 2.9]{DY3}} \label{question3}For any subset $T$ of
$\emph{\Assh} M$, is there an ideal $\fa$ of $R$ such that
$\emph{\Att} \emph{\lc}^d_\fa(M)=T$?
\end{ques}

Note that if $T=\Assh M$, then the maximal ideal is the answer. Thus
through this section we always assume that $T$ is a non--empty
proper subset of $\Assh M$.

In special case when $d=1$, it is easy to  deal  Question
\ref{question3}.

\begin{prop}\label{323}
If  $\emph{\dim} M=1$,
 then any subset $T$ of $\emph{\Assh} M$ is equal to the set
 $\emph{\Att} \emph{\lc}^1_\fa(M)$ for some ideal $\fa$ of $R$.
 \end{prop}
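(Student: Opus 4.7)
The plan is to write down an explicit ideal $\fa$ in terms of $T$ and then compute $\Att \H^1_\fa(M)$ through Theorem \ref{A}, which in this setting reads
\[\Att \H^1_\fa(M) \;=\; \{\fp \in \Supp M : \cd(\fa, R/\fp) = 1\}.\]
Because $T$ is a non--empty proper subset of $\Assh M$, the set $\Assh M \setminus T$ is a non--empty finite collection of primes, so I would set $\fa := \bigcap_{\fp \in \Assh M \setminus T} \fp$.

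The first step is to observe that distinct members of $\Assh M$ are pairwise incomparable. This is where the hypothesis $\dim M = 1$ enters: for $\fp \in \Assh M$ the ring $R/\fp$ is a one--dimensional local domain whose only primes are $(0)$ and $\fm/\fp$, so the only prime of $R$ properly containing $\fp$ is $\fm \notin \Assh M$. Consequently, for every $\fq \in T$ no $\fp \in \Assh M \setminus T$ is contained in $\fq$, and since $\fq$ is prime and $\fa$ is a finite intersection of primes, $\fa \nsubseteq \fq$.

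For the inclusion $\Att \H^1_\fa(M) \subseteq T$, note that $\Supp M = \Assh M \cup \{\fm\}$ in dimension one, and for any $\fp \in (\Assh M \setminus T) \cup \{\fm\}$ we have $\fa \subseteq \fp$; hence $R/\fp$ is $\fa$--torsion and $\H^i_\fa(R/\fp) = 0$ for all $i > 0$, giving $\cd(\fa, R/\fp) = 0$. For the reverse inclusion $T \subseteq \Att \H^1_\fa(M)$, take $\fq \in T$: the image of $\fa$ in the one--dimensional local domain $R/\fq$ is a non--zero ideal, hence $(\fm/\fq)$--primary, so $\fa + \fq$ is $\fm$--primary. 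Using that $R/\fq$ is $\fq$--torsion and that local cohomology only sees the radical,
\[\H^1_\fa(R/\fq) \;\cong\; \H^1_{\fa+\fq}(R/\fq) \;\cong\; \H^1_\fm(R/\fq) \;\neq\; 0\]
by Grothendieck's non--vanishing theorem (Theorem \ref{G}); thus $\cd(\fa, R/\fq) = 1$ and $\fq \in \Att \H^1_\fa(M)$.

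The only real obstacle is the incomparability step for distinct primes of $\Assh M$: it is precisely this point that fails in higher dimension and would obstruct a direct extension of the argument to $\dim M > 1$. Once this is settled, the rest is a routine combination of Theorem \ref{A}, torsionness of $R/\fp$ when $\fa \subseteq \fp$, and Grothendieck's non--vanishing result.
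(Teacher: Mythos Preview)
Your proof is correct and follows essentially the same route as the paper: both set $\fa = \bigcap_{\fp \in \Assh M \setminus T} \fp$ and verify, via Theorem~\ref{A}, that $\H^1_\fa(R/\fp)\neq 0$ precisely for $\fp\in T$ by checking that $\rad(\fa+\fp)=\fm$ when $\fp\in T$ and $\fa\subseteq\fp$ otherwise. Your write-up is more explicit about the incomparability of the primes in $\Assh M$ and about handling $\fm$, but the underlying argument is identical.
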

 \begin{proof}
 Set   $\fa:=\underset{\fp\in \Assh M\setminus T}{\cap}\fp$. Note that
  $\rad(\fa+\fp)=\fm$ for all $\fp\in T$ and $\fa+\fp=\fp$ for all
  $\fp\in\Assh M\setminus T$. Thus $\H^1_\fa(R/\fp)\neq 0$ if and
  only if $\fp\in T$.
 \end{proof}

In the following result we find a characterization for a subset of
$\Assh M$ to be the set of attached primes of the top local
cohomology of $M$ with respect to an ideal~$\fa$.

\index{attached prime, $\Att$} \index{local cohomology!-module}
\begin{prop}\label{321}
Assume that $R$ is  complete, $d\geq 1$ and set $\emph{\Assh}
M\setminus T=\{\fq_1,\ldots,\fq_r\}$. The following statements are
equivalent.
\begin{enumerate}
  \item[\emph{(i)}] There exists an ideal $\fa$ of $R$ such that
  $\emph{\Att} \emph{\lc}^d_\fa(M)=T$.
  \item[\emph{(ii)}] For each $i,\,1\leq i\leq r$, there exists $Q_i\in \emph{\Supp} M$
   with $\emph{\dim} R/Q_i=1$
  such that
  $$\underset{\fp\in T}{\bigcap}\fp\nsubseteq Q_i \quad \mbox{and}
  \quad \fq_i\subseteq Q_i.$$
\end{enumerate}
With $Q_i,\, 1\leq i\leq r$,  as above,  $\emph{\Att}
\emph{\lc}^d_\fa(M)=T$ where $\fa=\bigcap\limits_{i=1}^rQ_i$.
\end{prop}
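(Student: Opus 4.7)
My plan is to translate the attached-prime computation for $\H^d_\fa(M)$ into a condition about $\fm$-primary ideals, exploiting the completeness of $R$. Combining Theorem A with the Lichtenbaum-Hartshorne theorem (Theorem \ref{LH}) applied to the complete domain $R/\fp$ of dimension $d$, for which $\Assh(R/\fp)$ consists of the zero ideal, I would first establish the characterization
\[
\Att \H^d_\fa(M) \;=\; \{\fp \in \Assh M : \fa + \fp \text{ is } \fm\text{-primary}\}.
\]
The containment $\Att \H^d_\fa(M) \subseteq \Assh M$ is automatic, since $\cd(\fa, R/\fp) = d$ and $\fp \in \Supp M$ together force $\dim R/\fp = d$ by Grothendieck vanishing. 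Once this characterization is in hand, both implications reduce to the simple dichotomy: if a prime $P$ contains a prime $Q$ with $\dim R/Q = 1$, then $P = Q$ or $P = \fm$.

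For (ii) $\Longrightarrow$ (i), I would set $\fa := \bigcap_{i=1}^r Q_i$. Given $\fp \in T$, any prime $P$ containing $\fa + \fp$ must contain some $Q_j$; the dichotomy yields $P = Q_j$ or $P = \fm$, and the first case would give $\bigcap_{\fp' \in T} \fp' \subseteq \fp \subseteq Q_j$, contradicting the hypothesis. Hence $\fa + \fp$ is $\fm$-primary, showing $T \subseteq \Att \H^d_\fa(M)$. Conversely, for $\fq_i \in \Assh M \setminus T$, the inclusions $\fa + \fq_i \subseteq Q_i \neq \fm$ show that $\fa + \fq_i$ is not $\fm$-primary, so $\fq_i \notin \Att \H^d_\fa(M)$.

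For (i) $\Longrightarrow$ (ii), fix $\fq_i \notin T$. The characterization supplies a prime $P \supseteq \fa + \fq_i$ with $P \neq \fm$. In the noetherian local ring $R/P$ of positive dimension I would pick $Q_i/P$ maximal among primes strictly contained in $\fm/P$; such a prime exists by noetherianness, and the maximality forces $\dim R/Q_i = 1$. To verify $\bigcap_{\fp \in T} \fp \nsubseteq Q_i$, suppose otherwise: some $\fp \in T$ then satisfies $\fp \subseteq Q_i$, whence $\fa + \fp \subseteq Q_i$; but $\fa + \fp$ is $\fm$-primary, forcing the contradiction $Q_i = \fm$. The main obstacle is really the initial translation of the cohomological dimension condition via Lichtenbaum-Hartshorne; the remaining arguments are routine manipulations leveraging the dimension-one condition on the auxiliary primes $Q_i$.
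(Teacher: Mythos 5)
Your proposal is correct and follows essentially the same route as the paper: both directions hinge on Theorem \ref{A} together with the Lichtenbaum--Hartshorne theorem applied to the complete local domains $R/\fp$ for $\fp\in\Assh M$, reducing the attached-prime question to whether $\fa+\fp$ is $\fm$-primary. You make the translation $\Att \H^d_\fa(M)=\{\fp\in\Assh M : \fa+\fp\text{ is }\fm\text{-primary}\}$ explicit, and you spell out (via noetherian maximality) why $Q_i$ can be chosen with $\dim R/Q_i=1$, a detail the paper passes over quickly; otherwise the two arguments coincide.
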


\begin{proof}
(i) $\Rightarrow$ (ii). By Theorem \ref{A}, $\lc^d_\fa(R/\fp)\neq 0$
for all $\fp\in T$, that is $\fa+\fp$ is $\fm$--primary for all
$\fp\in T$, by the \LH (\ref{LH}). \index{Lichtenbaum-Hartshorne
vanishing theorem} On the other hand, for $1\leq i\leq r,
\fq_i\notin T$ which is equivalent to say that $\fa+\fq_i$ is not an
$\fm$--primary ideal. Hence there exists a prime ideal $Q_i\in \Supp
M$ such that $\dim R/Q_i=1$ and $\fa+\fq_i\subseteq Q_i$. It follows
that $\underset{\fp\in T}{\bigcap}\fp\nsubseteq Q_i$.
\\
\indent (ii) $\Rightarrow$ (i). Set
$\fa:=\bigcap\limits_{i=1}^rQ_i$. For each $i, 1\leq i\leq r$,
$\fa+\fq_i\subseteq Q_i$ implies that $\fa+\fq_i$ is not
$\fm$--primary and so $\lc^d_\fa(R/\fq_i)= 0$. Thus by Theorem
\ref{dy2} $\Att \lc_\fa ^d(M)\subseteq T$. Assume $\fp\in T$ and
$Q\in \Supp M$ such that $\fa+\fp \subseteq Q$. Then $Q_i\subseteq
Q$ for some $i, 1\leq i\leq r$. Since $\fp\nsubseteq Q_i$, we have
$Q_i\neq Q$, so $Q=\fm$. Hence $\fa+ \fp$ is $\fm$--primary ideal.
Now, by the \LH (\ref{LH}), and Theorem \ref{A}, it follows that
$\fp\in\Att \lc^d_\fa (M)$.
\end{proof}
\index{attached prime, $\Att$} \index{local cohomology!-module}
\begin{cor}\label{322}
If $\emph{\lc}^d_\fa (M)\not=0$ and $R$ is complete, then there is
an ideal $\fb$ of $R$ such that $\emph{\dim} R/\fb\leq 1$ and
$\emph{{\lc}}^d_\fa (M)\cong\emph{\lc}^d_\fb (M)$.
\end{cor}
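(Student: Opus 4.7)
The plan is to realize $\H^d_\fa(M)$ as $\H^d_\fb(M)$ by matching attached primes via Theorem \ref{dy2}, with $\fb$ forced to have $\dim R/\fb\le 1$ by Proposition \ref{321}. Set $T := \Att \H^d_\fa(M)$; since $\H^d_\fa(M)\ne 0$ we have $T\ne\emptyset$, and by Theorem \ref{A} (with the remark after it) $T\subseteq \Assh M$.

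I would first clear the easy edge cases. If $d=0$, then $\dim M=0$, so $M$ is $\fm$--torsion, whence $\H^0_\fa(M)=M=\H^0_\fm(M)$, and $\fb:=\fm$ works with $\dim R/\fb=0$. If $T=\Assh M$, then by Theorem \ref{G}, $\Att \H^d_\fm(M)=\Assh M=T=\Att\H^d_\fa(M)$, so Theorem \ref{dy2} gives $\H^d_\fa(M)\cong\H^d_\fm(M)$, and again $\fb:=\fm$ does the job.

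In the remaining case, $d\ge 1$ and $T$ is a non--empty \emph{proper} subset of $\Assh M$, which is precisely the setting of Proposition \ref{321}. The ideal $\fa$ itself witnesses condition (i) of that proposition, so (ii) must hold: there exist prime ideals $Q_1,\dots,Q_r\in\Supp M$ with $\dim R/Q_i=1$, $\bigcap_{\fp\in T}\fp\not\subseteq Q_i$, and $\fq_i\subseteq Q_i$ for each $\fq_i\in\Assh M\setminus T$. Define
\[
\fb:=\bigcap_{i=1}^r Q_i .
\]
The final assertion of Proposition \ref{321} then yields $\Att\H^d_\fb(M)=T=\Att\H^d_\fa(M)$, and since $\fb$ is a finite intersection of primes of dimension one, $\dim R/\fb=1$.

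Finally, completeness of $R$ allows us to apply Theorem \ref{dy2} to $\fa$ and $\fb$: the equality of attached prime sets forces $\H^d_\fa(M)\cong\H^d_\fb(M)$, completing the proof. There is no real obstacle here; the corollary is essentially a repackaging of Proposition \ref{321} combined with Theorem \ref{dy2}, and the only care needed is to split off the trivial cases $d=0$ and $T=\Assh M$ before invoking Proposition \ref{321}, whose hypotheses require $d\ge 1$ and $T$ a non--empty proper subset of $\Assh M$.
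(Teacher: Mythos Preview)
Your proof is correct and follows essentially the same approach as the paper's: split off the case $T=\Assh M$ (where $\fb=\fm$ works via Theorem~\ref{dy2}), and in the remaining case invoke Proposition~\ref{321} to produce $\fb=\bigcap_i Q_i$ with $\dim R/\fb=1$, then conclude by Theorem~\ref{dy2}. The only cosmetic difference is that you treat $d=0$ separately with a direct argument, whereas the paper absorbs it into the $T=\Assh M$ case (since $d=0$ forces $\Assh M=\{\fm\}$ and hence $T=\Assh M$).
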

\begin{proof}
 If $\Att \lc^d_\fa(M)=\Assh M$, then $\lc^d_\fa (M)\cong
\lc^d_\fm (M)$ by Theorem \ref{dy2}. Otherwise $d\geq 1$ and $\Att \lc^d_\fa(M)$ is a
proper subset of $\Assh M$. Set $\Assh M\setminus \Att
{\lc}^d_\fa(M):=\{\fq_1, \ldots, \fq_r\}$. By Proposition \ref{321},
there are $Q_i\in \Supp M$ with $\dim R/Q_i= 1, \ i=1, \ldots, r$,\
such that $\Att \emph{\lc}^d_\fa(M)= \Att \emph{\lc}^d_\fb(M)$ with
$\fb= {\bigcap\limits_{i=1}^rQ_i}$. Now, by  Theorem \ref{dy2}, we
have $\lc^d_\fa(M)\cong \lc^d_\fb(M)$. As $\dim R/\fb= 1$, the proof
is complete.
\end{proof}

 \vspace{0.5cm}
 Proposition \ref{321} provides  a useful method to find examples
of top local cohomology modules with specified attached primes.

\begin{exam}\label{324}
 \emph{Set $R=k[[X,Y,Z,W]]$, where $k$ is a field
and $X,Y,Z,W$ are independent indeterminates. Then $R$ is a complete
Noetherian local ring with maximal ideal $\fm=(X,Y,Z,W)$. Consider
prime ideals
$$\fp_1=(X,Y) \quad  ,  \quad \fp_2=(Z,W)\quad , \quad  \fp_3=(Y,Z)
\quad , \quad  \fp_4=(X,W)$$ and  set  $\displaystyle
M=\frac{R}{\fp_1\fp_2\fp_3\fp_4}$ as an $R$--module, so that we have
$\Assh  M=\{\fp_1,\fp_2,\fp_3,\fp_4\}$ and $\dim M=2$. We get
$\{\fp_i\}=\Att \lc^2_{\fa_i}(M)$, where $\fa_1=\fp_2, \fa_2=\fp_1,
\fa_3=\fp_4, \fa_4=\fp_3$, and $\{\fp_i,\fp_j\}=\Att
\lc^2_{\fa_{ij}}(M)$, where
$$\begin{array}{l}
\fa_{12}=(Y^2+YZ,Z^2+YZ,X^2+XW,W^2+WX),\\
\fa_{34}=(Z^2+ZW,X^2+YX,Y^2+YX,W^2+WZ),\\
\fa_{13}=(Z^2+XZ,W^2+WY,X^2+XZ),\\
\fa_{14}=(W^2+WY,Z^2+ZY,Y^2+YW),\\
\fa_{23}=(X^2+XZ,Y^2+WY,W^2+ZW),\\
\fa_{24}=(X^2+XZ,Y^2+WY,Z^2+ZW).\\
\end{array}$$
Finally, we have $\{\fp_i,\fp_j,\fp_k\}=\Att \lc^2_{\fa_{ijk}}(M)$,
where
$\fa_{123}=(X,W,Y+Z)$, $\fa_{234}=(X,Y,W+Z)$, $\fa_{134}=(Z,W,Y+X)$}.\\
\end{exam}

\index{attached prime, $\Att$} \index{local cohomology!-module}
\begin{lem}\label{325}
Assume that $R$ is complete, $d\geq 2$, and
  $\underset{\fp\in T}{\bigcap} \fp \nsubseteq \underset
   {\fq\in \emph{\Assh} R/\sum\limits_{\fp\in T'}\fp}{\bigcap} \fq$, where
   $T'=\emph{\Assh} M\setminus~T$.
Then there exists a prime ideal $Q\in \emph{\Supp}M$ with
$\emph{\dim}R/Q=1$ and $\emph{\Att}\emph{\lc}^d_Q(M)=T.$
\end{lem}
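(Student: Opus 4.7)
The plan is to translate the desired conclusion into a purely ideal-theoretic condition on $Q$. By Theorem \ref{A} together with Grothendieck's vanishing theorem (which rules out every $\fp\in\Supp M\setminus\Assh M$ from contributing to $\Att \lc^d_Q(M)$) and the \LH\ \ref{LH} applied in each complete local domain $R/\fp$ with $\fp\in\Assh M$, one obtains
$$\Att \lc^d_Q(M)=\{\fp\in\Assh M : Q+\fp \text{ is } \fm\text{-primary}\}.$$
Since $\dim R/Q=1$ forces $Q+\fp$ to be $\fm$-primary precisely when $\fp\nsubseteq Q$, the lemma reduces to exhibiting a prime $Q\in\Supp M$ with $\dim R/Q=1$, $\sum_{\fp\in T'}\fp\subseteq Q$, and $\bigcap_{\fp\in T}\fp\nsubseteq Q$ (the latter being equivalent, by prime avoidance applied to the prime $Q$, to $\fp\nsubseteq Q$ for every $\fp\in T$). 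The hypothesis supplies a starting point: some $\fq_0\in\Assh R/\sum_{\fp\in T'}\fp$ satisfies $\bigcap_{\fp\in T}\fp\nsubseteq \fq_0$; I pick $x\in\bigcap_{\fp\in T}\fp\setminus\fq_0$, noting $\fq_0\in\Supp M$ (it contains some $\fq\in T'\subseteq\Supp M$) and $\fq_0\neq\fm$ (since $T\neq\emptyset$), so $d':=\dim R/\fq_0\geq 1$; if $d'=1$ the choice $Q:=\fq_0$ already finishes the proof.

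Assume $d'\geq 2$ and work inside the complete local domain $B:=R/\fq_0$ with maximal ideal $\fn:=\fm/\fq_0$; let $\bar x\in\fn$ denote the nonzero image of $x$. Since $\bar x$ is a parameter of the domain $B$, $\dim B/\bar xB=d'-1\geq 1$, and lifting a system of parameters of $B/\bar xB$ yields $\bar y_2,\ldots,\bar y_{d'}\in\fn$ for which $\bar x,\bar y_2,\ldots,\bar y_{d'}$ is a system of parameters of $B$. Put $I:=(\bar y_2,\ldots,\bar y_{d'})B$: then $I+\bar xB$ is $\fn$-primary while $I$ itself is not (otherwise $\dim B/I=0$, contradicting that $I$ is generated by a partial system of parameters of length $d'-1$). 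Hence $\bar x\notin\rad(I)$, so some minimal prime $\bar Q$ of $I$ avoids $\bar x$. For this $\bar Q$, $\bar Q+\bar xB$ strictly contains $\bar Q$ and still contains the $\fn$-primary ideal $I+\bar xB$, so it is $\fn$-primary, forcing $\dim B/\bar Q\leq 1$; minimality of $\bar Q$ over the non-$\fn$-primary ideal $I$ prevents $\bar Q=\fn$, so $\dim B/\bar Q=1$.

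The preimage $Q\subseteq R$ of $\bar Q$ then satisfies $Q\supseteq\fq_0\supseteq\sum_{\fp\in T'}\fp$, $x\notin Q$, $\dim R/Q=1$, and $Q\in\Supp M$, which is exactly what the reformulation in the first paragraph demanded; equivalently, setting $Q_i:=Q$ for every $i$ in Proposition \ref{321} produces $\fa=Q$ and the conclusion $\Att \lc^d_Q(M)=T$. The main obstacle is the extraction of a coheight-one prime above $\fq_0$ that still avoids $\bigcap_T\fp$: the key trick is to make $\bar x$ the first member of a system of parameters of $B$, so that the remaining parameters cut out an ideal $I$ whose minimal primes all have coheight exactly one (by the dimension count) and at least one of which must avoid $\bar x$, since otherwise $\bar x$ would lie in $\rad(I)$ and $I$ itself would be $\fn$-primary.
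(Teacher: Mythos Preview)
Your proof is correct and reaches the same reduction as the paper --- find a prime $Q\supseteq\sum_{\fp\in T'}\fp$ with $\dim R/Q=1$ and $\bigcap_{\fp\in T}\fp\nsubseteq Q$, then invoke Proposition~\ref{321} --- but you execute the climb from $\fq_0$ to coheight one differently. The paper builds a saturated chain $Q_0\subset Q_1\subset\cdots\subset Q_{d-s-1}$ step by step: at each stage it invokes Ratliff's weak existence theorem \cite[Theorem 31.2]{Ma} to produce infinitely many primes of the next coheight above $Q_{j-1}$, and then observes that only finitely many of them can contain $\bigcap_{\fp\in T}\fp$ (since any such prime would be minimal over $Q_{j-1}+\bigcap_{\fp\in T}\fp$). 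Your argument instead passes once to the complete local domain $B=R/\fq_0$, extends the image $\bar x$ of a chosen element $x\in\bigcap_{\fp\in T}\fp\setminus\fq_0$ to a full system of parameters, and takes a minimal prime of the ideal generated by the remaining $d'-1$ parameters that avoids $\bar x$. This is more elementary: it sidesteps Ratliff's theorem entirely, and your two bounds $\dim B/\bar Q\le 1$ (from $\bar Q+\bar xB$ being $\fn$-primary) and $\dim B/\bar Q\ge 1$ (from $\bar x\notin\bar Q$, so $\bar Q\neq\fn$) need nothing beyond Krull's principal ideal theorem. One terminological quibble: what you invoke as ``prime avoidance'' in the first paragraph is really just the defining property of the prime $Q$ (a prime contains a finite intersection of ideals iff it contains one of them); the mathematics is unaffected.
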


\begin{proof}
Set $s:=\h_M(\sum\limits_{\fp\in T'}\fp)$. We have $s\leq d-1$,
otherwise $\Assh(R/\sum\limits_{\fp\in T'}\fp)= \{\fm\}$ which
contradicts the condition  $\underset{\fp\in T}{\bigcap} \fp
\nsubseteq \underset
   {\fq\in \Assh(R/\sum\limits_{\fp\in T'}\fp)}{\bigcap} \fq$.

As $R$ is catenary, we have $\dim(R/\sum\limits_{\fp\in T'}\fp)=n-s$.
We first prove, by induction on $j$, $0\leq j\leq d-s-1$, that there
exists a chain of prime ideals

\centerline{$Q_0 \subset Q_1 \subset \cdots \subset Q_j \subset
\fm,$} \noindent such that $Q_0\in\Assh(R/\sum\limits_{\fp\in
T'}\fp)$, $\dim R/Q_j=d-s-j$ and $\underset{\fp\in
T}{\bigcap}\fp\nsubseteq Q_j$.

There is $Q_0\in\Assh(R/\sum\limits_{\fp\in T'}\fp)$ such that
$\underset{\fp\in T}{\bigcap}\fp\nsubseteq Q_0$. Note that $\dim
R/Q_0=\dim(R/\sum\limits_{\fp\in T'}\fp)=d-s$. Now, assume that
$0<j\leq d-s-1$ and that we have proved the existence of a chain
$Q_0 \subset Q_1 \subset \cdots \subset Q_{j-1}$ of prime ideals
such that $Q_0\in\Assh(R/\sum\limits_{\fp\in T'}\fp)$, $\dim
R/Q_j=d-s-(j-1)$ and that $\underset{\fp\in T}{\bigcap}\fp\nsubseteq
Q_{j-1}$.
\\
Since $d-s-(j-1)=d-s+1-j\geq 2$, the set $V$ defined as
\\
$$\begin{array}{ll}
V= \{\fq\in \Supp M |&  Q_{j-1}\subset \fq\subset \fq'\subseteq
 \fm, \dim R/\fq=d-s-j,\\ &
\fq'\in\Spec R \, \mbox{and}\,  \dim R/\fq'=d-s-j-1\}
\end{array}$$
\\
is non--empty and so, by Ratliff's weak existence theorem
\cite[Theorem 31.2]{Ma}, is not finite. As $\underset{\fp\in
T}{\bigcap}\fp \nsubseteq Q_{j-1}$, we have $Q_{j-1}\subset
Q_{j-1}+\underset{\fp\in T}{\bigcap}\fp$. If, for $\fq\in V$,
$\underset{\fp\in T}{\bigcap}\fp\subseteq \fq$, then $\fq$ is a
minimal prime of $Q_{j-1}+\underset{\fp\in T}{\bigcap}\fp$. As $V$
is an infinite set, there is $Q_j\in V$ such that $\underset{\fp\in
T}{\bigcap}\fp\nsubseteq Q_j$. Thus the induction is complete. Now
by taking $Q:=Q_{d-s-1}$ and by Proposition \ref{321}, the claim
follows.
\end{proof}

\begin{cor}\label{326}
Assume that  $R$ is complete and  $|T|=|\emph{\Assh}M|-1$. Then
there is an ideal $\fa$ of $R$ such that
$\emph{\Att\lc}^d_\fa(M)=T$.
\end{cor}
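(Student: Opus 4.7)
The plan is to reduce this to the preceding results by exhibiting that the hypothesis of Lemma \ref{325} is automatically satisfied when $T$ omits just one element of $\Assh M$. Write $\Assh M \setminus T = \{\fq\}$, so $T' = \{\fq\}$ and $\sum_{\fp \in T'}\fp = \fq$. Since $R/\fq$ is a domain, $\Assh_R(R/\fq) = \{\fq\}$, and the condition in Lemma \ref{325} reduces to the single inclusion
\[
\bigcap_{\fp \in T}\fp \nsubseteq \fq.
\]

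First I would dispose of the low-dimensional case. If $d=0$ then $\Assh M$ contains only $\fm$, so $T$ cannot be a non-empty proper subset; the case does not arise. If $d=1$, the result is immediate from Proposition \ref{323}. So assume $d \geq 2$, which puts us in range for applying Lemma \ref{325}.

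Next, I would verify the displayed non-containment by prime avoidance. Suppose to the contrary that $\bigcap_{\fp \in T}\fp \subseteq \fq$. Since $\fq$ is prime, prime avoidance (applied to finitely many prime ideals $\fp \in T$) forces $\fp \subseteq \fq$ for some $\fp \in T$. But $\fp, \fq \in \Assh M$, so $\dim R/\fp = \dim R/\fq = d$; any strict containment $\fp \subsetneq \fq$ would give $\dim R/\fp > \dim R/\fq$, a contradiction, whence $\fp = \fq$. This contradicts $\fp \in T$ and $\fq \notin T$. Therefore $\bigcap_{\fp\in T}\fp \nsubseteq \fq$, the hypothesis of Lemma \ref{325} is met, and that lemma produces a prime ideal $Q \in \Supp M$ with $\dim R/Q = 1$ and $\Att \lc^d_Q(M) = T$.

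The argument is mostly bookkeeping, with no real obstacle: the only subtlety is that the sum $\sum_{\fp\in T'}\fp$ is the prime $\fq$ itself, which trivializes the right-hand side of the hypothesis of Lemma \ref{325} and makes the prime-avoidance argument immediate thanks to the equidimensionality built into $\Assh M$.
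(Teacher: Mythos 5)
Your proof is correct and follows essentially the same route as the paper: reduce to Lemma \ref{325} with $T'=\{\fq\}$ a singleton (so $\Assh(R/\fq)=\{\fq\}$) and verify $\bigcap_{\fp\in T}\fp\nsubseteq\fq$ by prime avoidance using the equidimensionality encoded in $\Assh M$. The paper's proof is terser (it only notes $\h_M\fq=0$ and asserts the non-containment), so your explicit prime-avoidance argument and your handling of the low-dimensional cases $d\leq 1$ are welcome additions, not a different method.
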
\index{attached prime, $\Att$} \index{local cohomology!-module}

\begin{proof}
Note that $\Assh M\setminus T$ is a singleton set $\{\fq\}$, say,
and so $\h_M\fq=0$ and $\underset{\fp\in T}{\bigcap}\fp\nsubseteq
\fq$. Therefore the result follows  by Lemma \ref{325}.
\end{proof}

\begin{lem}\label{327}
Let  $\fa_1$ and $\fa_2$ be ideals of a complete ring  $R$. Then
there exists an ideal $\fb$ of $R$ such that
$\emph{\Att}\emph{\lc}^d_\fb(M)=\emph{\Att}\emph{\lc}^d_{\fa_{1}}(M)\cap\emph{\Att}\emph{\lc}^d_{\fa_{2}}(M)$.
\end{lem}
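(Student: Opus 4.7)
The plan is to exhibit $\fb$ explicitly as $\fa_1\fa_2$ (or, equivalently, as $\fa_1\cap\fa_2$) and to verify the equality of attached prime sets directly by combining Theorem~\ref{A} with the \LH (\ref{LH}). First I would use Theorem~\ref{A} to identify, for any proper ideal $\fa$ of $R$, the set $\Att\lc^d_\fa(M)$ with $\{\fp\in\Assh M:\cd(\fa,R/\fp)=d\}$, noting that the condition $\cd(\fa,R/\fp)=d$ forces $\dim R/\fp=d$ so the search is automatically confined to $\Assh M$. Since $R$ is complete, each quotient $R/\fp$ is a complete local ring whose only minimal prime is $(0)$; applying the \LH to $R/\fp$ then gives
\[
\fp\in\Att\lc^d_\fa(M)\ \Longleftrightarrow\ \fp\in\Assh M\ \text{and}\ \fa+\fp\ \text{is}\ \fm\text{-primary}.
\]

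With this characterization in hand, I would set $\fb:=\fa_1\fa_2$ and show, for every $\fp\in\Assh M$, that $\fb+\fp$ is $\fm$-primary iff both $\fa_1+\fp$ and $\fa_2+\fp$ are $\fm$-primary. One direction is free: the inclusion $\fa_1\fa_2+\fp\subseteq\fa_i+\fp$ implies that if the left side is $\fm$-primary, so is each $\fa_i+\fp$. For the converse, given $x\in\fm$, choose $n_1,n_2$ with $x^{n_i}\in\fa_i+\fp$; then
\[
x^{n_1+n_2}\in(\fa_1+\fp)(\fa_2+\fp)\subseteq\fa_1\fa_2+\fa_1\fp+\fa_2\fp+\fp^{\,2}\subseteq\fa_1\fa_2+\fp,
\]
so $\fm\subseteq\rad(\fa_1\fa_2+\fp)$, and since $\fp\subsetneq\fm$ and $\fa_1\fa_2\subseteq\fm$ (in the non-trivial situation) we conclude $\rad(\fa_1\fa_2+\fp)=\fm$. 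Combined with the first paragraph, this shows $\Att\lc^d_\fb(M)=\Att\lc^d_{\fa_1}(M)\cap\Att\lc^d_{\fa_2}(M)$, as desired.

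Two small housekeeping points need attention but neither presents a real obstacle. If one of the $\fa_i$ equals $R$, then $\lc^d_{\fa_i}(M)=0$ and the intersection is empty, and one may simply take $\fb=R$ to make $\lc^d_\fb(M)=0$ as well; so we may assume both $\fa_1,\fa_2$ are proper. The other point is to be careful that in the assertion ``$\fa+\fp$ is $\fm$-primary'' we really mean $\rad(\fa+\fp)=\fm$, which indeed presupposes $\fa+\fp\subsetneq R$; this holds automatically once $\fa_1,\fa_2\subseteq\fm$ and $\fp\in\Assh M$ (so $\fp\subseteq\fm$, which uses $d\geq 0$; the case $d=0$ is trivial since then $\Assh M=\{\fm\}$ and every ideal works). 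The only conceptual step is the passage through Lichtenbaum--Hartshorne, but this is precisely the tool (\ref{LH}) already quoted in the preceding arguments (\ref{321}, \ref{322}), so no new machinery is needed; the whole argument is in the same spirit as Proposition~\ref{321} but shorter because no chain of prime ideals has to be constructed.
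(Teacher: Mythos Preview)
Your proof is correct and takes a genuinely different---and more direct---route than the paper. The paper sets $T_i=\Att\lc^d_{\fa_i}(M)$, invokes Proposition~\ref{321} to produce, for each $\fq\in\Assh M\setminus(T_1\cap T_2)$, a prime $Q$ of coheight one containing $\fq$ but not $\bigcap_{\fp\in T_1\cap T_2}\fp$, and then applies Proposition~\ref{321} again to build $\fb$ as the intersection of these $Q$'s. Your approach instead takes $\fb=\fa_1\fa_2$ explicitly and verifies the equality via the Lichtenbaum--Hartshorne criterion, which (over a complete ring) reduces membership in $\Att\lc^d_\fa(M)$ to the elementary condition $\rad(\fa+\fp)=\fm$. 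The key radical identity $\rad(\fa_1\fa_2+\fp)=\rad(\fa_1+\fp)\cap\rad(\fa_2+\fp)$ (which your power computation establishes) then finishes the job immediately. Your argument is shorter, gives an explicit $\fb$, and bypasses Proposition~\ref{321} entirely; the paper's construction has the side benefit that its $\fb$ satisfies $\dim R/\fb\leq 1$ (cf.\ Corollary~\ref{322}), but that is not needed here.
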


\begin{proof}
Set $T_{1}=\emph{\Att}\lc^d_{\fa_{1}}(M)$ and
$T_{2}=\Att\lc^d_{\fa_{2}}(M)$. We may assume that $T_1\bigcap T_2$
is a non-empty proper subset of $\Assh M$. Assume that $\fq \in
\Assh M\setminus (T_1\bigcap T_2)=(\Assh M\setminus
T_1)\bigcup(\Assh M\setminus T_2) $. By Proposition \ref{321}, there
exists $Q\in \Supp M$ with $\dim R/Q=1$ such that $\fq\subseteq Q$
and $\bigcap_{\fp\in T_1\bigcap T_2}\fp \nsubseteq Q$. Now, by
Proposition \ref{321}, again there exists an ideal $\fb$ of $R$ such
that $\Att \lc^d_\fb(M)=T_1\bigcap T_2$.
\end{proof}

\indent Now we are ready to present our main result.

\begin{thm}\label{328}
Assume that $R$ is complete and  $T\subseteq \emph{\Assh} M$, then
there exists an ideal $\fa$ of $R$ such that
$T=\emph{\Att}\emph{\lc}^d_\fa(M)$.
\end{thm}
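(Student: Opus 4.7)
The plan is to reduce the problem to the case handled by Corollary \ref{326} (where the complement of $T$ in $\Assh M$ is a singleton) and then assemble the general case via the intersection principle of Lemma \ref{327}. The key combinatorial observation is that every subset $T \subseteq \Assh M$ can be written as the intersection
\[
T \;=\; \bigcap_{\fq \in \Assh M \setminus T}\bigl(\Assh M \setminus \{\fq\}\bigr),
\]
so it suffices to know that each singleton-complement set on the right-hand side is realizable as $\Att \H^d_{\fa}(M)$, and that the family of realizable sets is closed under finite intersections.

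First I would dispose of the two boundary cases. If $T = \Assh M$, then $\fa = \fm$ already satisfies $\Att \H^d_\fm(M) = \Assh M$ by Grothendieck's non-vanishing theorem \ref{G}. So the substantive case is when $T$ is a proper (possibly empty) subset of $\Assh M$; write $\Assh M \setminus T = \{\fq_1,\ldots,\fq_r\}$ with $r \geq 1$.

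Next, for each $i \in \{1,\ldots,r\}$, apply Corollary \ref{326} to the set $T_i := \Assh M \setminus \{\fq_i\}$, which has cardinality $|\Assh M| - 1$: this yields an ideal $\fa_i$ of $R$ with $\Att \H^d_{\fa_i}(M) = T_i$. Now I would invoke Lemma \ref{327} inductively: starting from $\fa_1$ and $\fa_2$, it produces an ideal $\fb_2$ with $\Att \H^d_{\fb_2}(M) = T_1 \cap T_2$; combining $\fb_2$ with $\fa_3$ gives $\fb_3$ with $\Att \H^d_{\fb_3}(M) = T_1 \cap T_2 \cap T_3$; and so on up to $\fb_r =: \fa$ satisfying
\[
\Att \H^d_{\fa}(M) \;=\; \bigcap_{i=1}^{r} T_i \;=\; \bigcap_{i=1}^{r}\bigl(\Assh M \setminus \{\fq_i\}\bigr) \;=\; \Assh M \setminus \{\fq_1,\ldots,\fq_r\} \;=\; T.
\]

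There is essentially no obstacle here, since the completeness hypothesis has already been exploited in the two ingredients (Corollary \ref{326} to manufacture the singleton-complement realizations and Lemma \ref{327} to close under finite intersections). The only care needed is bookkeeping: ensuring that the intermediate sets $T_1 \cap \cdots \cap T_j$ remain in the range to which Lemma \ref{327} can be reapplied, which is automatic because that lemma takes arbitrary pairs of ideals and is stated without further restriction on the intersected attached-prime sets.
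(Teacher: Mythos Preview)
Your proposal is correct and follows essentially the same approach as the paper: realize the singleton-complement sets via Corollary~\ref{326} and then close under intersection using Lemma~\ref{327}. The paper organizes the intersection step as an induction on $r=|\Assh M\setminus T|$ (at each stage intersecting two sets with complement of size $r-1$), whereas you iterate Lemma~\ref{327} directly across all the $T_i$; this is a purely cosmetic difference. The one small omission is that Corollary~\ref{326} (through Lemma~\ref{325}) needs $d\geq 2$, so you should dispose of the case $\dim M\leq 1$ separately, as the paper does via Proposition~\ref{323}.
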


\begin{proof}
By Corollary \ref{323}, we may assume that $\dim M\geq 2$ and that
$T$ is a non-empty proper subset of $\Assh M$. Set
$T=\{\fp_1,\ldots,\fp_t\}$ and $\Assh M\setminus
T=\{\fp_{t+1},\ldots,\fp_{t+r}\}$. We use induction on $r$. For
$r=1$, Corollary \ref{326} proves the first step of induction.
Assume that $r>1$ and that the case $r-1$ is proved. Set
$T_1=\{\fp_1,\ldots,\fp_t,\fp_{t+1}\}$ and
$T_2=\{\fp_1,\ldots,\fp_t,\fp_{t+2}\}$. By induction assumption
there exist ideals $\fa_1$ and $\fa_2$ of $R$ such that $T_1=\Att
\lc^d_{\fa_1}(M)$ and $T_2=\Att \lc^d_{\fa_2}(M)$. Now by the Lemma
\ref{327}, there exists an ideal $\fa$ of $R$ such that
$T=T_1\bigcap T_2=\Att \lc^d_{\fa}(M)$.
\end{proof}

\begin{cor}\label{329}
\emph{(See \cite[Corollary 1.7]{C})} Assume that $R$ is complete.
Then the number of non--isomorphic top local cohomology modules of
$M$ with respect to all ideals of $R$ is equal to
$2^{|\emph{\Assh}M|}$.
\end{cor}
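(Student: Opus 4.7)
The plan is to set up a bijection between the isomorphism classes of top local cohomology modules $\lc^d_\fa(M)$ (as $\fa$ runs over all ideals of $R$) and the power set of $\Assh M$. The map will be
\[
\fa \longmapsto \Att \lc^d_\fa(M).
\]
First I would note that $\lc^d_\fa(M)$ is artinian, hence has a secondary representation, so its attached prime set is well defined, and that by Theorem \ref{A} this set is contained in $\Assh M$ for every ideal $\fa$. Thus, passing to isomorphism classes, we get a well-defined map
\[
\Phi : \{\lc^d_\fa(M) : \fa \text{ an ideal of } R\}/{\cong} \; \longrightarrow \; 2^{\Assh M},
\]
since isomorphic modules have the same attached primes.

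Next I would verify that $\Phi$ is injective: if $\Att \lc^d_\fa(M) = \Att \lc^d_\fb(M)$ for two ideals $\fa,\fb$, then by Theorem \ref{dy2} (which uses completeness of $R$) we have $\lc^d_\fa(M) \cong \lc^d_\fb(M)$, so they represent the same isomorphism class. Surjectivity of $\Phi$ is exactly the content of Theorem \ref{328}: every subset $T \subseteq \Assh M$ arises as $\Att \lc^d_\fa(M)$ for some ideal $\fa$ of $R$ (including $T = \emptyset$, which is realized by the zero module, e.g. via $\fa = 0$ when $d \geq 1$, or by a non-$\fm$-primary choice in general).

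Combining injectivity and surjectivity, $\Phi$ is a bijection, so the number of non-isomorphic top local cohomology modules $\lc^d_\fa(M)$ equals $|2^{\Assh M}| = 2^{|\Assh M|}$. There is no real obstacle here beyond assembling the pieces already in place: the bulk of the work is absorbed by Theorems \ref{dy2} and \ref{328}, and the corollary is essentially a counting statement once one notices that the attached prime set is a complete invariant of $\lc^d_\fa(M)$ under the completeness hypothesis on $R$.
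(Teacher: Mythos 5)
Your argument is exactly the one the paper has in mind: the paper's proof reads, in its entirety, ``It follows from Theorem \ref{328} and Theorem \ref{dy2},'' which is precisely the injectivity-via-\ref{dy2}, surjectivity-via-\ref{328} bijection you spell out. Your added care over the $T=\emptyset$ case (realized by $\fa=0$ when $d\geq 1$) fills in a point the paper leaves implicit, since the proof of Theorem \ref{328} reduces at the outset to non-empty proper $T$.
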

\begin{proof}
It follows from Theorem \ref{328} and Theorem \ref{dy2}.
\end{proof}


\section{Applications to  generalized Cohen-Macaulay  modules}

\index{generalized Cohen-Macaulay, g.CM}\index{Cohen-Macaulay!generalized-, g.CM}

\quad In this section we study some properties of a generalized
Cohen-Macaulay modules in terms of certain prime ideals $\fp$ which
$R/\fp$ has a uniform local cohomological annihilator and we give a
new characterization of these modules.


\begin{lem}\label{1.5}
Let $(R,\fm)$ be a g.CM local ring. Then $R/\fp$ has a uniform local
cohomological annihilator for all $\fp\in\emph\Spec R$. In
particular, any equidimensional $R$--module $M$ has a uniform local
cohomological annihilator.
\end{lem}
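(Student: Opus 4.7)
The plan is to reduce the first claim to the case where $\fp$ is a minimal prime of a suitable g.CM quotient of $R$, after which Proposition~\ref{ulc} does the work; the ``in particular'' clause then falls out at once from the converse direction of Proposition~\ref{ulc}. The case $\fp = \fm$ being trivial, I assume $\fp \neq \fm$ and set $d := \dim R$, $d' := \dim R/\fp$ and $h := \h\fp$.

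First I would observe that $R$ itself has a uniform local cohomological annihilator. The g.CM hypothesis furnishes $n$ with $\fm^n \H^i_\fm(R) = 0$ for all $i < d$, Theorem~\ref{gcm}(i) forces $R$ to be equidimensional, and therefore any element of $\fm^n$ lying outside $\bigcup_{\fq\in\Min R}\fq$ works. Corollary~\ref{cat} then makes $R$ universally catenary, so the dimension formula supplies $h + d' = d$. If $h = 0$ then $\fp \in \Min R$ and the implication (i)$\Rightarrow$(ii) of Proposition~\ref{ulc} applied to $R$ already yields the claim for $R/\fp$.

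For $h \geq 1$, the substantive step is to build, by successive prime avoidance, elements $r_1, \ldots, r_h \in \fp$ with $\h(r_1,\ldots,r_i)R = i$ for every $i$: at each stage the minimal primes of the previously chosen ideal have common height strictly less than $h = \h\fp$, so none of them contains $\fp$, and an appropriate $r_i \in \fp$ avoiding them can be selected. Each $r_i$ is then a parameter element of $R/(r_1,\ldots,r_{i-1})R$, so iterating Theorem~\ref{gcm}(iii) produces $\bar R := R/(r_1,\ldots,r_h)R$ as a g.CM local ring of dimension $d'$; catenarity of $R$ forces $\bar R$ to be equidimensional, and the preliminary observation applied to $\bar R$ supplies it with a uniform local cohomological annihilator. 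Since $\fp \supseteq (r_1,\ldots,r_h)R$ and $\dim R/\fp = \dim \bar R$, $\fp$ corresponds to a minimal prime of $\bar R$, so (i)$\Rightarrow$(ii) of Proposition~\ref{ulc} applied to $\bar R$ now delivers the desired annihilator for $R/\fp$.

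The main obstacle is the inductive construction of the parameter sequence inside $\fp$ while forcing the heights to strictly increase, but this is handled cleanly by prime avoidance once one notes that $\h\fp = h$ strictly exceeds the common height $i - 1$ of the minimal primes of $(r_1,\ldots,r_{i-1})R$. For the ``in particular'' statement, any finitely generated equidimensional $M$ is automatically locally equidimensional ($R$ being local), and the first part provides a uniform local cohomological annihilator for each $R/\fp$ with $\fp \in \Min M$, so the implication (ii)$\Rightarrow$(i) of Proposition~\ref{ulc} completes the argument.
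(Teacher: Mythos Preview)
Your proof is correct and follows essentially the same approach as the paper: establish that $R$ itself has a uniform local cohomological annihilator, then for $\h\fp = t > 0$ find a partial system of parameters $x_1,\ldots,x_t \in \fp$, use Theorem~\ref{gcm}(iii) iteratively to conclude $R/(x_1,\ldots,x_t)$ is g.CM, and finish via Proposition~\ref{ulc} (the paper cites the ring version, Theorem~\ref{z3.2}, but this is the same). The paper simply asserts the existence of the partial system of parameters in $\fp$, whereas you spell out the prime-avoidance construction in detail; otherwise the arguments coincide.
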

\begin{proof}
Note that if $\dim R=0$, then there is nothing to prove. So we
assume that $\dim R>0$. Let $\fp\in\Spec R$ with $\h \,\fp=0$. As
$R$ is g.CM and  $\fm\nsubseteq \cup_{\fp\in \Min R}\fp$,  $R$ has a
uniform local cohomological annihilator and thus $R/\fp$ has a
uniform local cohomological annihilator by Theorem \ref{z3.2}.
Assume that $\h_M\fp=t>0$. There is a subset of system of parameters
$x_1,\ldots,x_t$ of $R$ contained in $\fp$. By Theorem
\ref{gcm}(iii), $R/(x_1,\ldots,x_t)$ is g.CM and so it has a uniform
local cohomological annihilator. In particular $R/\fp$ has a uniform
local cohomological annihilator by Theorem \ref{z3.2}. The final
part  follows  immediately  from the first part and Proposition
\ref{ulc}.
\end{proof}

The converse of the above result is not true in general, but we may
get a positive answer in a special case where $M_\fp$ is a
Cohen-macaulay $R_\fp$--module for all $\fp\in\Supp
M\setminus\{\fm\}$. We need the following lemma which is
straightforward and we give a proof for completeness.

\begin{lem}\label{eq}
Assume that $R$ is a noetherian local ring. Then
 \begin{itemize}
 \item[\emph{(a)}]If
  $\fQ\in\emph\Min \widehat{M}$, then
  $\fQ\in\emph\Min \widehat{A}/\fQ^{ce}$.
\item[\emph{(b)}] If $R$ is universally catenary and $M$ is equidimensional, then
$\widehat{M}$ is equidimensional as $\widehat{R}$--module.
\end{itemize}
\end{lem}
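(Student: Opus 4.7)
The plan is to handle part (a) by a direct minimality-in-support argument, then bootstrap to part (b) by combining (a) with the standard characterization of universally catenary local rings through formal equidimensionality.

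For part (a), I would set $\fq := \fQ \cap R$ and observe that by faithful flatness of the completion map $R \longrightarrow \widehat{R}$, the condition $\fQ \in \Supp \widehat{M}$ forces $\fq \in \Supp M$. Then any prime $\fP \in \Spec \widehat{R}$ with $\fq\widehat{R} \subseteq \fP$ satisfies $\fP \cap R \supseteq \fq$, so $\fP \cap R \in \Supp M$ and therefore $\fP \in \Supp \widehat{M}$. Hence the entire closed set $\V(\fq\widehat{R}) = \V(\fQ^{ce})$ sits inside $\Supp \widehat{M}$. Since $\fQ$ is a minimal member of $\Supp \widehat{M}$ and $\fq \widehat{R} \subseteq \fQ$, no prime of $\widehat{R}$ can sit strictly between $\fQ^{ce}$ and $\fQ$, which is exactly the statement $\fQ \in \Min \widehat{R}/\fQ^{ce}$.

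For part (b), set $d = \dim M$; by faithful flatness, $\dim \widehat{M} = d$ as well. Fix $\fQ \in \Min \widehat{M}$ and put $\fq := \fQ \cap R$. I first want to show $\fq \in \Min M$: given any $\fp \in \Min M$ with $\fp \subseteq \fq$, going-down for the flat map $R\longrightarrow \widehat{R}$ produces $\fP \in \Spec \widehat{R}$ with $\fP \subseteq \fQ$ and $\fP \cap R = \fp$; since $\fp \in \Supp M$ we have $\fP \in \Supp \widehat{M}$, and the minimality of $\fQ$ forces $\fP = \fQ$, hence $\fp = \fq$. Equidimensionality of $M$ then gives $\dim R/\fq = d$. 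By part (a), the image of $\fQ$ in $\widehat{R}/\fq\widehat{R} \cong \widehat{R/\fq}$ is a minimal prime of that ring.

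The final step is to invoke the characterization of universally catenary local rings: $R$ is universally catenary if and only if $\widehat{R/\fp}$ is equidimensional for every $\fp \in \Spec R$ (this is precisely what Lemma \ref{31.7Ma} quoted from Matsumura is based on, applied to the domain $R/\fq$). Consequently $\widehat{R/\fq}$ is equidimensional of dimension $\dim R/\fq = d$, so $\dim \widehat{R}/\fQ = d = \dim \widehat{M}$. Since this equality holds for every $\fQ \in \Min \widehat{M}$, the module $\widehat{M}$ is equidimensional. The main obstacle, and really the only non-formal input, is this last invocation of the Matsumura-type theorem on universally catenary rings; everything else is a direct unfolding of the minimality conditions through faithful flatness and going-down.
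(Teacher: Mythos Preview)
Your proof is correct and, for part (b), follows the same line as the paper: contract $\fQ$ to a minimal prime $\fq$ of $M$, use part (a) to see $\fQ$ is minimal over $\fq\widehat{R}$, and then invoke formal equidimensionality of $R/\fq$ (the Matsumura criterion for universal catenarity) to conclude $\dim \widehat{R}/\fQ = \dim R/\fq = d$.

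For part (a) your route is a slight streamlining of the paper's. The paper first shows $\fQ^c\in\Min M$ via going-down, then embeds $R/\fQ^c\hookrightarrow M$, completes to $\widehat{R}/\fQ^{ce}\hookrightarrow\widehat{M}$, and reads off that any minimal prime of $\widehat{R}/\fQ^{ce}$ lies in $\Ass\widehat{M}\subseteq\Supp\widehat{M}$. You instead observe directly that $\V(\fQ^{ce})\subseteq\Supp\widehat{M}$ from the description $\Supp\widehat{M}=\{\fP:\fP\cap R\in\Supp M\}$, which bypasses both the going-down step and the associated-prime machinery for (a). This is a minor but genuine simplification; the paper's embedding argument is not needed here, though the going-down step you do use in (b) is exactly the same as theirs.
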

\begin{proof}
(a). It follows that  $\fQ^c\in\Min M$ by the Going Down Theorem.
Assume that $\fQ'\in\Min \widehat{R}/\fQ^{ce}$ such that
$\fQ'\subseteq\fQ$. The exact sequence $0\longrightarrow
R/\fQ^c\longrightarrow M$ implies the exact sequence
$0\longrightarrow \widehat{R}/\fQ^{ce}\longrightarrow \widehat{M}$.
Therefore $\fQ'\in\Ass \widehat{M}$ and so $\fQ'=\fQ$.

(b). Assume that $\fQ\in\Min\widehat{M}$. By the Going Down Theorem,
$\fQ^c\in\Min M$ from which we have $\dim \widehat{A}/\fQ^{ce}= \dim
R/\fQ^c= \dim M$. As, by part (a), $\fQ\in\Min \widehat{R}/\fQ^{ce}$
and using the fact that $R/\fQ^c$ is formally equidimensional, we
get $\dim \widehat{R}/\fQ=\dim {\widehat{R}/\fQ^{ce}}$ which implies
that $\dim \widehat{R}/\fQ=\dim M$.
\end{proof}


\begin{lem}\label{3.8}
Assume that $(R, \fm)$ is a local ring such that $R/\fp$ has a
uniform local cohomological annihilator for all $\fp\in\emph\Spec
R$. Then  the following statements are
equivalent.

 \begin{itemize}
 \item[\emph{(i)}] $M$ is  equidimensional $R$--module and for all $\fp\in\Supp
M\setminus\{\fm\}$, $M_\fp$ is a Cohen-macaulay $R_\fp$--module.
 \item[\emph{(ii)}] $M$ is a g.CM module.
 \end{itemize}
\end{lem}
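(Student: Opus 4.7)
The plan for (ii) $\Rightarrow$ (i) is a direct application of Theorem \ref{gcm}(i). If $d = \dim M = 0$ both assertions are trivial; for $d > 0$ we have $\fm \notin \Min M$, so the conclusion $\dim R/\fp = d$ for all $\fp \in \Ass M \setminus \{\fm\}$ forces $\Min M \subseteq \Assh M$, yielding equidimensionality, while the Cohen--Macaulayness of $M_\fq$ for $\fq \in \Supp M \setminus \{\fm\}$ is the second half of that theorem.

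For (i) $\Rightarrow$ (ii) I would induct on $d = \dim M$. The cases $d \leq 1$ are immediate since $\H^0_\fm(M) = \Gamma_\fm(M)$ is always finitely generated and $\fm$--torsion, hence of finite length. For $d \geq 2$, combining the hypothesis on $R$ with equidimensionality of $M$ feeds Proposition \ref{ulc} to produce a uniform local cohomological annihilator $x$ of $M$, and Corollary \ref{cat} makes $R/0:_R M$ universally catenary. A preliminary reduction lets me assume $\Ass M = \Min M$: for $\fq \in \Ass M \setminus \{\fm\}$, the Cohen--Macaulay module $M_\fq$ has no embedded associated primes, so $\fq R_\fq \in \Ass M_\fq$ is minimal, forcing $\fq \in \Min M$. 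Hence $\Gamma_\fm(M)$ captures all possible embedded primes, is of finite length, and $M$ is g.CM if and only if $M/\Gamma_\fm(M)$ is, while (i) is preserved; I replace $M$ by $M/\Gamma_\fm(M)$.

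Now $x$ is a non--zero--divisor on $M$ and a parameter element, and the module $M/xM$ inherits (i): equidimensionality follows from universal catenarity of $R/0:_R M$ via the standard Krull height argument for a non--zero--divisor parameter, and $(M/xM)_\fp = M_\fp/xM_\fp$ is Cohen--Macaulay for $\fp \neq \fm$ in $\Supp(M/xM)$ because $M_\fp$ is Cohen--Macaulay and $x$ is $M_\fp$--regular. By induction, $M/xM$ is g.CM. Splicing the long exact sequence coming from $0 \to M \xrightarrow{x} M \to M/xM \to 0$ and using $x \cdot \H^i_\fm(M) = 0$ for $i < d$, one obtains short exact sequences $0 \to \H^i_\fm(M) \to \H^i_\fm(M/xM) \to \H^{i+1}_\fm(M) \to 0$ for $0 \leq i \leq d-2$. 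Since $\H^i_\fm(M/xM)$ has finite length for $i < d-1$ by g.CM of $M/xM$, both the sub and the quotient in each such sequence are of finite length, covering every $\H^j_\fm(M)$ with $0 \leq j \leq d-1$.

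The main technical obstacle will be preservation of hypothesis (i) under the step $M \rightsquigarrow M/xM$, in particular equidimensionality of $M/xM$; this is exactly where universal catenarity of $R/0:_R M$ granted by Corollary \ref{cat} intervenes.
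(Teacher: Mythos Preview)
Your proof is correct and follows the same overall architecture as the paper: reduce to $\Gamma_\fm(M)=0$, invoke Proposition~\ref{ulc} and Corollary~\ref{cat} to get a uniform local cohomological annihilator $x$ and universal catenarity of $R/0:_RM$, then induct on $d$ by passing to $M/xM$ and splicing the long exact sequence using $x\H^i_\fm(M)=0$.

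The one genuine difference is the base case. The paper takes $d=2$ as the base and handles it with the Cousin-complex machinery developed in the thesis: it passes to $\widehat{M}$, uses Corollary~\ref{216} to get finiteness of $\C_{\widehat{R}}(\widehat{M})$, and then applies the attached-primes formula of Corollary~\ref{2.6} together with Lemma~\ref{3.6} and Theorem~\ref{atc} to force $\Att\H^1_{\widehat{\fm}}(\widehat{M})\subseteq\{\widehat{\fm}\}$. Your base case $d\leq 1$ is more elementary and allows the identical inductive step to absorb $d=2$ as well, so you bypass the Cousin-complex detour entirely. What you lose is the illustration of how Corollary~\ref{2.6} feeds back into the g.CM theory; what you gain is a self-contained argument that does not need $\C_{\widehat{R}}(\widehat{M})$ finite. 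One small point worth making explicit: if the annihilator $x$ happens to be a unit then $M$ is already Cohen--Macaulay and there is nothing to prove; otherwise $x\in\fm$ and is a genuine parameter element as you claim.
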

\begin{proof}
 (i)$\Rightarrow$ (ii). Since
$\H^i_\fm(M)\cong\H^i_\fm(M/\Gamma_\fm(M))$ for $i>0$, we may assume
that $\Gamma_\fm(M)=0$ and so $\fm\notin\Ass M$. As, for each
$\fp\in \Ass M$, $M_\fp$ is Cohen-Macaulay, so $\Ass M=\Min M$.
 Thus $\mH^{-1}_M=0$, by Lemma \ref{1310}. Since
$M$ is equidimensional and $R/\fp$ has a uniform local cohomological
annihilator for all $\fp\in\Min M$, $M$ has a uniform local
cohomological annihilator by Proposition \ref{ulc}, and so
$R/(0:_RM)$ is universally catenary by Corollary \ref{cat}. As a
result, considering $M$ as an $R/0:_RM$--module, Lemma \ref{eq}
implies that $\widehat{M}$ is equidimensional. Hence
$\C_{\widehat{R}}(\widehat{M})$ is finite by Corollary \ref{216}.

Now, we prove the statement by using induction on $d=\dim M$. For
$d=2$, we have, by Corollary \ref{2.6}, that
$$\Att \H^1_{\widehat{\fm}}(\widehat{M})=\{\fp\in\Ass \mH^{-1}_{\widehat{M}}
: \h_{\widehat{M}}(\fp)=1\} \cup\{\fp\in\Ass \mH^{0}_{\widehat{M}} :
\h_{\widehat{M}}(\fp)=2\}.$$ If $\fp\in\Ass \mH^{-1}_{\widehat{M}}$
with $\h_{\widehat{M}}(\fp)=1$,
 then $\fp\in\Ass \widehat{M}$ and so  $\fp^c\in\Ass M=\Min M$.
 On the other hand, since
 $\fp\in\Att \H_{\widehat{\fm}}^1(\widehat{M})$,
 $\fp^c\in\Att \H^1_\fm(M)$ by Theorem \ref{atc} which contradicts, with Lemma \ref{3.6}.
 Hence $\Att \H^1_{\widehat{\fm}}(\widehat{M})\subseteq \{\widehat{\fm}\}$. Now, Corollary \ref{111}(i) implies that
  $\H^1_{\fm}(M)\otimes_R\widehat{R}$ is finitely generated  $\widehat{R}$--module and
  it is  the first step of the induction.

 Now assume that $d>2$ and the statement holds up to $d-1$. Let $x$ be a uniform local cohomological
 annihilator of $M$.
 Since $\Min M=\Ass M$, $x$ is a nonzero divisor on $M$ by using its definition. On the other
 hand, as $R/0:_RM$ is catenary,  it is straightforward to
  see that $M/xM$
 satisfies the induction hypothesis for $d-1$. Therefore, $\H_\fm^i(M/xM)$ is finitely generated for all $i< d-1$.
  The exact sequence
 $0\longrightarrow M\overset{x}{\longrightarrow} M\longrightarrow M/xM \longrightarrow 0$ implies the
 long exact sequence
 $$\cdots\longrightarrow\H^i_\fm(M) \overset{x.}{\longrightarrow} \H^i_\fm(M)\longrightarrow
  \H^i_\fm(M/xM)\longrightarrow \H^{i+1}_\fm(M)\overset{x.}{\longrightarrow} \H^{i+1}_\fm(M)
  \longrightarrow\cdots.$$
 Since $x\H^j_\fm(M)=0$ for $j<d$, we get the exact sequence
 $$0\longrightarrow \H^i_\fm(M)\longrightarrow \H^i_\fm(M/xM)\longrightarrow \H^{i+1}_\fm(M)
 \longrightarrow0,$$ for $i=0,\ldots,d-2$. Now the result follows.

(ii)$\Rightarrow$(i) is clear by Theorem \ref{gcm}(i).
 \end{proof}

Now we can state a criterion for an equidimensional local ring to be
a g.CM ring in terms of uniform local cohomological annihilators.

\index{generalized Cohen-Macaulay, g.CM}\index{Cohen-Macaulay!generalized-, g.CM}\index{uniform local
cohomological annihilator}\index{Cohen-Macaulay}
\begin{cor}\label{3.9}
Assume that $R$ is an equidimensional noetherian local ring. The
following statements are equivalent.
\begin{itemize}
\item [\emph{(i)}]$R$ is  g.CM.
\item [\emph{(ii)}] For all $\fp\in\Spec R\setminus\{\fm\}$, $R_\fp$ is a Cohen-Macaulay ring and $R/\fp$ has a uniform local cohomological annihilator.
\end{itemize}
\end{cor}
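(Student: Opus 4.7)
The plan is to derive both directions directly from the preceding results, treating this as a synthesis rather than a new argument.

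For (i)$\Rightarrow$(ii), I would invoke Theorem \ref{gcm}(i), which immediately gives that $R_\fq$ is Cohen-Macaulay for every $\fq\in\Supp R\setminus\{\fm\}=\Spec R\setminus\{\fm\}$. For the second half of (ii), I would apply Lemma \ref{1.5}, which asserts that in a g.CM local ring, $R/\fp$ has a uniform local cohomological annihilator for \emph{every} $\fp\in\Spec R$, and in particular for those $\fp\neq\fm$ required here.

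For (ii)$\Rightarrow$(i), the strategy is to reduce to Lemma \ref{3.8} applied to $M=R$. The hypothesis of Lemma \ref{3.8} requires (a) $R$ equidimensional, (b) $R_\fp$ Cohen-Macaulay for $\fp\in\Supp R\setminus\{\fm\}$, and (c) $R/\fp$ having a uniform local cohomological annihilator for every $\fp\in\Spec R$. Condition (a) is assumed, and (b) is a direct restatement of the first part of (ii). For (c), condition (ii) supplies this for $\fp\neq\fm$; for $\fp=\fm$, the quotient $R/\fm$ is a field so any unit (for instance $1$) serves as a uniform local cohomological annihilator trivially. Hence all hypotheses of Lemma \ref{3.8} are met and $R$ is g.CM.

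Since both directions are one-line applications of results already in hand, there is essentially no obstacle; the only subtlety is the observation that the case $\fp=\fm$ in the hypothesis of Lemma \ref{3.8} is vacuous because $R/\fm$ is a field, so requiring the uniform local cohomological annihilator property only at non-maximal primes in (ii) is enough. I would make this point explicitly at the transition to (ii)$\Rightarrow$(i) to justify the direct appeal to Lemma \ref{3.8}.
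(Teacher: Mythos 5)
Your proof is correct and follows the same route as the paper: (i)$\Rightarrow$(ii) by Theorem \ref{gcm}(i) together with Lemma \ref{1.5}, and (ii)$\Rightarrow$(i) by applying Lemma \ref{3.8} to $M=R$. The one point you spell out that the paper leaves implicit is that Lemma \ref{3.8} demands the uniform local cohomological annihilator property for \emph{all} primes including $\fm$, whereas statement (ii) only supplies it for $\fp\neq\fm$; your observation that $R/\fm$ is a field, so the condition there is vacuous and any unit serves, is exactly the right way to close that small gap, and making it explicit is an improvement on the paper's terse ``is immediate from Lemma \ref{3.8}.''
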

\begin{proof}
(i)$\Rightarrow$(ii). We know that $R_\fp$ is Cohen-Macaulay for all
$\fp\in\Spec R\setminus\{\fm\}$, by Theorem \ref{gcm}(i). The rest
is the subject of Lemma \ref{1.5}.

(ii)$\Rightarrow$(i) is immediate from Lemma \ref{3.8}.
\end{proof}


The following remark is easy but we bring it here for completeness
and future reference.

\begin{rem}\label{1.4}
Assume that $(R, \fm)$ is local. \index{generalized Cohen-Macaulay, g.CM}\index{Cohen-Macaulay!generalized-, g.CM}
\begin{enumerate}
\item[\emph{(i)}] A finitely generated $R$--module $M$ is g.CM if and only if all
cohomology modules of $\C_R(M)$ are of finite lengths.

\item[\emph{(ii)}]
     A finitely generated $R$--module $M$ is quasi--Buchsbaum module if and only if
$\C_R(M)$ is finite and $\fm\mH^i_M=0$ for all $i$.
   \end{enumerate}
\end{rem}\index{quasi--Buchsbaum}
\begin{proof}
(i).  Assume that $M$ is g.CM. By Theorem \ref{gcm}(i), we have
$M_\fp$ is Cohen-Macaulay for all $\fp\in\Supp M\setminus\{\fm\}$.
So that $\Supp \mH^i_M\subseteq \{\fm\}$  and, by Lemma \ref{3.8},
the result follows. The converse is clear by Lemma \ref{3.8}

(ii). It is similar to (i).
\end{proof}

\chapter{ Cohen-Macaulay loci of modules }

\quad Throughout this chapter  $M$ is a finitely generated
$R$--module. In the case $(R,\fm)$ is local, we use  as
the completion of $M$ with respect to $\fm$. The main objective of
this chapter is to study the Cohen-Macaulay locus of a module. We
show that it is a Zariski--open subset of $\Spec R$ in certain
cases.  Our results are also related to Cohen-Macaulayness of formal
fibres over certain prime ideals.

\section{Openness of Cohen-Macaulay locus }

\quad \index{Cohen-Macaulay!-locus, $\CM(M)$|textbf}
\index{non--Cohen-Macaulay locus, non-$\CM(M)$|textbf} The
Cohen-Macaulay locus of $M$ is denoted by
\begin{center} $\CM(M):=\{\fp\in\Spec R: M_\fp$ is Cohen-Macaulay
as $R_\fp$--module$\}$.\end{center}
 Let non--$\CM(M)=\Spec R\setminus\CM(M)$. Trivially the Cohen-Macaulay
locus of a Cohen-Macaulay module is $\Spec R$ and  of a generalized
Cohen-Macaulay module $M$ over a local ring $(R,\fm)$ contains
$\Spec R\setminus\{\fm\}$ by Theorem \ref{gcm}(i). In these cases
$\CM(M)$ are Zariski--open subsets of $\Spec R$.

The objective of this section is to study the Cohen-Macaulay locus
of $M$ and find out when it is a Zariski--open subset of $\Spec R$.
We first mention a remark for future references.


\begin{rem}\label{3.1}\index{Cousin complex, $\C_R(M)$!finite-}
For an  $R$--module $M$ of finite dimension, if the Cousin complex
of $M$ is finite, then \emph{non}-$\emph{\CM}(M)=
\emph{\emph{\V}}(\underset{i}{\prod}(0:_R \mH_M^i))$
  so that $\emph{\CM}(M)$
 is open.
\end{rem}
\begin{proof}
 It is clear, by Theorem \ref{2.4S4} and Theorem \ref{3.5S1},
 that

 \centerline{$\CM(M)=\Spec(R)\setminus\underset{i\geq -1}{\cup}\Supp_R(\mH_M^i).$}
\end{proof}

As we have seen in Corollary \ref{216}, the Cousin complex of every
equidimensional module over a complete local ring is finite. So it
is natural to ask, over which rings the  openness of Cohen-Macaulay
locus property is heritable from the completion ring. As an example
of such rings one may consider the rings whose formal fibres are
Cohen-Macaulay. To see this, recall first the standard dimension and
depth formula.

\begin{lem}\emph{\cite[Chapitre IV, (6.1.2),
(6.1.3),(6.3.3)]{G}}\label{standard} Assume that $(R,\fm)$ and
$(S,\fn)$ are local rings with $k=R/\fm$ and  $f:R\longrightarrow S$
is a flat local  homomorphism. For a finitely generated $R$--module
$M$, $M\otimes_R S$ is a finitely generated $S$--module and the
following statements hold true.

\emph{(i)} $\emph{\dim} M\otimes_R S=\emph{\dim} M+\emph{\dim}
S\otimes_Rk$;

\emph{(ii)} $\emph{\depth} M\otimes_R S=\emph{\depth}
M+\emph{\depth} S\otimes_Rk$;

\emph{(iii)} $M\otimes_R S$ is Cohen-Macaulay $S$--module if and
only if $M$ is a Cohen-Macaulay $R$--module and $S\otimes_Rk$ is a
Cohen-Macaulay ring.
\end{lem}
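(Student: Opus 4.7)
The plan is to establish parts (i) and (ii) separately, then derive (iii) by combining them with the Cohen-Macaulay criterion $\dim = \depth$. Throughout, the fact that $M\otimes_R S$ is a finitely generated non-zero $S$-module follows from finite generation of $M$ and faithful flatness of $R \to S$.

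For (i), I would exploit a prime filtration of $M$: there is a chain $0 = M_0 \subset M_1 \subset \cdots \subset M_n = M$ of submodules with $M_j/M_{j-1} \cong R/\fp_j$ and $\fp_j \in \Supp M$. Since $R \to S$ is flat, tensoring produces a parallel chain of $S$-submodules with successive quotients $S/\fp_j S$, giving
\[ \dim M\otimes_R S = \max_j \dim S/\fp_j S, \qquad \dim M = \max_j \dim R/\fp_j. \]
For each $j$, the induced local homomorphism $R/\fp_j \longrightarrow S/\fp_j S$ is flat (since $S$ is $R$-flat and we mod out by the same ideal on both sides), and its closed fibre is $S/\fm S$ because $\fp_j \subseteq \fm$. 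Applying the classical flat local dimension formula $\dim T = \dim A + \dim T/\fa T$ to each such pair yields $\dim S/\fp_j S = \dim R/\fp_j + \dim S/\fm S$, and taking the maximum over $j$ completes (i).

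For (ii), I would proceed by induction on $r = \depth M$. The crucial input is that any $M$-regular sequence $\underline{x} = x_1,\ldots,x_r$ in $\fm$ remains an $(M\otimes_R S)$-regular sequence in $\fn$: flatness preserves the injectivity of each multiplication-by-$x_{i+1}$ map on the subquotient $M/(x_1,\ldots,x_i)M$, and $(M/\underline{x}M)\otimes_R S$ is non-zero by faithful flatness plus Nakayama. This yields $\depth_\fn(M\otimes_R S) \geq r + \depth_\fn((M/\underline{x}M)\otimes_R S)$. Taking $\underline{x}$ to be maximal in $\fm$ reduces the problem to the base case $\depth M = 0$, i.e.\ $\fm \in \Ass M$, where $k \hookrightarrow M$ induces $S/\fm S \hookrightarrow M\otimes_R S$ by flatness. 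The remaining equality $\depth_\fn(M\otimes_R S) = \depth_\fn(S/\fm S)$ can be obtained from the flat base change isomorphism $\H^q_{\fm S}(M\otimes_R S) \cong \H^q_\fm(M)\otimes_R S$ combined with the Grothendieck composition spectral sequence
\[ E_2^{p,q} = \H^p_\fn\bigl(\H^q_{\fm S}(M\otimes_R S)\bigr) \;\Longrightarrow\; \H^{p+q}_\fn(M\otimes_R S), \]
which arises from $\Gamma_\fn = \Gamma_\fn \circ \Gamma_{\fm S}$ (valid since $\fm S \subseteq \fn$). The least $q$ with a non-vanishing row is $\depth M$, and for that row the least $p$ matches $\depth_\fn(S/\fm S)$, giving the claim. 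Statement (iii) then drops out immediately:
\[ (\dim M - \depth M) + (\dim S/\fm S - \depth S/\fm S) = 0, \]
and since both summands are non-negative, each must vanish, which is exactly the assertion that $M$ is Cohen-Macaulay and $S/\fm S$ is a Cohen-Macaulay ring.

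The main obstacle is the base case of (ii): the embedding $S/\fm S \hookrightarrow M\otimes_R S$ alone is insufficient to pin down the depth equality, since sub-module inclusions do not in general comparably bound depths. One really needs the spectral sequence above, or an equally intricate double induction on $\depth_\fn(S/\fm S)$, to control the local cohomology of the $\fm S$-torsion module $\H^0_\fm(M)\otimes_R S$ regarded as an $S/\fm S$-module; this is the only non-formal step of the argument.
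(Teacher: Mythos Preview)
The paper does not prove this lemma; it is quoted from EGA with the citation \cite[Chapitre IV, (6.1.2), (6.1.3), (6.3.3)]{G}, so there is no internal argument to compare against. Your outline is sound and follows the standard route: a prime filtration reduces (i) to the ring case $\dim S=\dim R+\dim S/\fm S$, the composite-functor spectral sequence $\H^p_\fn\bigl(\H^q_{\fm S}(-)\bigr)\Rightarrow \H^{p+q}_\fn(-)$ together with flat base change handles (ii), and (iii) drops out by subtraction.

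The one point that deserves an extra line is why \emph{every} row of the $E_2$-page, not just the bottom one, vanishes for $p<\depth S/\fm S$: each $\H^q_\fm(M)$ is $\fm$-torsion, hence a direct limit of finite-length $R$-modules, so $\H^q_\fm(M)\otimes_R S$ is a direct limit of iterated extensions of $S/\fm S$; since local cohomology commutes with direct limits, the vanishing passes to the limit. After your reduction to $\depth M=0$ the bottom row is $q=0$, where $\Gamma_\fm(M)$ has finite length and the corner $E_2^{\,\depth S/\fm S,\,0}$ is visibly non-zero and survives, so the obstacle you flag is indeed resolved by the spectral sequence.
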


As an immediate corollary of the above lemma, we have the following
result.

\begin{cor}\label{ffcm}\index{Cohen-Macaulay}
Assume that $\fP\in\emph{\Supp} \widehat{M}$ and  $\fp=R\cap\fP$. If
the formal fibre $\widehat{R}\otimes_Rk(\fp)$ over  $\fp$ is
Cohen-Macaulay, then $M_\fp$ is Cohen-Macaulay if  and only if
$\widehat{M}_\fP$ is Cohen-Macaulay.
\end{cor}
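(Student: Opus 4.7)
The plan is to invoke Lemma \ref{standard}(iii) directly, applied to the flat local homomorphism obtained from the canonical map $R \longrightarrow \widehat{R}$ by localizing at the prime $\fP$ (and $\fp$ on the source side). So first I would set $S := \widehat{R}_\fP$ and observe that the composite $R \longrightarrow \widehat{R} \longrightarrow \widehat{R}_\fP$ factors through $R_\fp$, producing a local ring homomorphism $R_\fp \longrightarrow \widehat{R}_\fP$. This map is flat because $R \longrightarrow \widehat{R}$ is flat and both localizations preserve flatness; it is a local homomorphism since $\fP \cap R = \fp$ forces $\fp R_\fp$ into $\fP \widehat{R}_\fP$.

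Next I would identify the closed fibre of this map with a localization of the formal fibre over $\fp$. Concretely, the closed fibre is
\[
\widehat{R}_\fP \otimes_{R_\fp} k(\fp) \;\cong\; \bigl(\widehat{R} \otimes_R k(\fp)\bigr)_{\fP},
\]
which, as a localization of the Cohen-Macaulay ring $\widehat{R}\otimes_R k(\fp)$, is itself Cohen-Macaulay.

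Now the module $M_\fp \otimes_{R_\fp} \widehat{R}_\fP$ is naturally isomorphic to $\widehat{M}_\fP$ via the standard identification $M \otimes_R \widehat{R} \cong \widehat{M}$ followed by localization. Moreover $\fP \in \Supp \widehat{M}$ ensures $\widehat{M}_\fP \neq 0$, and hence $M_\fp \neq 0$ as well, so both modules are finitely generated and nonzero over their respective local rings. Therefore Lemma \ref{standard}(iii) applies and yields: $\widehat{M}_\fP$ is Cohen-Macaulay over $\widehat{R}_\fP$ if and only if $M_\fp$ is Cohen-Macaulay over $R_\fp$ and the fibre $\widehat{R}_\fP \otimes_{R_\fp} k(\fp)$ is Cohen-Macaulay. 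Since the fibre is already Cohen-Macaulay by hypothesis, the equivalence reduces to the desired statement.

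The only real content is the fibre identification and verifying the hypotheses of Lemma \ref{standard}(iii); there is no genuine obstacle, as the result is an immediate packaging of the standard depth formula for faithfully flat extensions localized at a prime $\fP$ lying over $\fp$.
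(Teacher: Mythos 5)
Your proposal is correct and follows exactly the paper's route: the paper's proof is the one-liner "the natural local homomorphism $R_\fp\longrightarrow \widehat{R}_\fP$ is flat, so the result follows by the above lemma," i.e. Lemma \ref{standard}(iii) applied to that map. You have simply spelled out the details the paper leaves implicit, in particular that the closed fibre $\widehat{R}_\fP \otimes_{R_\fp} k(\fp)$ is a localization of the formal fibre $\widehat{R}\otimes_R k(\fp)$ and hence Cohen-Macaulay, and that $M_\fp \otimes_{R_\fp}\widehat{R}_\fP \cong \widehat{M}_\fP$.
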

\begin{proof}
Note that the natural local homomorphism $R_\fp\longrightarrow
\widehat{R}_\fP$ is flat, so that the result follows by the above
lemma.
\end{proof}

Now we may easily see that the openness of Cohen-Macaulay locus
property is heritable from the completion ring, if all formal fibres
are Cohen-Macaulay.
\index{Cohen-Macaulay}\index{Cohen-Macaulay!-locus, $\CM(M)$}\index{formal fibre}
\begin{cor}\label{414}
Assume that all formal fibres of $R$ are Cohen-Macaulay. If the
Cohen-Macaulay locus of $\widehat{M}$ is a Zariski--open subset of
$\emph\Spec \widehat{R}$, then the Cohen-Macaulay locus of ${M}$ is
a Zariski--open subset of $\emph\Spec R$.
\end{cor}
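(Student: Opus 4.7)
The plan is to transfer the non-Cohen-Macaulay locus of $\widehat{M}$ through the canonical surjection $\phi:\Spec\widehat{R}\to\Spec R$, exploiting the hypothesis on formal fibres together with the fact that $\mbox{non-}\CM(M)$ is closed under specialization.

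First, I would invoke Corollary \ref{ffcm} together with the assumption that every formal fibre of $R$ is Cohen-Macaulay to conclude that, for any $\fp\in\Spec R$ and any $\fP\in\Spec\widehat{R}$ with $\fP\cap R=\fp$, the module $M_\fp$ is Cohen-Macaulay if and only if $\widehat{M}_\fP$ is Cohen-Macaulay. Faithful flatness of $R\to\widehat{R}$ additionally guarantees that every $\fp\in\Supp M$ admits a lift to $\Supp\widehat{M}$. Together these yield the identification
$$\mbox{non-}\CM(M)\;=\;\phi\bigl(\mbox{non-}\CM(\widehat{M})\bigr).$$

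By hypothesis $\mbox{non-}\CM(\widehat{M})=\V(J)$ for some ideal $J\subseteq\widehat{R}$; since $\widehat{R}$ is noetherian the minimal primes of $J$ form a finite set $\{\fP_1,\ldots,\fP_s\}$ with $\V(J)=\V(\fP_1)\cup\cdots\cup\V(\fP_s)$. Setting $\fp_i:=\fP_i\cap R$, the identification above forces each $\fp_i$ to lie in $\mbox{non-}\CM(M)$.

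The final step is to establish
$$\mbox{non-}\CM(M)\;=\;\V(\fp_1)\cup\cdots\cup\V(\fp_s),$$
which is manifestly closed. The inclusion $\subseteq$ is straightforward: any $\fp=\fP\cap R$ with $\fP\in\V(J)$ must satisfy $\fP\supseteq\fP_i$ for some $i$, whence $\fp\supseteq\fp_i$. For the inclusion $\supseteq$, I rely on specialization-closure of $\mbox{non-}\CM(M)$: if $M_\fp$ is not Cohen-Macaulay and $\fp\subseteq\fq$ with $\fq\in\Supp M$, then $M_\fp$ is a localization of $M_\fq$, and since localizations of Cohen-Macaulay modules stay Cohen-Macaulay, $M_\fq$ cannot be Cohen-Macaulay either. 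Applied to each $\fp_i$, this extends membership to the whole of $\V(\fp_i)$. The main obstacle I anticipate is that $\phi$ need not be a closed map in general, so one cannot simply push the closed set $\V(J)$ forward and declare victory; specialization-closure of the non-Cohen-Macaulay locus, combined with the noetherian finiteness of the minimal primes $\fP_i$, is exactly what repairs this.
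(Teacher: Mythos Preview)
Your argument is correct and is essentially the paper's proof run in the opposite direction: the paper lifts each $\fp\in\Min(\mbox{non-}\CM(M))$ to a prime $\fQ\in\Min(\mbox{non-}\CM(\widehat{M}))$ lying over it (by taking $\fQ$ minimal in the fibre over $\fp$ inside $\Supp\widehat{M}$ and checking that anything strictly below $\fQ$ contracts strictly below $\fp$), thereby bounding $|\Min(\mbox{non-}\CM(M))|$ by the finite quantity $|\Min(\mbox{non-}\CM(\widehat{M}))|$, whereas you contract the minimal primes of $\mbox{non-}\CM(\widehat{M})$ down to $R$ and then invoke specialization-closure explicitly. Both routes rest on the same two ingredients, Corollary~\ref{ffcm} and the noetherian finiteness of $\Min(\mbox{non-}\CM(\widehat{M}))$.
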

\begin{proof}\index{non--Cohen-Macaulay locus, non-$\CM(M)$}
Equivalently, we prove that  $\Min($non--$\CM(M))$ is a finite set.
Choose $\fp\in\Min($non--$\CM(M))$ and let $\fQ$ be a minimal member
of the non--empty set \begin{center}$\{\fq\in\Supp \widehat{M} :
\fq\cap R=\fp\}.$\end{center} Since the formal fibre of $R$ over
$\fp$ is Cohen--Macaulay, $\widehat{M}_\fQ$ is not Cohen-Macaulay by
Corollary \ref{ffcm}. On the other hand, for each $\fq\in\Supp
\widehat{M}$ with $\fq\subset \fQ$ we have $\fq\cap R\subset \fp$
and so $\widehat{M}_{\fq}$ is Cohen-Macaulay again by Corollary
\ref{ffcm}. Hence $\fQ\in\Min($non--$\CM(\widehat{M}))$ which is a
finite set.
\end{proof}

The following lemma shows that the Cohen-Macaulay locus of $M$ is
open if it is true for certain submodules of $M$.

\begin{lem}\label{3.2}\index{non--Cohen-Macaulay locus, non-$\CM(M)$}
Let \\ {\small $S=\{T\subseteq\emph\Min M$: there exists
$\fq\in\emph\Supp M$ such that $\emph\h (\fq/\fp)$ is constant for
all $\fp\in T\}$.} For each $T\in S$, we assign a submodule $M^T$ of
$M$ with $\emph\Ass M^T= T$ and $\emph\Ass M/M^T= \emph\Ass
M\setminus T$. Then
$$\emph\CM(M)= \underset{T\in
S}{\bigcup}(\emph\CM(M^T)\setminus\underset{\fp\in\emph\Ass
M\setminus T}{\cup}\V(\fp)).$$
\end{lem}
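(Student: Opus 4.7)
The plan is to prove the equality by double inclusion, using in both directions the short exact sequence $0 \to M^T \to M \to M/M^T \to 0$ coupled with the key fact that $\Ass(M/M^T) = \Ass M \setminus T$.

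For the inclusion $\supseteq$, I would fix $T \in S$ and a prime $\fp$ lying in $\CM(M^T) \setminus \bigcup_{\fq \in \Ass M \setminus T}\V(\fq)$. The condition $\fp \notin \V(\fq)$ for every $\fq \in \Ass(M/M^T)$ immediately forces $\Supp_{R_\fp}(M/M^T)_\fp = \emptyset$, so $(M/M^T)_\fp = 0$ and hence $M^T_\fp \cong M_\fp$ via the localized exact sequence. Since $M^T_\fp$ is Cohen--Macaulay by hypothesis, so is $M_\fp$, proving $\fp \in \CM(M)$. This direction is a routine localization argument.

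For the inclusion $\subseteq$, the real content of the lemma, given $\fp \in \CM(M)$ I would \emph{construct} a witnessing $T \in S$ by setting $T := \{\fq \in \Min M : \fq \subseteq \fp\}$ (and reducing the trivial case $\fp \notin \Supp M$ to $T = \emptyset$ and $M^T = 0$). The main work is a short chain of verifications: first, every $\fq \in \Ass M$ with $\fq \subseteq \fp$ is automatically minimal in $\Supp M$, because $\fq R_\fp \in \Ass M_\fp = \Min M_\fp$ (using that $M_\fp$ is Cohen--Macaulay, so $\Ass M_\fp = \Min M_\fp = \Assh M_\fp$); any strictly smaller prime in $\Supp M$ inside $\fp$ would contradict this. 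Consequently $\Ass M \setminus T$ consists of primes not contained in $\fp$, which yields $\fp \notin \bigcup_{\fq \in \Ass M\setminus T}\V(\fq)$ and, exactly as in the first direction, $M^T_\fp \cong M_\fp$ is Cohen--Macaulay, so $\fp \in \CM(M^T)$.

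The main obstacle, and the step where the Cohen--Macaulay hypothesis on $M_\fp$ is genuinely used, is to verify that this $T$ actually lies in $S$; I would take $\fq^\ast := \fp \in \Supp M$ as the required common upper bound and check that $\h(\fp/\fq) = \dim R_\fp/\fq R_\fp$ is constant for all $\fq \in T$. This constancy is precisely the statement that every $\fq \in T$ satisfies $\fq R_\fp \in \Assh M_\fp$, which follows from $\Min M_\fp = \Assh M_\fp$ for a Cohen--Macaulay module. Once this is in place, the two inclusions combine to give the displayed equality.
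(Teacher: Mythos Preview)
Your proof is correct and follows essentially the same route as the paper's: both directions use the exact sequence $0 \to M^T \to M \to M/M^T \to 0$ together with $\Supp(M/M^T) = \bigcup_{\fq \in \Ass M \setminus T} \V(\fq)$, and for the nontrivial inclusion the witnessing set $T$ is chosen as the minimal primes of $M$ lying under the given prime (the paper phrases this as the contractions of $\Ass M_\fp$, which you correctly observe coincides with your description since $M_\fp$ is Cohen--Macaulay). The only cosmetic differences are that you treat the case $\fp \notin \Supp M$ separately and make explicit the verification that every associated prime under $\fp$ is minimal, whereas the paper leaves the latter implicit in the claim $T' \in S$.
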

\begin{proof} For each $T\in S$, there exists a submodule $M^T$ of
$M$ with $\Ass M^T= T$ and $\Ass M/M^T=\Ass M\setminus T$ (c.f.
\cite[Page 263, Proposition 4]{B}). it is clear that
$$\Supp M/M^T=
 \underset{\fp\in\Ass M\setminus
T}{\cup}\V(\fp).$$ Let $\fq\in\CM(M)$ and set $T':= \{\fP\cap A:
\fP\in\Ass M_\fq\}$.
 As $M_\fq$ is Cohen-Macaulay, $\h(\fq/\fp)= \dim M_\fq$ for all $\fp\in T'$ and so
  $T'\in S$. We claim that $\fq\not\in\Supp M/M^{T'}$. Assuming
  contrary, there is $\fp\in\Ass M/M^{T'}$ such that $\fp\subseteq
  \fq$. Hence $\fp A_\fq\in\Ass M_\fq$ which implies that
   $\fp\in\ T'$. This contradicts with the fact that $\Ass M/M^{T'}= \Ass M \setminus T'$.
 Therefore from the exact sequence
 \begin{equation}\label{e4.11}
 0\longrightarrow M^{T'}\longrightarrow
 M\longrightarrow M/M^{T'}\longrightarrow 0
 \end{equation}
  we get $(M^{T'})_\fq\cong M_\fq$
  so that $\fq\in\CM(M^{T'})$.

Conversely, assume that $T\in S$ and that
$\fq\in\CM(M^T)\setminus\underset{\fp\in\Ass M\setminus
T}{\cup}\V(\fp)$. That is $(M^T)_\fq$ is Cohen-Macaulay and
$\fq\not\in\Supp M/M^T$. Therefore $M_\fq$ is
   Cohen-Macaulay by (\ref{e4.11}), replacing $T$ by $T'$.
\end{proof}

Note that if $R/0:_R M$ is catenary, then each module $M^T$ in the
above lemma is an equidimensional $R$--module. Therefore one can
state the following remark. \index{equidimensional}
\begin{rem}\label{3.3} \emph{ If $R$ is catenary and $\CM(N)$ is open for all equidimensional
submodules $N$ of $M$, then $\CM(M)$ is open.}
\end{rem}\index{Cohen-Macaulay!-locus, $\CM(M)$}

It is now a routine check to see that, over a local ring $R$ with
Cohen-Macaulay formal fibres, the Cohen-Macaulay locus of any
finitely generated $R$--module is open.

\begin{rem}\label{3.7}\index{formal fibre} \index{Cohen-Macaulay!-locus, $\CM(M)$}
 Assume that all formal fibres of $R$ are Cohen-Macaulay.  Then the Cohen-Macaulay locus of  $M$ is
a Zariski--open subset of  $\emph\Spec R$.
\end{rem}
\begin{proof}
By Corollary \ref{414},  it is enough to show that
$\CM(\widehat{M})$ is a Zariski--open subset of  $\Spec
\widehat{R}$. As $\widehat{R}$ is catenary, we may assume that
$\widehat{M}$ is equidimensional $\widehat{R}$--module by Remark
\ref{3.3}. Finally, Corollary \ref{216} implies that
$\mathcal{C}_{\widehat{A}}(\widehat{M})$ has finite cohomologies and
so $\CM(\widehat{M})$ is open by Remark \ref{3.1}.
\end{proof}
\index{Cousin complex, $\C_R(M)$!finite-}

We are now able to prove that any minimal element of non-$\CM(M)$ is
either an attached prime of $\H_\fm^i(M)$ for some $i$ or $R_\fp$ is
not a Cohen-Macaulay ring.
\begin{thm}\label{3.10}\index{attached
prime, $\Att$}\index{equidimensional}\index{non--Cohen-Macaulay
locus, non-$\CM(M)$}  Assume that $(R,\fm)$ is a catenary local
ring and that $M$ is equidimensional $R$--module. Then

\centerline{$\emph\Min($\emph{non}--$\emph\CM(M))\subseteq
\underset{0\leq i\leq\emph\dim M}{\cup}\emph\Att
\emph\H^i_\fm(M)\cup$\emph{non}--$\emph\CM(R)$.}
\end{thm}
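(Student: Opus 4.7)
The plan is to handle each $\fp \in \Min(\mathrm{non}\text{-}\CM(M))$ by either placing it in $\mathrm{non}\text{-}\CM(R)$ or in some $\Att \H^i_\fm(M)$. Assume $\fp$ is such a minimal prime and that $R_\fp$ is Cohen--Macaulay (otherwise $\fp \in \mathrm{non}\text{-}\CM(R)$ and there is nothing to prove). Set $s = \dim M_\fp$ and $t = \dim R/\fp$. The catenary hypothesis on $R$ together with equidimensionality of $M$ forces $\dim R_\fp/\fq R_\fp = \dim M - t = s$ for every $\fq \in \Min M$ with $\fq \subseteq \fp$, so $M_\fp$ is an equidimensional $R_\fp$-module of dimension $s$ with $s + t = \dim M$.

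Next I promote $M_\fp$ to being generalized Cohen--Macaulay over $R_\fp$. By minimality of $\fp$, for every $\fq \in \Supp M$ with $\fq \subsetneq \fp$ the localization $(M_\fp)_{\fq R_\fp} = M_\fq$ is Cohen--Macaulay, so $M_\fp$ is Cohen--Macaulay on the punctured spectrum of $R_\fp$. Since $R_\fp$ is Cohen--Macaulay it is in particular generalized Cohen--Macaulay, so Lemma \ref{1.5} applied to the ring $R_\fp$ tells us that $R_\fp/\fq R_\fp$ admits a uniform local cohomological annihilator for every $\fq R_\fp \in \Spec R_\fp$. These are precisely the hypotheses of Lemma \ref{3.8} for the local ring $(R_\fp, \fp R_\fp)$ and the module $M_\fp$, and its conclusion is that $M_\fp$ is a generalized Cohen--Macaulay $R_\fp$-module.

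I now exhibit an attached prime and lift it. Since $M_\fp$ is not Cohen--Macaulay, $j := \depth M_\fp < s$, and Theorem \ref{grade} gives $\H^j_{\fp R_\fp}(M_\fp) \neq 0$. Being generalized Cohen--Macaulay forces $(\fp R_\fp)^n \H^j_{\fp R_\fp}(M_\fp) = 0$ for some $n$; since $\H^j_{\fp R_\fp}(M_\fp)$ is also artinian, it has finite length, so Corollary \ref{111}(i) gives $\Att \H^j_{\fp R_\fp}(M_\fp) = \{\fp R_\fp\}$. Applying the weak general shifted localization principle (Theorem \ref{wgs}) with $\fq = \fp$ yields $\fp \in \Att \H^{j+t}_\fm(M)$, and $j + t < s + t = \dim M$ places this index in the advertised range.

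The main obstacle is the upgrade of $M_\fp$ from merely Cohen--Macaulay on the punctured spectrum to generalized Cohen--Macaulay; this is exactly where the hypothesis $R_\fp \in \CM(R)$ is used essentially, by routing through Lemma \ref{1.5} into Lemma \ref{3.8}. Without this upgrade $\H^j_{\fp R_\fp}(M_\fp)$ need not have finite length, so $\fp R_\fp$ cannot be isolated as its sole attached prime before invoking Theorem \ref{wgs}.
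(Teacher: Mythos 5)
Your proof is correct and follows the same overall strategy as the paper's: discard the case $\fp\in\mathrm{non}$--$\CM(R)$, localize at $\fp$ and use the catenary plus equidimensional hypotheses to get $M_\fp$ equidimensional, upgrade $M_\fp$ to generalized Cohen--Macaulay via Lemma~\ref{3.8}, find a nonzero finite--length $\H^{j}_{\fp R_\fp}(M_\fp)$ with $\Att=\{\fp R_\fp\}$, and lift through the weak general shifted localization principle (Theorem~\ref{wgs}). The one place you diverge is in verifying the hypothesis of Lemma~\ref{3.8}, namely that $R_\fp/\fq R_\fp$ has a uniform local cohomological annihilator for every $\fq R_\fp\in\Spec R_\fp$. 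You obtain this from Lemma~\ref{1.5} applied to $R_\fp$, using only that a Cohen--Macaulay local ring is in particular generalized Cohen--Macaulay; the paper instead cites Proposition~\ref{233}, whose hypotheses include that all formal fibres of the local ring be Cohen--Macaulay. The latter is not an automatic consequence of $R_\fp$ being Cohen--Macaulay (the formal fibres of a Cohen--Macaulay local ring need not be Cohen--Macaulay), so your route via Lemma~\ref{1.5} is the cleaner justification of this step, while the rest of the argument is identical.
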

\begin{proof}

Choose $\fp\in\Min($non--$\CM(M))$. As $R$ is catenary and $M$ is
equidimensional, $M_\fp$ is also equidimensional as $R_\fp$--module.
Assume that $R_\fp$ is a Cohen-Macaulay ring.  For each $\fq\in\Spec
R$ with $\fq\subseteq\fp$, $R_\fp/\fq R_\fp$ has a uniform local
cohomological annihilator by Proposition \ref{233}. Therefore, by
Lemma \ref{3.8}, $M_\fp$ is a g.CM $R_\fp$--module. As $M_\fp$ is
not Cohen-Macaulay, $\H^i_{\fp R_\fp}(M_\fp)\neq 0$ for some integer
$i$, $i< \dim M_\fp$. In particular, $\H^i_{\fp R_\fp}(M_\fp)$ is a
non--zero finite length $R_\fp$--module so that $\Att \H^i_{\fp
R_\fp}(M_\fp)=\{\fp A_\fp\}$. By Weak general shifted localization
principle (Theorem \ref{wgs}), $\fp\in\Att \H^{i+t}_\fm(M)$, where
$t=\dim(A/\fp)$. Now the result follows.
\end{proof}
\index{uniform local cohomological annihilator}\index{generalized
Cohen-Macaulay, g.CM}\index{weak general shifted localization
principle}


\begin{cor}\label{419}
Assume that $(R, \fm)$ is a catenary local ring and that the
non--$\emph\CM(R)$ is a finite set.  Then the Cohen-Macaulay locus
of $M$ is open.
\end{cor}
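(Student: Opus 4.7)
The plan is to combine Theorem \ref{3.10} with Remark \ref{3.3} and the general fact that a specialization-closed subset of $\Spec R$ with only finitely many minimal elements is automatically Zariski-closed.

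First, I would reduce to the equidimensional case. Since $R$ is catenary by hypothesis, Remark \ref{3.3} allows us to assume that $M$ is an equidimensional $R$--module; it suffices to prove that $\CM(N)$ is open for every equidimensional submodule $N$ of $M$.

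Second, with $M$ equidimensional, Theorem \ref{3.10} provides the crucial inclusion
\[
\Min(\text{non--}\CM(M)) \subseteq \bigcup_{0 \leq i \leq \dim M} \Att \H^i_\fm(M) \ \cup \ \text{non--}\CM(R).
\]
The right-hand side is a finite set: each local cohomology module $\H^i_\fm(M)$ is artinian (a standard fact for finitely generated modules over a local ring), so it is representable with only finitely many attached primes; the index $i$ ranges over a finite set; and $\text{non--}\CM(R)$ is finite by hypothesis. Therefore $\Min(\text{non--}\CM(M))$ is a finite set, say $\{\fp_1,\ldots,\fp_n\}$.

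Third, I would observe that $\text{non--}\CM(M)$ is specialization-closed in $\Spec R$. Indeed, if $\fp \subseteq \fq$ are primes in $\Supp M$ and $M_\fq$ is Cohen-Macaulay, then $M_\fp \cong (M_\fq)_{\fp R_\fq}$ is a localization of a Cohen-Macaulay module at a prime in its support, hence is Cohen-Macaulay. Thus $\CM(M)$ is stable under generalization and its complement is specialization-closed. Since $R$ is noetherian, every element of $\text{non--}\CM(M)$ contains some $\fp_i$, and conversely each $\V(\fp_i)$ lies in $\text{non--}\CM(M)$, giving
\[
\text{non--}\CM(M) = \V(\fp_1) \cup \cdots \cup \V(\fp_n),
\]
which is closed in $\Spec R$. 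Hence $\CM(M)$ is Zariski-open.

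The main obstacle is essentially already absorbed into Theorem \ref{3.10}; the remaining work is the bookkeeping above, with the finite-attached-primes property of artinian modules and the specialization-closedness of $\text{non--}\CM(M)$ being the two standard facts one must not forget to invoke.
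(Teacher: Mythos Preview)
Your proof is correct and follows essentially the same approach as the paper: reduce to the equidimensional case via Remark \ref{3.3} (the paper cites Lemma \ref{3.2}, which amounts to the same thing), then apply Theorem \ref{3.10} to see that $\Min(\text{non--}\CM(M))$ is finite, and conclude that non--$\CM(M)$ is closed. The paper compresses your second and third steps into a single sentence, leaving the finiteness of attached primes and the specialization-closedness of non--$\CM(M)$ implicit, whereas you spell them out.
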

\begin{proof} By Lemma \ref{3.2}, we may assume that $M$ is equidimensional.
Now  Theorem \ref{3.10} implies that $\Min($non--$\CM(M))$ is a
finite set. In other words  non--$\CM(M)$ is a Zariski--closed
subset of $\Spec R$.
\end{proof}

Here is an example of local rings which satisfy  the above
condition.

\begin{exam}
\emph{Consider a local ring  $R$ satisfying Serre's condition
$(\emph{S}_{d-2})$, $d:=\dim R$, such that $\mathcal{C}_R(R)$ is
finite. Then $\mH^i_R=0$ for $i\leq d-4$ and $i\geq d-1$, by Theorem
\ref{4.4SSc} Lemma \ref{1310}(ii). Now, $\dim \mH^{d-3}_R\leq 1$
and $\dim^{d-2}_R\leq 0$, by Theorem \ref{2.7S1}(ii). Thus
non--$\CM(R)=\Supp \mH^{d-2}_R\cup\Supp \mH^{d-1}_R$ is a finite
set.}
\end{exam}

\index{non--Cohen-Macaulay locus, non-$\CM(M)$}
\index{Cohen-Macaulay!-locus, $\CM(M)$} \index{formal fibre}
\index{Cohen-Macaulay} By Remark \ref{3.7}, for a local ring $R$, if
all formal fibres of $R$ are Cohen-Macaulay, then the Cohen-Macaulay
locus of each finitely generated $R$--module is open and in
Corollary \ref{419}, we have seen  that the same statement holds if
$R$ is a catenary local ring and that the non--$\emph\CM(R)$ is a
finite set. The following examples, show that these two conditions
are significant.

Example \ref{3.15} gives a local ring $S$ with Cohen-Macaulay formal
fibres for which  the set non--$\CM(S)$ is infinite. Example
\ref{3.16} presents a local ring $T$ which admits a
non--Cohen-Macaulay formal fibre with finite non--$\CM(T)$.

\begin{exam}\label{3.15} \emph{Set $S= k[[X, Y, Z, U, V]]/(X)\cap(Y, Z)$, where $k$ is a
field. It is clear that $S$ is a local ring with Cohen-Macaulay
formal fibres. By Ratliff's weak existence theorem \cite[Theorem
31.2]{Ma}, there are infinitely many prime ideals $P$ of $k[[X, Y,
Z, U, V]]$, with $(X, Y, Z)\subset P\subset (X, Y, Z, U, V)$. For
any such prime ideal $P$, $S_{\overline{P}}$ is not equidimensional
and so it is not Cohen-Macaulay. In other words, non--$\CM(S)$ is
infinite.}
\end{exam}
\begin{exam}\label{3.16}
\emph{It is shown in \cite[Proposition 3.3]{FR} that there exists a
local integral domain $(R, \fm)$ of dimension 2 such that
$\widehat{R}= \mathbb{C}[[X, Y, Z]]/(Z^2, tZ)$, where $\mathbb{C}$
is the field of complex numbers and $t= X+ Y+ Y^2s$ for some
$s\in\mathbb{C}[[Y]]\setminus\mathbb{C}\{Y\}$. As $\Ass \widehat{R}=
\{(Z), (Z, t)\}$, $\widehat{R}$ does not satisfy $(S_1)$. Thus
$\mH^{-1}_{\widehat{R}} \neq 0$ while $\mH^{-1}_R=0$, by Theorem
\ref{4.4SSc}. Now Lemma \ref{Pet} implies that there exists a formal
fibre of $R$ which is not Cohen-Macaulay. As $R$ is an integral
local domain, we have non--$\CM(R)= \{\fm\}$.}
\end{exam}

\section{Rings whose formal fibres are Cohen-Macaulay}

\quad It is shown in Corollary \ref{414} that if all formal fibres
of $R$ are Cohen-Macaulay, then  the Cohen-Macaulay locus of any
finitely generated $R$--module $M$ is a Zariski--open subset of
$\Spec R$. This result motivates us to determine rings whose formal
fibres are Cohen-Macaulay. More precisely, we study the affect of
certain formal fibres being Cohen-Macaulay on the structure of a
module.

Throughout this section $(R, \fm)$ is a local ring and $M$ is a
finitely generated  $R$--module of dimension $d$.

We begin with the following result which is the heart of the proof of
our main result Theorem \ref{B}
 \begin{prop}\label{4.21}\index{formal fibre}
 \index{uniform local cohomological annihilator}
Assume that $\fp$ is a prime ideal of $\emph{\Spec} R$ such that
$R/\fp$ has a uniform local cohomological annihilator. Then the
formal fibre of $R$ over $\fp$ is Cohen-Macaulay.
\end{prop}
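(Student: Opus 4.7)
The plan is to push the uniform local cohomological annihilator $x\in R\setminus\fp$ of $R/\fp$ up to an annihilator of $\widehat{R}/\fp\widehat{R}$ regarded as an $\widehat{R}$-module, and then invoke Corollary \ref{Zx} applied to $\widehat{R}/\fp\widehat{R}$ to Cohen--Macaulayness of each stalk of the formal fibre.

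First I would fix $x\in R\setminus\fp$ with $x\H^i_\fm(R/\fp)=0$ for all $i<\dim R/\fp$. By the Flat base change theorem, there is an isomorphism $\H^i_{\widehat{\fm}}(\widehat{R}/\fp\widehat{R})\cong\H^i_\fm(R/\fp)\otimes_R\widehat{R}$, and since $\widehat{R}/\fp\widehat{R}\cong\widehat{R/\fp}$ preserves dimension, $\dim\widehat{R}/\fp\widehat{R}=\dim R/\fp$. Hence $x$ kills $\H^i_{\widehat{\fm}}(\widehat{R}/\fp\widehat{R})$ for every $i<\dim\widehat{R}/\fp\widehat{R}$. To complete the transfer I would verify that $x$ avoids every minimal prime of $\widehat{R}/\fp\widehat{R}$: if $\fQ$ is minimal over $\fp\widehat{R}$, then Going Down for the flat map $R\to\widehat{R}$ forces $\fQ\cap R=\fp$, whence $x\notin\fQ$ because $x\notin\fp$. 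Thus $x$ is a uniform local cohomological annihilator of $\widehat{R}/\fp\widehat{R}$ as an $\widehat{R}$-module.

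At this point Corollary \ref{Zx} applied to the $\widehat{R}$-module $\widehat{R}/\fp\widehat{R}$ gives that $(\widehat{R}/\fp\widehat{R})_x$ is Cohen--Macaulay, i.e.\ $(\widehat{R}/\fp\widehat{R})_\fQ$ is Cohen--Macaulay for every $\fQ\in\Spec\widehat{R}$ with $\fp\widehat{R}\subseteq\fQ$ and $x\notin\fQ$. On the other hand, $\widehat{R}\otimes_R k(\fp)\cong(R\setminus\fp)^{-1}(\widehat{R}/\fp\widehat{R})$, so its prime ideals correspond bijectively to those $\fQ\in\Spec\widehat{R}$ satisfying $\fQ\supseteq\fp\widehat{R}$ and $\fQ\cap R=\fp$. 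For any such $\fQ$, the equality $\fQ\cap R=\fp$ together with $x\notin\fp$ gives $x\notin\fQ$, so $(\widehat{R}/\fp\widehat{R})_\fQ$ is Cohen--Macaulay by the previous step. Since every localization of $\widehat{R}\otimes_R k(\fp)$ at a prime ideal is Cohen--Macaulay, the formal fibre itself is a Cohen--Macaulay ring.

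The main obstacle is the transfer of the annihilator to $\widehat{R}$: one needs both that the cohomological annihilating relations survive the flat base change (routine) and that $x$ avoids every minimal prime of the extended module $\widehat{R}/\fp\widehat{R}$ (the flat Going Down input). Once these are secured, Corollary \ref{Zx} together with the correct identification of the primes of the formal fibre completes the argument without further effort.
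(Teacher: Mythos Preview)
Your argument is correct and considerably more direct than the paper's. You transfer the uniform local cohomological annihilator $x$ of $R/\fp$ to $\widehat{R}/\fp\widehat{R}$ via flat base change (for the annihilation of $\H^i_{\widehat{\fm}}$) and Going Down (for the avoidance of minimal primes), and then conclude immediately from Corollary~\ref{Zx} that $(\widehat{R}/\fp\widehat{R})_\fQ$ is Cohen--Macaulay whenever $x\notin\fQ$, which covers every prime of the formal fibre. The paper instead argues by contradiction: assuming some $\fq$ in the formal fibre has $(\widehat{R}/\fp\widehat{R})_\fq$ not Cohen--Macaulay, it takes $\fq$ minimal in the non--CM locus, uses finiteness of Cousin complexes over $\widehat{R}$ (Proposition~\ref{233} and Theorem~\ref{229}) together with the g.CM characterization of Lemma~\ref{3.8} to force $(\widehat{R}/\fp\widehat{R})_\fq$ to be generalized Cohen--Macaulay, and then applies the weak general shifted localization principle (Theorem~\ref{wgs}) and Theorem~\ref{atc} to produce $\fp\in\Att\H^j_\fm(R/\fp)$ for some $j<\dim R/\fp$, contradicting Lemma~\ref{3.6}. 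Your route bypasses all of this machinery---Cousin complexes, attached primes, and shifted localization are not needed---at the cost of relying on Corollary~\ref{Zx}, which in turn rests on the Faltings-type inequality of Theorem~\ref{935BS}. The paper's longer proof has the virtue of illustrating the thesis's central tools, but yours is the more economical argument.
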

\begin{proof}
 It is well known that $(R_\fp/\fp R_\fp)\otimes_R \widehat{R}\cong
S^{-1}(\widehat{R}/\fp \widehat{R})$, where $S$ is the image of
$R\setminus \fp$ in $\widehat{R}$. Therefore we want to show that
$(S^{-1}(\widehat{R}/\fp \widehat{R}))_{S^{-1}\fq}$ is
Cohen-Macaulay for all $\fq\in \Spec \widehat{R}$ with $S\cap\fq=
\emptyset$. It is enough to show that $(\widehat{R}/\fp
\widehat{R})_\fq$ is Cohen-Macaulay $\widehat{R}_\fq$--module. Since
$R/\fp $ has a uniform local cohomological annihilator,
$\widehat{R}/\fp \widehat{R}$ has a uniform local cohomological
annihilator which, in particular, implies that $\widehat{R}/\fp
\widehat{R}$ is equidimensional by Proposition \ref{equ}. Assume,
contrarily, $(\widehat{R}/\fp \widehat{R})_\fq$ is not
Cohen-Macaulay. We may assume that $\fq\in\Min(\nCM(\widehat{R}/\fp
\widehat{R}))$ and that $(\fq\cap R)\cap(R\setminus\fp)= \emptyset$.
In other words, $\nCM((\widehat{R}/\fp \widehat{R})_\fq)= \{\fq
\widehat{R}_\fq\}$ and $\fq\cap R= \fp$.

Let us replace $R$ and $M$ in Lemma \ref{3.3} by $\widehat{R}_\fq$
and $(\widehat{R}/\fp \widehat{R})_\fq$, respectively. Note that
$\widehat{R}/\fq'$ is equidimensional for all $\fq'\in\Spec
\widehat{R}$, so that $\mathcal{C}_{\widehat{R}}(\widehat{R}/\fq')$
is finite by Proposition \ref{233} and thus
$\mathcal{C}_{\widehat{R}_\fq}(\widehat{R}_\fq/\fq'\widehat{R}_\fq)$
is finite. Therefore $\widehat{R}_\fq/\fq'\widehat{R}_\fq$ has a
uniform local cohomological annihilator as $\widehat{R}_\fq$--module
 for all $\fq'\in\Spec \widehat{R}$ with $\fq'\subseteq\fq$ by Theorem \ref{229}. As $(\widehat{R}/\fp
\widehat{R})_\fq$ is equidimensional, we can apply Lemma \ref{3.8}
to deduce that $(\widehat{R}/\fp \widehat{R})_\fq$ is  g.CM as
$\widehat{R}_\fq$--module. In particular,
$\H_{\fq\widehat{R}_\fq}^i((\widehat{R}/\fp \widehat{R})_\fq))$ is a
non--zero  $\widehat{R}_\fq$--module of finite length for some
$i<\dim(\widehat{R}/\fp \widehat{R})_\fq$ for which we get $\Att
\H_{\fq\widehat{R}_\fq}^i((\widehat{R}/\fp \widehat{R})_\fq)=
\{\fq\widehat{R}_\fq\}$. Now, the weak general shifted localization
principle (Theorem \ref{wgs}) \index{weak general shifted
localization principle} implies that $\fq\in\Att
\H_{\widehat{\fm}}^j(\widehat{R}/\fp\widehat{R})$ for some $j< \dim
R/\fp$ which gives $\fp= \fq\cap R\in\Att \H_\fm^j(R/\fp)$. This
contradicts with Lemma \ref{3.6}.
\end{proof}

The above proposition enables us to give a characterization of a
finitely generated module which admits a uniform local cohomological
annihilator in terms of a certain set of formal fibres of the ground
ring.

\begin{thm}\label{B}\index{formal fibre}\index{uniform local cohomological
annihilator}
 The following statements
are equivalent.
\begin{itemize}
\item[\emph{(i)}] $\widehat{M}$ is equidimensional $\widehat{R}$--module
and all formal fibres of $R$ over minimal members of $\emph\Supp M$
are Cohen-Macaulay.
\item[\emph{(ii)}] $M$ has a uniform local cohomological annihilator.
\end{itemize}
\end{thm}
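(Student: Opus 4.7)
The plan is to use Proposition \ref{ulc} as the backbone: $M$ has a uniform local cohomological annihilator if and only if $M$ is locally equidimensional and $R/\fp$ has such an annihilator for every $\fp\in\Min M$. Since $(R,\fm)$ is local, local equidimensionality is just equidimensionality of $M$, so each implication reduces to a prime-by-prime statement over $\Min M$.

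For $(ii)\Rightarrow(i)$, I would take a uniform local cohomological annihilator $x$ of $M$. Flat base change gives $\H^i_\fm(M)\otimes_R\widehat{R}\cong\H^i_{\widehat{\fm}}(\widehat{M})$, and going-down for the faithfully flat map $R\to\widehat{R}$ shows that every $\fP\in\Min\widehat{M}$ contracts to some $\fp\in\Min M$; hence $x$ remains a uniform local cohomological annihilator of $\widehat{M}$, and Proposition \ref{equ} yields that $\widehat{M}$ is equidimensional. Applying Proposition \ref{ulc} in the forward direction to $M$ extracts a uniform local cohomological annihilator for each $R/\fp$, $\fp\in\Min M$, and Proposition \ref{4.21} converts this into Cohen-Macaulayness of the formal fibre of $R$ over $\fp$.

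For $(i)\Rightarrow(ii)$, since equidimensionality of $\widehat{M}$ forces equidimensionality of $M$, Proposition \ref{ulc} reduces the task to producing a uniform local cohomological annihilator of $R/\fp$ for each $\fp\in\Min M$. Fix such $\fp$. First I would show $\widehat{R/\fp}=\widehat{R}/\fp\widehat{R}$ is equidimensional: any minimal prime $\fP$ of $\fp\widehat{R}$ contracts to $\fp$ (else going-down would produce a strictly smaller prime of $\widehat{R}$ containing $\fp\widehat{R}$), and any $\fP'\subseteq\fP$ with $\fP'\supseteq\Ann\widehat{M}=(\Ann M)\widehat{R}$ satisfies $\fp\subseteq\fP'\cap R\subseteq\fp$ (via the minimality of $\fp$ in $\Min M$), so $\fP'\supseteq\fp\widehat{R}$ and minimality of $\fP$ gives $\fP'=\fP$. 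Hence $\fP\in\Min\widehat{M}$, and equidimensionality of $\widehat{M}$ yields $\dim\widehat{R}/\fP=\dim\widehat{M}=\dim R/\fp$ for each such $\fP$. Since $\widehat{R}$ is complete, it is universally catenary with Cohen-Macaulay formal fibres, so Proposition \ref{233} applied to the equidimensional $\widehat{R}$-module $\widehat{R/\fp}$ makes $\mathcal{C}_{\widehat{R}}(\widehat{R/\fp})$ finite; by Remark \ref{3.1}, non--$\CM(\widehat{R/\fp})=\V(J)$ with $J=\prod_i(0:_{\widehat{R}}\mH^i_{\widehat{R/\fp}})$.

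Now the decisive step: the Cohen-Macaulay formal fibre over $\fp$, via Corollary \ref{ffcm} and the fact that $(R/\fp)_\fp$ is a field, places every prime $\overline{\fP}\in\Spec\widehat{R/\fp}$ with $\overline{\fP}\cap(R/\fp)=0$ into $\CM(\widehat{R/\fp})$; so every prime of $\V(J)$ has nonzero contraction to the domain $R/\fp$. For each $i$ with $\mH^i_{\widehat{R/\fp}}\neq 0$, choosing a nonzero element of $\fQ^{(i)}_j\cap(R/\fp)$ for each minimal support prime $\fQ^{(i)}_j$ and multiplying yields a nonzero element of $R/\fp$ in $\sqrt{0:_{\widehat{R}}\mH^i_{\widehat{R/\fp}}}$, and an appropriate product of powers across $i$ produces a nonzero $x\in(R/\fp)\cap J$. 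Corollary \ref{c224} applied to $\widehat{R}$ and $\widehat{R/\fp}$ with $\fa=\widehat{\fm}$ then gives $x\cdot\H^i_{\widehat{\fm}}(\widehat{R/\fp})=0$ for $i<\dim R/\fp$; since $\H^i_{\widehat{\fm}}(\widehat{R/\fp})=\H^i_\fm(R/\fp)$, any lift of $x$ to $R\setminus\fp$ is a uniform local cohomological annihilator of $R/\fp$. The main obstacle is precisely this descent: without the Cohen-Macaulay formal fibre hypothesis, $\V(J)$ could contain primes contracting to zero in $R/\fp$ and then $J\cap(R/\fp)$ might vanish, leaving no annihilator in $R$.
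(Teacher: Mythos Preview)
Your proof is correct. The direction (ii)$\Rightarrow$(i) matches the paper's argument essentially verbatim. For (i)$\Rightarrow$(ii), however, you take a genuinely different route from the paper.

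The paper works \emph{globally with $\widehat{M}$}: it applies Proposition~\ref{233} once to obtain that $\mathcal{C}_{\widehat{R}}(\widehat{M})$ is finite, so that non--$\CM(\widehat{M})=\V(J)$ for $J=\prod_i(0:_{\widehat{R}}\mH^i_{\widehat{M}})$, and then argues by contradiction that $J\cap R\not\subseteq\bigcup_{\fp\in\Min M}\fp$. The contradiction is obtained exactly as in your decisive step, but at the level of $\widehat{M}$: if some $\fq\in\V(J)$ contracted to $\fp\in\Min M$, the Cohen--Macaulay formal fibre over $\fp$ together with Corollary~\ref{ffcm} would force $M_\fp$ to be non--Cohen--Macaulay, which is absurd since $\dim M_\fp=0$. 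An element of $J\cap R$ avoiding all minimal primes of $M$ is then (after a power) the desired annihilator.

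You instead pass through Proposition~\ref{ulc}, reducing to the domains $R/\fp$ for $\fp\in\Min M$, and run the same machinery on each $\widehat{R/\fp}$ separately, including a separate verification that $\widehat{R/\fp}$ is equidimensional. Your route is more modular and yields the per-prime conclusion that each $R/\fp$ has a uniform local cohomological annihilator as an explicit intermediate step; the paper's route is shorter, avoiding the equidimensionality check for $\widehat{R/\fp}$ and the prime-by-prime bookkeeping, at the cost of a slightly less transparent final prime-avoidance step. Both hinge on the same insight: the Cohen--Macaulay formal fibre hypothesis is precisely what prevents the non--Cohen--Macaulay locus of the relevant completion from contracting into the minimal primes.
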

\begin{proof}\index{Cohen-Macaulay!-locus, $\CM(M)$}
\index{non--Cohen-Macaulay locus, non-$\CM(M)$}
(i)$\Rightarrow$(ii). By Proposition \ref{233},
$\mathcal{C}_{\widehat{R}}(\widehat{M})$ is finite, which implies
that the Cohen-Macaulay locus of $M$ is open or equivalently
$\Min($non-$\CM(\widehat{M}))$ is a finite set (see Remark
\ref{3.1}). Thus Lemma \ref{139} implies that
\begin{equation}
\underset{\fq\in \nCM(\widehat{M})} {\cap}\fq \not\subseteq
\underset{\fq\in\Min \widehat{M}} {\cup}\fq.
\end{equation}
Note that for an element $r\in(\underset{\fq\in \nCM(\widehat{M})}
{\cap}\fq)\cap R$, Corollary \ref{c224} implies that
$r^n\H^i_{\widehat{\fm}}(\widehat{M})= 0$ for some positive integer
$n$ and for all $0\leq i< \dim M$. On the other hand
 $\H_{\widehat{\fm}}^i(\widehat{M})\cong\H_\fm^i(M)$, so  it is
enough to show that
\begin{equation}\label{sub}
(\underset{\fq\in \nCM(\widehat{M})} {\cap}\fq) \cap R\not\subseteq
\underset{\fp\in\Min M} {\cup}\fp.\end{equation}

Assume contrarily that (\ref{sub}) does not hold. Then since
$\Min(\nCM(\widehat{M}))$ is a finite set, there is $\fp\in\Min M$
such that $\fp= \fq\cap R$ for some $\fq\in \nCM(\widehat{M})$. Note
that $R_\fp\longrightarrow (\widehat{R})_\fq$ is a faithfully flat
ring homomorphism and its fibre ring over $\fp R_\fp$ is
$((R_\fp/\fp R_\fp) \otimes_R \widehat{R})_\fq$ which is
Cohen-Macaulay by our assumption. Therefore, by Corollary
\ref{ffcm}, $M_\fp$ is not Cohen-Macaulay. This contradicts with the
fact that $\dim_{R_\fp}(M_{\fp})= 0$.

(ii)$\Rightarrow$(i). As $M$ has a uniform local cohomological
annihilator, $M$ is equidimensional and $R/\fp$ has uniform local
cohomological annihilator for all minimal prime $\fp$ of $M$, by
Proposition \ref{ulc}. Thus the formal fibre over $\fp$ is
Cohen-Macaulay for $\fp\in\Min M$ by Proposition \ref{4.21}.
\end{proof}

The following result shows that if all formal fibres of a ring $R$
are Cohen-Macaulay, then $R$ is universally catenary if and only if
$\mathcal{C}_R(R/\fp)$ is finite for all $\fp\in\Spec R$.
\begin{cor} \label{C}\index{uniform local cohomological
annihilator}

\index{Cousin complex, $\C_R(M)$!finite-} \index{universally
catenary}
The following statements are equivalent. \index{uniform local
cohomological annihilator}\index{formal fibre}
\begin{itemize}
\item[\emph{(i)}] $R$ is universally catenary  ring and all of its formal fibres are Cohen-Macaulay.
\item[\emph{(ii)}] The Cousin complex $\mathcal{C}_{R}(R/\fp)$ is finite for all $\fp\in\emph\Spec R$.
\item[\emph{(iii)}]  $R/\fp$ has
a uniform local cohomological annihilator for all $\fp\in\emph\Spec
R$.
\end{itemize}
\end{cor}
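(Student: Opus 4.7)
The plan is to prove the three implications cyclically, $(\mathrm{i})\Rightarrow(\mathrm{ii})\Rightarrow(\mathrm{iii})\Rightarrow(\mathrm{i})$, by specializing the module-theoretic machinery already developed to the test modules $M=R/\fp$. The point is that each $R/\fp$ is a domain, hence automatically equidimensional as an $R$--module, which allows previous results that required equidimensionality to apply without extra hypotheses.

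For $(\mathrm{i})\Rightarrow(\mathrm{ii})$, I would take an arbitrary $\fp\in\Spec R$ and apply Proposition \ref{233} directly to $M=R/\fp$: since $R$ is universally catenary with Cohen-Macaulay formal fibres and $R/\fp$ is equidimensional, $\mathcal{C}_R(R/\fp)$ is finite. For $(\mathrm{ii})\Rightarrow(\mathrm{iii})$, apply Theorem \ref{229} to $M=R/\fp$ to conclude that $R/\fp$ has a uniform local cohomological annihilator. Both of these are immediate.

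The heart of the argument is $(\mathrm{iii})\Rightarrow(\mathrm{i})$. Given $\fp\in\Spec R$, $\Supp(R/\fp)=\V(\fp)$ has the unique minimal element $\fp$. Applying Theorem \ref{B} to $M=R/\fp$ delivers two things at once: the formal fibre of $R$ over $\fp$ is Cohen-Macaulay, and $\widehat{R/\fp}$ is equidimensional as an $\widehat{R}$--module. Letting $\fp$ vary over $\Spec R$, the first conclusion already yields that all formal fibres of $R$ are Cohen-Macaulay. Alternatively, Proposition \ref{4.21} gives the formal-fibre statement without going through Theorem \ref{B}.

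The genuine obstacle is to deduce that $R$ itself is universally catenary, not merely that each quotient $R/\fp$ is (the latter being immediate from Corollary \ref{cat} applied to $M=R/\fp$). Here I would invoke Ratliff's characterization for local rings: $R$ is universally catenary iff $\widehat{R/\fp}$ is equidimensional for every $\fp\in\Spec R$. Since that equidimensionality has just been produced from Theorem \ref{B}, Ratliff's criterion closes the loop. This is essentially the only external ingredient needed beyond the results of Sections 2.2, 4.1 and the first half of Section 4.2; everything else is bookkeeping.
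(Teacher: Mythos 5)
Your implications (i)$\Rightarrow$(ii) and (ii)$\Rightarrow$(iii) coincide with the paper's (Proposition~\ref{233} and Theorem~\ref{229} applied to $M=R/\fp$), and for (iii)$\Rightarrow$(i) both you and the paper obtain Cohen-Macaulayness of the formal fibres from Proposition~\ref{4.21}. The only divergence is the universal catenariness step. The paper applies Corollary~\ref{cat} to $M=R/\fp$ to conclude that each $R/\fp$ is universally catenary, and then passes to $R$: any finitely generated $R$--algebra modulo a prime $\fQ$ lying over $\fp\in\Spec R$ is a quotient of the catenary ring $(R/\fp)[X_1,\dots,X_n]$, hence catenary, and catenariness of a noetherian ring can be checked on its integral quotients. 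You instead read off formal equidimensionality of $\widehat{R/\fp}$ from Theorem~\ref{B} and invoke Ratliff's criterion. Both routes are correct. Your use of Ratliff is a heavier tool than the situation demands, and it also routes the argument through Theorem~\ref{B} (Section~4.2) rather than the lighter Corollary~\ref{cat} (Section~2.2); but since Theorem~\ref{B} is already proved at that point in the thesis, nothing is circular, and the proof stands.
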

\begin{proof}
(i)$\Rightarrow$(ii) is clear by Proposition \ref{233}.

(ii)$\Rightarrow$(iii) is clear by Theorem \ref{229}.

(iii)$\Rightarrow$(i). By Corollary \ref{cat}, $R/\fp$ is
universally catenary for all  primes $\fp$ and so is $R$.  The rest
is clear by Proposition \ref{4.21}.

\end{proof}

As we have seen in Lemma \ref{228},  non--$\CM(M)\subseteq
\V(\mathrm{a}(M))$. Our final goal is to investigate that when the
equality holds. In 1982,  Schenzel proved that the equality holds
when $M$ is equidimensional and $R$ posses a dualizing
complex\cite[p. 52]{Sc1}. Recall that if a local  $R$ posses a
dualizing complex \index{dualizing complex} then it is a homomorphic
image of a Gorenstein local ring by \cite[Theorem 6.1]{K1}  and so
$R$ is universally catenary and all formal fibres of $R$ are
Cohen-Macaulay. Hence $\C_R(M)$ is finite for all equidimensional
$R$--module $M$ by Corollary \ref{216}  and Lemma \ref{31.7Ma}.

In the following,  we see that non--$\CM(M)= \V(\mathrm{a}(M))$
whenever $\mathcal{C}_R(M)$ is finite.

\begin{cor}\label{F}\index{Cousin complex, $\C_R(M)$!finite-}
\index{non--Cohen-Macaulay locus, non-$\CM(M)$} \index{a@$\a(M)$}
Assume that $M$ is a finitely generated $R$--module of dimension $d$
and that $\mathcal{C}_R(M)$ is finite.  Then

\centerline{$\V(\prod_{i= -1}^{d-1}(0 :_R
\mathcal{H}_M^i))=$\emph{non}--$\emph{\CM}(M)=\V(\mathrm{a}(M))$.}
\end{cor}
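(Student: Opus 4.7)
The plan is to prove the two equalities separately, with the product computation serving as the bridge that forces both varieties to coincide.

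For the first equality $\V(\prod_{i=-1}^{d-1}(0:_R \mathcal{H}_M^i))=\text{non-}\CM(M)$, I would invoke Remark~\ref{3.1}, which, under the standing hypothesis that $\mathcal{C}_R(M)$ is finite, already identifies $\text{non-}\CM(M)$ with $\V(\prod_{i\geq -1}(0:_R \mathcal{H}_M^i))$. Truncating the product at $i=d-1$ costs nothing: Lemma~\ref{1310}(ii) gives $\mathcal{H}_M^{d-1}=\mathcal{H}_M^{d}=0$, so all factors for $i\geq d-1$ equal $R$, and Theorem~\ref{2.7S1}(ii) forces all further $\mathcal{H}_M^i$ to vanish as well. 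Hence the two products have the same radical and cut out the same closed set.

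For the containment $\text{non-}\CM(M)\subseteq \V(\mathrm{a}(M))$, I would quote Lemma~\ref{228} verbatim: every prime at which $M$ fails to be Cohen--Macaulay contains $\mathrm{a}(M)$.

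The reverse containment $\V(\mathrm{a}(M))\subseteq \text{non-}\CM(M)$ is where the Cousin data must enter, and this is the only step that really requires $\mathcal{C}_R(M)$ finite. I would derive it from the chain
\[
\prod_{i=-1}^{d-1}(0:_R \mathcal{H}_M^i)\ \subseteq\ \mathrm{a}(M),
\]
which passes to varieties in the right direction and, combined with the first equality, delivers $\V(\mathrm{a}(M))\subseteq \V(\prod_{i=-1}^{d-1}(0:_R \mathcal{H}_M^i))=\text{non-}\CM(M)$. To prove this inclusion of ideals, I would apply Corollary~\ref{c224} with $\fa=\fm$ (so $\h_M\fa=d$) and $r=d-1$, yielding
\[
\prod_{i=-1}^{d-2}(0:_R \mathcal{H}_M^i)\ \subseteq\ \bigcap_{i=0}^{d-1}(0:_R \H^i_\fm(M))\ =\ \mathrm{a}(M),
\]
and then note that $\mathcal{H}_M^{d-1}=0$ makes the $i=d-1$ factor equal $R$, so it may be appended on the left without changing anything. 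The degenerate case $d=0$ is handled separately: here $M$ is Cohen--Macaulay (so $\text{non-}\CM(M)=\emptyset$), $\mathrm{a}(M)=R$ by the empty-intersection convention, and the product in question is $(0:_R \mathcal{H}_M^{-1})=R$ by exactness of $\mathcal{C}_R(M)$ (Theorem~\ref{2.4S4}), so all three varieties are empty. The only potential obstacle is bookkeeping at the boundary indices, but Lemma~\ref{1310}(ii) and the single application of Corollary~\ref{c224} dispatch this cleanly.
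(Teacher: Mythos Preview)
Your proof is correct and follows the same route as the paper: Remark~\ref{3.1} for the first equality, and Lemma~\ref{228} together with Corollary~\ref{c224} (applied with $\fa=\fm$, $r=d-1$) for the second. You are simply more explicit than the paper about the index truncation via Lemma~\ref{1310}(ii) and about the degenerate case $d=0$, but the argument is the same.
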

\begin{proof}
The first equality is in Remark \ref{3.1}. The second inequality is
clear by Lemma \ref{228} and Corollary \ref{c224}.
\end{proof}

\index{formal fibre}\index{Cohen-Macaulay} As we have seen in
Proposition \ref{4.21}, if $R/\fp$ has a uniform local cohomological
annihilator, then the formal fibre of $R$ over $\fp$ is
Cohen-Macaulay. So to find the Cohen-Macaulay formal fibres, it is
useful to study  those prime ideals $\fp$ such that $R/\fp$ has a
uniform local cohomological annihilator. Characterizing these ideals
enables also us to characterize those modules $M$ with non--$\CM(M)=
\V(\mathrm{a}(M))$ (see also Corollary \ref{C}).
\begin{prop}\label{G4}\index{uniform local cohomological
annihilator}\index{a@$\a(M)$}
Assume that $\fp\in\emph\Spec R$. A necessary and sufficient
condition for  $R/\fp$ to have a uniform local cohomological
annihilator is that there exists an equidimensional $R$--module $M$
such that
 $\fp\in\emph\Supp M\setminus\V(\mathrm{a}(M))$.
\end{prop}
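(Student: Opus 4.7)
Necessity is immediate: take $M=R/\fp$. As a prime quotient, $R/\fp$ is a domain, hence equidimensional with $\Min M=\{\fp\}$, and any uniform local cohomological annihilator $x\in R\setminus\fp$ of $R/\fp$ lies in $\a(R/\fp)\setminus\fp$, so $\fp\notin\V(\a(M))$.

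For sufficiency, suppose $M$ is equidimensional with $\fp\in\Supp M\setminus\V(\a(M))$ and fix $x\in\a(M)\setminus\fp$. The strategy is to apply Theorem~\ref{B} to the $R$-module $R/\fp$, whose only minimal prime is $\fp$: it then suffices to show that the formal fibre of $R$ over $\fp$ is Cohen--Macaulay and that $\widehat R/\fp\widehat R$ is equidimensional. The formal fibre statement is the clean half. Lemma~\ref{228} applied to $M$ gives $M_\fp$ Cohen--Macaulay, and by flat base change $x\in\a(\widehat M)$. For any $\fQ\in\Spec\widehat R$ lying over $\fp$ we have $x\notin\fQ$, so Lemma~\ref{228} applied to $\widehat M$ yields $\widehat M_\fQ$ Cohen--Macaulay. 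Since $R_\fp\to\widehat R_\fQ$ is flat local with $\widehat M_\fQ\cong M_\fp\otimes_{R_\fp}\widehat R_\fQ$, and both $M_\fp$ and $\widehat M_\fQ$ are Cohen--Macaulay, Lemma~\ref{standard}(iii) forces the fibre $\widehat R_\fQ\otimes_{R_\fp}k(\fp)$ to be Cohen--Macaulay; varying $\fQ$ over primes above $\fp$ yields the desired conclusion for the formal fibre.

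The main obstacle is the equidimensionality of $\widehat R/\fp\widehat R$. By Lemma~\ref{31.7Ma} applied to the equidimensional domain $R/\fp$, this follows once $R/\fp$ is shown to be universally catenary, and by Corollary~\ref{cat} it in turn reduces to producing a finitely generated $R$-module $N$ with a uniform local cohomological annihilator and with $0:_R N\subseteq\fp$. The obvious candidate $N=M$ need not work, because $\a(M)\not\subseteq\fp$ does not rule out $\a(M)\subseteq\fq$ for some $\fq\in\Min M$ with $\fq\not\subseteq\fp$. My plan is therefore to replace $M$ by the equidimensional quotient $N:=M/\Gamma_\fb(M)$, where $\fb=\bigcap\{\fq\in\Min M:\fq\not\subseteq\fp\}$: the minimal primes of $N$ are exactly those $\fq\in\Min M$ lying under $\fp$, and for each such $\fq$ the containment $\fq\subseteq\fp$ together with $\a(M)\not\subseteq\fp$ gives $\a(M)\not\subseteq\fq$. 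The technical core is then to use the long exact sequence of local cohomology attached to $0\to\Gamma_\fb(M)\to M\to N\to 0$, combined with the annihilation $x\H^i_\fm(M)=0$ for $i<\dim M$, to construct a uniform local cohomological annihilator of $N$; this is where the delicate dimension estimates on $\Gamma_\fb(M)$ have to be handled.
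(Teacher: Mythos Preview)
Your approach is genuinely different from the paper's and is essentially correct, but you stop just short of the finish line and make the last step sound harder than it is.

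The paper argues by induction on $h=\h_M\fp$. For $h=0$ it picks a submodule $N\subseteq M$ with $\Ass N=\{\fp\}$ and uses the long exact sequence to produce a uniform local cohomological annihilator of $N$, then invokes Proposition~\ref{ulc}. For $h>0$ it passes from $M$ to a submodule $K$ with $\Ass K=\Min M$, shows $\a(K)\not\subseteq\fp$, picks a non--zero--divisor $x\in\fp$ on $K$, and reduces to $K/xK$ with $\h_{K/xK}\fp<h$. Neither step uses Theorem~\ref{B}, and in fact the paper's argument never explicitly appeals to equidimensionality of the auxiliary modules, so the induction goes through cleanly.

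Your route through Theorem~\ref{B} is more conceptual and avoids induction entirely. The formal--fibre half of your argument is clean and correct. For the equidimensionality of $\widehat R/\fp\widehat R$, your plan to exhibit $N=M/\Gamma_\fb(M)$ with a uniform local cohomological annihilator and with $0:_RN\subseteq\fp$ is the right idea, but there are no ``delicate dimension estimates'' to handle: since $\Gamma_\fb(M)$ is finitely generated and $\fb$--torsion, some power $\fb^n$ annihilates it, and by prime avoidance you may choose $y\in\fb^n\setminus\bigcup_{\fq\in\Min M,\,\fq\subseteq\fp}\fq$ (each such $\fq$ being a minimal prime distinct from all the $\fq'\in\Min M$ with $\fq'\not\subseteq\fp$). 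Then $y$ kills every $\H^j_\fm(\Gamma_\fb(M))$, so the long exact sequence attached to $0\to\Gamma_\fb(M)\to M\to N\to 0$ gives $xy\,\H^i_\fm(N)=0$ for all $i<\dim M\geq\dim N$. You already verified $\Min N=\{\fq\in\Min M:\fq\subseteq\fp\}$ and $xy$ avoids each of these, so $xy$ is a uniform local cohomological annihilator of $N$. Corollary~\ref{cat} then yields that $R/0:_RN$, and hence its quotient $R/\fp$, is universally catenary; Lemma~\ref{31.7Ma} and Theorem~\ref{B} finish the argument.

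In short: complete the last paragraph with the one--line observation that $\Gamma_\fb(M)$ is killed by an element of $\fb^n$ avoiding the minimal primes below $\fp$, and your proof is done. Compared with the paper's height induction, your argument packages the work into a single application of Theorem~\ref{B}; the cost is reliance on that heavier result, while the paper's proof is self--contained within the elementary toolkit of Chapter~2.
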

\begin{proof}
The necessary condition is clear by taking $M:=R/\fp$. For the
converse, assume that there is an equidimensional $R$--module $M$
such that $\fp\in\Supp M\setminus\V(\mathrm{a}(M))$. We prove the
claim by induction on $h:=\h_M\fp$. When $h=0$, we have $\fp\in\Min
M$. Choose a submodule $N$ of $M$ with $\Ass N= \{\fp\}$ and $\Ass
M/N=\Ass M\setminus\{\fp\}$. It is clear that $(M/N)_\fp =0$ so that
$r(M/N)= 0$ for some $r\in R\setminus\fp$. On the other hand the
fact that $\mathrm{a}(M)\not\subseteq\fp$ implies that there is
$s\in R\setminus\fp$ such that $s\H_\fm^i(M)= 0$ for all $i<\dim M$.
The exact sequence
$\H_\fm^{i-1}(M/N)\longrightarrow\H_\fm^i(N)\longrightarrow\H_\fm^i(M)$
implies $rs\H_\fm^i(N)= 0$ for all $i<\dim N$. As $\fp\in\Min N$,
$R/\fp$ has a uniform local cohomological annihilator by Proposition
\ref{ulc}.

Now assume that $h>0$. For any $\fq\in\Supp M$ with $\fq\subset\fp$
we have $\fq\not\in\V(\mathrm{a}(M))$ so that $R/\fq$ has a uniform
local annihilator by induction hypothesis. As
$\fp\not\in\V(\mathrm{a}(M))$, $M_\fp$ is Cohen-Macaulay, by
Corollary  \ref{F}. Choose a submodule $K$ of $M$ with $\Ass K= \Min
M$ and $\Ass M/K= \Ass M\setminus\Min M $. If $\fp\in\Supp M/K$ then
there is $\fq\in\Ass M/K$ with $\fq\subseteq\fp$. Therefore
$\fq\in\Ass M$ and $\fq\not\in\Min M$. As $M_\fp$ is Cohen-Macaulay,
so is $M_\fq$ which gives $\fq\in\Min M$, which is a contradiction.
Hence we have $\fp\not\in\Supp M/K$ which yields $r(M/K)=0$ for some
$r\in A\setminus\fp$ and so, by applying local cohomology to the
exact sequence $0\longrightarrow K\longrightarrow M\longrightarrow
M/K\longrightarrow 0$, it follows that
$\mathrm{a}(K)\not\subseteq\fp$. As $M_\fp\cong K_\fp$, $K_\fp$ is
Cohen-Macaulay and $\h_K\fp>0$, there is $x\in\fp$ which is
non--zero--divisor on $K$. The exact sequence $0\longrightarrow
K\overset{x}{\longrightarrow} K\longrightarrow K/xK\longrightarrow
0$ implies that $\mathrm{a}(K)^2\subseteq\mathrm{a}(K/xK)$ which
implies that $\mathrm{a}(K/xK)\not\subseteq\fp$. As
$\h_{K/xK}(\fp)<h$, $R/\fp$ has a uniform local cohomological
annihilator by the induction hypothesis.
\end{proof}

As our first application of the above proposition, we have the following result.

\begin{cor}\label{4266}\index{Serre's condition $(S_n)$}\index{formal
fibre}\index{Cohen-Macaulay} Assume that $M$ is a finitely generated
$R$--module which satisfies the condition $(S_n)$. If $\C_R(M)$ is
finite, then the formal fibres of $R$ over all prime ideals
$\fp\in\emph{\Supp} M$ with $\h_M\fp\leq n$ are Cohen-Macaulay.
\end{cor}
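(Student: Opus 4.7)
The plan is to chain together several results that have just been established, in particular Proposition \ref{G4} and Proposition \ref{4.21}, once we verify that $M$ is equidimensional and that each relevant $\fp$ lies outside $\V(\mathrm{a}(M))$.

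First, since $R$ is local and $\C_R(M)$ is finite, Corollary \ref{cor} tells us that $M$ is locally equidimensional. Because $M$ is finitely generated and nonzero over the local ring $(R,\fm)$, we have $\fm\in\Supp M$ and hence $M=M_\fm$ is equidimensional. Second, fix $\fp\in\Supp M$ with $\h_M\fp\leq n$; thus $\dim M_\fp\leq n$. The Serre condition $(S_n)$ then gives
\[
\depth M_\fp\geq\min\{n,\dim M_\fp\}=\dim M_\fp,
\]
so $M_\fp$ is Cohen-Macaulay, i.e.\ $\fp\in\CM(M)$.

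Third, since $\C_R(M)$ is finite we may apply Corollary \ref{F} to obtain the equality non--$\CM(M)=\V(\mathrm{a}(M))$, and therefore $\fp\in\Supp M\setminus\V(\mathrm{a}(M))$. Now that we have an equidimensional module $M$ with $\fp$ in its support but outside $\V(\mathrm{a}(M))$, Proposition \ref{G4} applies and yields that $R/\fp$ admits a uniform local cohomological annihilator. Finally, Proposition \ref{4.21} turns this annihilator property directly into the desired conclusion: the formal fibre of $R$ over $\fp$ is Cohen-Macaulay.

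I do not expect a serious obstacle here; the statement is essentially a packaging of Corollary \ref{F}, Proposition \ref{G4} and Proposition \ref{4.21}, with the role of $(S_n)$ being exactly to supply Cohen-Macaulayness of $M_\fp$ at the primes of small $M$-height so that these primes fall inside $\CM(M)$. The one point to check carefully is the passage from ``locally equidimensional'' to ``equidimensional,'' which holds automatically in the local setting since $\fm\in\Supp M$, and this is the only place where the local hypothesis of Section 4.2 is used beyond what the cited results already assume.
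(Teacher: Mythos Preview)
Your proof is correct and follows essentially the same route as the paper: use $(S_n)$ to place $\fp$ in $\CM(M)$, invoke Corollary \ref{F} to get $\fp\notin\V(\mathrm{a}(M))$, apply Proposition \ref{G4}, and finish with Proposition \ref{4.21}. You are in fact more careful than the paper, which leaves both the equidimensionality check (required for Proposition \ref{G4}) and the final appeal to Proposition \ref{4.21} implicit.
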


\begin{proof}
Let $\fp\in\Supp M$ with $\h_M\fp\leq n$. Note that
$\V(\a(N))=$non-$CM(N)$ by Corollary \ref{F}. On the other hand
$M_\fp$ is Cohen-Macaulay, so that the result follows by Proposition
\ref{G4}.
\end{proof}

Now we may show that if $\C_R(M)$ is finite, then the formal fibres
of $R$ over some certain prime ideals are Cohen-Macaulay.

\begin{cor}\label{426}
Assume that $\C_R(M)$ is finite. Then the formal fibres of $R$ over
all prime ideals $\fp\in\emph{\Supp} M$ with $\emph{\h}_M\fp\leq 1$
are Cohen-Macaulay.
\end{cor}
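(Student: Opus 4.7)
The plan is to reduce to a module satisfying the Serre condition $(S_1)$ and then invoke Corollary \ref{4266}. The main technical input is Corollary \ref{cs1}: since $\mathcal{C}_R(M)$ is finite, there exists a finitely generated $R$--module $N$ which satisfies $(S_1)$, has $\Supp N = \Supp M$, and whose Cousin complex $\mathcal{C}_R(N)$ is again finite. So my first step would be to produce such an $N$.

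The second step is a purely formal observation: because $\Supp N = \Supp M$, for every prime $\fp$ of $R$ we have $\Supp N_\fp = \Supp M_\fp$ as subsets of $\Spec R_\fp$, and hence $\h_N\fp = \dim N_\fp = \dim M_\fp = \h_M\fp$. In particular, any prime $\fp \in \Supp M$ with $\h_M\fp \leq 1$ automatically satisfies $\fp \in \Supp N$ with $\h_N\fp \leq 1$.

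The third step is to apply Corollary \ref{4266} to the module $N$ with $n = 1$: since $N$ satisfies $(S_1)$ and $\mathcal{C}_R(N)$ is finite, the formal fibres of $R$ over every prime $\fq \in \Supp N$ with $\h_N\fq \leq 1$ are Cohen--Macaulay. Combining this with the translation from the previous step yields exactly the conclusion for $M$.

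There is no serious obstacle, since everything reduces to bookkeeping: the only nontrivial content is the existence of the $(S_1)$--replacement $N$ with the same support and a still--finite Cousin complex, which is already supplied by Corollary \ref{cs1}. If a subtlety arises, it would be confirming that $\h_N\fp = \h_M\fp$ in the edge case $\h_M\fp = 0$, where $\fp \in \Min M = \Min N$ and $(S_1)$ is automatic so that Corollary \ref{4266} still applies without issue.
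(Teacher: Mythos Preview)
Your proposal is correct and follows essentially the same approach as the paper: the paper's proof likewise invokes Corollary~\ref{cs1} to replace $M$ by an $(S_1)$--module $N$ with the same support and finite Cousin complex, and then applies Corollary~\ref{4266} with $n=1$. Your additional remark that $\h_N\fp=\h_M\fp$ because $\Supp N=\Supp M$ makes explicit a step the paper leaves to the reader, but the strategy is identical.
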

\begin{proof}
  By Corollary \ref{cs1}, there
exists a finitely generated $R$--module $N$ which satisfies $(S_1)$,
$\Supp N=\Supp M$ and $\C_R(N)$ is finite. Now the result follows by
Corollary \ref{4266}.
\end{proof}


In Corollary \ref{F}, it is shown that, for a finitely generated
$R$--module $M$, non--$\CM(M)= \V(\mathrm{a}(M))$ whenever
$\mathcal{C}_R(M)$ is finite. In the following we characterize those
modules $M$ satisfying  non--$\CM(M)= \V(\mathrm{a}(M))$ without
assuming that the Cousin complex of $M$ to be finite.

\begin{thm}\label{last}\index{non--Cohen-Macaulay locus, non-$\CM(M)$}\index{a@$\a(M)$}\index{uniform local
cohomological annihilator}\index{Cohen-Macaulay!-locus, $\CM(M)$}
For an equidimensional $R$--module $M$, the following statements are
equivalent.
\begin{itemize}
\item[\emph{(i)}] $R/\fq$ has a uniform local cohomological annihilator for
all $\fq\in\emph\CM(M)$.
\item[\emph{(i$'$)}] $\widehat{R}/\fq\widehat{R}$ is equidimensional
$\widehat{R}$--module and the formal fibre ring $(R_\fq /\fq
R_\fq)\otimes_R \widehat{R}$ is Cohen-Macaulay for all
$\fq\in\emph\CM(M)$.
\item[\emph{(ii)}] \emph{non}--$\emph\CM(M)= \emph{\V}(\mathrm{a}(M))$.
\item[\emph{(iii)}] \emph{non}--$\emph\CM(M)\supseteq \emph{\V}(\mathrm{a}(M))$.
\end{itemize}
\end{thm}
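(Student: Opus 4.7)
The plan is to prove the cycle (i) $\Leftrightarrow$ (i'), (ii) $\Leftrightarrow$ (iii), (ii) $\Rightarrow$ (i), and (i') $\Rightarrow$ (iii). The first equivalence follows from Theorem \ref{B} applied to $R/\fq$ for each $\fq \in \CM(M)$, since $\Supp(R/\fq)$ has $\fq$ as its unique minimal element. The equivalence (ii) $\Leftrightarrow$ (iii) holds because Lemma \ref{228} already supplies the inclusion non--$\CM(M) \subseteq \V(\a(M))$, so (iii) forces equality. For (ii) $\Rightarrow$ (i): given $\fq \in \CM(M) \cap \Supp M$, (ii) yields $\a(M) \nsubseteq \fq$, and Proposition \ref{G4} applied to the equidimensional module $M$ shows $R/\fq$ has a uniform local cohomological annihilator.

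The substantive direction is (i') $\Rightarrow$ (iii). I would first verify that $\widehat{M}$ is equidimensional as an $\widehat{R}$-module. Each $\fp \in \Min M$ lies in $\CM(M)$ since $M_\fp$ is Artinian; (i') then forces $\widehat{R}/\fp\widehat{R}$ to be equidimensional. Arguing as in Lemma \ref{eq}, any $\fP \in \Min \widehat{M}$ contracts to some $\fp \in \Min M$ and is minimal over $\fp\widehat{R}$, so $\dim \widehat{R}/\fP = \dim \widehat{R}/\fp\widehat{R} = \dim R/\fp = \dim M$, whence $\Min \widehat{M} = \Assh \widehat{M}$. All formal fibres of the complete local ring $\widehat{R}$ are trivially Cohen-Macaulay, so Corollary \ref{216} applied to $(\widehat{R}, \widehat{M})$ gives that $\mathcal{C}_{\widehat{R}}(\widehat{M})$ is finite, and Corollary \ref{F} yields non--$\CM(\widehat{M}) = \V(\a(\widehat{M}))$.

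To descend this to $R$, I would establish $\a(\widehat{M}) = \a(M)\widehat{R}$. Each $\H^i_\fm(M)$ coincides with $\H^i_{\widehat{\fm}}(\widehat{M})$ as an Artinian module, so it is annihilated by some $\fm^N$ and is naturally an $R/\fm^N = \widehat{R}/\fm^N\widehat{R}$-module; chasing annihilators through this identification gives $\Ann_{\widehat{R}}\H^i_\fm(M) = (\Ann_R\H^i_\fm(M))\widehat{R}$, and intersecting over $i$ using flatness of $\widehat{R}$ yields the identity. Now pick $\fp \in \V(\a(M))$ (automatically contained in $\Supp M$ since $0:_R M \subseteq \a(M)$) and choose $\fP \in \Spec \widehat{R}$ minimal over $\fp\widehat{R}$; flatness of $R \to \widehat{R}$ ensures $\fP \cap R = \fp$ and $\fP \in \Supp \widehat{M}$. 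Since $\fP \supseteq \fp\widehat{R} \supseteq \a(M)\widehat{R} = \a(\widehat{M})$, we have $\fP \in \V(\a(\widehat{M})) = $ non--$\CM(\widehat{M})$, so $\widehat{M}_\fP$ is not Cohen-Macaulay. If $\fp$ were in $\CM(M)$, (i') would provide a Cohen-Macaulay formal fibre over $\fp$, and Corollary \ref{ffcm} would force $\widehat{M}_\fP$ to be Cohen-Macaulay, a contradiction. Therefore $\fp \in$ non--$\CM(M)$, which establishes (iii).

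The main obstacle will be the descent step: one must carefully verify the identity $\a(\widehat{M}) = \a(M)\widehat{R}$, which relies on the Artinian nature of the local cohomology modules and the fact that such modules do not distinguish between $R$ and $\widehat{R}$, and then invoke the Cohen-Macaulay formal fibre hypothesis supplied by (i') via Corollary \ref{ffcm} to transfer failure of Cohen-Macaulayness from $\widehat{M}$ back to $M$.
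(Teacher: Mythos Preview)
Your treatment of the equivalences (i)$\Leftrightarrow$(i$'$), (ii)$\Leftrightarrow$(iii), and (ii)$\Rightarrow$(i) is correct and matches the paper's use of Theorem~\ref{B}, Lemma~\ref{228}, and Proposition~\ref{G4}.

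The gap lies in your descent step for (i$'$)$\Rightarrow$(iii). The assertion that $\H^i_\fm(M)$ ``is annihilated by some $\fm^N$'' is false: this holds only for modules of finite length, whereas $\H^i_\fm(M)$ is merely Artinian for $i<d$. Your derivation of $\Ann_{\widehat R}\H^i_\fm(M)=(\Ann_R\H^i_\fm(M))\widehat R$ therefore collapses, and with it the identity $\a(\widehat M)=\a(M)\widehat R$. What one gets for free is only the inclusion $\a(M)\widehat R\subseteq\a(\widehat M)$, giving $\V(\a(\widehat M))\subseteq\V(\a(M)\widehat R)$ --- the wrong direction for your purpose, since you need your chosen $\fP$ (minimal over $\fp\widehat R$) to land in $\V(\a(\widehat M))$. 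In fact, for a general Artinian $R$-module $A$ the equality $\Ann_{\widehat R}A=(\Ann_R A)\widehat R$ can fail: via Matlis duality it amounts to asking that the $\widehat R$-annihilator of an arbitrary finitely generated $\widehat R$-module be extended from $R$, which is not true without extra hypotheses such as the existence of a dualizing complex for $R$.

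The paper sidesteps this obstacle by passing through attached primes rather than through the ideal $\a(M)$. Given $\fp\supseteq\a(M)$, one picks $i<d$ with $\fp\supseteq 0:_R\H^i_\fm(M)$, chooses $\fq\in\Att_R\H^i_\fm(M)$ with $\fq\subseteq\fp$, and lifts $\fq$ via Theorem~\ref{atc} to some $\fQ\in\Att_{\widehat R}\H^i_{\widehat\fm}(\widehat M)$ with $\fQ\cap R=\fq$. This $\fQ$ automatically satisfies $\fQ\supseteq 0:_{\widehat R}\H^i_{\widehat\fm}(\widehat M)\supseteq\a(\widehat M)$, hence lies in $\V(\a(\widehat M))=\nCM(\widehat M)$ by Corollary~\ref{F}. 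If $\fq$ were in $\CM(M)$, condition (i)/(i$'$) together with Corollary~\ref{ffcm} would force $\widehat M_\fQ$ to be Cohen--Macaulay, contradicting $\fQ\in\nCM(\widehat M)$; hence $\fq\in\nCM(M)$ and so $\fp\in\nCM(M)$. Your argument can be repaired along precisely these lines.
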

\begin{proof}
The equivalence of (i) and (i$'$) is the subject of Theorem \ref{B}.

(i)$\Longrightarrow$(ii). The inclusion
non-$\CM(M)\subseteq\V(\mathrm{a}(M))$ is clear by Lemma \ref{228}.

Now assume that $\fp\supseteq\mathrm{a}(M)$. Thus there is an
integer $i$, $0\leq i< d$, such that $\fp\supseteq 0:_R
\H_\fm^i(M)$. There is $\fQ\in\Att
\H_{\widehat{\fm}}^i(\widehat{M})$ with
$\fq:=R\cap\fQ\in\Att\H_\fm^i(M)$  and $\fp\supseteq\fq$. To show
$\fp\in$non--$\CM(M)$ it is enough to show that
$\fq\in$non--$\CM(M)$. Assuming contrarily, $\fq\in\CM(M)$, $R/\fq$
has a uniform local cohomological annihilator by our assumption and
so the formal fibre $k(\fq)\otimes_R \widehat{R}$ is Cohen-Macaulay,
by Theorem \ref{B}. As the map $R_\fq\longrightarrow
\widehat{R}_\fQ$ is faithfully flat ring homomorphism, we find that
$k(\fq)\otimes_{R_\fq}\widehat{R}_\fQ$ is also Cohen-Macaulay.
Therefore the standard dimension and depth formulas (Lemma
\ref{standard}), applied to the faithfully flat extension
$R_\fq\longrightarrow \widehat{R}_\fQ$, implies that
$\widehat{M}_\fQ$ is Cohen-Macaulay. On the other hand, $R/\fr$ has
a uniform local cohomological annihilator for all $\fr\in\Min M$
(simply because in this case $M_\fr$ has zero dimension and so
$\fr\in\CM(M)$). Thus, by Proposition \ref{ulc}, $M$ has a uniform
local cohomological annihilator and so does $\widehat{M}$.
Therefore, by Proposition \ref{equ} $\widehat{M}$ is
equidimensional. Thus Corollary \ref{216} implies that the Cousin
complex $\mathcal{C}_{\widehat{R}}(\widehat{M})$ is finite. As
$\fQ\in\Att \H_{\widehat{\fm}}^i(\widehat{M})$, we have, by
Corollary \ref{F}, $\fQ\in$non--$\CM(\widehat{M})$. This is a
contradiction.

(iii)$\Longrightarrow$(i). Assume that $\fq\in\CM(M)$ so that
$\fq\not\supseteq\mathrm{a}(M)$ by our assumption. Now Proposition
\ref{G4} implies that $R/\fq$ has a uniform local cohomological
annihilator.
\end{proof}

Our last result in this section is a  consequence of the above
theorem, which improves Corollary \ref{426} to a generalization  of
the fact that, all formal fibres of a Cohen-Macaulay ring are
Cohen-Macaulay.

\begin{cor}\label{clast}
Assume that $\C_R(M)$ is finite. Then the formal fibres of $R$ over
all prime ideals $\fp\in\emph{\CM}(M)\cup\{\fp\in\emph{\Supp} M :
\emph{\h}_M\fp=1\}$ are Cohen-Macaulay.
\end{cor}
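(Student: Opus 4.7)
The plan is to reduce the two cases in the union to the two tools already developed at the end of Section~4.2, namely Theorem~\ref{last} (for the Cohen--Macaulay locus) and Corollary~\ref{426} (for height--one primes). Since $(R,\fm)$ is local and $\mathcal{C}_R(M)$ is finite, Corollary~\ref{cor} provides that $M$ is locally equidimensional and $R/0:_R M$ is universally catenary. Because $R$ is local, the unique maximal ideal meeting $\Supp M$ is $\fm$, so local equidimensionality is the same as equidimensionality. Thus $M$ satisfies the standing hypothesis of Theorem~\ref{last}.

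Next, I would invoke Corollary~\ref{F}, which gives the exact equality $\mathrm{non}\textrm{--}\CM(M)=\V(\mathrm{a}(M))$ whenever $\mathcal{C}_R(M)$ is finite. This is precisely condition~(ii) of Theorem~\ref{last}, and by the equivalence (ii)$\Leftrightarrow$(i$'$) established there, for every $\fq\in\CM(M)$ the quotient $\widehat{R}/\fq\widehat{R}$ is equidimensional and, more importantly, the formal fibre $(R_\fq/\fq R_\fq)\otimes_R \widehat{R}$ is Cohen--Macaulay. This settles the claim for all $\fp\in\CM(M)$, and in particular takes care of the minimal primes of $M$ (where $M_\fp$ has dimension zero and is trivially Cohen--Macaulay).

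For prime ideals $\fp\in\Supp M$ with $\h_M\fp=1$ the assertion is literally the content of Corollary~\ref{426}, so no further work is required; the corollary in question is, in effect, the union of the two disjoint statements.

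The only genuinely delicate point is the very first step, namely confirming that the finiteness of $\mathcal{C}_R(M)$ over a local ring supplies the equidimensionality hypothesis of Theorem~\ref{last}; once this is granted, the entire proof is a short concatenation of Corollary~\ref{cor}, Corollary~\ref{F}, Theorem~\ref{last} and Corollary~\ref{426}, with no further computation needed.
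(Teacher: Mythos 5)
Your proof is correct and follows essentially the same route as the paper: Corollary~\ref{426} for the height-one primes, and Corollary~\ref{F} combined with the (ii)~$\Leftrightarrow$~(i$'$) equivalence of Theorem~\ref{last} for primes in $\CM(M)$. You are slightly more explicit than the paper in citing Corollary~\ref{cor} to verify the equidimensionality hypothesis of Theorem~\ref{last}, which the paper leaves implicit.
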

\begin{proof}
By  Corollary \ref{426} all formal fibres of $R$ over prime ideals
$\fp$ with $\h \fp=1$, are Cohen-Macaulay. On the other hand
{non}--$\CM(M)= {\V}(\mathrm{a}(M))$ by Corollary \ref{F}. Now,
Theorem \ref{last} completes the proof.\end{proof}


\section{Some comments}

\index{Cousin complex, $\C_R(M)$!finite-} \quad Throughout this
section $(R,\fm)$ is a local ring. As we have seen in Corollary
\ref{cor}, if $\C_R(M)$  is finite, then  $M$ is equidimensional and
$R/0:_RM$ is universally catenary.  On the other hand, Proposition
\ref{233} shows that, if all formal fibres of a universally catenary
local ring $R$ are Cohen-Macaulay, then $\C_R(M)$ is finite for all
equidimensional $R$--module $M$. These results lead us to the
following  natural question.

\index{formal fibre}\index{universally
catenary}\index{equidimensional}
\begin{ques} \label{question} Assume that  $\C_R(M)$ is finite. Are the
formal fibres of $R$ over all prime ideals $\fp\in\emph{\Supp} M$,
Cohen-Macaulay? \end{ques}

 Corollary \ref{426} shows that if $\C_R(M)$ is finite, then the
formal fibres of $R$ over all prime ideals
 $\fp\in\Supp M$ with ${\h}_M\fp\leq1$, are Cohen-Macaulay. As a consequence we may have
a positive answer for the above question in the following  special
case.

\begin{cor}\label{c1}
Assume that ${\C}_R(M)$ is finite and $\emph{\dim} M\leq 3$. Then
the formal fibres of $R$ over all prime ideals $\fp\in\emph{\Supp}
M$ are Cohen-Macaulay.
\end{cor}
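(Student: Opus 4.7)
The plan is to do a case analysis on $\fp\in\Supp M$ using $\h_M\fp$, combining Corollary~\ref{426} with the observation that $R/\fp$ admits a uniform local cohomological annihilator for trivial reasons when $\dim R/\fp\leq 1$. Because $\C_R(M)$ is finite, Corollary~\ref{cor} tells us that $M$ is (locally) equidimensional and that $R/0:_RM$ is universally catenary, hence catenary. Combined with equidimensionality of $R/0:_RM$ as a local ring, this gives the dimension formula $\h_M\fp+\dim R/\fp=\dim M\leq 3$ for every $\fp\in\Supp M$.

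With this dichotomy in hand, the argument splits cleanly. If $\h_M\fp\leq 1$, Corollary~\ref{426} directly asserts that the formal fibre of $R$ over $\fp$ is Cohen-Macaulay, so nothing more is required. Otherwise $\h_M\fp\geq 2$, and together with $\dim M\leq 3$ this forces $\dim R/\fp\leq 1$ via the above dimension formula.

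For the leftover cases I treat $\dim R/\fp=0$ and $\dim R/\fp=1$ separately. When $\dim R/\fp=0$, $\fp=\fm$ and the formal fibre is $k(\fm)\otimes_R\widehat{R}\cong R/\fm$, a field, which is trivially Cohen-Macaulay (equivalently, $R/\fm$ is a field and Proposition~\ref{4.21} applies). When $\dim R/\fp=1$, the key point is that $R/\fp$ is a one-dimensional local domain; for any $y\in R/\fp$ annihilated by some power of $\fm/\fp$, the annihilator $0:_{R/\fp}y$ would contain a power of $\fm/\fp$, forcing $y=0$ since $R/\fp$ is a positive-dimensional domain. Hence $\Gamma_{\fm/\fp}(R/\fp)=0$, so every non-zero element of $R/\fp$ kills $\H^i_{\fm/\fp}(R/\fp)$ for $i<\dim R/\fp=1$, i.e.\ it is a uniform local cohomological annihilator of $R/\fp$. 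Proposition~\ref{4.21} then yields that the formal fibre of $R$ over $\fp$ is Cohen-Macaulay.

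There is no genuine obstacle. The only conceptual step is noticing that the bound $\dim M\leq 3$, when coupled with the dimension formula coming from Corollary~\ref{cor}, reduces every prime not already covered by Corollary~\ref{426} to the case $\dim R/\fp\leq 1$, where $R/\fp$ is either the residue field or a one-dimensional local domain and therefore automatically possesses a uniform local cohomological annihilator.
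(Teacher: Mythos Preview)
Your proof is correct and follows essentially the same route as the paper's: split on $\h_M\fp\leq 1$ (handled by Corollary~\ref{426}) versus $\h_M\fp\geq 2$, where $\dim R/\fp\leq 1$ makes $R/\fp$ trivially possess a uniform local cohomological annihilator, and then invoke Proposition~\ref{4.21}. One small simplification: you do not actually need the full dimension \emph{equality} via Corollary~\ref{cor}; the elementary inequality $\h_M\fp+\dim R/\fp\leq\dim M$ (valid in any local ring since $\Supp M$ is specialization-closed) already forces $\dim R/\fp\leq 1$ once $\h_M\fp\geq 2$.
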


\begin{proof}
Let $\fp\in\Supp M$. If $\h_M\fp\leq 1$, then the formal fibre over
$\fp$ is Cohen-Macaulay by Corollary \ref{426}. Now assume that
$\h_M\fp>1$. Thus $\dim R/\fp\leq 1$ and  so  $R/\fp$ has a uniform
local cohomological annihilator. Hence the formal fibre over $\fp$
is Cohen-Macaulay by Proposition \ref{4.21}.
\end{proof}

In Corollary \ref{clast}, we have seen that if $\C_R(M)$ is finite,
then the formal fibres of $R$ over all prime ideals
$\fp\in{\CM}(M)\cup\{\fp\in{\Supp} M : \emph{\h}_M\fp=1\}$ are
Cohen-Macaulay. Thus we have the following result in which,

\centerline{$\dim($non--$\CM(M))=\sup\{\dim R/\fp : \fp\in$
non--$\CM(M)\}$.}

\begin{cor}
Assume that $\C_R(M)$ is finite and
\emph{$\dim$(non}--$\emph{\CM}(M))\leq 1$. Then the  formal fibres
of $R$ over all prime ideals $\fp\in\emph{\Supp} M$ are
Cohen-Macaulay.
\end{cor}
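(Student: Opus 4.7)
The plan is to split on the prime $\fp\in\Supp M$ and reduce everything to Corollary~\ref{clast} together with Proposition~\ref{4.21}. By Corollary~\ref{clast}, primes in $\CM(M)\cup\{\fp\in\Supp M:\h_M\fp=1\}$ are already handled. Thus the only primes I need to treat are those $\fp\in\text{non-}\CM(M)$ with $\h_M\fp\neq 1$. The case $\h_M\fp=0$ is vacuous: it would force $\fp\in\Min M$, whence $\dim M_\fp=0$ and $\fp\in\CM(M)$, a contradiction. So I may assume $\fp\in\text{non-}\CM(M)$ and $\h_M\fp\geq 2$.

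For such a $\fp$, the hypothesis $\dim(\text{non-}\CM(M))\leq 1$ gives $\dim R/\fp\leq 1$. The plan is then to verify that $R/\fp$ has a uniform local cohomological annihilator and invoke Proposition~\ref{4.21} to conclude that the formal fibre of $R$ over $\fp$ is Cohen-Macaulay.

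The verification is essentially automatic. If $\dim R/\fp=0$, then $\fp=\fm$, so $R/\fp$ is a field and any nonzero element works. If $\dim R/\fp=1$, then $R/\fp$ is a one-dimensional local domain, so $\Min(R/\fp)=\{(0)\}$ and the only local cohomology module that must be annihilated is $\H^0_{\fm/\fp}(R/\fp)=\Gamma_{\fm/\fp}(R/\fp)$, which vanishes because $R/\fp$ is a domain with $\fm/\fp\neq 0$; consequently any nonzero element of $R/\fp$ is a uniform local cohomological annihilator.

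There is no real obstacle: the corollary is a clean bookkeeping consequence of the two cited tools. The only point worth articulating is the observation that the hypotheses force $\dim R/\fp\leq 1$ on the remaining primes, which is precisely the range where existence of a uniform local cohomological annihilator is free, so that Proposition~\ref{4.21} can be applied without further work.
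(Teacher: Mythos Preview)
Your proof is correct and follows essentially the same route as the paper's: split into $\fp\in\CM(M)$ (handled by Corollary~\ref{clast}) and $\fp\in\text{non-}\CM(M)$ (where the hypothesis forces $\dim R/\fp\leq 1$, so $R/\fp$ trivially has a uniform local cohomological annihilator and Proposition~\ref{4.21} applies). The paper's version is terser---it does not separate out the height-one case or spell out why a local domain of dimension at most one automatically has a uniform local cohomological annihilator---but the logic is identical.
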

\begin{proof}
Let $\fp\in\Supp M$. If $M_\fp$ is Cohen-Macaulay, the formal fibre
of $R$ over $\fp$ is Cohen-Macaulay by Corollary \ref{clast}. Now,
assume that $\fp\in$ non--$\CM(M)$. Thus $\dim R/\fp\leq 1$ and  so
that $R/\fp$ has a uniform local cohomological annihilator. Hence
the formal fibre over $\fp$ is Cohen-Macaulay by Proposition
\ref{4.21}.
\end{proof}

 One way of dealing Question \ref{question}, is to find a reduction
 technique e.g. for dimension of the module. In this connection, we
 propose the following.

\index{Cousin complex, $\C_R(M)$!finite-|textbf}
\begin{prob}\label{p1}
Assume that $\C_R(M)$ is  finite and $x$ is a non--zero--divisor on
$M$. Then $\C_R(M/xM)$ is finite.
\end{prob}

Assume that $\C_R(M)$ is finite and $\fp\in\Supp M$. Consider the
submodule $N$ of $M$ with $\Ass N=\Ass M\setminus\Assh M$ and $\Ass
M/N=\Assh M$ (c.f. \cite[Page 263, Proposition 4]{B}).
 As in the
exact sequence $0\longrightarrow N\longrightarrow M\longrightarrow
M/N\longrightarrow 0$ we have $\h_M\fp\geq 1$ for all $\fp\in\Supp
N$, Lemma \ref{211}(b) implies that $\mathcal{C}_R(M/N)$ is finite.
Note that $\Supp M=\Supp M/N$ and so $\fp\in\Supp M/N$. Since $\Ass
M/N=\Min M/N$, $\fp$ contains a non--zero--divisor $x$ on $M/N$. If
we have positive answer for the above problem, the Cousin complex
$\C_R(\frac{M/N}{xM/N})$ is finite. Now by an induction argument on
$\dim M$ and using Corollary \ref{c1} one finds  that the formal
fibre of $R$ over $\fp$ is Cohen-Macaulay.

Note that if $\C_R(M)$ is finite, then $M$ is equidimensional and
$R/0:_RM$ is universally catenary by  Corollary \ref{cor} and so
$M/xM$ is equidimensional. Now, if  the formal fibres over all
$\fp\in\Supp M$ are Cohen-Macaulay, then Proposition \ref{233}
follows that $\C_R(M/xM)$ is finite. Hence solving Problem \ref{p1}
is equivalent to find an answer for Question \ref{question}.


\bibliographystyle{amsplain}

\begin{thebibliography}{9}

\bibitem{B}
 N.~Bourbaki, Commutative algebra, Translated from the
French, Hermann, Paris, 1972.

\bibitem{BS}
M.~P.~Brodmann, R.~Y.~Sharp, \emph {Local cohomology: an algebraic
introduction with geometric applications}, Cambridge Studies in
Advanced Mathematics, 60. Cambridge University Press, Cambridge,
1998.


\bibitem{C}
F.~W.~Call, \emph{On local cohomology modules}, J. Pure Appl.
Algebra \textbf{43} (1986), no. 2, 111--117.


\bibitem{D}
M.~T.~Dibaei, \emph{A study of Cousin complexes through the
dualizing complexes}, Comm. Alg. \textbf{33} (2005), 119--132.

\bibitem{DJ1}
M.~T.~Dibaei and R.~Jafari, \emph{Top local cohomology modules with
specified attached primes}, Algebra Colloquium \textbf{15} : 2
(2008) 341--344.


\bibitem{DJ2}
M.~T.~Dibaei and R.~Jafari, \emph{Modules with finite Cousin
cohomologies have uniform local cohomological annihilators}, Journal
of Algebra, \textbf{319} (2008), 3291--3300.


\bibitem{DJ3}
M.~T.~Dibaei and R.~Jafari, \emph{Cohen-Macaulay loci of modules},
Communications in Algebra, to appear.

\bibitem{DT1}
M.~T.~Dibaei and M.~Tousi, \emph{The structure of dualizing complex
for a ring which is $(S_2)$}, J. Math. Kyoto University, \textbf{38}
(1998), 503--516.

\bibitem{DT2}
M.~T.~Dibaei and M.~Tousi, \emph{A generalization of the dualizing
complex structure and its applications}, J. Pure and Applied Algebra
155 (2001) 17-28.


\bibitem{DY3}
M.~T.~Dibaei and S.~Yassemi, \emph{Some regidity results for highest
order local cohomology modules},  Algebra Colloquium \textbf{14} : 3
(2007) 497--504.

\bibitem{DY2}
M.~T.~Dibaei and S.~Yassemi, \emph{Top local cohomology modules},
 Algebra Colloquium \textbf{14} : 2 (2007) 209--214.

\bibitem{DY1}
M.~T.~Dibaei and S.~Yassemi, \emph{Attached primes of the top local
cohomology modules with respect to an ideal}, Arch. Math. (Basel)
\textbf{84} (2005), no. 4, 292--297.

\bibitem{FR}
D.~Ferrand and M.~Raynaud, \emph{Fibres formelles d'un anneau local
noetherien}, Ann. Sci. Ecole Norm. Sup. \textbf{4}, t. 3 (1970),
295-311.

\bibitem{G}
A.~Grothendieck, \emph{\'{E}l\'{e}ments de g\'{e}om\'{e}trie
alg\'{e}brique IV}, Inst. Hautes \'{E}tudes Sci. Publ. Math.,
\textbf{24} 1965.


\bibitem{GW}
S.~Goto and K.~Watanabe, \emph{On graded rings, I}, J. Math. Soc.
Japan, \textbf{30} (1978) 179--213.

\bibitem{Hall}
J.~E.~Hall, \emph{Fundumental dualizing complex for commutative
Noetherian rings}, Quart. J. Math. Oxford, (2) \textbf{30} (1979),
21-32.


\bibitem{H1}
R.~Hartshorne, \emph{Residues and duality}, vol. 20, Lect. Notes
Math. Berlin Heidelberg New York, Springer-Verlog, 1966.

\bibitem{H2}
R.~Hartshorne, \emph{Cohomological dimension of algebraic
varieties}, Annals of Math., \textbf{88} (1968) 403--450.






\bibitem{K1}
T.~Kawasaki, \emph{On macaulayfication of noetherian schemes},
Trans. Amer. Math. Soc., \textbf{352} (2000),  2517--2552.

\bibitem{K}
T.~Kawasaki, \emph{Finiteness of Cousin cohomologies}, Trans. Amer.
Math. Soc., \textbf{360} (2008), 2709--2739.

\bibitem{LNS}
J.~Lipman, S.~Nayak and P.~Sastry, \emph{Pseudofunctorial behavior
of Cousin complexes on formal schemes}, Contemp. Math.  \textbf{375}
(2005), 3--133.

\bibitem{Mac} I.~G.~Macdonald, \emph{Secondary representation of modules
over a commutative ring}, Symposia Mathematica, \textbf{XI} (1973),
23--43.

\bibitem{MS}
I.~G.~Macdonald and R.~Y.~Sharp, \emph{An elementary proof of the
non-vanishing of certain local cohomology modules}, Quart. J. Math.
Oxford \textbf{23} (1972), 197--204.

\bibitem{Ma}
H.~Matsumura, \emph{ Commutative ring theory}, Cambridge University
Press, 1992.






\bibitem{P}
H.~Petzl,\emph{ Cousin complexes and flat ring extentions}, Comm.
Algebra, {\bf 25} (1997), 311--339.



\bibitem{RS}
C.~Rotthaus and L.~M.~$\c{S}$ega, \emph{Open loci of graded
modules}, Trans. Amer. Math. Soc. \textbf{358} (2006), 4959--4980.

\bibitem{Sc1}
P.~Schenzel, \emph{Dualisierende Komplexe in der lokalen Algebra und
Buchsbaum ringe}, Lecture Notes in Math., vol. 907, Springer,
Berlin, Heidelberg, New York, 1982. MR 83i:13013

\bibitem{Sc}
P.~Schenzel, \emph{ On the use of local cohomology in algebra and
geometry},  Six lectures on commutative algebra (Bellaterra, 1996),
241--292, Progr. Math., 166, Birkh\"{a}user, Basel, 1998.

\bibitem{S1}
R.~Y.~Sharp, \emph{The Cousin complex for a module over a
commutative Noetherian ring}, Math. Z. \textbf{112} (1969), 340-356.


\bibitem{S4}
R.~Y.~Sharp, \emph{Gorenstein modules}, Math. Z. \textbf{115}
(1970), 117--139.



\bibitem{S6}
R.~Y.~Sharp, \emph{Some reults on the vanishing of local cohomology
modules}, Proc. London Math. Soc.  \textbf{30} (1975), 177--195.

\bibitem{S8}
R.~Y.~Sharp, \emph{Dualizing complexes for commutative Noetherian
ring}, Math. Proc. Cambridge Philos Soc., \textbf{78} (1975),
369--386.


\bibitem{S2}
R.~Y.~Sharp, \emph{Local cohomology and the Cousin complex for a
commutative Noetherian ring}, Math. Z. \textbf{153} (1977), 19--22.

\bibitem{S7}
R.~Y.~Sharp, \emph{Necessary conditions for the existence of
dualizing complexes in commutative algebra}, Lecture Notes in Math.
740, Springer, Berlin, 1979.




\bibitem{S9}
R.~Y.~Sharp, \emph{A cousin complex characterization of balanced big
cohen-macaulay modules}, Quart. J. Math. Oxford, \textbf{33} (1982),
471--485.


\bibitem{SSc}
R.~Y.~Sharp and P.~Schenzel,\emph{Cousin complex and generalized
Hughes complexes}, Proc. London Math. Soc., {\bf 68} (1994),
499--517.

\bibitem{Z}
C.~Zhou, \emph{Uniform annihilators of local cohomology}, Journal of
algebra  \textbf{305} (2006), 585--602.


\end{thebibliography}

\clearpage

\addcontentsline{toc}{chapter}{Index}
\def\baselinestretch{1}
\small \printindex
\end{document}